\newcommand\widecheck[1]{%
\savestack{\tmpbox}{\stretchto{%
  \scaleto{%
    \scalerel*[\widthof{\ensuremath{#1}}]{\kern-.6pt\bigwedge\kern-.6pt}%
    {\rule[-\textheight/2]{1ex}{\textheight}}
  }{\textheight}%
}{0.5ex}}%
\stackon[1pt]{#1}{\scalebox{-1}{\tmpbox}}%
}
\tikzset{inner sep=0pt, node distance=5mm,
  root/.style={circle,draw,minimum size=5pt,thick},
  broot/.style={circle,draw,minimum size=5pt,thick,fill},
  xroot/.style={circle,draw,minimum size=5pt,thick,fill=gray!70!white},
  crossroot/.style={circle,draw,minimum size=5pt,thick,label=below:$\times$},
  doublearrow/.style={postaction={decorate},   decoration={markings,mark=at position .6 with {\arrow[line width=1.2pt]{>}}},double distance=1.6pt,thick},
  doublenoarrow/.style={double distance=1.6pt,thick},
  rdoublearrow/.style={postaction={decorate},   decoration={markings,mark=at position .4 with {\arrowreversed[line width=1.2pt]{>}}},double distance=1.6pt,thick},
  rtriplearrow/.style={postaction={decorate},   decoration={markings,mark=at position .4 with {\arrowreversed[line width=1.2pt]{>}}},double distance=2.5pt,thick},
  curvedline/.style={bend=right}
}
\tikzset{main node/.style={rectangle,rounded corners,fill=blue!20,draw,minimum size=1cm,inner sep=0pt},
            }
\tikzset{sec node/.style={rectangle,rounded corners,fill=green!20,draw,minimum size=1cm,inner sep=0pt},
            }
\tikzset{cross/.style={cross out, draw=black, minimum size=2*(#1-\pgflinewidth), inner sep=0pt, outer sep=0pt},
cross/.default={3pt}}
\definecolor{mygray}{gray}{0.7}
\tikzset{inner sep=0pt, node distance=5mm,
  root/.style={circle,draw,minimum size=5pt,thick},
  broot/.style={circle,draw,minimum size=5pt,thick,fill},
  xroot/.style={circle,draw,minimum size=5pt,thick,label=below:$\times$},
  doublearrow/.style={postaction={decorate},   decoration={markings,mark=at position .6 with {\arrow[line width=1.2pt]{>}}},double distance=1.6pt,thick},
  rdoublearrow/.style={postaction={decorate},   decoration={markings,mark=at position .4 with {\arrowreversed[line width=1.2pt]{>}}},double distance=1.6pt,thick},
	rtriplearrow/.style={postaction={decorate},   decoration={markings,mark=at position .4 with {\arrowreversed[line width=1.2pt]{>}}},double distance=2.5pt,thick},
	ltriplearrow/.style={postaction={decorate},   decoration={markings,mark=at position .6 with {\arrow[line width=1.2pt]{>}}},double distance=2.5pt,thick},
  curvedline/.style={bend=right}
}
\keywords{Simple Lie superalgebra, super-PDE symmetries, contact supermanifold}
\subjclass[2020]{Primary:
35B06, 
17B66; 
Secondary:
17B20, 
53D10. 
} 
\numberwithin{equation}{section}
\theoremstyle{plain}
\newtheorem{theorem}{Theorem}[section]
\newtheorem{proposition}[theorem]{Proposition}
\newtheorem{prop}[theorem]{Proposition}
\newtheorem{corollary}[theorem]{Corollary}
\newtheorem{cor}[theorem]{Corollary}
\newtheorem{lemma}[theorem]{Lemma}
\theoremstyle{definition}
\newtheorem{rem}[theorem]{Remark}
\newtheorem{definition}[theorem]{Definition}
\newcommand{\Hom}{\operatorname{Hom}}
\newcommand{\End}{\operatorname{End}}
\newcommand{\GL}{\operatorname{GL}}
\newcommand{\SO}{\operatorname{SO}}
\newcommand{\Spin}{\operatorname{Spin}}
\newcommand{\Cl}{C\ell}
\newcommand{\ad}{\operatorname{ad}}
\newcommand{\stab}{\mathfrak{stab}}
\newcommand{\der}{\mathfrak{der}}
\newcommand{\fosp}{\mathfrak{osp}}
\newcommand{\fgl}{\mathfrak{gl}}
\newcommand{\fsl}{\mathfrak{sl}}
\newcommand{\fso}{\mathfrak{so}}
\newcommand{\fsp}{\mathfrak{sp}}
\newcommand{\fspin}{\mathfrak{spin}}
\newcommand{\fm}{\mathfrak{m}}
\newcommand{\fp}{\mathfrak{p}}
\newcommand{\fg}{\mathfrak{g}}
\newcommand{\fh}{\mathfrak{h}}
\newcommand{\fX}{\mathfrak{X}}
\renewcommand{\1}{\mathbbm{1}}
\newcommand{\CC}{\mathbb{C}}
\newcommand{\be}{\boldsymbol{e}}
\newcommand{\vol}{\operatorname{vol}}
\newcommand{\gr}{\operatorname{gr}}
\newcommand{\bep}{\boldsymbol{\epsilon}}
\newcommand{\res}{\operatorname{res}}
\newcommand\diag{\operatorname{diag}}
\newenvironment{psm}
  {\left(\begin{smallmatrix}}
  {\end{smallmatrix}\right)}
 \newcommand\CO{\operatorname{CO}}
\newcommand\Stab{\operatorname{Stab}}
 \newcommand\CSpin{\operatorname{CSpin}}
 \newcommand\bbC{\mathbb{C}}
 \newcommand\bbP{\mathbb{P}}
 \newcommand\bbS{\mathbb{S}}
 \newcommand\bbZ{\mathbb{Z}}
 \newcommand\sfQ{\mathsf{Q}}
 \newcommand\sfZ{\mathsf{Z}}
 \newcommand\LG{\operatorname{LG}}
 \newcommand{\fz}{\mathfrak{z}}
 \newcommand\op{\oplus}
 \newcommand\fco{\mathfrak{co}}
 \newcommand\fcspo{\mathfrak{cspo}}
 \newcommand\id{\operatorname{id}}
 \newcommand{\fq}{\mathfrak{q}}
 \newcommand{\ff}{\mathfrak{f}}
 \newcommand\cU{\mathcal{U}}
 \newcommand\cV{\mathcal{V}}
 \newcommand{\fC}{\mathfrak{C}}
 \newcommand\bbA{\mathbb{A}}
 \newcommand\cC{\mathcal{C}}
 \newcommand\cD{\mathcal{D}}
 \newcommand\cL{\mathcal{L}}
 \newcommand\bX{\mathbf{X}}
 \newcommand\bS{\mathbf{S}}
 \newcommand\bU{\mathbf{U}}
 \newcommand{\Ch}{Ch}
 \renewcommand\ss{{\rm ss}}
 \newcommand\fms[1]{\fm_{\bar{#1}}}
 \newcommand\prn{\operatorname{pr}}
 \newcommand\finf{\mathfrak{inf}}
 \newcommand\bbV{\mathbb{V}}
 \newcommand\pr{\operatorname{pr}}
 \newcommand{\comm}[1]{}
\begin{document}
\title[Exceptionally simple super-PDE for $F(4)$]{Exceptionally simple super-PDE for $F(4)$}

\author{Andrea Santi}
\address{Andrea Santi, Dipartimento di Matematica, Universit\'a degli Studi di Roma ``Tor Vergata'',
Via della ricerca scientifica 1, 00133 Roma, ITALY}
\email{santi@mat.uniroma2.it, asanti.math@gmail.com}

\author{Dennis The}
\address{Dennis The, Department of Mathematics and Statistics, UiT The Arctic University of Norway, Troms\o{} 90-37, Norway}
\email{dennis.the@uit.no}

\thanks{}

 \begin{abstract}
 For the largest exceptional simple Lie superalgebra $F(4)$, having dimension $(24|16)$, we provide two explicit geometric realizations as supersymmetries, namely as the symmetry superalgebra of super-PDE systems of second and third order respectively.
 \end{abstract}

\maketitle

\section{Introduction}\label{S1}
\vskip0.3cm\par

In Kac's celebrated classification \cite{Kac}, the largest exceptional complex\footnote{Throughout the article, we exclusively work over $\bbC$, and use $\fsl(2)$, $\fso(7)$ to refer to $\fsl(2,\bbC)$, $\fso(7,\bbC)$, etc.} simple Lie superalgebra is $F(4)$ of dimension $(24|16)$ (this is not to be confused with the simple Lie algebra $F_4$ of dimension $52$). Similar to the other exceptional simple Lie superalgebras, this Lie superalgebra is traditionally described by introducing the brackets on its even and
odd components and not as the symmetry superalgebra of some simple algebraic or geometric
structure. One of the reasons is that its smallest non-trivial representation is the
adjoint representation \cite{SS}.  Our main goal is to establish the first explicit geometric realizations of $F(4)$ as the symmetry superalgebra of certain systems of super-PDE.  This paper can be regarded as accompanying \cite{KST2021}, where the analogous task was recently carried out for the exceptional Lie superalgebra $G(3)$.

 Our {exceptionally simple super-PDE} depend on an {\em even} scalar function $u$ of several independent variables $\{ x^0, x^1, ... \}$ and are explicitly given by:
 \begin{enumerate}
 \item a 2nd order system, with $x^0,x^1,x^2$ even, and $x^3,x^4$ odd:
  \begin{align} \label{E:F4-2PDE}
 \begin{split}
 u_{00} &= u_{22} (u_{12})^2 + 2 u_{12} u_{23} u_{24}, \\
 u_{01} &= \tfrac{1}{2} (u_{12})^2, \quad
 u_{02} = u_{22} u_{12} + u_{23} u_{24}, \quad
 u_{03} = u_{12} u_{23}, \quad
 u_{04} = u_{12} u_{24}, \\
 u_{11} &= 0, \quad
 u_{12} = -u_{34}, \quad
 u_{13} = 0, \quad 
 u_{14} = 0.
 \end{split}
 \end{align}
 \item a 3rd order system, with all $x^0,x^1,x^2,x^3$ odd:
 \begin{align} \label{E:F4-3PDE}
 u_{0ab} = u_{ab} u_{123}, \quad 1 \leq a < b \leq 3.
 \end{align}
 \end{enumerate}
Our convention for partial derivatives is that $u_{ij} = \partial_{x^i} u_j = \partial_{x^i} \partial_{x^j} u$, and we also recall that $u_{ij} = (-1)^{|i||j|} u_{ji}$, where $|i|,|j| \in \bbZ_2$ are the parities of $x^i$ and $x^j$. Our main result is:
 
 \begin{theorem} \label{T:main}
 The contact symmetry superalgebra of \eqref{E:F4-2PDE} or \eqref{E:F4-3PDE} is isomorphic to $F(4)$.
 \end{theorem}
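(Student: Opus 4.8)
The plan is to compute the contact symmetry superalgebra directly in the super-jet formalism and then to recognise the answer abstractly. First I would set up the relevant super-contact geometry: over a base with coordinates $x^i$ of the prescribed parities and an even dependent variable $u$, the first jet space $J^1$ carries the contact form $\theta = du - \sum_i u_i\, dx^i$, and the super-analogue of B\"acklund's theorem guarantees that every contact vector field $X_f$ is determined by a single generating superfunction $f=f(x^i,u,u_i)$, the even/odd splitting of $f$ inducing the even/odd parts of the symmetry superalgebra. Each $X_f$ prolongs uniquely to $J^2$ (resp.\ $J^3$), so the contact symmetry superalgebra of \eqref{E:F4-2PDE} (resp.\ \eqref{E:F4-3PDE}) is exactly the space of $f$ whose prolongation is tangent to the PDE supersubmanifold $\cE\subset J^2$ (resp.\ $\cE\subset J^3$).

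Next I would derive and solve the determining equations. Substituting the prolonged coefficients into the tangency condition along $\cE$ produces an overdetermined linear system of super-PDEs on $f$; keeping careful track of the Koszul signs coming from the $\ZZ$-grading of the $x^i$ and the $u_i$, one integrates this system. The first checkpoint is that the solution space is finite-dimensional of superdimension exactly $(24|16)$, matching $\dim F(4)$. I would then compute graded commutators of the $X_f$ via the contact (Lagrange) bracket on generating functions, and exhibit the compatible $\ZZ$-grading governing \eqref{E:F4-2PDE}: a contact grading in which $\fg_{-2}$ is one-dimensional, $\fg_{-1}\cong\bbC^{6|4}$ is the symbol carrying the even-symplectic form of the contact distribution, and $\fg_0$ is the reductive structure superalgebra preserving the symbol of the PDE.

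Assembling the brackets, one verifies that $\fg_{\bar 0}\cong\fsl(2)\op\fso(7)$, of dimension $24$, and that $\fg_{\bar 1}\cong\bbC^2\boxtimes\SS$, of dimension $16$, where $\SS$ is the $8$-dimensional \emph{spin} module for $\fso(7)$. By Kac's classification, the unique simple Lie superalgebra with this even part and this odd module is $F(4)$, which gives the isomorphism; the third-order case \eqref{E:F4-3PDE} is handled identically using the analogous deeper grading coming from its purely odd base. The genuinely efficient route, which I expect to use in place of a brute-force integration, is to read off the symbol pair $(\fm,\fg_0)$ from the PDE as a filtered superstructure and invoke the super Tanaka prolongation: since $(\fm,\fg_0)$ is the non-positive part of a contact grading of the simple superalgebra $F(4)$, its maximal prolongation $\mathfrak{prol}(\fm,\fg_0)$ is all of $F(4)$, after which one need only check that \eqref{E:F4-2PDE} (resp.\ \eqref{E:F4-3PDE}) is the flat model realising this maximum.

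The main obstacle in either approach is the same. On the computational side it is organising the super-sign bookkeeping so that no branch of the determining equations is lost and the full $(24|16)$ is recovered. On the conceptual side, and this is the real crux, it is matching the odd-odd brackets so as to reconstruct $\fso(7)$ together with its \emph{spin} module rather than its vector module: this is precisely what distinguishes $F(4)$ from $\fosp(7|2)$, whose even part $\fsl(2)\op\fso(7)$ coincides with that of $F(4)$ but whose odd part is $\bbC^2\boxtimes\bbC^7$. Certifying that the grading is the contact grading of $F(4)$, and not of some other superalgebra of the same even dimension, is therefore the decisive step.
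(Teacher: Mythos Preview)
Your outline follows the right architecture --- bound from above via Tanaka--Weisfeiler prolongation, bound from below by exhibiting symmetries, then identify the result --- but it skips over precisely the step that is the technical heart of the paper. You write: ``since $(\fm,\fg_0)$ is the non-positive part of a contact grading of the simple superalgebra $F(4)$, its maximal prolongation $\mathfrak{prol}(\fm,\fg_0)$ is all of $F(4)$.'' In the classical setting this would follow from Kostant's Bott--Borel--Weil theorem, but for Lie superalgebras Kostant's theorem fails in general, and the equality $\prn(\fm,\fg_0)\cong\fg$ is equivalent to the vanishing of $H^{d,1}(\fm,\fg)$ for all $d>0$, which must be checked by hand. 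The paper devotes all of \S\ref{sec:SCG} (Theorems~\ref{thm:235H^1} and~\ref{sec:Spencer}) to these Spencer cohomology computations, and they are not routine: for the mixed contact grading one passes through a long exact sequence comparing to the classical cohomology of $\fm_{\bar 0}$ and exploits the $\mathfrak{osp}(4|2;\alpha)$-module structure, while for the odd contact grading one needs spinorial Fierz-type identities (Proposition~\ref{prop:omega-vanishes}). Without this, you have no upper bound, and the brute-force integration you set aside is the only alternative.

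Two further points. First, even granting $\prn(\fm,\fg_0)\cong F(4)$, you still need the fact that the symmetry superalgebra of a filtered $G_0$-structure is bounded by $\dim\prn(\fm,\fg_0)$; this is quoted from \cite{KST2022} and is itself nontrivial in the super setting. Second, your treatment of \eqref{E:F4-3PDE} as ``identical'' to the 2nd-order case via ``the analogous deeper grading'' misses the geometry: the odd contact grading is still a $|2|$-grading, and the 3rd-order PDE arises not from osculation but from a genuinely different construction through the incidence Lagrange--Grassmann bundle (\S\ref{S:DLG}), where the $\fg_0$-reduction is encoded by a conformal Cayley $4$-form rather than a subvariety. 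The lower bound there requires exhibiting the explicit symmetries of Table~\ref{F:FlatQctSym} and then tracing them through two geometric lifts (Theorems~\ref{T:F4-5gr} and~\ref{T:3PDE}).
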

  
 The system \eqref{E:F4-2PDE} may not appear to live up to its moniker, but let us express it parametrically.  Let $(\lambda_1,\lambda_2| \theta_1,\theta_2)$ be coordinates on the supermanifold $\bbC^{2|2}$, i.e., $\lambda_i$ are even while $\theta_i$ are odd, and let us group them together into
the symbol $T = (t^1,t^2 | t^3,t^4)= (\lambda_1,\lambda_2| \theta_1,\theta_2)$. We define the supersymmetric cubic form
 \begin{align*} 
 \fC(T^3) := \lambda_1 (\lambda_2)^2 + 2 \lambda_2\theta_1\theta_2,
 \end{align*}
and its derivatives $\fC_b(T^2) := \tfrac{1}{3} \partial_{t^b}(\fC(T^3))$ and 
 $\fC_{ab}(T) := \tfrac{1}{2} \partial_{t^a}(\fC_b(T^2))$.
 Then the system \eqref{E:F4-2PDE} takes the compact form
 \begin{align} \label{E:cubicPDE}
 \begin{pmatrix}
 u_{00} & u_{0b}\\
 u_{a0} & u_{ab}
 \end{pmatrix}
  = 
 \begin{pmatrix}
 \fC(T^3) & \frac{3}{2}\fC_b(T^2)\\
 \frac{3}{2}\fC_a(T^2) & 3\fC_{ab}(T)
 \end{pmatrix}\;,
 \end{align}
where $1\leq a,b\leq 4$.
 For varying choices of cubic form $\fC$, the formula \eqref{E:cubicPDE} uniformly yields geometric realizations of the simple Lie algebras $B_\ell, D_\ell, G_2, F_4, E_6, E_7, E_8$, as well as the exceptional simple Lie superalgebra $G(3)$ \cite{The2018, KST2021}.  We claim that this uniformity remarkably extends also to $F(4)$.
 
  Explicit expressions of the contact symmetries of the systems are also provided -- see Table \ref{F:sym} in the first case and Table \ref{F:FlatQctSym} in the second case.  More precisely, these are generating superfunctions (see \eqref{E:Sf}) for contact supervector fields that are symmetries of the super-PDE, and the Lie superalgebra structure is given by the Lagrange bracket (see \eqref{E:LB}), which is induced on the space of superfunctions from the Lie bracket of supervector fields.

 Underlying the super-PDE \eqref{E:F4-2PDE} and \eqref{E:F4-3PDE} are rich geometric stories that led to their discovery.  For \eqref{E:F4-2PDE}, the story begins with the first geometric realizations of $G_2$ in 1893 due to Cartan \cite{Cartan1893} and Engel \cite{Engel1893}.  From a modern viewpoint, Engel considered a contact 5-manifold $(M^5,\cC)$ endowed with a field of twisted cubics $\cV \subset \bbP(\cC)$, which defines a {\sl $G_2$-contact structure} $(M,\cC, \cV)$.  In general, this geometry has non-trivial local invariants, but the so-called {\it flat} model is maximally symmetric, has symmetry algebra $G_2$, is locally unique, and is equivalent to Engel's model.  Twisted cubics are projective {\em Legendrian varieties}, so their osculation yields affine tangent spaces $\widehat{T}_\ell \cV \subset \cC$ that are Lagrangian (i.e., maximally isotropic) for the natural conformal symplectic form on $\cC$.  Hence, the family $\widehat\cV = \{ \widehat{T}_\ell \cV : \ell \in \cV \}$ defines a submanifold of the total space of the Lagrange--Grassmann bundle $\pi:\widetilde{M}=LG(\cC)\rightarrow M$ consisting of all Lagrangian subspaces of $(M,\cC)$. Now, $(M,\cC)$ and $(\widetilde{M},\widetilde\cC)$ are locally equivalent to the first and second order jet-spaces $J^1(\bbC^2,\bbC)$ and $J^2(\bbC^2,\bbC)$ with their Cartan distributions, respectively, so $\widehat\cV \subset \widetilde{M}$ is a coordinate-independent realization of a 2nd order PDE, which inherits all the symmetries of $\cV$. Below, we provide a discussion in Lie-theoretic terms  on why the symmetry algebra has in fact dimension no larger than $\dim (G_2)$.

Carrying this out for the flat $G_2$-contact structure yields Cartan's realization of $G_2$ as the algebra of contact symmetries of
 \begin{align}
 u_{xx} = \tfrac{1}{3} u_{yy}^3, \quad 
 u_{xy} = \tfrac{1}{2} u_{yy}^2.
 \end{align}
This is \eqref{E:cubicPDE} specialized to $T = t \in \bbC$ and $\fC(T^3) = \frac{1}{3} t^3$.  
 
 This geometric construction generalizes to all complex simple Lie algebras that are not of type $A$ or $C$ \cite{The2018}.  The adjoint variety $M = G/P$ is a parabolic homogeneous quotient, which is a complex contact manifold whose contact distribution $\cC$ admits a $G$-invariant field of Legendrian varieties $\cV \subset \bbP(\cC)$, and the triple $(M,\cC,\cV)$ is the {\sl  flat $G$-contact structure}.  We may osculate in each fibre, obtaining the 2nd order PDE $\widehat\cV \subset \widetilde{M}$, which is $G$-invariant by naturality of the construction, and \eqref{E:cubicPDE} is its local description.  (The cubic form $\fC$ is uncovered from repeated osculations of $\cV$ at the chosen basepoint, and $\cV$ is locally described in terms of it.) In the recent paper \cite{KST2021}, a careful analysis at the Lie superalgebra cohomological level and the osculation construction in terms of the functor of points were applied to find the {\em $G(3)$-contact super-PDE system} with symmetry superalgebra $G(3)$:
 \begin{equation}\label{SC-PDE}
 \begin{split}
u_{xx} &= \tfrac13 u_{yy}^3 + 2 u_{yy} u_{y\nu} u_{y\tau}, \quad
u_{xy} = \tfrac12 u_{yy}^2 + u_{y\nu} u_{y\tau}, \\
u_{x\nu}&= u_{yy} u_{y\nu}, \qquad\ u_{x\tau} = u_{yy} u_{y\tau}, \qquad\ u_{\nu\tau} = -u_{yy},
 \end{split}
 \end{equation}
where $u=u(x,y|\nu,\tau):\mathbb C^{2|2}\to \mathbb C^{1|0}$.

The algebraic starting point in all these cases is a grading on the Lie superalgebra $\fg$ of contact type: we have $\fg = \fg_{-2} \op \ldots \op \fg_2$ as a $\bbZ$-graded Lie superalgebra whose symbol $\fm = \fg_-$ is the Heisenberg superalgebra (i.e., $\dim(\fg_{-2}) = (1|0)$ and the super-skewsymmetric bracket $\eta: \Lambda^2 \fg_{-1} \to \fg_{-2}$ is non-degenerate).  The superalgebra $\der_{{\rm gr}}(\fm)$ of zero-degree derivations of $\fm$ is isomorphic to the conformal symplectic-orthogonal superalgebra $\mathfrak{cspo}(\fg_{-1})$ and $\fg_0$ is a subalgebra.  The associated parabolic subalgebra is $\fp = \fg_{\geq 0}$.  More generally, for an arbitrary $\bbZ$-graded Lie superalgebra $\fg = \fg_- \op \fg_0 \op \fg_+$, the vanishing of the Spencer cohomology groups $H^{d,1}(\fm,\fg)$ in degrees $d > 0$ is equivalent to $\fg \cong \prn(\fm,\fg_0)$, where $\prn(\fm,\fg_0)$ is the so-called {\sl Tanaka--Weisfeiler prolongation}, which is the maximal effective graded Lie superalgebra extending $\fm$ and $\fg_0 \subset \der_{{\rm gr}}(\fm)$. In the classical setting, Kostant's version of the Bott--Borel--Weil theorem \cite{Kos} is available  to efficiently assess this vanishing, and the fact that $\fg \cong \prn(\fm,\fg_0)$ implies that the symmetry algebra of the geometric structure under consideration has dimension bounded by $\dim(\fg)$.  In this setting, both $\cV$ and the osculation $\widehat\cV$ of $\cV$ described above fibrewise reduce the structure algebra to $\fg_0$, and the vanishing of the above Spencer groups leads to $(M,\cC,\cV)$ and $(M,\cC,\widehat\cV)$ having the same contact symmetry algebra with upper bound $\dim(\fg)$.
 
 In the super-setting, the analogous vanishing of the Spencer cohomology groups equivalently characterizes $\fg \cong \prn(\fm,\fg_0)$ but Kostant's result does not hold in general. Cohomology groups have been known for some irreducible (i.e, depth $1$) supergeometries \cite{LPS} and some distinguished Borel subalgebras \cite{Coul}, but, for instance, not for the parabolic subalgebras of depth $2$ that we consider in this paper.  In \cite[Thm.1.1]{KST2022}, we showed that the symmetry superalgebra of an associated supergeometry modelled on the data $(\fm,\fg_0)$ is bounded by $\dim \prn(\fm,\fg_0)$ in the strong sense: the even and odd parts are respectively bounded by those of the prolongation.
 
 Turning now to $\fg = F(4)$, there are two inequivalent contact gradings:
 \begin{enumerate}
 \item[$(i)$] the {\sl mixed contact grading}, having $\fg_0 \cong \mathfrak{cosp}(4|2;\alpha) \cong \bbC \op \mathfrak{osp}(4|2;\alpha)$ with $\alpha = 2$, and $\fg_{-1}$ given by the unique non-trivial $\fg_0$-irreducible representation of dimension $(6|4)$.
 \item[$(ii)$] the {\sl odd contact grading}, having $\fg_0 \cong \fco(7)$ and $\fg_{-1} \cong \bbS$ given by the 8-dimensional spin representation of $\fso(7)$.  In this case, $\fg_0$ is purely even and $\fg_{-1}$ is purely odd.
 \end{enumerate}
 
 In both cases, we show (see Theorems \ref{thm:235H^1} and \ref{sec:Spencer}) that $H^{d,1}(\fm,\fg) = 0$ for $d > 0$, and hence $\fg \cong \prn(\fm,\fg_0)$.  This has two implications.  First, the geometric structures under consideration are encoded by a contact supermanifold equipped with a structure superalgebra reduced to $\fg_0 \subset \der_{{\rm gr}}(\fm) \cong \mathfrak{cspo}(\fg_{-1})$.  For the mixed contact case, a field of certain supervarieties $\cV \subset \bbP(\cC)$ realizes this reduction; for the odd contact case, it is accomplished by a conformal supersymmetric quartic tensor $[\sfQ] \in \Gamma(\odot^4 \cC^*)$. Secondly, the maximal symmetry dimension associated with either of these structures is $\dim(\fg) =\dim (F(4))=(24|16)$ and, in each case, there is a flat structure that realizes the symmetry upper bound.  In this article, we do not present our results on the second Spencer cohomology groups $H^2(\fm,\fg)$,  since they are quite involved and ultimately will deserve a separate work in the framework of deformations of mixed-contact and odd-contact $F(4)$-supergeometries.
 
 In \S\ref{S:mixedct}, for the mixed contact case, we carry out the osculation described above on the flat structure $(M^{7|4},\cC,\cV \subset \bbP(\cC))$ to obtain the 2nd order super-PDE system \eqref{E:F4-2PDE} and establish $\fg = F(4)$ as its contact symmetry superalgebra. 
 
  In \S\ref{S:oddct}, we consider the odd contact case, which is completely different from the aforementioned osculation story.  Since $\fg_{-1} \cong \bbS$ is purely odd, the super-skew form $\eta$ is symmetric in the classical sense.  The natural variety present here is the null quadric $\cV = \{ \eta = 0 \} \subset \bbP(\bbS)$, but this is also an orbit of $\CO(\bbS) = \CO(8)$, so $\cV$ does {\it not} reduce the structure algebra from $\der_{\rm gr}(\fm)\cong\mathfrak{co}(8)$ to $\fg_0 \cong \fco(7)$.  Instead, the reduction is provided by the conformal class $[\sfQ]$ of a supersymmetric quartic tensor $\sfQ$, which, fixing an adapted Witt frame of odd supervector fields on $\cC$, is the classical Cayley skew $4$-form (see \eqref{E:Q} for its explicit expression). It is remarkable that $\sfQ$ arises also as the supersymmetric counterpart of Freudenthal's quartic invariant \cite[(4.8)]{Fre1954}, \cite[\S 4.11]{Fre1964}, or rather its realization in terms of a contact grading \cite[p.155-156]{LM2002}, \cite{Hel}; see Remark \ref{R:Cayley}.  Recently, the conformal class $[\sfQ]$ of this quartic was used to realize exceptional simple Lie algebras as symmetries of geometric structures on contact manifolds and 2nd order PDE \cite{LNS, The2018, AGMM}.  In our odd contact $F(4)$ setting, $[\sfQ]$ is related to a 3rd order super-PDE, a phenomenon that has no parallel in our study \cite{KST2021} of $G(3)$.	
	
 Let us sketch here how to connect $[\sfQ]$ to a 3rd order super-PDE. Under the reduction to $\fg_0 \cong \fco(7)$, there is for any point $\ell \in \cV$ a distinguished $\eta$-Lagrangian subspace $L_\ell \cV \subset \widehat{T}_\ell \cV$.  This is a {\em self-dual} plane w.r.t. $\eta$ (contrasted to $\sfQ$ which, in our conventions, is anti self-dual), and this property clearly persists to the conformal class $[\eta]$.  The family $L(\cV) := \{ L_\ell \cV : \ell \in \cV \}$ of all such {\sl Lagrangian tangent planes} is an {\em open} $\SO(\bbS)$-orbit in $LG(\bbS)$, and again insufficient to enforce a reduction to $\fg_0 \cong \fco(7)$.  Instead, it is the flag manifold $F(\cV):=\{(\ell,L): \ell\in\cV, L = L_\ell\cV\}$ with its incidence condition $\ell \subset L_\ell \cV$ that encodes the $\fg_0$-reduction, with $[\sfQ]$ canonically determined from $F(\cV)$ and conversely (see Proposition \ref{P:VLV} for the precise statement).
  
 Starting from the geometric data $(M^{1|8},\cC,[\sfQ])$, we may consider the
{\sl incidence Lagrange-Grassmann bundle} $\widetilde{M}^o$  determined fibrewise by $F(\cV)$.
Since the map $\ell \mapsto L_\ell \cV$ is injective, then $\widetilde{M}^o$ is diffeomorphic to 
the self-dual Lagrange--Grassmann bundle $\widetilde{M}_+=LG_+(\cC)$. The Cartan superdistribution $\widetilde\cC$ of rank $(6|4)$ is induced on $\widetilde{M}^o$ as usual (tautologically  via the pullback of the self-dual Lagrangian subspaces $L_\ell \cV$ of $(M,\cC)$) but, in addition, the pullback of the corresponding lines $\ell \subset L_\ell \cV$ distinguishes an extra subsuperdistribution $\widetilde\cC^o \subset \widetilde\cC$ of rank $(6|1)$.  Slightly finer is a $(3|1)$-subsuperdistribution $\cD \subset \widetilde\cC^o$ canonically determined by an appropriate tensorial condition, and we also establish that $\cD = \cD_{\bar{0}} \op \cD_{\bar{1}}$ has geometrically distinguished even and odd parts.  The passage from $(M,\cC,[\sfQ])$ to $(\widetilde{M}^o,\widetilde\cC,\cD)$ is akin to ``lifting a geometric structure to a correspondence space'' familiar in twistor theory.
 
\sloppy We can now motivate the connection to super-PDE.  Locally, $(M,\cC) \cong (J^1(\bbC^{0|4},\bbC^{1|0}),\cC)$, and $(\widetilde{M}^o,\widetilde\cC) \cong (J^2(\bbC^{0|4},\bbC^{1|0}),\widetilde\cC)$, with the latter additionally equipped with the superdistribution $\cD=\cD_{\bar 0}\oplus\cD_{\bar 1}$.  The geometric construction of the third order jet space $(J^3(\bbC^{0|4},\bbC^{1|0}),\widecheck\cC)$ consists of the isotropic subspaces of $\widetilde\cC$ of rank $(0|4)$ that are complementary to the vertical superdistribution of $\pi:\widetilde{M}^o\cong\widetilde M_+=LG_+(\cC) \to M$, and we distinguish in there the subsupermanifold $\Sigma$ of those that contain $\cD_{\bar{1}}$.  This $\Sigma$ is a distinguished 3rd order super-PDE, and carrying this out for the flat structure $(M,\cC,[\sfQ])$ yields the remarkable system \eqref{E:F4-3PDE}.

The structure of the paper is as follows. In \S \ref{sec:alg} we recall the basics of $F(4)$,
its parabolic subalgebras and $\mathbb Z$-gradings, give a spinorial description of the odd contact grading of $F(4)$ akin to those used in supergravity theories and finish with some combinatorial results that are crucial for our cohomological computations. Then \S \ref{sec:SCG} is devoted to Spencer cohomology, an important ingredient in the proof that $F(4)$
is the symmetry superalgebra of the two differential equations \eqref{E:F4-2PDE} and \eqref{E:F4-3PDE} of the paper. The main results are  Theorems \ref{thm:235H^1} and \ref{sec:Spencer}, while some straightforward computations are postponed to Appendix \ref{S:Spencer-odd-contact-grading}. A short resum\'e of jet-superspaces, including contact vector fields and generating superfunctions, is provided in \S \ref{S:J1J2} for the reader's convenience.  The last two sections are devoted to deriving \eqref{E:F4-2PDE} and \eqref{E:F4-3PDE}: first we introduce the mixed-contact $F(4)$-supergeometries and consider the corresponding osculation construction using the functor of points in \S\ref{S:mixedct} and then focus on odd-contact $F(4)$-supergeometries and the incidence Lagrange--Grassmann bundle in  \S\ref{S:oddct}. The main results here are Theorem  \ref{T:symbound-mixed} and \ref{thm:areyoulookingforalabel} in \S\ref{S:mixedct} and Theorems \ref{T:symbound}, \ref{T:flatF4ct}, \ref{T:F4-5gr}, \ref{T:3PDE} in \S\ref{S:oddct}, together with the explicit description of the solution superspace of \eqref{E:F4-3PDE}.

\section{Algebraic aspects and parabolic subalgebras of $F(4)$}
\label{sec:alg}
\subsection{Root systems for $F(4)$}

The complex simple Lie superalgebra (shortly, LSA) $\fg = F(4)$ has dimension $(24|16)$, with even and odd parts given by
 \[
 \fg_{\bar 0} = B_3 \oplus A_1, \quad \fg_{\bar 1} = \mathbb{S}\boxtimes \bbC^2 .
 \]
 Here, $\fg_{\bar 0}$ is a direct sum of the complex Lie algebras $B_3 := \fso(7)$ and $A_1 := \fsp(2)$, while 
 $\fg_{\bar 1}$ (as a $\fg_{\bar 0}$-module) is the exterior tensor product of the 8-dimensional spin representation $\mathbb{S}$ of $B_3$ and the standard representation of $A_1$. 
The somewhat unusual notation $\fsp(2)$ will be reserved specifically to the
ideal $A_1$ of $\fg_{\bar 0}$
throughout the whole paper, to avoid confusion with other $\fsl(2)$-subalgebras.
 
 We fix a Cartan subalgebra $\fh$ of $\fg$, which by definition is a Cartan subalgebra for $\fg_{\bar 0}$, so all are conjugate.  Adopting the root conventions of \cite[\S 2.19]{FSS}, we fix functionals $\epsilon_i, \delta \in \fh^*$ with scalar products
 $\langle \epsilon_i, \epsilon_j \rangle = \delta_{ij}$,\,\,
 $\langle \epsilon_i, \delta \rangle = 0$,\,\,
 $\langle \delta, \delta \rangle = -3$.  (This scalar product is induced from the Killing form on $F(4)$, which is non-degenerate.)  The $F(4)$ root system $\Delta = \Delta_{\bar 0} \cup \Delta_{\bar 1} \subset \fh^* \backslash \{ 0 \}$ splits into even and odd roots given by
 \begin{align*}
 \Delta_{\bar{0}}: &\quad \pm \delta,\quad \pm \epsilon_1,\quad \pm \epsilon_2,\quad \pm \epsilon_3,\quad \pm (\epsilon_1 \pm \epsilon_2),\quad \pm (\epsilon_1 \pm \epsilon_3),\quad  \pm (\epsilon_2 \pm \epsilon_3)\\
 \Delta_{\bar{1}}: &\quad \frac{1}{2} (\pm \delta \pm \epsilon_1 \pm \epsilon_2 \pm \epsilon_3)
 \end{align*}
 The Weyl group is generated by reflections $S_\alpha$ w.r.t. even roots $\alpha \in \Delta_{\bar 0}$.  Here, $S_\alpha(\beta) = \beta - \frac{2\langle \beta, \alpha \rangle}{\langle \alpha, \alpha \rangle} \alpha$ is the usual reflection formula for any $\alpha \in \Delta_{\bar 0}$, and $S_\alpha$ preserves both $\Delta_{\bar 0}$ and $\Delta_{\bar 1}$. 
 
 Given a simple root system $\Pi$ and a simple root $\alpha \in \Pi$ that is odd and isotropic, i.e., $\alpha \in \Delta_{\bar 1}$ and $\langle \alpha, \alpha \rangle = 0$, we define (see \cite{LSS, Ser2011}) an {\sl odd reflection} by
 \begin{align*}
 S_\alpha(\beta) = \begin{cases}
 \beta + \alpha, & \langle \beta,\alpha \rangle \neq 0,\\
 \beta, & \langle \beta,\alpha \rangle = 0; \,\, \beta \neq \alpha,\\
 -\alpha, & \beta = \alpha,
 \end{cases}
 \end{align*}
where $\beta\in\Pi$.
 Any odd reflection maps $\Pi$ to another  simple root system that is inequivalent under the action of the Weyl group; moreover all possible inequivalent simple root systems can be generated from successively applying odd reflections.  These are shown in Table \ref{F:F4-SRS}, along with corresponding Dynkin diagrams, Dynkin labels indicating the coefficients of the highest root (w.r.t. the given simple root system), and odd reflections indicated with red dashed lines.  All positive roots are listed in Table \ref{F:F4-PosSys}.
 \begin{footnotesize}
 \begin{table}[h]
 \begin{center}
 \[
 \begin{array}{|c|c|l|} \hline
 \mbox{Label} & \mbox{Dynkin diagram} & \multicolumn{1}{c|}{\mbox{Simple roots}}\\ \hline\hline
 I & \begin{array}{c}
 \begin{tikzpicture}[remember picture]
 \draw (0,0) -- (1,0);
 \draw (1,0.05) -- (2,0.05);
 \draw (1,-0.05) -- (2,-0.05);
 \draw (2,0) -- (3,0);
 \draw (1.6,0.15) -- (1.4,0) -- (1.6,-0.15);
 \node[draw,circle,inner sep=2pt,fill=mygray] (A1) at (0,0) {};
 \node[draw,circle,inner sep=2pt,fill=white] at (1,0) {};
 \node[draw,circle,inner sep=2pt,fill=white] at (2,0) {};
 \node[draw,circle,inner sep=2pt,fill=white] at (3,0) {};
 \node[above] at (0,0.25) {2};
 \node[above] at (1,0.25) {3};
 \node[above] at (2,0.25) {2};
 \node[above] at (3,0.25) {1};
 \node[below] at (0,-0.25) {$\alpha_1$};
 \node[below] at (1,-0.25) {$\alpha_2$};
 \node[below] at (2,-0.25) {$\alpha_3$};
 \node[below] at (3,-0.25) {$\alpha_4$};
 \end{tikzpicture}
 \end{array} & \begin{array}{l}
 \alpha_1 = \frac{1}{2}(\delta - \epsilon_1 - \epsilon_2 - \epsilon_3)\\
 \alpha_2 = \epsilon_3\\
 \alpha_3 = \epsilon_2 - \epsilon_3\\
 \alpha_4 = \epsilon_1 - \epsilon_2
 \end{array} \\ \hline
 II & \begin{array}{c}
 \begin{tikzpicture}[remember picture]
 \draw (0,0) -- (1,0);
 \draw (1,0.05) -- (2,0.05);
 \draw (1,-0.05) -- (2,-0.05);
 \draw (2,0) -- (3,0);
 \draw (1.6,0.15) -- (1.4,0) -- (1.6,-0.15);
 \node[draw,circle,inner sep=2pt,fill=mygray] (B1) at (0,0) {};
 \node[draw,circle,inner sep=2pt,fill=mygray] (B2) at (1,0) {};
 \node[draw,circle,inner sep=2pt,fill=white] at (2,0) {};
 \node[draw,circle,inner sep=2pt,fill=white] at (3,0) {};
 \node[above] at (0,0.25) {2};
 \node[above] at (1,0.25) {3};
 \node[above] at (2,0.25) {2};
 \node[above] at (3,0.25) {1};
 \node[below] at (0,-0.25) {$\alpha_1$};
 \node[below] at (1,-0.25) {$\alpha_2$};
 \node[below] at (2,-0.25) {$\alpha_3$};
 \node[below] at (3,-0.25) {$\alpha_4$};
 \end{tikzpicture}
 \end{array} & 
 \begin{array}{l}
 \alpha_1 = \frac{1}{2}(-\delta + \epsilon_1 + \epsilon_2 + \epsilon_3)\\
 \alpha_2 = \frac{1}{2}(\delta - \epsilon_1 - \epsilon_2 + \epsilon_3)\\
 \alpha_3 = \epsilon_2 - \epsilon_3\\
 \alpha_4 = \epsilon_1 - \epsilon_2
 \end{array}\\ \hline
  III & \begin{array}{c}
 \begin{tikzpicture}[remember picture]
 \draw (0,0.05) -- (1,0.05);
 \draw (0,-0.05) -- (1,-0.05);
 \draw (1,0.05) -- (2,0.05);
 \draw (1,-0.05) -- (2,-0.05);
 \draw (0.4,0.15) -- (0.6,0) -- (0.4,-0.15);
 \draw (1,0) -- (1.5,-1);
 \draw (2,0) -- (1.5,-1);
 \node[draw,circle,inner sep=2pt,fill=mygray] (C2) at (1,0) {};
 \node[draw,circle,inner sep=2pt,fill=mygray] (C3) at (2,0) {};
 \node[draw,circle,inner sep=2pt,fill=white] at (0,0) {};
 \node[draw,circle,inner sep=2pt,fill=white] at (1.5,-1) {};
 \node[above] at (0,0.25) {1};
 \node[above] at (1,0.25) {2};
 \node[above] at (2,0.25) {2};
 \node[above] at (1.5,-0.75) {2};
 \node[below] at (0,-0.25) {$\alpha_1$};
 \node[below] at (0.95,-0.25) {$\alpha_2$};
 \node[below] at (2.05,-0.25) {$\alpha_3$};
 \node[right] at (1.7,-1) {$\alpha_4$};
 \end{tikzpicture}
 \end{array} & 
  \begin{array}{l}
 \alpha_1 = \epsilon_1 - \epsilon_2\\
 \alpha_2 = \frac{1}{2}(\delta - \epsilon_1 + \epsilon_2 - \epsilon_3)\\
 \alpha_3 = \frac{1}{2}(-\delta + \epsilon_1 + \epsilon_2 - \epsilon_3)\\
 \alpha_4 = \epsilon_3
 \end{array}\\ \hline
  IV & \begin{array}{c}
 \begin{tikzpicture}[remember picture]
 \draw (0,-0.5) -- (1,0);
 \draw (0,0.55) -- (1,0.05);
 \draw (0,0.45) -- (1,-0.05);
 \draw (0,0.5) -- (0,-0.5);
 \draw (0.05,0.5) -- (0.05,-0.5);
 \draw (-0.05,0.5) -- (-0.05,-0.5);
 \draw (1,0.05) -- (2,0.05);
 \draw (1,-0.05) -- (2,-0.05);
 \draw (1.6,0.15) -- (1.4,0) -- (1.6,-0.15);
 \node[draw,circle,inner sep=2pt,fill=mygray] (D1) at (0,0.5) {};
 \node[draw,circle,inner sep=2pt,fill=mygray] (D2) at (0,-0.5) {};
 \node[draw,circle,inner sep=2pt,fill=mygray] (D3) at (1,0) {};
 \node[draw,circle,inner sep=2pt,fill=white] at (2,0) {};
 \node[above] at (0,0.7) {1};
 \node[above] at (0.2,-0.3) {2};
 \node[above] at (1,0.25) {3};
 \node[above] at (2,0.25) {2};
 \node[left] at (-0.2,0.5) {$\alpha_1$};
 \node[left] at (-0.2,-0.5) {$\alpha_2$};
 \node[below] at (1,-0.25) {$\alpha_3$};
 \node[below] at (2,-0.25) {$\alpha_4$};
 \end{tikzpicture}
 \end{array} & 
 \begin{array}{l}
 \alpha_1 = \frac{1}{2}(\delta + \epsilon_1 - \epsilon_2 - \epsilon_3)\\
 \alpha_2 = \frac{1}{2}(\delta - \epsilon_1 + \epsilon_2 + \epsilon_3)\\
 \alpha_3 = \frac{1}{2}(-\delta + \epsilon_1 - \epsilon_2 + \epsilon_3)\\
 \alpha_4 = \epsilon_2 - \epsilon_3
 \end{array}\\ \hline
 V & \begin{array}{c}
 \begin{tikzpicture}[remember picture]
 \draw (0,0) -- (1,0);
 \draw (0,0.05) -- (1,0.05);
 \draw (0,-0.05) -- (1,-0.05);
 \draw (0.4,0.15) -- (0.6,0) -- (0.4,-0.15);
 \draw (2,0.05) -- (3,0.05);
 \draw (2,-0.05) -- (3,-0.05);
 \draw (1,0) -- (2,0);
 \draw (2.6,0.15) -- (2.4,0) -- (2.6,-0.15);
 \node[draw,circle,inner sep=2pt,fill=white] at (0,0) {};
 \node[draw,circle,inner sep=2pt,fill=mygray] (E2) at (1,0) {};
 \node[draw,circle,inner sep=2pt,fill=white] at (2,0) {};
 \node[draw,circle,inner sep=2pt,fill=white] at (3,0) {};
 \node[above] at (0,0.25) {1};
 \node[above] at (1,0.25) {2};
 \node[above] at (2,0.25) {3};
 \node[above] at (3,0.25) {2};
 \node[below] at (0,-0.25) {$\alpha_1$};
 \node[below] at (1,-0.25) {$\alpha_2$};
 \node[below] at (2,-0.25) {$\alpha_3$};
 \node[below] at (3,-0.25) {$\alpha_4$};
 \end{tikzpicture}
 \end{array} & \begin{array}{l}
 \alpha_1 = \delta\\
 \alpha_2 = \frac{1}{2}(-\delta + \epsilon_1 - \epsilon_2 - \epsilon_3)\\
 \alpha_3 = \epsilon_3\\
 \alpha_4 = \epsilon_2 - \epsilon_3
 \end{array} \\ \hline
VI & \begin{array}{c}
 \begin{tikzpicture}[remember picture]
 \draw (0,0) -- (1,0);
 \draw (0,0.05) -- (1,0.05);
 \draw (0,-0.05) -- (1,-0.05);
 \draw (0.4,0.15) -- (0.6,0) -- (0.4,-0.15);
 \draw (1,0.05) -- (2,0.05);
 \draw (1,-0.05) -- (2,-0.05);
 \draw (1.6,0.15) -- (1.4,0) -- (1.6,-0.15);
 \draw (2,0) -- (3,0);
 \node[draw,circle,inner sep=2pt,fill=white] at (0,0) {};
 \node[draw,circle,inner sep=2pt,fill=mygray] (F2) at (1,0) {};
 \node[draw,circle,inner sep=2pt,fill=white] at (2,0) {};
 \node[draw,circle,inner sep=2pt,fill=white] at (3,0) {};
 \node[above] at (0,0.25) {2};
 \node[above] at (1,0.25) {4};
 \node[above] at (2,0.25) {3};
 \node[above] at (3,0.25) {2};
 \node[below] at (0,-0.25) {$\alpha_1$};
 \node[below] at (1,-0.25) {$\alpha_2$};
 \node[below] at (2,-0.25) {$\alpha_3$};
 \node[below] at (3,-0.25) {$\alpha_4$};
 \end{tikzpicture}
 \end{array} & \begin{array}{l}
 \alpha_1 = \delta\\
 \alpha_2 = \frac{1}{2}(-\delta - \epsilon_1 + \epsilon_2 + \epsilon_3)\\
 \alpha_3 = \epsilon_1 - \epsilon_2\\
 \alpha_4 = \epsilon_2 - \epsilon_3
 \end{array} \\ \hline
 \end{array}
 \]
 \begin{tikzpicture}[overlay, red, remember picture]
\draw[<->, ultra thick, densely dotted] (A1) to[in=140, out = -140] (B1);
\draw[<->, ultra thick, densely dotted] (B2) to[in=150, out = -20] (C3);
\draw[<->, ultra thick, densely dotted] (C2) to[in=130, out = -130] (D3);
\draw[<->, ultra thick, densely dotted] (D2) to[in=140, out = -110] (E2);
\draw[<->, ultra thick, densely dotted] (D1) to[in=170, out = -150] (F2);
 \end{tikzpicture}
 \end{center}
 \caption{Inequivalent simple root systems for $F(4)$}
 \label{F:F4-SRS}
 \end{table}

\begin{table}[h]
  \[
 \begin{array}{|c|l|l|} \hline
 & \multicolumn{1}{c|}{\Delta_{\bar{0}}^+} & \multicolumn{1}{c|}{\Delta_{\bar{1}}^+}\\ \hline
 I & 
 \begin{array}{l}
\alpha_2,\ \alpha_3,\ \alpha_4,\ \alpha_2+\alpha_3,\ \alpha_3+\alpha_4,\\ 
2\alpha_2+\alpha_3,\ \alpha_2+\alpha_3+\alpha_4,\\ 
2\alpha_2+\alpha_3+\alpha_4,\ 2\alpha_2+2\alpha_3+\alpha_4,\\ 
2\alpha_1+3\alpha_2+2\alpha_3+\alpha_4
 \end{array}  & 
 \begin{array}{l}
\alpha_1,\ \alpha_1+\alpha_2,\ \alpha_1+\alpha_2+\alpha_3,\\
\alpha_1+2\alpha_2+\alpha_3,\ \alpha_1+\alpha_2+\alpha_3+\alpha_4,\\
\alpha_1+2\alpha_2+\alpha_3+\alpha_4,\ \alpha_1+2\alpha_2+2\alpha_3+\alpha_4,\\
\alpha_1+3\alpha_2+2\alpha_3+\alpha_4
 \end{array}  \\ \hline
 
 II & 
 \begin{array}{l}
\alpha_3,\ \alpha_4,\ \alpha_1+\alpha_2,\ \alpha_3+\alpha_4,\ \alpha_1+\alpha_2+\alpha_3,\\
2\alpha_1+2\alpha_2+\alpha_3,\ \alpha_1+\alpha_2+\alpha_3+\alpha_4,\\
2\alpha_1+2\alpha_2+\alpha_3+\alpha_4,\ 2\alpha_1+2\alpha_2+2\alpha_3+\alpha_4,\\
\alpha_1+3\alpha_2+2\alpha_3+\alpha_4
 \end{array} & 
 \begin{array}{l}
\alpha_1,\ \alpha_2,\ \alpha_2+\alpha_3,\\
\alpha_2+\alpha_3+\alpha_4,\ \alpha_1+2\alpha_2+\alpha_3,\\
\alpha_1+2\alpha_2+\alpha_3+\alpha_4,\ \alpha_1+2\alpha_2+2\alpha_3+\alpha_4,\\
2\alpha_1+3\alpha_2+2\alpha_3+\alpha_4
 \end{array} \\ \hline
 
 III & 
 \begin{array}{l}
\alpha_1,\ \alpha_4,\ \alpha_2+\alpha_3,\ \alpha_1+\alpha_2+\alpha_3,\\
\alpha_2+\alpha_3+\alpha_4,\ \alpha_1+2\alpha_2+\alpha_4,\ \alpha_2+\alpha_3+2\alpha_4,\\
\alpha_1+\alpha_2+\alpha_3+\alpha_4,\ \alpha_1+\alpha_2+\alpha_3+2\alpha_4,\\
\alpha_1+2\alpha_2+2\alpha_3+2\alpha_4
 \end{array} & 
 \begin{array}{l}
\alpha_2,\ \alpha_3,\\ 
\alpha_1+\alpha_2,\ \alpha_2+\alpha_4,\ \alpha_3+\alpha_4,\\
\alpha_1+\alpha_2+\alpha_4,\ \alpha_1+2\alpha_2+\alpha_3+\alpha_4,\\
\alpha_1+2\alpha_2+\alpha_3+2\alpha_4 
 \end{array} \\ \hline

 IV & 
 \begin{array}{l}
\alpha_4,\ \alpha_1+\alpha_2,\ \alpha_1+\alpha_3,\ \alpha_2+\alpha_4,\\
\alpha_1+\alpha_3+\alpha_4,\ \alpha_2+\alpha_3+\alpha_4,\ 2\alpha_2+2\alpha_3+\alpha_4,\\
\alpha_1+\alpha_2+2\alpha_3+\alpha_4,\ \alpha_1+2\alpha_2+3\alpha_3+\alpha_4,\\
\alpha_1+2\alpha_2+3\alpha_3+2\alpha_4
 \end{array} & 
 \begin{array}{l}
\alpha_1,\ \alpha_2,\ \alpha_3,\\
\alpha_3+\alpha_4,\ \alpha_1+\alpha_2+\alpha_3,\\
\alpha_2+2\alpha_3+\alpha_4,\ \alpha_1+\alpha_2+\alpha_3+\alpha_4,\\
\alpha_1+2\alpha_2+2\alpha_3+\alpha_4 
 \end{array} \\ \hline
 
 V & \begin{array}{l}
\alpha_1,\ \alpha_3,\ \alpha_4,\ \alpha_3+\alpha_4,\ 2\alpha_3+\alpha_4,\\
\alpha_1+2\alpha_2+\alpha_3,\ \alpha_1+2\alpha_2+\alpha_3+\alpha_4,\\
\alpha_1+2\alpha_2+2\alpha_3+\alpha_4,\ \alpha_1+2\alpha_2+3\alpha_3+\alpha_4,\\
\alpha_1+2\alpha_2+3\alpha_3+2\alpha_4
 \end{array} & 
 \begin{array}{l}
\alpha_2,\ \alpha_1+\alpha_2,\ \alpha_2+\alpha_3,\\
\alpha_1+\alpha_2+\alpha_3,\ \alpha_2+\alpha_3+\alpha_4,\\
\alpha_2+2\alpha_3+\alpha_4,\ \alpha_1+\alpha_2+\alpha_3+\alpha_4,\\
\alpha_1+\alpha_2+2\alpha_3+\alpha_4
 \end{array} \\ \hline
 
 VI & 
 \begin{array}{l}
\alpha_1,\ \alpha_3,\ \alpha_4,\ \alpha_3+\alpha_4,\ \alpha_1+2\alpha_2+\alpha_3,\\
\alpha_1+2\alpha_2+\alpha_3+\alpha_4,\ \alpha_1+2\alpha_2+2\alpha_3+\alpha_4,\\
2\alpha_1+4\alpha_2+2\alpha_3+\alpha_4,\ 2\alpha_1+4\alpha_2+3\alpha_3+\alpha_4,\\
2\alpha_1+4\alpha_2+3\alpha_3+2\alpha_4
 \end{array} & 
 \begin{array}{l}
\alpha_2,\ \alpha_1+\alpha_2,\ \alpha_2+\alpha_3,\\
\alpha_1+\alpha_2+\alpha_3,\ \alpha_2+\alpha_3+\alpha_4,\\
\alpha_1+\alpha_2+\alpha_3+\alpha_4,\ \alpha_1+3\alpha_2+2\alpha_3+\alpha_4,\\
2\alpha_1+3\alpha_2+2\alpha_3+\alpha_4
 \end{array} \\ \hline
 
 \end{array}
 \]
 \caption{Positive roots for $F(4)$ w.r.t. simple root systems}
 \label{F:F4-PosSys}
 \end{table}
 \end{footnotesize}
  
\comm{
  \[
 \begin{array}{|c|c|c|} \hline
 & \Delta_{\bar{0}}^+ & \Delta_{\bar{1}}^+\\ \hline
 
 I & 
 \begin{array}{r@{}r@{}r@{}r}
 &&&\alpha_4\\ 
 &&\alpha_3\\
 &&\alpha_3 &+ \alpha_4\\
 &\alpha_2 \\ 
 &\alpha_2 &+ \alpha_3\\
 &2\alpha_2 &+ \alpha_3\\
 &\alpha_2 &+ \alpha_3 &+ \alpha_4\\
 &2\alpha_2 &+ \alpha_3 &+ \alpha_4\\
 &2\alpha_2 &+ 2\alpha_3 &+ \alpha_4\\
 2\alpha_1 &+ 3\alpha_2 &+ 2\alpha_3 &+ \alpha_4
 \end{array}  & 
 \begin{array}{r@{}r@{}r@{}r}
 \alpha_1\\ 
 \alpha_1 &+\alpha_2\\
 \alpha_1 &+ \alpha_2 &+ \alpha_3\\
 \alpha_1 &+ 2\alpha_2 &+ \alpha_3\\
 \alpha_1 &+ \alpha_2 &+ \alpha_3 &+ \alpha_4\\
 \alpha_1 &+ 2\alpha_2 &+ \alpha_3 &+ \alpha_4\\
 \alpha_1 &+ 2\alpha_2 &+ 2\alpha_3 &+ \alpha_4\\
 \alpha_1 &+ 3\alpha_2 &+ 2\alpha_3 &+ \alpha_4
 \end{array} \\ \hline
 
 II & 
 \begin{array}{l}
 \end{array} & 
 \begin{array}{l}
 \end{array} \\ \hline
 
 III & 
 \begin{array}{l}
 \end{array} & 
 \begin{array}{l}
 \end{array} \\ \hline

 IV & 
 \begin{array}{l}
 \end{array} & 
 \begin{array}{l}
 \end{array} \\ \hline
 
 V & \begin{array}{l}
 \end{array} & 
 \begin{array}{l}
 \end{array} \\ \hline
 
 VI & 
 \begin{array}{r@{}r@{}r@{}r}
 &&&\alpha_4\\ 
 &&\alpha_3\\
 &&\alpha_3 &+ \alpha_4\\
 \alpha_1 \\ 
 \alpha_1 &+ 2\alpha_2 &+ \alpha_3\\
 \alpha_1 &+ 2\alpha_2 &+ \alpha_3 &+ \alpha_4\\
 \alpha_1 &+ 2\alpha_2 &+ 2\alpha_3 &+ \alpha_4\\
 2\alpha_1 &+ 4\alpha_2 &+ 2\alpha_3 &+ \alpha_4\\ 
 2\alpha_1 &+ 4\alpha_2 &+ 3\alpha_3 &+ \alpha_4\\
 2\alpha_1 &+ 4\alpha_2 &+ 3\alpha_3 &+ 2\alpha_4
 \end{array} & 
 \begin{array}{r@{}r@{}r@{}r}
 &\alpha_2 \\
 &\alpha_2 &+ \alpha_3 \\
 &\alpha_2 &+ \alpha_3 &+ \alpha_4 \\
 \alpha_1 &+ \alpha_2 \\
 \alpha_1 &+ \alpha_2 &+ \alpha_3\\
 \alpha_1 &+ \alpha_2 &+ \alpha_3 &+ \alpha_4\\
 \alpha_1 &+ 3\alpha_2 &+ 2\alpha_3 &+ \alpha_4\\
 2\alpha_1 &+ 3\alpha_2 &+ 2\alpha_3 &+ \alpha_4
 \end{array} \\ \hline
 
 \end{array}
 \]
 }

 \subsection{Parabolic subalgebras}
 \label{sec:parabolicsubalgebras}
 Given a simple root system $\Pi=\{ \alpha_i \}_{i=1}^4 \subset \fh^*$, we let $\{ \sfZ_i \}_{i=1}^4 \subset \fh$ be the dual basis. We specify a subset $I_\fp \subset \{ 1, 2, 3, 4 \}$ (corresponding to a subset of the set of simple roots), and define a {\sl grading element} by $\sfZ = \sum_{i \in I_\fp} \sfZ_i$. This induces a $\mathbb Z$-grading 
 \begin{align}
\label{eq:Zgrading}
 \fg = \bigoplus_{k \in \bbZ} \fg_k, \quad
 \fg_k = \{ x \in \fg : [\sfZ,x] = k x \},
 \end{align}
of the LSA $\fg$, i.e., $[\fg_k, \fg_\ell] \subset \fg_{k+\ell}$, for all $k,\ell \in \bbZ$.  In particular, $\fg_0$ is a sub-LSA, with $\sfZ$ central in $\fg_0$, and each $\fg_k$ is a $\fg_0$-module.  The non-degenerate Killing form on $F(4)$ induces a $\fg_0$-module isomorphism $\fg_{-k} \cong \fg_k^*$.  Let $\mu = \max\{ k\in\mathbb Z_+ : \fg_{-k} \neq 0 \}$ be the {\sl depth} of the grading, which agrees with the {\sl height} of the grading, i.e., $\mu = \max\{ k\in\mathbb Z_+ : \fg_k \neq 0 \}$.
 The {\sl parabolic} sub-LSA corresponding to the $\mathbb Z$-grading \eqref{eq:Zgrading} is defined by $\fp = \fg_{\geq 0} = \bigoplus_{k \geq 0} \fg_k$, and the {\sl symbol algebra} by $\fm= \bigoplus_{k< 0} \fg_k$. The symbol algebra is bracket-generated by $\fg_{-1}$ (namely, $\fg_k=[\fg_{-1},\fg_{k+1}]$ for all $k\leq -2$) and 
the $\mathbb Z$-grading \eqref{eq:Zgrading} is {\em transitive} in the sense of N.\ Tanaka \cite{Tan}, i.e., if $[X,\fg_{-1}]=0$ for $X\in\fp$, then $X=0$. Letting $\der_{\gr}(\fm)$ be the LSA of the zero degree derivations of $\fm$, we then have $\fg_0 \subset \der_{\gr}(\fm)$ and, by the bracket-generating property, also $\der_{\gr}(\fm) \subset\fgl(\fg_{-1})$.
 
 In Table \ref{F:max-parabolic}, we give details for all (inequivalent) gradings associated to {\sl maximal} parabolic subalgebras $\fp \subset \fg$, i.e., those with $|I_\fp| = 1$. In this case $\fg_0=\mathbb{C}Z\oplus\fg_0^{\ss}$, with $\fg_0^{\ss}$ semisimple ideal. In the first column, the parabolic is ornamented by a superscript indicating the simple root system from Table \ref{F:F4-SRS}, and a subscript $k\in \{ 1, 2, 3, 4 \}$ for which $I_\fp = \{ k \}$. 

\begin{footnotesize}
\begin{table}[h]
 \[
 \begin{array}{|l|c|l|l|l|} \hline
\text{Parabolic} & \mbox{depth } \mu & \dim\fg_0, \,\dim \fg_{-1}, \,...,\, \dim\fg_{-\mu} & \fg_0^{\ss} & \fg_{-1} \mbox{ as $\fg_0$-module} \\ \hline
\fp^{\rm I}_1 & 2 & (22|0,0|8,1|0) & \mathfrak{spin}(7) & \CC^{0|8} 
  \vphantom{\frac{A^A}{A^A}}\\
\fp^{\rm I}_2=\fp^{\rm II}_2 & 3 & (10|2,3|3,3|3,1|1) & \fsl(3)\oplus\fsl(1|1) & 
\CC^{3|0}\boxtimes\CC^{1|1}
  \vphantom{\frac{A^A}{A^A}}\\
\fp^{\rm I}_3=\fp^{\rm II}_3=\fp^{\rm III}_2 & 2 & (8|4,6|4,2|2) & \fsl(2)\oplus\fsl(1|2) & 
\CC^{2|0}\boxtimes\bigwedge^2(\CC^{1|2})
  \vphantom{\frac{A^A}{A^A}}\\
\fp^{\rm I}_4=\fp^{\rm II}_4=\fp^{\rm III}_1=\fp^{\rm IV}_1=\fp^{\rm V}_1 & 1 & (12|8,6|4) &
\mathfrak{osp}(2|4) & \CC^{6|4}
  \vphantom{\frac{A^A}{A^A}}\\ 
\fp^{\rm II}_1=\fp^{\rm III}_4=\fp^{\rm IV}_2=\fp^{\rm VI}_1 & 2 & (10|6,4|4,3|1) & \fsl(1|3) & 
\bigodot^3(\mathbb C^{1|3})
  \vphantom{\frac{A^A}{A^A}}\\
\fp^{\rm III}_3=\fp^{\rm IV}_4=\fp^{\rm V}_4=\fp^{\rm VI}_4 & 2 & (10|8,6|4,1|0) & 
\mathfrak{osp}(4|2;\alpha)\;\text{with}\;\alpha=2 & \CC^{6|4}
  \vphantom{\frac{A^A}{A^A}}\\ 
\fp^{\rm IV}_3=\fp^{\rm V}_3=\fp^{\rm VI}_3 & 3 & (8|4,4|4,2|2,2|0) & 
\fsl(2)\oplus\fsl(2|1) & \CC^{2|0}\boxtimes \bigwedge^2(\CC^{2|1})
  \vphantom{\frac{A^A}{A^A}}\\
\fp^{\rm V}_2 & 2 & (14|0,0|8,5|0) & \fsl(2)\oplus\mathfrak{spin}(5) & \CC^{2|0}\boxtimes\CC^{0|4} 
  \vphantom{\frac{A^A}{A^A}}\\
\fp^{\rm VI}_2 & 4 & (12|0,0|6,3|0,0|2,3|0) & \fsl(2)\oplus\fsl(3) & 
\CC^{2|0}\boxtimes\CC^{0|3}
  \vphantom{\frac{A^A}{A^A}}\\
\hline
 \end{array}
 \] 
 \caption{Gradings of $F(4)$ corresponding to maximal parabolics}
 \label{F:max-parabolic}
 \end{table}
 \end{footnotesize}

We make a few remarks concerning the adjoint action of $\fg_0^{\ss}$ on $\fg_{-1}$, which is always irreducible:
 \begin{itemize}
\item The representations $\CC^{0|8}$ and $\CC^{0|4}$ in the description of $\fg_{-1}$ in the cases $\fp^I_{1}$ and $\fp^{V}_2$ are the spin representations in dimension $7$ and $5$ (thought with odd parity). The case $\fp^{V}_2$ is relevant for $5$-dimensional supergravity (see, e.g., \cite{AS} and references therein);
 \item For the only $\mu=1$ case, $\fg_{-1}$ is the unique (up to parity change) irreducible singly atypical representation of $\mathfrak{osp}(2|4)$ of dimension $(6|4)$ \cite[Table 3.64]{FSS}. 
 \item Finally, we remark that in the case $\fp^{VI}_4$ we have $\fg_0^{\ss}\cong
\mathfrak{osp}(4|2;\alpha)$ with $\alpha=2$, whose smallest non-trivial irreducible representation is  of dimension $(6|4)$. This can be seen using the Cartan matrix
\begin{equation}
\label{eq:CartanVI}
\begin{pmatrix}
2 & 3 & 0 &0 \\
-1 &0 & -1 &0\\
0 &-2 &2 &-1\\
0 &0 &-1 & 2
\end{pmatrix}
\end{equation}
corresponding to Dynkin diagram $VI$ and removing the last column and row. (We stress that in our conventions the Cartan matrix has entries equal to $\alpha_i(h_{\alpha_j})$, where $h_{\alpha_j}$ are coroots of the simple root system, $i$ is the row index and $j$ the column index.)
The resulting $3 \times 3$ matrix is, upon rescaling of the column corresponding to the odd isotropic root, the Cartan matrix 
$$
\begin{pmatrix}
2 & 1 & 0  \\
-1 &0 & -1 \\
0 & \alpha &2
\end{pmatrix}
$$
of $\mathfrak{osp}(4|2;\alpha)$ for $\alpha=-\tfrac{2}{3}$. Since all $\alpha$'s on the same orbit under the action of the permutation group $\mathfrak{S}_3$
generated by $\alpha\to \alpha^{-1}$ and $\alpha\to -(1+\alpha)$ are equivalent, the first claim follows. 
(We recall for the reader's convenience that the undeformed $\mathfrak{osp}(4|2)$ corresponds instead to $\alpha=1,-2,-\tfrac12$.)
According to \cite[pg. 917]{MR787332}, the unique smallest 
non-trivial irreducible representation of $\mathfrak{osp}(4|2;\alpha)$ for $\alpha=2$ is $10$-dimensional, so the adjoint action of $\fg_0^{\ss}$ on $\fg_{-1}$ has to be isomorphic to this representation. In our setting, we have $(\fg_{-1})_{\bar 0}\cong \CC\boxtimes\odot^2\CC^2\boxtimes\CC^2$
and $(\fg_{-1})_{\bar 1}\cong \CC^2\boxtimes \CC^2\boxtimes\CC$ 
w.r.t. $\mathfrak{osp}(4|2;\alpha)_{\bar 0}\cong\fsp(2)\oplus\fsl(2)\oplus\fsl(2)$, which is also in agreement with \cite[pg. 917]{MR787332}.
 \end{itemize}

The remainder of our article will focus on {\sl contact} gradings, i.e., those for which $\mu = 2$ and $\fm$ is a supersymmetric analogue of a classical Heisenberg algebra.  In particular, these have $\dim(\fg_{-2}) = (1|0)$, and there are precisely two such cases above:
 \begin{itemize}
 \item[$(i)$] the grading associated to $\fp^{\rm VI}_4$, which we refer to as ``mixed contact grading''; 
 \item[$(ii)$] the grading associated to $\fp^{\rm I}_1$, which we refer to as ``odd contact grading''.
 \end{itemize}
 The organization of the (positive) roots are given in Tables \ref{F:mixed-roots} and \ref{F:odd-roots} respectively.
 
 \begin{table}[h]
 \[
 \begin{array}{|c|l|l|} \hline
 k & \multicolumn{1}{c|}{\Delta_{\bar 0}^+(\fg_k)} &  \multicolumn{1}{c|}{\Delta_{\bar 1}^+(\fg_k)} \\ \hline
 0 & 
 \begin{array}{r@{}r@{}r@{}r}
 \alpha_1\\
 && \alpha_3\\
 \alpha_1 &+ 2\alpha_2 &+ \alpha_3\\
 \end{array} & 
 
 \begin{array}{r@{}r@{}r@{}r}
 & \alpha_2 \\
 & \alpha_2 &+ \alpha_3 \\
 \alpha_1 &+ \alpha_2 \\
 \phantom{\,\,\,}\alpha_1 &+ \alpha_2 &+ \alpha_3
 \end{array}
 \\ \hline
 1 & 
 \begin{array}{r@{}r@{}r@{}r}
 &&&\alpha_4\\
 &&\alpha_3 &+ \alpha_4\\
 \alpha_1 &+ 2\alpha_2 &+ \alpha_3 &+ \alpha_4\\
 \alpha_1 &+ 2\alpha_2 &+ 2\alpha_3 &+ \alpha_4\\
 2\alpha_1 &+ 4\alpha_2 &+ 2\alpha_3 &+ \alpha_4\\ 
 2\alpha_1 &+ 4\alpha_2 &+ 3\alpha_3 &+ \alpha_4
 \end{array} &
 
 \begin{array}{r@{}r@{}r@{}r}
 &\alpha_2 &+ \alpha_3 &+ \alpha_4 \\
 \alpha_1 &+ \alpha_2 &+ \alpha_3 &+ \alpha_4\\
 \alpha_1 &+ 3\alpha_2 &+ 2\alpha_3 &+ \alpha_4\\
 2\alpha_1 &+ 3\alpha_2 &+ 2\alpha_3 &+ \alpha_4
 \end{array}
 \\ \hline
 2 & 
 \begin{array}{r@{}r@{}r@{}r}
 2\alpha_1 &+ 4\alpha_2 &+ 3\alpha_3 &+ 2\alpha_4
 \end{array} & \\ \hline
 \end{array}
 \] 
 \caption{Mixed contact grading (associated to the parabolic $\fp^{\rm VI}_4$)}
 \label{F:mixed-roots}
 \end{table}
 
 \begin{table}[h]
  \[
 \begin{array}{|c|l|l|} \hline
 k & \multicolumn{1}{c|}{\Delta_{\bar 0}^+(\fg_k)} &  \multicolumn{1}{c|}{\Delta_{\bar 1}^+(\fg_k)} \\ \hline
 0 & 
 \begin{array}{l}
\alpha_2,\ \alpha_3,\ \alpha_4,\ \alpha_2+\alpha_3,\ \alpha_3+\alpha_4,\\ 
2\alpha_2+\alpha_3,\ \alpha_2+\alpha_3+\alpha_4,\\ 
2\alpha_2+\alpha_3+\alpha_4,\ 2\alpha_2+2\alpha_3+\alpha_4,\\ 
 \end{array} & 
  \\ \hline
 1 & &
 \begin{array}{l}
\alpha_1,\ \alpha_1+\alpha_2,\ \alpha_1+\alpha_2+\alpha_3,\\
\alpha_1+2\alpha_2+\alpha_3,\ \alpha_1+\alpha_2+\alpha_3+\alpha_4,\\
\alpha_1+2\alpha_2+\alpha_3+\alpha_4,\ \alpha_1+2\alpha_2+2\alpha_3+\alpha_4,\\
\alpha_1+3\alpha_2+2\alpha_3+\alpha_4
 \end{array}
 \\ \hline
 2 & 
 \begin{array}{l}
 2\alpha_1+3\alpha_2+2\alpha_3+\alpha_4
 \end{array}
 & \\ \hline
 \end{array}
 \]
 \caption{Odd contact grading (associated to the parabolic $\fp^{\rm I}_1$)}
 \label{F:odd-roots}
 \end{table}
We note that the odd contact grading of $\fg=F(4)$ is consistent with the parity, in the sense that $\fg_{\bar 0}=\fg_{-2}\oplus\fg_0\oplus\fg_{2}$ and $\fg_{\bar 1}=\fg_{-1}\oplus\fg_{1}$.

In \S\ref{sec:SCG} we will be interested in computing some cohomology groups naturally associated to the mixed and odd contact gradings of $\fg=F(4)$. Since the classical Bott--Borel--Weil theorem does not hold in general for Lie superalgebras, the description of $\fg$ via roots can sometimes be ineffective and one has to resort to more explicit presentations. This is the case for our odd contact grading, for which we provide a spinorial presentation (akin to those used in supergravity theories) in the next subsection.

 \subsection{A spinorial description of the odd contact grading}
 \label{eq:spinorial-description}

Let $(\bbV,g)$ be  a  $7$-dimensional complex vector space $\bbV$ with a non-degenerate symmetric bilinear form $g$ and $(\be_\mu)_{\mu=1,\dots, 7}$ an orthonormal basis of $\bbV$.
We identify $\bbV$ with $\bbV^*$ using $g$, in particular, $\Lambda^2 \bbV\cong\Lambda^2 \bbV^*\cong\fso(\bbV)$ via $v \wedge w \mapsto \big(u\mapsto g(v,u)w - g(w,u) v\big)$, for all $v,w,u\in \mathbb V$.
We let $\mathbb S\cong \mathbb C^8$ be an irreducible module
of the Clifford algebra $\Cl(\bbV)\cong 2\mathbb C(8)$ (there are two such modules up to isomorphism, 
we choose the one for which the volume $\vol\in\Cl(\bbV)$ acts as minus the identity on $\mathbb S$) and
let $\sigma:\fso(\bbV)\to\mathfrak{gl}(\mathbb S)$ be the spinor representation of $\fso(\bbV)$. 
There is an $\fso(\bbV)$-invariant non-degenerate symmetric bilinear form
$\langle -,-\rangle$ on $\mathbb S$ that satisfies
$\langle v\cdot s,t\rangle=-\langle s,v\cdot t\rangle$
for all $s,t\in\mathbb S$, $v\in \bbV$, where $\cdot$ is the Clifford action (see, e.g., \cite{AC}).

The Clifford algebra is generated by the image of the map $\bbV \to \Cl(\bbV)$ that sends $\be_\mu$ to
$\Gamma_\mu$, where $\Gamma_\mu \Gamma_\nu + \Gamma_\nu \Gamma_\mu = - 2 g_{\mu\nu}$. We denote by $\Gamma_{\mu_1\cdots\mu_p}$ the totally
antisymmetric product
\begin{equation*}
  \Gamma_{\mu_1\cdots\mu_p} 
	:= \tfrac1{p!} \sum_{\rho} (-1)^\rho
  \Gamma_{\mu_{\rho(1)}} \cdots \Gamma_{\mu_{\rho(p)}}~,
\end{equation*}
where $(-1)^\rho$ is the sign of a permutation $\rho$ of $\{1,\dots,p\}$. The explicit isomorphism of vector spaces $\Lambda^\bullet \bbV\cong\Cl(\bbV)$ is built out
of the maps $\Lambda^p \bbV \to \Cl(\bbV)$ that send $\be_{\mu_1} \wedge \dots \wedge \be_{\mu_p} \mapsto
  \Gamma_{\mu_1\dots\mu_p}$
and then extending linearly. We recall that $\sigma(A):\mathbb S\to \mathbb S$ is half the Clifford action of $A\in\fso(\bbV)$ as a bivector, in other words $\sigma(\be_\mu\wedge\be_\nu)=\tfrac12\Gamma_{\mu\nu}$.

We have natural isomorphisms of $\fso(\bbV)$-modules
\begin{equation*}
\label{eq:cliffordmultivector}
\odot^2\mathbb S\cong \Lambda^0 \bbV\oplus\Lambda^3 \bbV\qquad\text{and}\qquad\Lambda^2 \mathbb S\cong\Lambda^1 \bbV\oplus\Lambda^2 \bbV\;,
\end{equation*}
whence $\End(\mathbb S)\cong\mathbb S\otimes \mathbb S\cong \bigoplus_{p=0}^{3}\Lambda^p \bbV$. We may define multivectors 
$\omega^{(p)}=\omega^{(p)}(s,t)\in\Lambda^ p\bbV$ for all $s,t\in\mathbb S$ accordingly:
\begin{equation*}
\omega^{(p)}(s,t)=\tfrac{1}{p!}(\overline s \Gamma^{\mu_1\cdots\mu_p} t)\Gamma_{\mu_1\cdots\mu_p}\in\Lambda^p \bbV\,,
\end{equation*}
where the pairing $\langle s,t\rangle$ has been denoted simply  by $\overline s t$, indices are raised/lowered using $g_{\mu\nu}$, and Einstein's summation convention on repeated indices is in force. Clearly 
\begin{equation} \label{E:SymProp}
\begin{aligned}
\omega^{(p)}(s,t)&=(-1)^{p(p+1)/2}\omega^{(p)}(t,s)
\end{aligned}
\end{equation}
for all $s,t\in\mathbb S$ and $0\leq p\leq 7$. We will make extensive use of the well-known {\it Fierz Identity}:
\begin{proposition}
For all $s,t\in\mathbb S$, we have
\begin{equation*}
\label{eq:Fierz-Identity}
\begin{aligned}
s\overline t&=\tfrac18\sum_{p=0}^3\omega^{(p)}(s,t)
\;\\
&=\tfrac18\Big(\overline s t+ (\overline s\Gamma^\mu t)\Gamma_\mu+\tfrac12(\overline s\Gamma^{\mu\nu}t)\Gamma_{\mu\nu}+\tfrac16(\overline s\Gamma^{\mu\nu\rho}t)\Gamma_{\mu\nu\rho}\Big)\;,
\end{aligned}
\end{equation*}
as endomorphisms of $\mathbb S$.
\end{proposition}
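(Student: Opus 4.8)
The plan is to exploit the decomposition $\End(\mathbb S)\cong\bigoplus_{p=0}^3\Lambda^p\bbV$ recalled above, which says precisely that the antisymmetrized products $\{\Gamma_{\mu_1\cdots\mu_p}\}_{0\le p\le 3}$, with strictly ordered indices $\mu_1<\cdots<\mu_p$, form a basis of $\End(\mathbb S)$. Dimensionally this is the identity $8^2=\sum_{p=0}^3\binom 7p=64$, and linear independence is forced by irreducibility of $\mathbb S$ together with the normalization $\vol=-\id$, which identifies the degrees $p$ and $7-p$ and hence makes the listed elements span. Accordingly I would write the rank-one endomorphism $s\overline t\colon u\mapsto (\overline t u)\,s$ as $s\overline t=\sum_{p=0}^3\tfrac1{p!}\,c^{(p)}_{\mu_1\cdots\mu_p}\,\Gamma^{\mu_1\cdots\mu_p}$ for unknown totally antisymmetric coefficients $c^{(p)}$, and determine these by the trace form on $\End(\mathbb S)$.

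The two computations that drive the proof are: first, the rank-one trace
\[
\operatorname{tr}\!\big(\Gamma^{\mu_1\cdots\mu_p}\,(s\overline t)\big)=\overline t\,\Gamma^{\mu_1\cdots\mu_p}s,
\]
which is immediate since $\Gamma^{\mu_1\cdots\mu_p}\circ(s\overline t)$ is the rank-one map $u\mapsto(\overline t u)\,\Gamma^{\mu_1\cdots\mu_p}s$; and second, the orthogonality relations
\[
\operatorname{tr}\!\big(\Gamma_{\mu_1\cdots\mu_p}\,\Gamma^{\nu_1\cdots\nu_q}\big)=8\,(-1)^{p(p+1)/2}\,p!\,\delta_{pq}\,\delta^{\nu_1\cdots\nu_p}_{\mu_1\cdots\mu_p}.
\]
For the latter I would expand the Clifford product $\Gamma_{\mu_1\cdots\mu_p}\Gamma^{\nu_1\cdots\nu_q}$ into a sum of antisymmetrized products $\Gamma^{\rho_1\cdots\rho_r}$ with $r$ running over $|p-q|,|p-q|+2,\dots,p+q$; since $p,q\le 3$ we have $p+q\le 6<7$, so no volume element appears and the only term of degree $0$ --- the only one with nonzero trace, as every such product with $1\le r\le 6$ is traceless --- occurs exactly when $p=q$. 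Tracking the sign through $\Gamma_\mu^2=-g_{\mu\mu}$ produces the factor $(-1)^{p(p+1)/2}$, which I would check on the low-degree cases $p=0,1$ and then propagate.

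Pairing $s\overline t$ against $\Gamma_{\mu_1\cdots\mu_p}$ and combining the two displays gives $\overline t\,\Gamma_{\mu_1\cdots\mu_p}s=8\,(-1)^{p(p+1)/2}c^{(p)}_{\mu_1\cdots\mu_p}$, whence $c^{(p)}_{\mu_1\cdots\mu_p}=\tfrac18(-1)^{p(p+1)/2}\,\overline t\,\Gamma_{\mu_1\cdots\mu_p}s$. Here I would invoke the symmetry \eqref{E:SymProp}, equivalently the transpose identity $\Gamma_{\mu_1\cdots\mu_p}^{\top}=(-1)^{p(p+1)/2}\Gamma_{\mu_1\cdots\mu_p}$ forced by $\langle v\cdot s,t\rangle=-\langle s,v\cdot t\rangle$, to rewrite $(-1)^{p(p+1)/2}\,\overline t\,\Gamma_{\mu_1\cdots\mu_p}s=\overline s\,\Gamma_{\mu_1\cdots\mu_p}t$. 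Substituting back collapses the expansion to $s\overline t=\tfrac18\sum_{p=0}^3\tfrac1{p!}(\overline s\,\Gamma^{\mu_1\cdots\mu_p}t)\,\Gamma_{\mu_1\cdots\mu_p}=\tfrac18\sum_{p=0}^3\omega^{(p)}(s,t)$, as claimed. The main obstacle is purely one of careful sign- and normalization-bookkeeping in the orthogonality relations; the conceptual content --- that $\{\Gamma_{\mu_1\cdots\mu_p}\}_{p\le3}$ is an orthogonal basis of $\End(\mathbb S)$ for the trace form, so that any endomorphism, and in particular $s\overline t$, is reconstructed from its bilinear covariants --- is what makes the Fierz identity essentially a change-of-basis formula.
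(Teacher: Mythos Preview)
Your approach is correct and is precisely the standard argument for the Fierz identity: expand in the orthogonal basis $\{\Gamma_{\mu_1\cdots\mu_p}\}_{0\le p\le 3}$ of $\End(\bbS)$ with respect to the trace form, and read off the coefficients. The paper does not supply a proof of this proposition---it is introduced as ``the well-known Fierz Identity'' and used as input for the subsequent lemmas---so there is no alternative argument to compare against. One small caution on your intermediate orthogonality relation: depending on the normalization of the generalized Kronecker delta, the explicit $p!$ you display may be absorbed elsewhere (for distinct fixed indices one has simply $\operatorname{tr}(\Gamma_{\mu_1\cdots\mu_p}^2)=8(-1)^{p(p+1)/2}$); but as you note, this is pure bookkeeping and the extraction of $c^{(p)}$ and the final identity come out correctly once the conventions are fixed.
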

Another useful combinatorial property is
$\Gamma_{\mu\nu}\Gamma_{[p]}\Gamma^{\mu\nu}=\big(7-(7-2p)^2\big)\Gamma_{[p]}$ 
for all $\Gamma_{[p]}\in\Lambda^p\bbV$.
A direct consequence is the following.
\begin{lemma}
\label{cor:Jacobi-odd-odd-odd}
The identity
$(\overline t t)s-(\overline t s)t=\tfrac13\omega^{(2)}(s,t)\cdot t$
holds for all $s,t\in\mathbb S$.
\end{lemma}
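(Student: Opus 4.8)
The plan is to evaluate both sides on the spinor $t$ and reduce everything to the Fierz identity together with the double-contraction rule $\Gamma_{\mu\nu}\Gamma_{[p]}\Gamma^{\mu\nu}=(7-(7-2p)^2)\Gamma_{[p]}$ stated just above. Writing $w_p := \omega^{(p)}(s,t)\cdot t$ for $p=0,1,2,3$, I first note that the two terms on the left are the natural ``Fierz evaluations'' of rank-one endomorphisms: since $\overline t s=\overline s t$ is a scalar, $(\overline t s)\,t = \omega^{(0)}(s,t)\cdot t = w_0$, while $(\overline t t)\,s=(s\overline t)(t)$. Applying the Fierz identity to $s\overline t$ and evaluating at $t$ gives $(\overline t t)\,s=\tfrac18(w_0+w_1+w_2+w_3)$, so the left-hand side becomes $\tfrac18(-7w_0+w_1+w_2+w_3)$. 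The goal is thus to show this equals $\tfrac13 w_2$.

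Next I would produce two linear relations among the $w_p$. The first comes from applying Fierz to the companion endomorphism $t\overline s$ and evaluating at $t$: since $(t\overline s)(t)=(\overline s t)\,t=w_0$ and, by the symmetry rule \eqref{E:SymProp}, $\omega^{(p)}(t,s)=(-1)^{p(p+1)/2}\omega^{(p)}(s,t)$, this yields $w_0=\tfrac18(w_0-w_1-w_2+w_3)$, i.e.\ $w_3=7w_0+w_1+w_2$ (the signs $(-1)^{p(p+1)/2}=+,-,-,+$ are exactly what flip the $p=1,2$ terms). Substituting this into the expression above already collapses the left-hand side to $\tfrac14(w_1+w_2)$, so it remains only to prove $w_1=\tfrac13 w_2$.

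The second relation, which supplies precisely this, I would obtain by conjugating the Fierz expansion of $s\overline t$ with $\Gamma^{\mu\nu}(\,\cdot\,)\Gamma_{\mu\nu}$ and evaluating at $t$. On the geometric side this operator annihilates $s\overline t$ at $t$: tracking the rank-one action gives $\Gamma^{\mu\nu}(s\overline t)\Gamma_{\mu\nu}(t)=(\overline t\,\Gamma_{\mu\nu}t)\,\Gamma^{\mu\nu}s$, which vanishes because $\omega^{(2)}(t,t)=0$ forces $\overline t\,\Gamma_{\mu\nu}t=0$ (again a consequence of \eqref{E:SymProp}). On the algebraic side, the double-contraction rule acts on $\omega^{(p)}(s,t)$ by the eigenvalue $\lambda_p=7-(7-2p)^2=(-42,-18,-2,6)$, so the same quantity equals $\tfrac18(\lambda_0 w_0+\lambda_1 w_1+\lambda_2 w_2+\lambda_3 w_3)$ --- note no symmetry factor enters here, since we conjugate $\omega^{(p)}(s,t)$ directly. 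Setting this to zero and inserting $w_3=7w_0+w_1+w_2$ cancels $w_0$ and leaves $-12w_1+4w_2=0$, i.e.\ $w_1=\tfrac13 w_2$. Combining, $\tfrac14(w_1+w_2)=\tfrac14(\tfrac13 w_2+w_2)=\tfrac13 w_2=\tfrac13\,\omega^{(2)}(s,t)\cdot t$, as claimed.

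The routine part is the Fierz bookkeeping; the main obstacle is getting the signs right --- the symmetry factors $(-1)^{p(p+1)/2}$ and the contraction eigenvalues $\lambda_p$ --- and verifying the two evaluation identities $(s\overline t)(t)=(\overline t t)\,s$ and $\Gamma^{\mu\nu}(s\overline t)\Gamma_{\mu\nu}(t)=(\overline t\,\Gamma_{\mu\nu}t)\,\Gamma^{\mu\nu}s$, which rest on the skew-adjointness of the $\Gamma_{\mu\nu}$ and on the pairing conventions for $\overline s t$. Once these are pinned down the three displayed relations are forced and the coefficient $\tfrac13$ emerges without further input.
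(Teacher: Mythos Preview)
Your proof is correct and uses the same three ingredients as the paper --- the Fierz identity, the symmetry rule \eqref{E:SymProp}, and the double-contraction formula $\Gamma_{\mu\nu}\Gamma_{[p]}\Gamma^{\mu\nu}=(7-(7-2p)^2)\Gamma_{[p]}$ --- but the organization is somewhat different. The paper computes the difference $(\overline t t)s-(\overline t s)t-\tfrac13\omega^{(2)}(s,t)\cdot t$ directly: it applies Fierz to $t\overline t$ (only the $p=0,3$ terms survive by symmetry) and then, for the $\omega^{(2)}$-term, rewrites $(\overline s\Gamma^{\mu\nu}t)\Gamma_{\mu\nu}t$ as the rank-one endomorphism $\sum_{\mu\nu}(\Gamma_{\mu\nu}t)\overline{(\Gamma_{\mu\nu}t)}$ applied to $s$, Fierzes again, and contracts. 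Your route instead introduces the variables $w_p=\omega^{(p)}(s,t)\cdot t$, extracts two linear relations from Fierz applied to $s\overline t$ and $t\overline s$ (both evaluated at $t$), and gets the decisive third relation by conjugating $s\overline t$ by $\Gamma^{\mu\nu}(\cdot)\Gamma_{\mu\nu}$, using that $\overline t\,\Gamma_{\mu\nu}t=0$. The two computations are dual to each other --- you act with the double-contraction on $\omega^{(p)}(s,t)$ before evaluating, the paper acts on $(\Gamma_{\mu\nu}t)\overline{(\Gamma_{\mu\nu}t)}$ after --- so the underlying identities match up exactly. Your $w_p$-bookkeeping is arguably the cleaner packaging, since it makes transparent that exactly three independent relations among four unknowns are needed and where each one comes from.
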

\begin{proof} From the Fierz Identity and \eqref{E:SymProp}, we have $t\overline{t} = \frac{1}{8} \left(\overline t t+\tfrac16(\overline t\Gamma^{\mu\nu\rho}t)\Gamma_{\mu\nu\rho}\right)$ for all $t \in \bbS$. The difference $(\overline t t)s-(\overline t s)t-\tfrac13\omega^{(2)}(s,t)\cdot t$ is equal to
\begin{equation*}
\begin{aligned}
(\overline t t)s-t(\overline t s)-\tfrac13\tfrac12(\overline s\Gamma^{\mu\nu}t)\Gamma_{\mu\nu}t
&=(\overline t t)s-\tfrac18\Big(\overline t t+\tfrac16(\overline t\Gamma^{\mu\nu\rho}t)\Gamma_{\mu\nu\rho}\Big)s-\tfrac13\tfrac12\Gamma_{\mu\nu}t(\overline{\Gamma^{\mu\nu}t}s) \\
&=\tfrac{7}{8}(\overline t t)s-\tfrac18\tfrac16(\overline t\Gamma^{\mu\nu\rho}t)\Gamma_{\mu\nu\rho}s\\
&\;\;\;-\tfrac16
\tfrac18\Big(\overline{\Gamma_{\mu\nu}t} \Gamma^{\mu\nu}t+\tfrac16(\overline{\Gamma_{\mu\nu}t}\Gamma^{\alpha\beta\delta}\Gamma^{\mu\nu}t)\Gamma_{\alpha\beta\delta}\Big)s \\
&=\tfrac{7}{8}(\overline t t)s-\tfrac18\tfrac16(\overline t\Gamma^{\mu\nu\rho}t)\Gamma_{\mu\nu\rho}s-\tfrac16
\tfrac18\Big(42\overline{t}t-(\overline{t}\Gamma^{\alpha\beta\delta}t)\Gamma_{\alpha\beta\delta}\Big)s\\
&=0\;,
\end{aligned}
\end{equation*}
where in the next-to-last step we used that
\begin{equation*}
\begin{aligned}
\langle \Gamma_{\mu\nu}t,\Gamma^{\mu\nu}t\rangle&=-\langle t,\Gamma^{\mu\nu}\Gamma_{\mu\nu}t\rangle=42\langle t,t\rangle\;,\\
\langle \Gamma_{\mu\nu}t,\Gamma^{\alpha\beta\delta}\Gamma^{\mu\nu}t\rangle&=-\langle \Gamma_{\mu\nu}\Gamma^{\alpha\beta\delta}\Gamma^{\mu\nu}t,t\rangle=-6\langle \Gamma^{\alpha\beta\delta} t,t\rangle\;,
\end{aligned}
\end{equation*}
for all $s,t\in\mathbb S$.
\end{proof}
The odd contact grading of $\fg=F(4)$ is of the form $\fg=\fg_{-2}\oplus\fg_{-1}\oplus\fg_{0}\oplus\fg_{1}\oplus\fg_{2}$, with $\fg_{\pm 2}\cong\mathbb C$, $\fg_{\pm 1}\cong \mathbb S$ and $\fg_0\cong \fso(\bbV)\oplus\mathbb C$ as $\fso(\bbV)$-modules, with the center of $\fg_0$ spanned by the grading element $Z$. 
We fix a basis $\1$ of $\fg_{-2}$ so that the only non-trivial Lie bracket of the symbol $\fm=\fg_{-2}\oplus\fg_{-1}$ is
$[s,t]=\langle s,t \rangle\1$, for all $s,t\in \fg_{-1}$. It is convenient to fix also a basis $\1^\dagger$ of $\fg_{2}$ and adorn every element of $\fg_{1}$ similarly, i.e., $s^\dagger \in\fg_{1}$ for all $s\in \mathbb S$.
\begin{proposition}
\label{prop:explicit-spinorial}
The non-trivial Lie brackets of the Lie superalgebra $\fg=F(4)$ are the natural action of $\fg_0$ on each graded component of the odd contact grading of $\fg$ and
\begin{equation*}
\label{eq:Lie-brackets-F(4)}
\begin{aligned}
{}[s,t]&=\langle s,t\rangle\1\;,\quad[s^\dagger,t^\dagger]=\langle s,t\rangle\1^\dagger\;,\\
[s^\dagger,\1]&=s\;,\quad[\1^\dagger,s]=s^\dagger\;,\\
[s^\dagger, t]&=\tfrac{1}{3}\omega^{(2)}(s,t)-\tfrac{1}{2}\langle s,t\rangle Z\;,\quad[\1^\dagger,\1]=-Z\;,
\end{aligned}
\end{equation*}
for all $s,t\in \mathbb S$. Here $\omega^{(2)}(s,t)\in\Lambda^2 \bbV$ is thought as an element of $\fso(\bbV)$.
\end{proposition}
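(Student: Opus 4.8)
The plan is to exploit the three constraints available in a graded Lie superalgebra --- the $\bbZ$-grading, $\fg_0$-equivariance, and the super-Jacobi identity --- the existence of $F(4)$ then guaranteeing that the constants we are forced to introduce are mutually consistent. First I would note that, apart from the bracket of $\fg_0$ on each $\fg_k$ (its natural action) and the internal bracket of $\fg_0\cong\fso(\bbV)\oplus\bbC Z$, the grading sends $\fg_j\otimes\fg_k$ into $\fg_{j+k}$, and since $\fg_{\pm 3}=0$ only finitely many brackets remain: those of types $\fg_{-1}\otimes\fg_{-1}\to\fg_{-2}$, $\fg_1\otimes\fg_1\to\fg_2$, $\fg_1\otimes\fg_{-2}\to\fg_{-1}$, $\fg_2\otimes\fg_{-1}\to\fg_1$, $\fg_2\otimes\fg_{-2}\to\fg_0$, and $\fg_1\otimes\fg_{-1}\to\fg_0$. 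Each is $\fso(\bbV)$-equivariant, and since $\fg_{\pm 1}\cong\bbS$ is irreducible while $\fg_{\pm 2}\cong\bbC$ is trivial, Schur's lemma reduces all but the last to a single scalar.

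Next I would fix normalizations. The bracket $\fg_{-1}\otimes\fg_{-1}\to\fg_{-2}$ is $[s,t]=\langle s,t\rangle\1$ by the chosen $\1$. The equivariant maps $s^\dagger\mapsto[s^\dagger,\1]$ and $s\mapsto[\1^\dagger,s]$ are scalars, and I would \emph{define} the identification $\bbS\cong\fg_1$, $s\mapsto s^\dagger$, and the basis vector $\1^\dagger\in\fg_2$ precisely so that these scalars equal $1$, giving $[s^\dagger,\1]=s$ and $[\1^\dagger,s]=s^\dagger$. This exhausts the rescaling freedom, so the remaining constants are rigid and must be computed.

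The crux is $\fg_1\otimes\fg_{-1}\to\fg_0\cong\Lambda^2\bbV\oplus\bbC Z$. Using the $\fso(\bbV)$-decomposition $\bbS\otimes\bbS\cong\bigoplus_{p=0}^3\Lambda^p\bbV$, in which $\Lambda^2\bbV$ and $\Lambda^0\bbV$ each occur with multiplicity one (the $\Lambda^1,\Lambda^3$ parts not being available in $\fg_0$), Schur forces $[s^\dagger,t]=a\,\omega^{(2)}(s,t)+b\,\langle s,t\rangle Z$ for scalars $a,b$; the antisymmetry of $\omega^{(2)}$ and symmetry of $\langle\cdot,\cdot\rangle$ recorded in \eqref{E:SymProp} match the odd--odd symmetry of the bracket. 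To pin down $a,b$ I would impose the super-Jacobi identity $[x,[y,z]]=[[x,y],z]+(-1)^{|x||y|}[y,[x,z]]$ on $(s^\dagger,t,t)$. Expanding $[s^\dagger,[t,t]]=[[s^\dagger,t],t]-[t,[s^\dagger,t]]$ and using $[t,t]=\langle t,t\rangle\1$, $[s^\dagger,\1]=s$, that $Z$ acts on $\fg_{-1}$ by $-1$, and that $\omega^{(2)}(s,t)\in\fso(\bbV)$ acts through $\sigma$, gives $(\overline t t)s=2a\,\sigma(\omega^{(2)}(s,t))t-2b\,(\overline s t)t$. The decisive input is Lemma \ref{cor:Jacobi-odd-odd-odd}, which --- after accounting for the factor $2$ between the Clifford action and $\sigma=\tfrac12\Gamma$ --- yields $\sigma(\omega^{(2)}(s,t))t=\tfrac32\big((\overline t t)s-(\overline s t)t\big)$. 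Substituting and separating the two linearly independent tensor structures $(\overline t t)s$ and $(\overline s t)t$ forces $1-3a=0$ and $3a+2b=0$, i.e. $a=\tfrac13$, $b=-\tfrac12$.

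Finally I would fix the last two scalars. The identity on $(\1^\dagger,s,\1)$, using $[s,\1]=0$, $[\1^\dagger,s]=s^\dagger$, $[s^\dagger,\1]=s$, and $[\1^\dagger,\1]=c\,Z$, collapses to $0=(1+c)s$, so $[\1^\dagger,\1]=-Z$; and the identity on $(s^\dagger,t^\dagger,\1)$, writing $[s^\dagger,t^\dagger]=c'\langle s,t\rangle\1^\dagger$ and inserting the now-known values of $[s^\dagger,t]$ and $[\1^\dagger,\1]=-Z$, forces $c'=1$, i.e. $[s^\dagger,t^\dagger]=\langle s,t\rangle\1^\dagger$. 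I expect the main obstacle to be the bracket $[s^\dagger,t]$: both the representation-theoretic splitting of its two components and, above all, the Clifford bookkeeping in the Jacobi computation --- the factor $2$ relating $\sigma$ to the Clifford action and the appeal to Lemma \ref{cor:Jacobi-odd-odd-odd} --- are where the real content lies, the remaining identities being routine once this is in hand.
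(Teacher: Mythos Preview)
Your proposal is correct and follows essentially the same approach as the paper's proof: use $\fg_0$-equivariance and Schur's lemma to reduce each bracket to undetermined scalars, then fix those scalars via super-Jacobi identities, with the only nontrivial case being the three-odd identity on $(s^\dagger,t,t)$, handled through Lemma~\ref{cor:Jacobi-odd-odd-odd}. The paper's proof is stated more tersely (it simply says the three-odd identity ``follows from Lemma~\ref{cor:Jacobi-odd-odd-odd} by partial polarization''), but your explicit unpacking of the constants $a=\tfrac13$, $b=-\tfrac12$, $c=-1$, $c'=1$ via the specific Jacobi triples $(s^\dagger,t,t)$, $(\1^\dagger,s,\1)$, $(s^\dagger,t^\dagger,\1)$ is exactly the direct computation the paper alludes to.
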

\begin{proof}
The proof uses $\fg_0$-equivariance and the
direct computation of the Jacobi identities to determine the values of the constants involved. 
The only identity that is not immediate is the one with three odd elements, which follows from Lemma \ref{cor:Jacobi-odd-odd-odd} by partial polarization. 
\end{proof}
\begin{rem}A posteriori, these Lie brackets can be made explicit in terms of our geometric realization in terms of generating superfunctions -- see Table \ref{F:FlatQctSym} and Remark \ref{R:omega2} in \S\ref{S:oddct} .
\end{rem}

We conclude this section with an auxiliary but important finer consequence of the Fierz Identity that will be useful later on. For this purpose, we fix a basis $(\bep_\alpha)_{\alpha=1,\dots,8}$ of $\mathbb S$ with $\langle \bep_\alpha,\bep_\beta\rangle=\delta_{\alpha\beta}$ and let $(\bep^\alpha)_{\alpha=1,\dots, 8}$ be the dual basis  of $\mathbb S^*$. Clearly $\bep^\alpha=\overline\bep_\alpha$. 
\begin{proposition}
\label{prop:omega-vanishes}
Let $\omega:\mathbb S\to\fso(\bbV)$ be a linear map that satisfies $\sigma(\omega_s)s=0$ for all $s\in\mathbb S$. Then $\omega=0$.
\end{proposition}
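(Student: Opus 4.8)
The plan is to linearise the quadratic hypothesis, repackage it as a totally antisymmetric trilinear form, and then decide an injectivity question by Schur's lemma for $\fso(\bbV)$.

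First I would polarise: since $s\mapsto\sigma(\omega_s)s$ is quadratic, its vanishing is equivalent to $\sigma(\omega_s)t+\sigma(\omega_t)s=0$ for all $s,t\in\bbS$. Set $T(s,t,u):=\langle\sigma(\omega_s)t,u\rangle$. Because $\omega_s\in\fso(\bbV)$ and $\langle-,-\rangle$ is $\fso(\bbV)$-invariant, $\sigma(\omega_s)$ is skew for $\langle-,-\rangle$, so $T$ is antisymmetric in its last two slots; the polarised identity renders it antisymmetric in the first two slots as well. Antisymmetry under the two adjacent transpositions forces the full sign character of $\mathfrak{S}_3$, hence $T\in\Lambda^3\bbS^*$. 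Conversely $\sigma(\omega_s)$, and thus $\omega$ (as $\sigma$ is faithful), is recovered from $T$ via the non-degeneracy of $\langle-,-\rangle$, so $\omega=0$ if and only if $T=0$.

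Next I would encode the constraint $\omega_s\in\fso(\bbV)$ as the kernel of an equivariant map. Under $\Lambda^2\bbS\cong\Lambda^1\bbV\oplus\Lambda^2\bbV$, the $\bbV$-part of a $2$-form on $\bbS$ is extracted by the Clifford trace against $\Gamma_\mu$; since $\sigma(\omega_s)$ is a combination of the $\Gamma_{\mu\nu}$, its $\bbV$-part vanishes automatically, i.e. $\Phi(T)=0$ for the $\fso(\bbV)$-equivariant map
\begin{equation*}
\Phi\colon\Lambda^3\bbS^*\longrightarrow\bbS^*\otimes\bbV,\qquad
\Phi(T)(s)=\sum_{\mu}\Big(\sum_{\alpha}T(s,\Gamma_\mu\bep_\alpha,\bep_\alpha)\Big)\be^\mu,
\end{equation*}
which equals, up to the harmless constant $-\tfrac18$, the assignment sending $s$ to the $\bbV$-part of $\iota_sT$. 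Running the correspondence of the previous paragraph backwards shows that the admissible $T$ are exactly those in $\ker\Phi$; therefore the proposition is equivalent to the injectivity of $\Phi$.

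Finally I would settle injectivity by representation theory. As $\fso(\bbV)$-modules both sides are isomorphic to $\bbS\oplus W$, where $W$ is the $48$-dimensional irreducible complement of Clifford multiplication in $\bbV\otimes\bbS$: indeed $\bbV\otimes\bbS\cong\bbS\oplus W$, while $\Lambda^3\bbS$ has no zero weight (so it contains neither the trivial nor the vector module), its highest weight is that of $W$, and $\dim\Lambda^3\bbS=56=48+8$, forcing $\Lambda^3\bbS\cong W\oplus\bbS$ as well. Both decompositions are multiplicity-free, so by Schur's lemma $\Phi$ acts as a scalar on each of the $\bbS$- and $W$-isotypic components, and injectivity amounts to both scalars being nonzero. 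I expect this last nonvanishing to be the only genuine obstacle: it is checked by evaluating $\Phi$ on one explicit representative of each component -- e.g. the wedge of the three highest weight spinors in the basis $(\bep_\alpha)$ for $W$, and the image of a highest weight spinor under $\bbS\hookrightarrow\Lambda^3\bbS$ for the small factor -- using only $\Gamma_\mu\Gamma_\nu+\Gamma_\nu\Gamma_\mu=-2g_{\mu\nu}$ and the tracelessness of $\Gamma_{\mu_1\cdots\mu_p}$ for $p>0$. Once both scalars are seen to be nonzero, $\Phi$ is an isomorphism, whence $T=0$ and $\omega=0$.
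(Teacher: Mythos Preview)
Your argument is correct in outline and takes a genuinely different route from the paper. The paper proceeds by a direct Clifford-algebra computation: it expands $\sigma(\omega_s)s$ via the Fierz identity, separates the resulting equation along the decomposition $\odot^2\bbS\cong\Lambda^0\bbV\oplus\Lambda^3\bbV$, and then uses gamma-matrix identities (notably $\Gamma^{\mu_1\mu_2}\Gamma_{\mu\nu}\Gamma_{\mu_1\mu_2\mu_3}$ expansions) to bootstrap from $\tfrac12\omega^{\alpha\mu\nu}\Gamma_{\mu\nu}\bep_\alpha=0$ and $\tfrac12\omega^{\alpha\mu\nu}\Gamma_{\mu\nu}\Gamma_{\mu_1\mu_2\mu_3}\bep_\alpha=0$ up to $\tfrac12\omega^{\alpha\mu\nu}\Gamma_{\mu\nu}\Theta\bep_\alpha=0$ for every $\Theta\in\End(\bbS)$, which forces $\omega=0$.

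Your approach trades this explicit manipulation for representation theory: repackaging the polarised hypothesis as a totally antisymmetric $T\in\Lambda^3\bbS^*$, identifying the constraint $\omega_s\in\fso(\bbV)$ with $T\in\ker\Phi$ for the equivariant $\Phi:\Lambda^3\bbS^*\to\bbS^*\otimes\bbV$, and then reducing injectivity of $\Phi$ to the nonvanishing of two Schur scalars via the multiplicity-free decompositions $\Lambda^3\bbS\cong\bbS\oplus W\cong\bbV\otimes\bbS$. This is conceptually cleaner and explains \emph{why} the result holds (there is simply no room in $\Lambda^3\bbS$ for a nontrivial kernel once the two scalars are nonzero), whereas the paper's Fierz computation is self-contained and leaves nothing deferred. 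The one place your write-up is not quite complete is the final nonvanishing check: you correctly flag it as the only genuine obstacle and indicate how to do it, but you do not actually carry it out. For a finished proof you should exhibit the two evaluations explicitly; each is a short Clifford trace.
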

\begin{proof}
We write $\omega=\tfrac12\omega_\alpha{}^{\mu\nu}\Gamma_{\mu\nu}\otimes \bep^\alpha $ and start with
\begin{equation*}
\begin{aligned}
0&=2\sigma(\omega_s)s=\omega_s\cdot s=\tfrac12\omega_\alpha{}^{\mu\nu}\Gamma_{\mu\nu}s(\bep^\alpha s)\\
&=\tfrac12\omega^\alpha{}^{\mu\nu}\Gamma_{\mu\nu}s(\overline s\bep_\alpha)=\tfrac12\tfrac18\omega^\alpha{}^{\mu\nu}\Gamma_{\mu\nu}
\Big(\overline s s+\tfrac16(\overline s\Gamma^{\mu_1\mu_2\mu_3}s)\Gamma_{\mu_1\mu_2\mu_3}\Big) \bep_\alpha\;,
\end{aligned}
\end{equation*}
which holds for all $s\in S$. Since $\odot^2\mathbb S\cong \Lambda^0 \bbV\oplus\Lambda^3 \bbV$, we may split this equation into
\begin{align}
\label{eq:to-be-split-1}
\tfrac12\omega^{\alpha\mu\nu}\Gamma_{\mu\nu}\bep_\alpha&=0\;,
\\
\label{eq:to-be-split-2}
\tfrac12\omega^{\alpha\mu\nu}\Gamma_{\mu\nu}\Gamma_{\mu_1\mu_2\mu_3}\bep_\alpha&=0\;,
\end{align}
for any $1\leq \mu_1<\mu_2<\mu_3\leq 7$. To proceed further, we first note that
\begin{equation*}
\begin{aligned}
\Gamma^{\mu_1\mu_2}\Gamma_{\mu\nu}\Gamma_{\mu_1\mu_2\mu_3}&=\Gamma^{\mu_1\mu_2}\Gamma_{\mu\nu}\Big(\Gamma_{\mu_1\mu_2}\Gamma_{\mu_3} + g_{\mu_2\mu_3}\Gamma_{\mu_1} - g_{\mu_1\mu_3}\Gamma_{\mu_2}\Big)\\
&=\Big(\Gamma^{\mu_1\mu_2}\Gamma_{\mu\nu}\Gamma_{\mu_1\mu_2}\Big)\Gamma_{\mu_3}
+\Gamma_{\mu_1\mu_3}\Gamma_{\mu\nu}\Gamma^{\mu_1}
-\Gamma_{\mu_3\mu_2}\Gamma_{\mu\nu}\Gamma^{\mu_2}\\
&=-2\Gamma_{\mu\nu}\Gamma_{\mu_3}
-2\Gamma_{\mu_3\mu_2}\Gamma_{\mu\nu}\Gamma^{\mu_2} = -2\Gamma_{\mu\nu}\Gamma_{\mu_3}
-2\Big(\Gamma_{\mu_3}\Gamma_{\mu_2} + g_{\mu_3\mu_2}\Big)\Gamma_{\mu\nu}\Gamma^{\mu_2}\\
&=-4\Gamma_{\mu\nu}\Gamma_{\mu_3}
-2\Gamma_{\mu_3}\Big(\Gamma_{\mu_2}\Gamma_{\mu\nu}\Gamma^{\mu_2}\Big)\\
&=-4\Gamma_{\mu\nu}\Gamma_{\mu_3}
+6\Gamma_{\mu_3}\Gamma_{\mu\nu}\;.
\end{aligned}
\end{equation*}
Hence
\begin{equation}
\begin{aligned}
0&=\tfrac12\omega^{\alpha\mu\nu}\Gamma^{\mu_1\mu_2}\Gamma_{\mu\nu}\Gamma_{\mu_1\mu_2\mu_3}\bep_\alpha=
-2\omega^{\alpha\mu\nu}\Gamma_{\mu\nu}\Gamma_{\mu_3}
\bep_\alpha+3\omega^{\alpha\mu\nu}
\Gamma_{\mu_3}\Gamma_{\mu\nu}
\bep_\alpha\\
&=-2\omega^{\alpha\mu\nu}\Gamma_{\mu\nu}\Gamma_{\mu_3}
\bep_\alpha\Longrightarrow \tfrac12\omega^{\alpha\mu\nu}\Gamma_{\mu\nu}\Gamma_{\mu_3}\bep_\alpha=0\;,
\end{aligned}
\end{equation}
where we first used \eqref{eq:to-be-split-2} and then \eqref{eq:to-be-split-1}. In a completely similar way, we get
\begin{equation}
\label{eq:lasteqhere}
\begin{aligned}
\tfrac12\omega^{\alpha\mu\nu}\Gamma_{\mu\nu}\Gamma_{\mu_2\mu_3}\bep_\alpha&=0\;,
\end{aligned}
\end{equation}
so that, summarizing \eqref{eq:to-be-split-1}-\eqref{eq:lasteqhere}, we have $\tfrac12\omega^{\alpha\mu\nu}\Gamma_{\mu\nu}\Theta\bep_\alpha=0$ for all $\Theta\in\End(\mathbb S)\cong \bigoplus_{p=0}^{3}\Lambda^p \bbV$. Taking $\Theta=\bep_\delta\otimes\bep^\beta$ yields
$\tfrac12\omega_\beta{}^{\mu\nu}\Gamma_{\mu\nu}\bep_\delta=0$ for all $1\leq \beta,\delta\leq 8$, that is, $\omega=0$.
\end{proof}
\section{The main algebraic result: computation of Spencer cohomology groups}
\label{sec:SCG}
 \subsection{Basic definitions}
We are interested in the Spencer cohomology of $\fg=F(4)$ for its contact gradings $\fg=\fg_{-2}\oplus\cdots\oplus\fg_{2}$, i.e., the cohomology associated to the Chevalley--Eilenberg complex $C^{\bullet}(\fm,\fg)$, where
the symbol $\fm=\fg_{-2}\oplus\fg_{-1}$ acts on $\fg$ via the adjoint representation. 
The space of $n$-cochains is $C^n(\fm,\fg)=\fg\otimes\Lambda^n\fm^*$ for all $n\geq 0$, where 
$\Lambda^\bullet$ is meant in the super-sense, and the differential $\partial:C^{n}(\fm,\fg) \to
C^{n+1}(\fm,\fg)$ is explicitly given for $n=0,1$ by the following expressions:
\begin{align}
  \begin{split}\label{eq:CE0}
    &\partial : C^{0}(\fm,\fg) \to C^{1}(\fm,\fg)\\
    &\partial\varphi(X) = (-1)^{x|\varphi|}[X,\varphi]\,,
  \end{split}
  \\
  \begin{split}\label{eq:CE1}
    &\partial : C^{1}(\fm,\fg)\to C^{2}(\fm,\fg)\\
    &\partial\varphi(X,Y) = (-1)^{x|\varphi|}[X,\varphi(Y)] - (-1)^{y(x+|\varphi|)}[Y,\varphi(X)] - \varphi([X,Y])\,,
  \end{split}
\end{align}
where $x,y$ denote the parity of elements $X,Y$ of $\fm$
and $|\varphi|$ the parity of $\varphi\in C^{n}(\fm,\fg)$.

Note that the ${\mathbb Z}$-degree in $\fg$ extends to the space of cochains by declaring that $\fg_d^*$ has degree $-d$ and that the differential $\partial$ has the degree zero. In particular, the complex breaks up
into the direct of sum of complexes for each degree and the group
\begin{equation}
\label{eq:dn}
H^n(\fm,\fg)=\bigoplus_{d\in\mathbb Z} H^{d,n}(\fm,\fg)
\end{equation}
into the direct sum of its homogeneous components.  The space $C^{d,n}(\fm,\fg)$ of $n$-cochains of degree $d$ has a natural $\fg_0$-module structure and the same is true for the spaces of cocycles and coboundaries, as $\partial$ is $\fg_0$-equivariant;
this implies that any $H^{d,n}(\fm,\fg)$ has a representation of $\fg_0$ and therefore of the reductive Lie algebra $(\fg_0)_{\bar 0}$. This equivariance is very useful in our arguments, as we will  demonstrate.
 
We are interested in the group $ H^{d,n}(\fm,\fg)$ for $n=0, 1$ and $d\geq 0$, due to the following classical result of Tanaka \cite{Tan}, whose proof extends verbatim to the supercase:
\begin{proposition} $H^{d,1}(\fm,\fg) = 0$ for $d > 0$ if and only if $\fg \cong \prn(\fm,\fg_0)$.
\end{proposition}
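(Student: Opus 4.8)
The plan is to reduce the statement to a degree-by-degree comparison of $\fg$ with its prolongation, exploiting the fact that the Chevalley--Eilenberg $1$-cocycle condition is, in each positive degree, \emph{precisely} the defining condition of the Tanaka--Weisfeiler prolongation. Concretely, I would establish the refined claim that, for every $d \geq 1$, there is a natural $\fg_0$-module identification
\begin{equation*}
H^{d,1}(\fm,\fg) \;\cong\; \prn_d(\fm,\fg_0)/\fg_d,
\end{equation*}
where $\prn_d(\fm,\fg_0)$ denotes the degree-$d$ component of $\prn(\fm,\fg_0)$ and $\fg_d$ is viewed inside it via the canonical embedding $\fg \hookrightarrow \prn(\fm,\fg_0)$ extending $\id_{\fm \oplus \fg_0}$, which exists and is injective because the grading is transitive. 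Granting this, both implications follow by induction on $d$: the vanishing of all $H^{d,1}$ forces $\fg_d = \prn_d$ for every $d$, hence $\fg \cong \prn(\fm,\fg_0)$; conversely, if $\fg \cong \prn(\fm,\fg_0)$ then $\fg_d = \prn_d$ and each quotient is trivial.

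The key steps are as follows. First, reading off \eqref{eq:CE1}, a degree-$d$ cochain $\varphi \in C^{d,1}(\fm,\fg)$ is a cocycle exactly when it is a degree-$d$ super-derivation of $\fm$ into $\fg$; since $\varphi$ carries $\fg_{-1} \to \fg_{d-1}$ and $\fg_{-2} \to \fg_{d-2}$, and since $\fg_{-2}=[\fg_{-1},\fg_{-1}]$ is bracket-generated, $\varphi$ is determined by its restriction to $\fg_{-1}$, its value on $\fg_{-2}$ being fixed by the cocycle condition. Second, arguing inductively with the hypothesis $\fg_j = \prn_j$ for $j<d$, I would match $Z^{d,1}(\fm,\fg)$ with $\prn_d(\fm,\fg_0)$: all brackets appearing in the derivation condition involve only the components $\fg_{<d}$, which coincide with $\prn_{<d}$ by induction, so the set of degree-$d$ derivations of $\fm$ valued in $\fg$ is literally the set of degree-$d$ elements of the prolongation. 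Third, from \eqref{eq:CE0} the coboundaries $B^{d,1}(\fm,\fg)=\partial(C^{d,0})$ are exactly the inner derivations $X \mapsto \pm[X,\xi]$ with $\xi \in \fg_d$, and transitivity makes $\xi \mapsto (\ad\xi)|_\fm$ injective, so $B^{d,1} \cong \fg_d$ sits inside $Z^{d,1}=\prn_d$ as the image of the canonical embedding. Passing to the quotient yields the displayed identification.

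The hard part will be the careful bookkeeping of the $\bbZ_2$-signs: one must verify that the signs in the super Chevalley--Eilenberg differential \eqref{eq:CE0}--\eqref{eq:CE1} are exactly those entering the super-derivation condition defining the prolongation bracket, so that the cocycle equation and prolongation membership agree on the nose rather than merely up to an overall twist (this is what underlies the assertion that Tanaka's proof extends verbatim). The only other point needing attention is the consistency of $\varphi$ on the $\fg_{-2}$ slot: the value forced by the cocycle condition must be independent of the factorization of an element of $\fg_{-2}=[\fg_{-1},\fg_{-1}]$, which is automatic since the cocycle condition is imposed on all of $\Lambda^2\fm$ and is $\fg_0$-equivariant, the additional $\fg_{-1}\wedge\fg_{-2}$ relations being identical on both sides. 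Once these sign and consistency checks are in place, the induction closes and both directions of the equivalence follow.
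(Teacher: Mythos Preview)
Your approach is correct and is precisely the classical Tanaka argument that the paper invokes without reproducing (it simply cites \cite{Tan} and notes that the proof extends verbatim to the supercase). One caveat on presentation: the displayed identification $H^{d,1}(\fm,\fg)\cong\prn_d(\fm,\fg_0)/\fg_d$ is not an unconditional fact valid for all $d$ simultaneously---it holds only under the inductive hypothesis $\fg_j=\prn_j$ for $j<d$ (otherwise $Z^{d,1}(\fm,\fg)$, consisting of derivations landing in $\fg_{d-1}\oplus\fg_{d-2}$, may be strictly smaller than $\prn_d$), so it should be stated as the content of the inductive step rather than as a standalone lemma; once phrased that way, your induction closes exactly as you describe.
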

Above, $\prn(\fm,\fg_0)$ refers to the maximal graded Lie superalgebra that extends $\fm$ and $\fg_0$, such that if $X \in \prn(\fm,\fg_0)$ is of non-negative degree and satisfies $[X,\fm] = 0$, then $X = 0$.

\subsection{Spencer cohomology of the mixed contact grading}
 \label{S:Spencer-mixed-contact-grading}
This is the case with associated parabolic subalgebra $\fp=\fp^{VI}_4$. Table \ref{F4table} below recollects the components of the mixed contact grading of $\fg=F(4)$, emphasizing their structure as modules for $\fg_0^{\ss}\cong\mathfrak{osp}(4|2;\alpha)$ (with $\alpha=2$), together with branchings w.r.t. the purely even subalgebra $(\fg_0)_{\bar 0}\cong\mathbb C Z\oplus\mathfrak{sl}(2)\oplus\mathfrak{sl}(2)\oplus\mathfrak{sl}(2)$. In this section, it is convenient to order the three $\fsl(2)$'s in $(\fg_0)_{\bar 0}$ in the opposite order of \S\ref{sec:parabolicsubalgebras}: w.r.t. the simple root system $VI$, the first $\fsl(2)$ corresponds to the root $\alpha_3$, the second to the root $\alpha_1+2\alpha_2+\alpha_3$ and the last to the root $\alpha_1$. This is due to the fact that $\alpha_1=\delta$ is the root generating the ideal $\fsp(2)$ of $\fg_{\bar 0}$, which is more convenient to have as last.
\begin{small}
\begin{table}[h]
\begin{center}
\begin{tabular}{c||*{1}{c}|*{1}{c}|*{1}{c}||}
\toprule
  {\bfseries Graded Components} & ${\bf \fg_0}$ & ${\bf\fg_{\pm 1}}$ & ${\bf\fg_{\pm 2}}$  \\
\midrule[0.02em]\midrule[0.02em]
& $\CC^{1|0}$ &  &  \\
{As $\fg_0$-module} & $\mathfrak{osp}(4|2;\alpha)\;\text{with}\;\alpha=2$ & $\CC^{6|4}$ & $\CC^{1|0}$ \\
\midrule[0.02em]
& $\CC\boxtimes\CC\boxtimes\CC$  &  &  \\
{Even part as $(\fg_0)_{\bar 0}$-module} & $\fsl(2)\boxtimes\CC\boxtimes\CC$\;\;\;\;\, & $\mathbb C^2\boxtimes\odot^2\mathbb C^2\boxtimes\CC$\,\,  & 
$\CC\boxtimes\CC\boxtimes\CC$ \,\,\\
 & $\CC\boxtimes\fsl(2)\boxtimes\CC$\;\;\;\;\, &  & \,\,\\
{}  & \;\;\;\;\;$\CC\boxtimes\CC\boxtimes\fsp(2)$  &  & \\
\midrule[0.02em]
& &  & \\
{Odd part as $(\fg_0)_{\bar 0}$-module}& \,\,$\CC^{2}\boxtimes\CC^{2}\boxtimes\CC^2$ & $\CC\boxtimes\CC^{2}\boxtimes\CC^2$ & \\
&  &  &  \\
\midrule[0.02em]\midrule[0.02em]
{Dimension} & \multicolumn{1}{c|}{$10|8$} & \multicolumn{1}{c|}{$6|4$} & \multicolumn{1}{c||}{$1|0$} \\ 
\bottomrule
\end{tabular}
\end{center}
\caption{Graded components of the mixed contact grading.}
\label{F4table}
\end{table}
\end{small}
\par
\begin{lemma}
\label{lem:centrI}
\label{lem:centralizers}
\hfill\smallskip\par
\begin{itemize}
\item[(i)] The centralizer of $\fm$ in $\fg$ coincides with the center of $\fm$, 
hence $H^{d,0}(\fm,\fg)=0$ for all $d\geq 0$;
	\item[(ii)] The centralizer of $(\fm_{-1})_{\bar 0}$ in $\fg$ is given by $\fm_{-2}\oplus(\fm_{-1})_{\bar 1}\oplus\mathfrak{sp}(2)$.
\end{itemize}
\end{lemma}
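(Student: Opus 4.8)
The plan for (i) is to identify the zeroth cohomology with a centralizer and then compute the latter by homogeneous degree. Since $C^0(\fm,\fg)=\fg$ and there are no $(-1)$-cochains, the differential \eqref{eq:CE0} has kernel equal to the centralizer $\fz_\fg(\fm)=\{X\in\fg:[X,\fm]=0\}$, and $H^{d,0}(\fm,\fg)$ is its degree-$d$ part. Decomposing $X=\sum_k X_k$ into $Z$-homogeneous pieces, the condition $[X,\fg_{-1}]=0$ splits by degree into $[X_k,\fg_{-1}]=0$ for each $k$. For $k\geq 0$ we have $X_k\in\fp$, so transitivity of the grading gives $X_k=0$; for $k=-1$ the condition reads $\eta(X_{-1},\cdot)=0$, whence $X_{-1}=0$ by non-degeneracy of $\eta$; and $\fg_{-2}$ brackets $\fm$ into $\fg_{\leq-3}=0$, so it lies in $\fz_\fg(\fm)$. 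Thus $\fz_\fg(\fm)=\fg_{-2}$ is precisely the centre of $\fm$, and $H^{d,0}(\fm,\fg)=0$ for all $d\neq-2$, in particular for $d\geq0$.

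For (ii) I would write $C$ for the centralizer of $(\fm_{-1})_{\bar 0}=(\fg_{-1})_{\bar 0}$ and analyse $C\cap\fg_k$ for $k=-2,\dots,2$ separately, further splitting by parity. Degree $-2$ lies in $C$ for free. In degree $-1$ the relevant bracket is $\eta(\cdot,(\fg_{-1})_{\bar 0})$; as $\eta$ is even and valued in the purely even $\fg_{-2}$, its mixed block vanishes, so all of $(\fg_{-1})_{\bar 1}$ survives, while the restriction of $\eta$ to $(\fg_{-1})_{\bar 0}$ is a non-degenerate symplectic form and kills the even part. In degree $0$, the even summand $(\fg_0)_{\bar 0}\cong\bbC Z\oplus\fsl(2)\oplus\fsl(2)\oplus\fsp(2)$ acts on $(\fg_{-1})_{\bar 0}\cong\bbC^2\boxtimes\odot^2\bbC^2\boxtimes\bbC$ (Table \ref{F4table}); an element $zZ+A_1+A_2+A_3$ acts as zero iff $-z\,\id+A_1\otimes\id+\id\otimes A_2=0$ on $\bbC^2\otimes\odot^2\bbC^2$ (recall $Z$ acts by the scalar $-1$ and $\fsp(2)$ trivially), and taking partial traces together with $\operatorname{tr}A_1=\operatorname{tr}A_2=0$ forces $z=A_1=A_2=0$. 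Hence the even degree-$0$ part of $C$ is exactly $\fsp(2)$.

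It remains to handle the odd parts in degrees $0$ and $1$, the even part in degree $1$, and degree $2$. A preliminary observation, proved from non-degeneracy of $\eta$ and of the Killing form $\kappa$ alone, is that $\ad_e\colon\fg_{-1}\to\fg_1$ and $\ad_f\colon\fg_1\to\fg_{-1}$ (for $0\neq e\in\fg_2$, $0\neq f\in\fg_{-2}$) are parity-preserving isomorphisms: e.g.\ $[e,v]=0$ gives $\kappa(e,[v,\fg_{-1}])=\kappa([e,v],\fg_{-1})=0$, and since $[v,\fg_{-1}]$ spans the one-dimensional $\fg_{-2}$ whenever $v\neq0$ (non-degeneracy of $\eta$), non-degeneracy of $\kappa$ on $\fg_2\times\fg_{-2}$ forces $e=0$. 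Injectivity of $\ad_e$ on $(\fg_{-1})_{\bar 0}$ then excludes degree $2$. For $X\in\fg_1$ centralizing $(\fg_{-1})_{\bar 0}$, the Jacobi identity shows $[f,X]\in\fg_{-1}$ also centralizes $(\fg_{-1})_{\bar 0}$ (as $[f,(\fg_{-1})_{\bar 0}]\subset\fg_{-3}=0$), so $[f,X]\in(\fg_{-1})_{\bar 1}$ and hence $X\in(\fg_1)_{\bar 1}$, killing the even part of degree $1$.

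Finally, the two odd pieces succumb to Schur's lemma and $\kappa$. The bracket $(\fg_0)_{\bar 1}\otimes(\fg_{-1})_{\bar 0}\to(\fg_{-1})_{\bar 1}$ is $\fsl(2)^{\oplus 3}$-equivariant out of the irreducible $(\fg_0)_{\bar 1}\cong\bbC^2\boxtimes\bbC^2\boxtimes\bbC^2$ into the irreducible $(\fg_{-1})_{\bar 1}\cong\bbC\boxtimes\bbC^2\boxtimes\bbC^2$, and it is non-zero --- otherwise $(\fg_{-1})_{\bar 0}$ would be a proper $\fg_0$-submodule of the irreducible $\fg_{-1}$. By Schur the induced map $(\fg_0)_{\bar 1}\to\Hom((\fg_{-1})_{\bar 0},(\fg_{-1})_{\bar 1})$ is injective, so there is no odd degree-$0$ centralizer, and its image exhausts the irreducible $(\fg_{-1})_{\bar 1}$, i.e.\ $[(\fg_0)_{\bar 1},(\fg_{-1})_{\bar 0}]=(\fg_{-1})_{\bar 1}$. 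For the odd part of degree $1$, invariance of $\kappa$ gives $\kappa([X,w],A)=\pm\kappa(X,[w,A])$ with $w\in(\fg_{-1})_{\bar 0}$, $A\in(\fg_0)_{\bar 1}$; if $X\in(\fg_1)_{\bar 1}$ centralizes $(\fg_{-1})_{\bar 0}$ then $X$ is $\kappa$-orthogonal to $[(\fg_0)_{\bar 1},(\fg_{-1})_{\bar 0}]=(\fg_{-1})_{\bar 1}$, and non-degeneracy of $\kappa$ between $(\fg_1)_{\bar 1}$ and $(\fg_{-1})_{\bar 1}$ yields $X=0$. Assembling the five degrees gives $C=\fm_{-2}\oplus(\fm_{-1})_{\bar 1}\oplus\fsp(2)$. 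The main obstacle throughout is the odd sector, where transitivity is unavailable; the key is to reduce everything to the single non-vanishing statement $[(\fg_0)_{\bar 1},(\fg_{-1})_{\bar 0}]\neq0$ and then exploit irreducibility of $\fg_{\pm1}$ and non-degeneracy of $\kappa$, rather than computing structure constants by hand.
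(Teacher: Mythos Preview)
Your proof is correct. For (i) you use transitivity of the grading and non-degeneracy of $\eta$ directly, whereas the paper argues that the centralizer of $\fm$ in $\fp$ generates an ideal of $\fg$ contained in $\fp$, hence zero by simplicity; these are essentially equivalent, yours being slightly more explicit about the negative-degree part. For (ii) the approaches genuinely diverge: the paper simply observes that, after discarding brackets forced to vanish by $(\fg_0)_{\bar 0}$-equivariance, parity, and $\bbZ$-degree, a direct inspection of the root system in Table~\ref{F:mixed-roots} confirms that all remaining bracket components between irreducible $(\fg_0)_{\bar 0}$-modules have full rank. You instead argue structurally, reducing everything to the single non-vanishing $[(\fg_0)_{\bar 1},(\fg_{-1})_{\bar 0}]\neq 0$ (forced by $\fg_0$-irreducibility of $\fg_{-1}$) and then leveraging Schur's lemma, non-degeneracy of $\kappa$, and the isomorphisms $\ad_e$, $\ad_f$. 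Your route avoids any root-by-root check and would transfer to other contact gradings with the same qualitative features; the paper's route is shorter to state but relies on having the explicit root tables at hand.
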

\begin{proof}
(i) The ideal of $\fg$ that is generated by the centralizer of $\fm$ in $\fp$ is easily seen to be contained in $\fp$, hence it is trivial by simplicity of $\fg$. This proves the first claim, and the claim on cohomology follows readily from definitions. (ii) Some of the components obtained by restriction of the bracket of $F(4)$ to the irreducible $(\fg_0)_{\bar 0}$-modules of Table \ref{F4table} are automatically zero, by $(\fg_0)_{\bar 0}$-equivariance, parity and $\mathbb Z$-degree.
 It is a straightforward matter  using the root system of $F(4)$ in Table  \ref{F:mixed-roots} to verify that all other components have ``full rank'' -- i.e., image as large as permitted by Schur's lemma, parity and $\mathbb Z$-degree -- with the sole exception
of the Lie brackets between the irreducible $(\fg_0)_{\bar 0}$-components of $\fg_0$. This readily implies (ii).
\end{proof}
There is an interesting and very useful relationship between the Spencer groups \eqref{eq:dn} for the Lie superalgebra $\fg$ and the classical Spencer groups.
Let
\begin{equation*}
0\longrightarrow K^{n}\stackrel{\imath}{\longrightarrow}\Lambda^n \fm^*\stackrel{\res}{\longrightarrow} \Lambda^n \fm_{\bar 0}^*\longrightarrow 0
\end{equation*}
be the short exact sequence given by the natural restriction map $\res:\Lambda^n \fm^*\to\Lambda^n \fm_{\bar 0}^*$ with kernel
$$K^0=0\;,\qquad K^n=\sum_{1\leq i\leq n}\Lambda^{n-i}\fm_{\bar 0}^*\otimes\Lambda^i\fm_{\bar 1}^*	\qquad\text{for}\qquad n>0\;,$$ and let
\begin{equation}
\label{eq:shortexact}
0\longrightarrow \fg\otimes K^\bullet\stackrel{\imath}{\longrightarrow} C^{\bullet}(\fm,\fg)\stackrel{\res}{\longrightarrow} C^{\bullet}(\fm_{\bar0},\fg)\longrightarrow 0
\end{equation}
be the associated short exact sequence of differential complexes. With some abuse of notation, we give the following.
\begin{definition}
\label{def:complexM1}
The differential complex $C^{\bullet}(\fm_{\bar 1},\fg)=\fg\otimes K^\bullet$ is the subcomplex of $C^{\bullet}(\fm,\fg)$ given by 
$C^{0}(\fm_{\bar 1},\fg)=0$ and the $n$-cochains, $n\geq 1$, that vanish when all entries are in $\fm_{\bar 0}$. 
\end{definition}
It is not difficult to see that every morphism in the sequence \eqref{eq:shortexact} is $(\fg_0)_{\bar 0}$-equivariant. The associated long exact sequence in cohomology, together with (i) of Lemma \ref{lem:centrI} and the fact that $\partial$ is $\fg_0$-equivariant gives the following general result.
\begin{proposition}
\label{prop:longexact}
For all $d\geq 0$, there exists a long exact sequence of vector spaces
\begin{equation*}
\begin{split}
\label{sequenza}
0\longrightarrow \xi^d_{\fg}(\fm_{\bar 0})\longrightarrow H^{d,1}(\fm_{\bar 1},\fg)\longrightarrow H^{d,1}(\fm, \fg)\longrightarrow
H^{d,1}(\fm_{\bar 0},\fg)
\end{split}
\end{equation*}
where $\xi^d_{\fg}(\fm_{\bar 0})$ is the component of degree $d$ of the centralizer of $\fm_{\bar 0}$ in $\fg$. The morphisms in the sequence are all $(\fg_0)_{\bar 0}$-equivariant.
\end{proposition}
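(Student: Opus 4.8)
The statement is exactly the low-degree segment of the long exact cohomology sequence attached to the short exact sequence of differential complexes \eqref{eq:shortexact}, so the plan is to feed \eqref{eq:shortexact} into the standard zig-zag (snake) construction and then identify the terms appearing in cohomological degrees $0$ and $1$.

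First I would record that the Chevalley--Eilenberg differential $\partial$ has $\bbZ$-degree zero, so the whole complex $C^\bullet(\fm,\fg)$ -- and with it each of the three complexes in \eqref{eq:shortexact} -- decomposes as a direct sum over the internal degree $d$. Consequently the long exact sequence produced from \eqref{eq:shortexact} splits into a direct sum of long exact sequences, one for each fixed $d$; fixing $d \geq 0$, I work with the degree-$d$ strand
\[
\cdots \to H^{d,0}(\fm_{\bar 1},\fg) \to H^{d,0}(\fm,\fg) \to H^{d,0}(\fm_{\bar 0},\fg) \xrightarrow{\ \delta\ } H^{d,1}(\fm_{\bar 1},\fg) \to H^{d,1}(\fm,\fg) \to H^{d,1}(\fm_{\bar 0},\fg) \to \cdots.
\]

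The heart of the argument is then the identification of the three degree-zero groups. Since $K^0 = 0$ by definition, the bottom complex satisfies $C^0(\fm_{\bar 1},\fg) = \fg \otimes K^0 = 0$, whence $H^{d,0}(\fm_{\bar 1},\fg) = 0$. The middle group $H^{d,0}(\fm,\fg)$ vanishes for every $d \geq 0$ by Lemma \ref{lem:centrI}(i). Finally, reading off \eqref{eq:CE0} with entries restricted to $\fm_{\bar 0}$, the group $H^{0}(\fm_{\bar 0},\fg)$ is precisely the centralizer of $\fm_{\bar 0}$ in $\fg$, whose degree-$d$ component is by definition $\xi^d_{\fg}(\fm_{\bar 0})$. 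Substituting these three identifications and deleting the two vanishing terms truncates the degree-$d$ strand to the asserted four-term sequence
\[
0 \to \xi^d_{\fg}(\fm_{\bar 0}) \xrightarrow{\ \delta\ } H^{d,1}(\fm_{\bar 1},\fg) \to H^{d,1}(\fm,\fg) \to H^{d,1}(\fm_{\bar 0},\fg),
\]
the leftmost arrow being the connecting homomorphism; injectivity of $\delta$ (the leading zero) is exactness of the long sequence at $\xi^d_{\fg}(\fm_{\bar 0})$ combined with the vanishing of $H^{d,0}(\fm,\fg)$.

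Since every map in \eqref{eq:shortexact} is $(\fg_0)_{\bar 0}$-equivariant, so are the induced maps in cohomology, and the connecting homomorphism $\delta$ inherits equivariance from the naturality of the zig-zag construction; this yields the final assertion. The only points requiring genuine care -- and the closest thing to an obstacle -- are purely formal: confirming that $\partial$ preserves the internal $\bbZ$-grading so the long exact sequence really does split degree by degree, and checking that the degree-zero cohomology of the $\fm_{\bar 0}$-complex is the centralizer rather than some larger space. Both are immediate from the explicit differentials \eqref{eq:CE0}--\eqref{eq:CE1} and the construction of \eqref{eq:shortexact}.
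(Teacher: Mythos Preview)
Your proof is correct and follows essentially the same approach as the paper: both derive the four-term sequence from the long exact cohomology sequence attached to \eqref{eq:shortexact}, invoke Lemma~\ref{lem:centrI}(i) to kill $H^{d,0}(\fm,\fg)$, and deduce $(\fg_0)_{\bar 0}$-equivariance from that of the morphisms in \eqref{eq:shortexact}. You have simply spelled out in full the identifications of the degree-zero terms and the role of the connecting homomorphism that the paper leaves implicit.
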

 Note that the mixed contact grading is compatible with the decomposition
 \begin{equation*}
\label{eq:decomp}
\fg = \fg_{\bar 0}\oplus\fg_{\bar 1}
      = \big(\fso(7)\oplus\fsp(2)\big)\oplus \big(\mathbb S\boxtimes\CC^2\big)\;;
 \end{equation*}
more precisely it induces the contact grading 
 \raisebox{-0.08in}{$
\begin{tikzpicture}
 \draw (1,0) -- (2,0);
 \draw (2,0.07) -- (3,0.07);
 \draw (2,-0.07) -- (3,-0.07); 
 \draw (2.4,0.15) -- (2.6,0) -- (2.4,-0.15);
 \node[draw,circle,inner sep=2pt,fill=white] at (1,0) {};
 \node[draw,circle,inner sep=2pt,fill=white] at (2,0) {};
 \node[draw,circle,inner sep=2pt,fill=white] at (3,0) {};
\node[below] at (2,-0.1) {$\times$};
 \end{tikzpicture}
 $}
on $\fso(7)$, the ideal $\fsp(2)$ of $\fg_{\bar 0}$ sits all in degree zero and the odd part $\fg_{\bar 1}$ has no graded components in degrees $\pm 2$. In particular, $Z$ is an element of $\fso(7)$, which coincides precisely with the grading element of  \raisebox{-0.08in}{$
\begin{tikzpicture}
 \draw (1,0) -- (2,0);
 \draw (2,0.07) -- (3,0.07);
 \draw (2,-0.07) -- (3,-0.07); 
 \draw (2.4,0.15) -- (2.6,0) -- (2.4,-0.15);
 \node[draw,circle,inner sep=2pt,fill=white] at (1,0) {};
 \node[draw,circle,inner sep=2pt,fill=white] at (2,0) {};
 \node[draw,circle,inner sep=2pt,fill=white] at (3,0) {};
\node[below] at (2,-0.1) {$\times$};
 \end{tikzpicture}
 $},
and $\fm_{\bar 0}=(\fm_{-1})_{\bar 0}\oplus (\fm_{-2})_{\bar 0}$ is the Heisenberg algebra
with $(\fm_{-1})_{\bar 0}\cong \mathbb C^2\boxtimes\odot^2\mathbb C^2$ and $(\fm_{-2})_{\bar 0}\cong\mathbb C$ as $\mathfrak{sl}(2)\oplus\mathfrak{sl}(2)$-modules. (Our conventions are consistent with the given ordering of the three copies of $\fsl(2)$ in $(\fg_0)_{\bar 0}\cong \mathbb C Z\oplus\mathfrak{sl}(2)\oplus\mathfrak{sl}(2)\oplus\mathfrak{sp}(2)$, namely the simple roots of $\fso(7)$ are $\alpha_3$, $\alpha_4$ and $\alpha_1+2\alpha_2+\alpha_3$.) Since $\fms{0} \subset \fg_{\bar{0}} = \fso(7)\oplus \fsp(2)$ is the symbol of the contact grading 
of $\fso(7)$ and $\fsp(2)$ sits in degree zero, we can use Kostant's version of the Bott--Borel--Weil theorem \cite{Kos} to compute the Lie algebra cohomology group $H^1(\fms{0},\fg)$ as a module for $(\fg_0)_{\bar{0}}$. 
 
Now, a direct application of Kostant's version of the Bott--Borel--Weil theorem tells us that, as $\fsl(2)\oplus\fsl(2)$-modules:
\begin{equation*}
\begin{aligned}
H^{d,1}(\fm_{\bar 0},\fso(7))&\cong
\begin{cases}\odot^2 \CC^2\boxtimes\odot^4\CC^2\;\text{if}\; d=0\;\\ 
0\;\;\,\,\text{for all}\; d> 0
\end{cases}\\
H^{d,1}(\fm_{\bar 0},\CC)&\cong
\begin{cases} \CC^2\boxtimes\odot^2\CC^2\;\text{if}\; d=1\;\\
0\;\;\,\,\text{for all}\; d\geq 0, d\neq 1
\end{cases}\\
H^{d,1}(\fm_{\bar 0},\mathbb S)&\cong
\begin{cases} 
\CC^2\boxtimes \odot^3\CC^2\;\;\,\,\text{if}\; d=0\;\\
0\;\;\,\,\text{for all}\; d>0
\end{cases}
\end{aligned}
\end{equation*}
so that, thanks to \eqref{eq:decomp},  the only non-trivial homogeneous components of $H^{1}(\fm_{\bar 0},\fg)$ are given by the following $(\fg_0)_{\bar 0}$-modules:
\begin{equation*}
\begin{aligned}
H^{0,1}(\fm_{\bar 0},\fg)_{\bar 0}&\cong \odot^2\CC^2\boxtimes\odot^4\CC^2\boxtimes\CC\\
H^{0,1}(\fm_{\bar 0},\fg)_{\bar 1}&\cong \CC^2\boxtimes\odot^3\CC^2\boxtimes\CC^2\\
H^{1,1}(\fm_{\bar 0},\fg)_{\bar 0}&\cong \CC^2\boxtimes\odot^2\CC^2\boxtimes\fsp(2).
\end{aligned}
\end{equation*}

The following is a consequence of the above discussion, Proposition \ref{prop:longexact}, Lemma \ref{lem:centralizers}(ii), and the immediate fact that the centralizers of $\fm_{\bar 0}$ and 
$(\fm_{-1})_{\bar 0}$ coincide for the mixed contact grading.
\begin{proposition}
\label{prop:3exse}
There exist long exact sequences of $(\fg_0)_{\bar 0}$-modules
\begin{equation*}
\begin{split}
0\longrightarrow \fsp(2)&\longrightarrow H^{0,1}(\fm_{\bar 1},\fg)_{\bar 0}\longrightarrow H^{0,1}(\fm, \fg)_{\bar 0}\longrightarrow
\odot^2\CC^2\boxtimes\odot^4\CC^2\boxtimes\CC\\
0&\longrightarrow H^{0,1}(\fm_{\bar 1},\fg)_{\bar 1}\longrightarrow H^{0,1}(\fm, \fg)_{\bar 1}\longrightarrow
\CC^2\boxtimes\odot^3\CC^2\boxtimes\CC^2
\end{split}
\end{equation*}
\begin{equation*}
\begin{split}
\label{sequenzaIII}
\qquad\qquad\,\; 0&\longrightarrow  H^{1,1}(\fm_{\bar 1},\fg)_{\bar 0}\longrightarrow H^{1,1}(\fm, \fg)_{\bar 0}\longrightarrow \CC^2\boxtimes\odot^2\CC^2\boxtimes\fsp(2)\\
\qquad\qquad\,\; 0&\longrightarrow  H^{1,1}(\fm_{\bar 1},\fg)_{\bar 1}\longrightarrow H^{1,1}(\fm, \fg)_{\bar 1}\longrightarrow 0
\end{split}
\end{equation*}
and
\begin{equation*}
\begin{split}
\label{sequenzaIV}
0\longrightarrow H^{d,1}(\fm_{\bar 1},\fg)\longrightarrow H^{d,1}(\fm, \fg)\longrightarrow 0
\end{split}
\end{equation*}
for all $d>1$.
\end{proposition}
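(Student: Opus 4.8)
The plan is to assemble all five sequences directly from the single long exact sequence of Proposition \ref{prop:longexact}, by identifying the centralizer term $\xi^d_{\fg}(\fm_{\bar 0})$ in each relevant degree $d$ and inserting the values of $H^{d,1}(\fm_{\bar 0},\fg)$ computed above via Kostant's theorem. Because every morphism in Proposition \ref{prop:longexact} is $(\fg_0)_{\bar 0}$-equivariant, it preserves parity, so the whole sequence splits into its even and odd summands, and it suffices to handle each parity independently.

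The first step is to record the graded pieces of the centralizer. By the coincidence of the centralizers of $\fm_{\bar 0}$ and $(\fm_{-1})_{\bar 0}$, Lemma \ref{lem:centralizers}(ii) gives $\fm_{-2}\oplus(\fm_{-1})_{\bar 1}\oplus\fsp(2)$ for the centralizer of $\fm_{\bar 0}$ in $\fg$. Reading off $\bbZ$-degrees in the mixed contact grading, $\fm_{-2}$ lies in degree $-2$, $(\fm_{-1})_{\bar 1}$ in degree $-1$, and $\fsp(2)$ in degree $0$. Consequently, for $d\geq 0$ we have $\xi^0_{\fg}(\fm_{\bar 0})=\fsp(2)$, which is purely even, while $\xi^d_{\fg}(\fm_{\bar 0})=0$ for all $d\geq 1$.

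It then remains to substitute parity by parity. For $d=0$, the even part of Proposition \ref{prop:longexact} opens with $\fsp(2)$ and terminates in $H^{0,1}(\fm_{\bar 0},\fg)_{\bar 0}\cong\odot^2\CC^2\boxtimes\odot^4\CC^2\boxtimes\CC$, giving the first sequence; the odd part has trivial centralizer (since $\fsp(2)$ is even) and terminates in $H^{0,1}(\fm_{\bar 0},\fg)_{\bar 1}\cong\CC^2\boxtimes\odot^3\CC^2\boxtimes\CC^2$, giving the second. For $d=1$ the centralizer term vanishes, so both parities begin at $H^{1,1}(\fm_{\bar 1},\fg)$; the even part terminates in $H^{1,1}(\fm_{\bar 0},\fg)_{\bar 0}\cong\CC^2\boxtimes\odot^2\CC^2\boxtimes\fsp(2)$, and the odd part in $H^{1,1}(\fm_{\bar 0},\fg)_{\bar 1}=0$, producing the third and fourth sequences. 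Finally, for $d>1$ both $\xi^d_{\fg}(\fm_{\bar 0})$ and $H^{d,1}(\fm_{\bar 0},\fg)$ vanish, so Proposition \ref{prop:longexact} collapses to the isomorphism $H^{d,1}(\fm_{\bar 1},\fg)\cong H^{d,1}(\fm,\fg)$ recorded in the last sequence.

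I do not anticipate any genuine obstacle: the entire content is carried by Proposition \ref{prop:longexact} and the Kostant computation, and the proof is pure assembly. The only point demanding attention is the degree-and-parity bookkeeping for the centralizer — in particular, that $\fsp(2)$ contributes only in degree $0$ and only to the even part, which is precisely what produces the leading $\fsp(2)$ term in the first sequence and nowhere else.
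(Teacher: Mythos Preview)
Your proposal is correct and follows exactly the approach the paper indicates: the paper's own proof is a single sentence invoking Proposition \ref{prop:longexact}, Lemma \ref{lem:centralizers}(ii), the coincidence of the centralizers of $\fm_{\bar 0}$ and $(\fm_{-1})_{\bar 0}$, and the Kostant computations of $H^{d,1}(\fm_{\bar 0},\fg)$ displayed just before the proposition. You have simply written out the degree-and-parity bookkeeping that the paper leaves implicit, and done so accurately.
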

To prove our main cohomological result, it is now mostly a matter of determining the groups $H^{d,1}(\fm_{\bar 1},\fg)$ of the complex of Definition \ref{def:complexM1}, which is a rather straightforward task.
\begin{theorem}
\label{thm:235H^1}
Let $\fg=\fg_{-2}\oplus\cdots\oplus\fg_{2}$ be the mixed contact grading of $\fg=F(4)$, with associated parabolic subalgebra $\fg_{\geq 0}=\fp_4^{\rm VI}$ and Levi factor $\fg_0\cong \mathbb CZ\oplus\mathfrak{osp}(4|2;\alpha)$, where $\alpha=2$. Then
$H^{d,1}(\fm, \fg)=0$ for all $d>0$, while $H^{0,1}(\fm,\fg)$ is an irreducible $\fg_0$-module of dimension $(18|16)$. In particular $\fg_0$ is a maximal subalgebra of the Lie superalgebra $\der_{\gr}(\fm)\cong\mathbb C Z\oplus \mathfrak{spo}(6|4)$ 
of the zero-degree derivations  of $\fm$.
\end{theorem}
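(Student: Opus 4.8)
The plan is to compute $H^{d,1}(\fm,\fg)$ through the long exact sequences of Proposition \ref{prop:3exse}, which express it in terms of the already-determined classical groups $H^{d,1}(\fm_{\bar 0},\fg)$ and the auxiliary groups $H^{d,1}(\fm_{\bar 1},\fg)$ of the complex $C^\bullet(\fm_{\bar 1},\fg)$ of Definition \ref{def:complexM1}. The decisive simplification is that $[\fm_{\bar 0},\fm_{\bar 1}]=0$: the brackets $(\fm_{-1})_{\bar 0}\times(\fm_{-1})_{\bar 1}\to\fm_{-2}$ vanish by parity, as $\fm_{-2}$ is even, and $\fm_{-2}$ is central. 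Since $C^0(\fm_{\bar 1},\fg)=0$, we have $H^{d,1}(\fm_{\bar 1},\fg)=\ker\big(\partial\colon \fg_{d-1}\otimes\fm_{\bar 1}^*\to C^{d,2}(\fm_{\bar 1},\fg)\big)$, and for such a cochain $\varphi$ the mixed-input condition $\partial\varphi(X,Y)=[X,\varphi(Y)]-\varphi([X,Y])=0$ with $X\in\fm_{\bar 0}$, $Y\in\fm_{\bar 1}$ collapses to $[X,\varphi(Y)]=0$. Hence $\varphi$ takes values in the centralizer $\xi^{d-1}_\fg(\fm_{\bar 0})$, which by Lemma \ref{lem:centralizers} is concentrated in degrees $-2,-1,0$, being $\fm_{-2}\oplus(\fm_{-1})_{\bar 1}\oplus\fsp(2)$.

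This at once gives $H^{d,1}(\fm_{\bar 1},\fg)=0$ for $d\geq 2$, whence $H^{d,1}(\fm,\fg)=0$ for $d\geq 2$ by the last sequence of Proposition \ref{prop:3exse}. For $d=1$ the values lie in $\fsp(2)\subset(\fg_0)_{\bar 0}$: even cochains die immediately by parity, while an odd cochain $\varphi\colon\fm_{\bar 1}\to\fsp(2)$ must also satisfy the odd-input condition $[X,\varphi(Y)]+[Y,\varphi(X)]=0$; polarizing yields $\varphi(X)\cdot X=0$ for all $X$, and since $\fsp(2)=\fsl(\CC^2)$ acts on the $4$-dimensional odd space $\fm_{\bar 1}\cong\CC\boxtimes\CC^2\boxtimes\CC^2$ through the last factor, an elementary $\fsl(2)$-argument (the analogue here of Proposition \ref{prop:omega-vanishes}) forces $\varphi=0$. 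Thus $H^{1,1}(\fm_{\bar 1},\fg)=0$, and the odd $d=1$ sequence gives $H^{1,1}(\fm,\fg)_{\bar 1}=0$. For $d=0$ the same parity dichotomy gives $H^{0,1}(\fm_{\bar 1},\fg)_{\bar 1}=0$, while an even cochain lands in $(\fm_{-1})_{\bar 1}\cong\fm_{\bar 1}$ and its odd-input condition becomes $\eta(X,\varphi Y)+\eta(Y,\varphi X)=0$; as $\eta$ restricts to a nondegenerate symmetric form on the odd space $\fm_{\bar 1}$, this says exactly $\varphi\in\fso(\fm_{\bar 1})$, so $H^{0,1}(\fm_{\bar 1},\fg)_{\bar 0}\cong\fso(4)$.

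It remains to handle the even $d=1$ case, where the classical group $H^{1,1}(\fm_{\bar 0},\fg)_{\bar 0}\cong\CC^2\boxtimes\odot^2\CC^2\boxtimes\fsp(2)$ is nonzero and a priori could survive. The first sequence of Proposition \ref{prop:3exse}, together with $H^{1,1}(\fm_{\bar 1},\fg)_{\bar 0}=0$, realizes $H^{1,1}(\fm,\fg)_{\bar 0}$ as a $(\fg_0)_{\bar 0}$-submodule of this irreducible module. Rather than compute the connecting homomorphism directly, I exploit the full $\fg_0$-module structure: $H^{1,1}(\fm,\fg)$ is a $\fg_0$-module whose odd part vanishes by the previous step, so $(\fg_0)_{\bar 1}$ — and hence $[(\fg_0^{\ss})_{\bar 1},(\fg_0^{\ss})_{\bar 1}]=(\fg_0^{\ss})_{\bar 0}$, by simplicity of $\fosp(4|2;\alpha)$ — must annihilate its even part; but $\CC^2\boxtimes\odot^2\CC^2\boxtimes\fsp(2)$ is a faithful $(\fg_0^{\ss})_{\bar 0}$-module, so the even part is forced to vanish. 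This yields $H^{1,1}(\fm,\fg)_{\bar 0}=0$ and completes the proof that $H^{d,1}(\fm,\fg)=0$ for all $d>0$. I expect this interplay — the nonvanishing classical cohomology being killed by superalgebra consistency — to be the main subtlety; the cochain computations above are routine.

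Finally, for $H^{0,1}(\fm,\fg)$ I use the general identification of degree-zero Spencer cohomology with outer derivations, $H^{0,1}(\fm,\fg)\cong\der_{\gr}(\fm)/\fg_0$. Since $\der_{\gr}(\fm)\cong\fcspo(\fg_{-1})=\CC Z\oplus\fspo(6|4)$ has dimension $(28|24)$ and $\fg_0$ has dimension $(10|8)$, the quotient has dimension $(18|16)$. Its $(\fg_0)_{\bar 0}$-structure is read off from the $d=0$ sequences of Proposition \ref{prop:3exse}, where this dimension count forces both connecting maps to vanish: the even part is $(\fso(4)/\fsp(2))\oplus(\odot^2\CC^2\boxtimes\odot^4\CC^2\boxtimes\CC)$ and the odd part is $\CC^2\boxtimes\odot^3\CC^2\boxtimes\CC^2$. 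Irreducibility as a $\fg_0$-module then follows from a submodule argument: a proper nonzero submodule either has vanishing odd part — impossible, since its nonzero even part would be a faithful $(\fg_0^{\ss})_{\bar 0}$-module annihilated by $(\fg_0^{\ss})_{\bar 0}$ as above — or contains the whole (irreducible) odd part, in which case one checks, using the explicit brackets, that $(\fg_0)_{\bar 1}$ applied to the odd part reaches both even $(\fg_0)_{\bar 0}$-summands, forcing the submodule to be everything. The maximality of $\fg_0$ in $\der_{\gr}(\fm)$ is then immediate, since any intermediate subalgebra would produce a proper nonzero $\fg_0$-submodule of the irreducible quotient $H^{0,1}(\fm,\fg)$.
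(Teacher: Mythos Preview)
Your proof is correct and follows essentially the same strategy as the paper: compute $H^{d,1}(\fm_{\bar 1},\fg)$ via the centralizer constraint, feed this into the long exact sequences of Proposition~\ref{prop:3exse}, and kill the surviving even piece at $d=1$ by the $\fg_0$-module trick (a purely even $\fg_0$-module must be $(\fg_0^{\ss})_{\bar 0}$-trivial). The only substantive deviation is in the irreducibility of $H^{0,1}(\fm,\fg)$: the paper invokes Van der Jeugt's classification of irreducible $\fosp(4|2;\alpha)$-modules to conclude that any submodule containing the full odd part must have at least three even $(\fg_0)_{\bar 0}$-constituents, whereas you attempt a direct argument whose final step (``one checks, using the explicit brackets, that $(\fg_0)_{\bar 1}$ applied to the odd part reaches both even summands'') is left as an unverified computation. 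You can close this cleanly without any computation by reusing your own $d=1$ trick on the \emph{quotient}: if a proper submodule $M$ contained the whole odd part, then $H^{0,1}(\fm,\fg)/M$ would be a nonzero purely even $\fg_0$-module, hence $(\fg_0^{\ss})_{\bar 0}$-trivial, contradicting that both even summands $\CC\boxtimes\fsl(2)\boxtimes\CC$ and $\odot^2\CC^2\boxtimes\odot^4\CC^2\boxtimes\CC$ are faithful $(\fg_0^{\ss})_{\bar 0}$-modules.
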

\begin{proof}
If $d>4$ then $C^{d,1}(\fm,\fg)=0$ just by degree reasons. We will then deal with $d=0,\ldots, 4$.

We start by showing that $H^{d,1}(\fm_{\bar 1},\fg)=0$ for $d=1,\ldots, 4$. The groups in question consist just of cocycles $\varphi\in\Hom(\fm_{\bar 1},\fg)$ (we recall that $C^{0}(\fm_{\bar 1},\fg)=0$ by definition).
We first note that
$0=\partial\varphi(X,Y) 
=[X,\varphi(Y)]$ for all $X\in\fm_{\bar 0}$ and $Y\in\fm_{\bar 1}$, so that
$
\varphi(\fm_{\bar 1})\subset\fm_{-2}\oplus\fm_{\bar 1}\oplus\mathfrak{sp}(2)
$
thanks to $\xi_{\fg}(\fm_{\bar 0})=\xi_{\fg}((\fm_{-1})_{\bar 0})$ and Lemma \ref{lem:centralizers}. In particular $H^{d,1}(\fm_{\bar 1},\fg)=0$ for $d=2,3,4$, so that $H^{d,1}(\fm,\fg)=0$ for $d=2,3,4$ too by Proposition \ref{prop:3exse}.

Let now $d=1$ and decompose
$\varphi=\varphi_{\bar 0}+\varphi_{\bar 1}$ into the sum of its even $\varphi_{\bar 0}:\fm_{\bar 1}\to (\fg_0)_{\bar 1}$ and odd components 
$\varphi_{\bar 1}:\fm_{\bar 1}\to (\fg_0)_{\bar 0}$, which are both cocycles. By our discussion above $\varphi$ takes values in $\fsp(2)$, i.e., $\varphi_{\bar 0}=0$ and $\varphi_{\bar 1}:\fm_{\bar 1}\to \fsp(2)$; moreover
$
0=\partial\varphi_{\bar 1}(X,Y)=-[X,\varphi_{\bar 1}(Y)] -[Y,\varphi_{\bar 1}(X)]
$
for all $X,Y\in\fm_{\bar 1}$, hence $[X,\varphi_{\bar 1}(Y)]=-[Y,\varphi_{\bar 1}(X)]$.  Now $\fm_{\bar 1}\cong \mathbb C\boxtimes\mathbb C^2\boxtimes\CC^2$, so that the space of cocycles $Z^{1,1}(\fm_{\bar 1},\fg)\cong \bbC^*\boxtimes(\bbC^2)^*\boxtimes Z^{1,1}(\mathbb C^{0|2},\fsp(2))$ as a $(\fg_{0})_{\bar 0}$-module, where $Z^{1,1}(\mathbb C^{0|2},\fsp(2))$ is the first prolongation of $\fsp(2)$ acting on the purely odd $2$-dimensional space $\mathbb C^{0|2}$. It is well-known that this prolongation is trivial, see e.g. \cite{MR2552682}, so that $\varphi_{\bar 1}=0$ too and $H^{1,1}(\fm_{\bar1},\fg)=Z^{1,1}(\fm_{\bar 1},\fg)=0$.  (Recall that $B^{1,1}(\fm_{\bar 1},\fg) = 0$ since $C^0(\fm_{\bar 1},\fg) = 0$.)

Proposition \ref{prop:3exse} then tells us $H^{1,1}(\fm,\fg)_{\bar 1}=0$ and that there exists a short exact sequence of $(\fg_0)_{\bar 0}$-modules
\begin{equation*}
\begin{split}
0\longrightarrow H^{1,1}(\fm, \fg)_{\bar 0}\longrightarrow \CC^2\boxtimes\odot^2\CC^2\boxtimes\fsp(2)\;.
\end{split}
\end{equation*}
By irreducibility $H^{1,1}(\fm,\fg)_{\bar 0}$ either vanishes or it is an irreducible $(\fg_0)_{\bar 0}$-module
isomorphic to $\CC^2\boxtimes\odot^2\CC^2\boxtimes\fsp(2)$. 
Interestingly, we may set this dichotomy
without any computation, thanks to the fact that $H^{1,1}(\fm,\fg)$ carries a representation of the Lie superalgebra $\fg_0\cong \mathbb C Z\oplus \mathfrak{osp}(4|2;\alpha)$, and not just of its even part $(\fg_0)_{\bar 0}$. Since $H^{1,1}(\fm,\fg)$ is purely even, the odd part of 
$\mathfrak{osp}(4|2;\alpha)$ acts trivially, so does the whole of $\mathfrak{osp}(4|2;\alpha)$ (as its even part is generated by the odd one). Therefore
$H^{1,1}(\fm,\fg)_{\bar 0}=0$, concluding the proof of the first claim of the theorem.

It remains to study the case $d=0$. By definition, the group $H^{0,1}(\fm,\fg)$ is the quotient of the Lie superalgebra 
$\der_{\gr}(\fm)$ of the zero-degree derivations  of $\fm$ by $\fg_0\cong \mathbb CZ\oplus\mathfrak{osp}(4|2;\alpha)$. Clearly 
$\der_{\gr}(\fm)\cong\mathbb C Z\oplus \mathfrak{spo}(6|4)$ and a simple dimension count shows that $\dim H^{0,1}(\fm,\fg)=(18|16)$.

 A line of arguments close to the one used for $H^{d,1}(\fm_{\bar 1},\fg)=0$ for $d>0$ says 
$H^{0,1}(\fm_{\bar 1},\fg)_{\bar 1}=0$ and that
$
H^{0,1}(\fm_{\bar 1},\fg)_{\bar 0}\cong \big(\mathbb C\boxtimes \fsl(2)\boxtimes\CC\big)\oplus\big(\CC\boxtimes\CC\boxtimes\fsp(2)\big)
$,
 as endomorphisms of $\fm_{\bar 1}\cong\CC\boxtimes \CC^2\boxtimes\CC^2$ that, in addition, act trivially on $\fm_{\bar 0}$.  Proposition \ref{prop:3exse}
then gives two exact sequences
\begin{equation*}
\begin{split}
0&\longrightarrow \big(\CC\boxtimes\CC\boxtimes\fsp(2)\big)\longrightarrow
 \big(\mathbb C\boxtimes \fsl(2)\boxtimes\CC\big)\oplus\big(\CC\boxtimes\CC\boxtimes\fsp(2)\big)\longrightarrow
\\
&\phantom{\longrightarrow \big(\CC\boxtimes\CC\boxtimes\fsl(2)\big)\;}\longrightarrow
 H^{0,1}(\fm, \fg)_{\bar 0}\longrightarrow
\odot^2\CC^2\boxtimes\odot^4\CC^2\boxtimes\CC\\
0&\longrightarrow H^{0,1}(\fm, \fg)_{\bar 1}\longrightarrow
\CC^2\boxtimes\odot^3\CC^2\boxtimes\CC^2
\end{split}
\end{equation*}
and using $(\fg_0)_{\bar 0}$-equivariance and dimension counting we arrive at 
\begin{equation}
\label{eq:even-and-odd}
\begin{split}
H^{0,1}(\fm, \fg)_{\bar 0}&\cong
(\odot^2\CC^2\boxtimes\odot^4\CC^2\boxtimes\CC)\oplus\big(\mathbb C\boxtimes \fsl(2)\boxtimes\CC\big)\;,\\
H^{0,1}(\fm, \fg)_{\bar 1}&\cong
\CC^2\boxtimes\odot^3\CC^2\boxtimes\CC^2\;.
\end{split}
\end{equation}
 Let now $M=M_{\bar 0}\oplus M_{\bar 1}$ be a non-zero $\mathfrak{osp}(4|2;\alpha)$-irreducible submodule of $H^{0,1}(\fm, \fg)$. Note that $M$ cannot be purely even or odd, otherwise $\mathfrak{osp}(4|2;\alpha)_{\bar 1}$ would act trivially on $M$ and so would $\mathfrak{osp}(4|2;\alpha)$, contradicting \eqref{eq:even-and-odd}. Then $M_{\bar 1}=H^{0,1}(\fm, \fg)_{\bar 1}$, by $\mathfrak{osp}(4|2;\alpha)_{\bar 0}$-irreducibility.
It is now a matter of going through the list of all irreducible $\mathfrak{osp}(4|2;\alpha)$-modules that, upon branching to $\mathfrak{osp}(4|2;\alpha)_{\bar 0}$, include $H^{0,1}(\fm, \fg)_{\bar 1}\cong\CC^2\boxtimes\odot^3\CC^2\boxtimes\CC^2$ and at least one irreducible component of $H^{0,1}(\fm, \fg)_{\bar 0}$. According to \cite[\S 5]{MR787332} (with the caution of considering all $\alpha$'s on the same orbit of $\alpha=2$ under the action of the permutation group $\mathfrak{S}_3$
generated by $\alpha\to \alpha^{-1}$ and $\alpha\to -(1+\alpha)$), any such module has at least {\it three} irreducible components for $\mathfrak{osp}(4|2;\alpha)_{\bar 0}$. It follows that $M=H^{0,1}(\fm,\fg)$, i.e., $H^{0,1}(\fm,\fg)$ is irreducible. In the convention of \cite[pag. 195 and eq. (5.6) at pag. 197]{MR787332}, our module is labeled by the triple $(p,q,r)=(\tfrac{1}{2},\tfrac{3}{2},\tfrac{1}{2})$ for $\alpha=2$ (equivalently for $(\sigma_1,\sigma_2,\sigma_3)=(3,-1,-2)$).  

The claim on the maximality of the embedding $\mathbb C Z\oplus\mathfrak{osp}(4|2;\alpha)\subset \mathbb C Z\oplus\mathfrak{spo}(6|4)$ is clear.
\end{proof}
\begin{cor} \label{C:pr-235}
  Let $\fg$ be as in Theorem \ref{thm:235H^1}, then $\fg \cong \pr(\fm,\fg_0)$.
 \end{cor}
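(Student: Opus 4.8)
The plan is to obtain the corollary as a purely formal consequence of the cohomological vanishing already in hand. First I would recall the abstract prolongation criterion stated in \S\ref{sec:SCG}: for a $\bbZ$-graded Lie superalgebra $\fg = \fg_- \op \fg_0 \op \fg_+$ with symbol $\fm$, one has $\fg \cong \pr(\fm,\fg_0)$ \emph{if and only if} $H^{d,1}(\fm,\fg) = 0$ for all $d > 0$. This is the super-analogue of Tanaka's identification of the prolongation with the maximal effective graded extension of the pair $(\fm,\fg_0)$, and its proof carries over verbatim to the $\bbZ_2$-graded setting.

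Next I would simply invoke Theorem \ref{thm:235H^1}, which establishes precisely that $H^{d,1}(\fm,\fg) = 0$ for all $d > 0$ in the mixed contact grading, where $\fg_0 \cong \bbC Z \oplus \fosp(4|2;\alpha)$ with $\alpha = 2$. Feeding this vanishing into the criterion of the previous paragraph yields $\fg \cong \pr(\fm,\fg_0)$ at once, completing the argument.

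I do not anticipate any genuine obstacle at the level of the corollary itself: all the substance resides in Theorem \ref{thm:235H^1}, whose proof controls $H^{d,1}(\fm,\fg)$ by splitting the complex via the restriction sequence \eqref{eq:shortexact}, reducing to the classical groups $H^{d,1}(\fm_{\bar 0},\fg)$ computed by Kostant's theorem together with the long exact sequence of Proposition \ref{prop:longexact} and the vanishing of the odd subcomplex groups $H^{d,1}(\fm_{\bar 1},\fg)$. Consequently I would keep the proof of the corollary to a single sentence, citing Theorem \ref{thm:235H^1} and the prolongation criterion, and resist any temptation to reprove maximality by hand.
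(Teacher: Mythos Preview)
Your proposal is correct and matches the paper's approach exactly: the corollary is stated without proof immediately after Theorem \ref{thm:235H^1}, relying on the equivalence $H^{d,1}(\fm,\fg)=0$ for all $d>0$ $\Leftrightarrow$ $\fg\cong\prn(\fm,\fg_0)$ recorded as a Proposition in \S\ref{sec:SCG}. Your instinct to keep it to a single sentence is right.
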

\subsection{Spencer cohomology of the odd contact grading}
This is the case with associated parabolic subalgebra $\fp=\fp^{I}_1$, and we use the conventions and results from \S\ref{eq:spinorial-description}. We immediately note that $H^{d,0}(\fm,\fg)=0$ for all $d\geq 0$ (the proof of (i) of Lemma \ref{lem:centralizers} carries over here unchanged) and that the reduction of the structure algebra from $\mathfrak{co}(\mathbb S)$ to $\fg_0\cong\mathfrak{co}(\bbV)$ (acting via the spinor representation) is encoded in 
\begin{equation*}
\begin{aligned}
H^{0,1}(\fm,\fg)&=\der_{\gr}(\fm)/\fg_0
\cong\mathfrak{co}(\mathbb S)/\fg_0
\\
&\cong\big(\Lambda^0\bbV\oplus\Lambda^2\mathbb S\big)/\big(\Lambda^0\bbV\oplus\Lambda^2 \bbV\big)\cong\Lambda^1 \bbV,
\end{aligned}
\end{equation*}
which is $\fg_0$-irreducible.  The following main result subsumes the first Spencer cohomology group for all non-negative degrees.

\begin{theorem}
\label{sec:Spencer}
 Let $\fg=\fg_{-2}\oplus\cdots\oplus\fg_{2}$ be the odd contact grading of $\fg = F(4)$, with parabolic subalgebra $\fg_{\geq 0}=\fp_1^{\rm I}$ and Levi factor $\fg_0\cong\mathfrak{co}(\bbV)$. Then $H^{d,1}(\fm,\fg)=0$ for all $d>0$, whereas
 $H^{0,1}(\fm,\fg)\cong 	\Lambda^1 \bbV$ as an $\fso(\bbV)$-module.  In particular $\fg_0$ is a maximal subalgebra of the Lie superalgebra $\der_{\gr}(\fm)\cong\mathfrak{co}(\mathbb S)$ 
of the zero-degree derivations  of $\fm$.
\end{theorem}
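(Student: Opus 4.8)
The plan is to compute $H^{d,1}(\fm,\fg)$ directly in each degree, exploiting that the symbol is small ($\fm_{\bar 0}=\fg_{-2}\cong\CC\one$ and $\fm_{\bar 1}=\fg_{-1}\cong\mathbb S$), so that the whole complex is finite-dimensional and completely controlled by the explicit brackets of Proposition \ref{prop:explicit-spinorial}. Writing a $1$-cochain of degree $d$ as a pair $(\varphi_{-1},\varphi_{-2})$ with $\varphi_{-1}\in\Hom(\fg_{-1},\fg_{d-1})$ and $\varphi_{-2}\in\Hom(\fg_{-2},\fg_{d-2})$, one sees that $C^{d,1}(\fm,\fg)=0$ for $d>4$ by degree reasons, so only $d=1,2,3,4$ need attention; the statements for $d=0$ (including the identification with $\Lambda^1\bbV$ and the maximality of $\fg_0$) have already been obtained in the discussion preceding the theorem. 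Throughout I would use that every cochain, cocycle and coboundary space is a module for the reductive $\fg_0\cong\fco(\bbV)$, and that $\langle\cdot,\cdot\rangle$ is non-degenerate and symmetric on $\mathbb S$.

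The top degrees are immediate. For $d=3,4$ the coboundaries vanish since $\fg_3=\fg_4=0$, and evaluating the cocycle condition on a pair $s,t\in\fg_{-1}$ produces, after using $[s,t]=\langle s,t\rangle\one$ and the brackets into $\fg_1,\fg_2$, either a relation of the shape $\beta(t)\,s+\beta(s)\,t=\langle s,t\rangle v$ (for $d=3$) or an equation forcing a scalar to vanish (for $d=4$); non-degeneracy of $\langle\cdot,\cdot\rangle$ together with $\dim\mathbb S>2$ then forces the cochain to be zero, so $H^{3,1}=H^{4,1}=0$.

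The degree $d=1$ computation is the clean conceptual heart and is where Proposition \ref{prop:omega-vanishes} enters. A degree-$1$ cocycle is a pair $(\omega,w)$ with $\omega\colon\fg_{-1}\to\fg_0=\fso(\bbV)\oplus\CC Z$ and $w\in\fg_{-1}$; subtracting the coboundary $\partial(u^\dagger)$ for a suitable $u\in\mathbb S$, I may assume $w=0$. The mixed condition on $(\one,t)\in\fg_{-2}\times\fg_{-1}$ then kills the $\CC Z$-component of $\omega$, leaving $\omega\colon\fg_{-1}\to\fso(\bbV)$, and the condition on $(s,t)\in\fg_{-1}\times\fg_{-1}$ reduces to $\sigma(\omega_t)s+\sigma(\omega_s)t=0$. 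Setting $s=t$ gives $\sigma(\omega_t)t=0$ for all $t$, so Proposition \ref{prop:omega-vanishes} yields $\omega=0$ and hence $H^{1,1}(\fm,\fg)=0$.

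The degree $d=2$ case is the main obstacle. Here the cochain is a pair consisting of a map $\fg_{-1}\to\fg_1$ and an element $D=D_0+\lambda Z\in\fg_0$ with $D_0\in\fso(\bbV)$; the $(\one,t)$-condition forces the first map to be $\sigma(D_0)-\lambda\,\id$, and the central parameter $\lambda$ is seen to account exactly for the one-dimensional coboundary $\partial(\one^\dagger)$. The surviving $(s,t)$-condition is an $\fso(\bbV)$-valued identity of the form $\tfrac13\big(\omega^{(2)}(\sigma(D_0)t,s)+\omega^{(2)}(\sigma(D_0)s,t)\big)=\langle s,t\rangle D_0$, which is $\fg_0$-equivariant in $D_0$. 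Since $\fso(\bbV)\cong\Lambda^2\bbV$ is an irreducible $\fg_0$-module, the space of admissible $D_0$ is either $0$ or all of $\fso(\bbV)$, and it suffices to exhibit a single violation: taking $s=t$ a null spinor (so $\langle t,t\rangle=0$) and one bivector $D_0$, the left-hand side $\tfrac23\omega^{(2)}(\sigma(D_0)t,t)$ is non-zero, which I would check either by a short $\Gamma$-matrix computation with $\langle\Gamma_{\mu\nu}t,\Gamma^{\mu\nu}t\rangle$-type contractions as in the proof of Proposition \ref{prop:omega-vanishes}, or by invoking transitivity of the spin action on the null quadric in $\bbP(\mathbb S)$. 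This forces $D_0=0$, whence $H^{2,1}(\fm,\fg)=0$. I expect this spinorial non-vanishing to be the only genuinely computational point: unlike the mixed contact case, $\fg_0$ is purely even here, so the short-cut ``the odd part of $\fg_0$ acts trivially on purely even cohomology, hence so does all of $\fg_0$'' is unavailable, and the explicit Clifford input cannot be bypassed. The remaining routine verifications I would relegate to Appendix \ref{S:Spencer-odd-contact-grading}.
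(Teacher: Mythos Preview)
Your proposal is correct and follows essentially the same route as the paper's proof in Appendix~\ref{S:Spencer-odd-contact-grading}: a degree-by-degree analysis using the explicit brackets of Proposition~\ref{prop:explicit-spinorial}, with the coboundary normalization in degrees $d=1,2$ and Proposition~\ref{prop:omega-vanishes} as the key input in degree $d=1$.

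The only substantive difference is in the $d=2$ step. The paper writes the surviving equation as $-\tfrac{1}{12}\omega^{\mu\nu}(\bar s\,\Gamma_{\alpha\beta\mu\nu}s)\Gamma^{\alpha\beta}-\tfrac{1}{3}(\bar s s)\omega^{\mu\nu}\Gamma_{\mu\nu}=0$ and then splits it along the isotypic decomposition $\odot^2\bbS\cong\Lambda^0\bbV\oplus\Lambda^4\bbV$ of the $s$-dependence, so the $\Lambda^0$-piece immediately gives $\omega=0$. You instead observe that the solution set in $D_0\in\fso(\bbV)$ is an $\fso(\bbV)$-submodule of the (irreducible) adjoint representation, reducing the question to exhibiting a single null $t$ and bivector $D_0$ with $\omega^{(2)}(\sigma(D_0)t,t)\neq 0$. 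Both arguments are valid and of comparable length; yours is a touch more conceptual but still needs a concrete spinor check, while the paper's isotypic split is more self-contained and avoids the dichotomy step. Your suggested ``transitivity of the spin action on the null quadric'' justification for the non-vanishing is a bit loose as stated---transitivity alone does not directly yield $\omega^{(2)}(\sigma(D_0)t,t)\neq 0$---so the short $\Gamma$-matrix verification you mention is the cleaner option.
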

\begin{proof}
Given the preliminary results from \S\ref{eq:spinorial-description} as Propositions \ref{prop:explicit-spinorial} and \ref{prop:omega-vanishes}, the proof goes over rather straightforwardly. It is therefore postponed to Appendix  \ref{S:Spencer-odd-contact-grading}.
\end{proof}
\begin{cor}
  Let $\fg$ be as in Theorem \ref{sec:Spencer}, then $\fg \cong \pr(\fm,\fg_0)$.
 \end{cor}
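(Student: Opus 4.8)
The plan is to deduce $\fg \cong \pr(\fm,\fg_0)$ as a purely formal consequence of the cohomological vanishing established in Theorem \ref{sec:Spencer}, by invoking the Tanaka--Weisfeiler prolongation criterion recalled at the beginning of \S\ref{sec:SCG}. That criterion asserts that, for a transitive $\mathbb{Z}$-graded Lie superalgebra $\fg = \fg_- \oplus \fg_0 \oplus \fg_+$ whose symbol $\fm = \fg_-$ is bracket-generated by $\fg_{-1}$, one has $\fg \cong \pr(\fm,\fg_0)$ if and only if $H^{d,1}(\fm,\fg) = 0$ for all $d > 0$. All the analytic content therefore lives in Theorem \ref{sec:Spencer}, whose proof is carried out in Appendix \ref{S:Spencer-odd-contact-grading}; the Corollary is then immediate once the structural hypotheses of the criterion are verified for the odd contact grading.

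First I would confirm that the odd contact grading genuinely meets those hypotheses. By the general discussion of \S\ref{sec:parabolicsubalgebras}, every grading attached to a parabolic $\fp = \fg_{\geq 0}$ is transitive in the sense of Tanaka and has symbol bracket-generated by $\fg_{-1}$; for $\fp = \fp^{\rm I}_1$ this is transparent from the spinorial model of Proposition \ref{prop:explicit-spinorial}, since the sole non-trivial bracket on $\fm = \fg_{-2} \oplus \fg_{-1}$ is $[s,t] = \langle s,t \rangle \1$ and the pairing $\langle -,- \rangle$ on $\mathbb{S}$ is non-degenerate, whence $[\fg_{-1},\fg_{-1}] = \fg_{-2}$. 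Moreover $\fg_0 \cong \mathfrak{co}(\bbV)$ embeds into $\der_{\gr}(\fm) \cong \mathfrak{co}(\mathbb{S})$ via the spinor representation, so $(\fm,\fg_0)$ is a bona fide pair to which the prolongation is associated, and $\fg$ is a transitive graded extension of it — hence a legitimate candidate for the maximal such extension in the sense of the definition recalled after the criterion.

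I would then simply apply the criterion: Theorem \ref{sec:Spencer} supplies exactly $H^{d,1}(\fm,\fg) = 0$ for all $d > 0$, which is the hypothesis of the relevant implication, yielding $\fg \cong \pr(\fm,\fg_0)$. As a sanity check one notes that this is fully consistent with the non-vanishing degree-zero group $H^{0,1}(\fm,\fg) \cong \Lambda^1 \bbV$: the degree-zero cohomology only records the gap between $\fg_0 \cong \mathfrak{co}(\bbV)$ and the full derivation algebra $\der_{\gr}(\fm) \cong \mathfrak{co}(\mathbb{S})$, and plays no role in the prolongation characterization, which constrains positive degrees alone.

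As for the main obstacle, at the level of the Corollary there is essentially none — it is a one-line deduction. The genuine difficulty has been absorbed into Theorem \ref{sec:Spencer}, where, because Kostant's version of the Bott--Borel--Weil theorem is unavailable in the super-setting, the vanishing of $H^{d,1}(\fm,\fg)$ in the delicate low degrees must be extracted by hand from the two spinorial inputs, namely the explicit brackets of Proposition \ref{prop:explicit-spinorial} and the injectivity statement of Proposition \ref{prop:omega-vanishes}. Thus the only points requiring care for the Corollary are to invoke the criterion with the correct pair $(\fm,\fg_0)$ and to have recorded the transitivity and bracket-generation of the odd contact grading as above.
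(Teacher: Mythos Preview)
Your proposal is correct and follows exactly the paper's intended reasoning: the Corollary is an immediate consequence of Theorem \ref{sec:Spencer} via the Tanaka criterion stated as a Proposition in \S\ref{sec:SCG}, and the paper accordingly gives it no separate proof. Your additional verification of transitivity and bracket-generation for the odd contact grading, and the remark on $H^{0,1}$, are accurate but go beyond what the paper records explicitly.
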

 
 \section{Jet-superspaces and contact vector fields}
 \label{S:J1J2}
 
For references on the theory of supermanifolds (superdistributions, superbundle, etc.), we refer the reader to \cite{Kos1977, Leites1980, KST2022}.

 Consider a contact supermanifold $(M,\cC)$, i.e., $M$ is a supermanifold and it is equipped with a corank $(1|0)$ maximally non-integrable superdistribution.  Locally, this means that $\cC = \ker(\sigma)$, where $\sigma$ is an even $1$-form on $M$, and $\eta = d\sigma|_\cC$ is a non-degenerate even $2$-form on $\cC$.  Since $\sigma$ is well-defined only up to scale, then the conformal class $[\eta]$ is distinguished, and we refer to this as a {\sl $\operatorname{CSpO}$-structure} on $\cC$. 
 
 Assuming $\dim M = (2p+1|2q)$, there exist local coordinates $(x^i,u,u_i)$ on $M$, $1 \leq i \leq p+q$, such that
 $\sigma = du - \sum_i (dx^i) u_i$.  Here, $u$ has even parity, while both $x^i$ and $u_i$ have parity denoted by $|i| \in \bbZ_2$, $p$ of which are even and $q$ of which are odd.  Letting $D_{x^i} := \partial_{x^i} + u_i \partial_u$, we have
 $\cC = \langle D_{x^i}, \partial_{u_i} \rangle$ and observe that $d\sigma =\sum_{i} dx^i \wedge du_i$.  (As usual, the exterior product is meant in the super-sense.)  Locally, $(M,\cC)$ is the {\sl first jet-superspace} $J^1(\bbC^{p|q},\bbC^{1|0})$.  For later reference, we will refer to any local frame of $\cC$ having the same components as $\{ D_{x^i}, \partial_{u_i} \}$ w.r.t. $f d\sigma$ for some even invertible superfunction $f$, as a {\sl $\operatorname{CSpO}$-framing}.
 
 The Lagrange--Grassmann bundle $\pi:\widetilde{M}\to M$ is the bundle over $M$ whose total space consists of all Lagrangian subspaces of $(\cC, [\eta])$. This can be made rigorous using the functor of points approach, in such a way that $L$ is a ``super-point'' of $\widetilde M$. We refer the reader to \cite{KST2021} for details. The Lagrange--Grassmann bundle inherits the tautological superdistribution $\widetilde \cC$ given by $\widetilde{\cC}|_L = (\pi_*)^{-1}(L)$, where $L$ is a Lagrangian subspace of $\cC$.  Locally, we take bundle-adapted coordinates $(x^i,u,u_i,u_{ij})$, where $u_{ij}$ is supersymmetric in $i,j$, i.e., $u_{ij} = (-1)^{|i||j|} u_{ji}$, so that $L = \langle \widetilde{D}_{x^i} \rangle$, with
 $\widetilde{D}_{x^i} := \partial_{x^i} + u_i \partial_u + \sum_{j}u_{ij} \partial_{u_j}$. Letting
 $\sigma_k := du_k - \sum_i(dx^i) u_{ik}$, we have $\widetilde\cC = \ker(\sigma,\sigma_k) = \langle \widetilde{D}_{x^i}, \partial_{u_{ij}} \rangle$.  Locally, $(\widetilde{M},\widetilde\cC)$ is the {\sl second jet-superspace} $J^2(\bbC^{p|q},\bbC^{1|0})$.
 
Finally, we define the bundle $p:\widecheck{M} \to \widetilde{M}$ with total space consisting of all subspaces in $\widetilde\cC=\langle \widetilde{D}_{x^i}, \partial_{u_{ij}} \rangle$ that are isotropic 
(w.r.t. the Levi form of $\widetilde\cC$) and complementary in $\widetilde\cC$ to the vertical superdistribution $\langle\partial_{u_{ij}} \rangle$ of $\pi:\widetilde{M} \to M$. The total space inherits a tautological superdistribution $\widecheck \cC$. Locally, we take bundle-adapted coordinates $(x^i,u,u_i,u_{ij},u_{ijk})$, where $u_{ijk}$ are supersymmetric in $i,j,k$, so that $L = \langle \widecheck{D}_{x^i} \rangle$, with
 $\widecheck{D}_{x^i} := \partial_{x^i} + u_i \partial_u + \sum_j u_{ij} \partial_{u_j} + \sum_{j\leq k} u_{ijk} \partial_{u_{jk}}$. Letting
 $\sigma_{jk}:= du_{jk} - \sum_i(dx^i) u_{ijk}$, we have $\widecheck\cC = \ker(\sigma,\sigma_k, \sigma_{jk}) = \langle \widecheck{D}_{x^i}, \partial_{u_{ijk}} \rangle$.  Locally, $(\widecheck{M},\widecheck\cC)$ is the {\sl third jet-superspace} $J^2(\bbC^{p|q},\bbC^{1|0})$.  The higher jet-superspaces are similarly constructed, but we will not need them in this paper.

 We summarize here the relevant jet-superspaces:
  \begin{align*}
 \begin{array}{ccll}
 \mbox{Space} & \mbox{Local model} & \mbox{Coordinates} & \mbox{Cartan superdistribution} \\ \hline
 \widecheck{M}^{\phantom{C^C}} & J^3(\bbC^{p|q},\bbC^{1|0}) & (x^i,u,u_i,u_{ij},u_{ijk}) & \widecheck\cC = \langle \widecheck{D}_{x^i}, \partial_{u_{ijk}} \rangle\\
 \widetilde{M}^{\phantom{C^C}} & J^2(\bbC^{p|q},\bbC^{1|0}) & (x^i,u,u_i,u_{ij}) & \widetilde\cC = \langle \widetilde{D}_{x^i}, \partial_{u_{ij}} \rangle \\
 M^{\phantom{C^C}} & J^1(\bbC^{p|q},\bbC^{1|0}) & (x^i,u,u_i) & \cC = \langle D_{x^i}, \partial_{u_i} \rangle
 \end{array}
 \end{align*}

We recall the following basic definition:
 \begin{definition}
 Let $\cD$ be a superdistribution on a supermanifold.  Its {\sl Cauchy characteristic space} is $\Ch(\cD) := \{ \bX \in \Gamma(\cD) : \cL_\bX \cD \subset \cD \}$.
 \end{definition}
 It is not difficult to see that the Cauchy characteristic spaces of the Cartan superdistributions just defined are trivial. 
 
 A {\sl contact supervector field} on the first jet-superspace $M$ is a vector field $\bS \in \mathfrak{X}(M)$ such that $\cL_\bS \cC \subset \cC$, and completely similar definitions can be given for supervector fields on higher jet-superspaces.  By naturality of the constructions, any contact supervector field $\bS$ on $M$ naturally {\sl prolongs} to a contact supervector field on a higher jet-superspace. Conversely, any contact vector field on a higher jet-superspace is projectable over a contact supervector field on $M$: this is the Lie--B\"acklund theorem, which arises from expressing vertical superdistributions in a covariant manner (e.g., the derived superdistribution $\widetilde\cC^2 := \widetilde\cC+[\widetilde\cC,\widetilde\cC] = \langle \widetilde{D}_{x^i}, \partial_{u_i}, \partial_{u_{ij}} \rangle$ has $\Ch(\widetilde\cC^2) = \langle \partial_{u_{ij}} \rangle$, which is preserved by contact supervector fields on $\widetilde M$).  
 
 Letting $\sigma= du - \sum_i (dx^i) u_i$ be as above, any contact vector field $\bS$ on $M$ is uniquely determined by a generating superfunction $f = \iota_\bS \sigma$. Conversely any superfunction on
$M$ determines a contact supervector field. The explicit expression of $\bS=\bS_f$ in terms of its generating superfunction $f$ is given by
\begin{align} \label{E:Sf}
 \bS_f = f\partial_u - \sum_{i=1}^{p+q} (-1)^{|i|(|f|+1)} (\partial_{u_i} f) D_{x^i} + \sum_{i=1}^{p+q} (-1)^{|i||f|} (D_{x^i} f) \partial_{u_i},
 \end{align}
 and the Lie bracket on supervector fields induces a {\sl Lagrange bracket} on superfunctions via $[\bS_f,\bS_g] = \bS_{[f,g]}$:
\begin{align} \label{E:LB}
 [f,g] = f\partial_u g - (-1)^{|f||g|} g\partial_u f + \sum_{i=1}^{p+q} (-1)^{|i||f|} (D_{x^i} f) \partial_{u_i} g - \sum_{i=1}^{p+q} (-1)^{|g|(|f|+|i|)} (D_{x^i} g) \partial_{u_i} f.
 \end{align}
 The {\sl prolongation} of $\bS_f$ to $\widetilde{M}$ is then given by
 \begin{align*} 
 \widetilde{\bS}_f &= \bS_f + \sum_{j\leq k} h_{jk} \partial_{u_{jk}}, \quad h_{jk} = (-1)^{(|j|+|k|)|f|} \widetilde{D}_{x^j} \widetilde{D}_{x^k} f,
 \end{align*}
 and similarly for the higher jet-superspaces, for instance:
 \begin{align*}
 \widecheck{\bS}_f &= \widetilde{\bS}_f + \sum_{j\leq k \leq \ell} h_{jk\ell} \partial_{u_{jkl}}, \quad h_{jk\ell} = (-1)^{(|j|+|k|+|\ell|)|f|} \widecheck{D}_{x^j} \widecheck{D}_{x^k} \widecheck{D}_{x^\ell} f.
 \end{align*}

 In this article, we will focus on sub-supermanifolds $\Sigma \subset \widetilde{M}$ (for the mixed contact grading) and $\Sigma \subset \widecheck{M}$ (for the odd contact grading). Locally, $\Sigma \subset J^k(\bbC^{p|q},\bbC^{1|0})$ for $k=2$ or $k=3$ and often it is also required that $\Sigma$ is transverse to the projections to the lower jet-superspaces.  These are {\sl super-PDE} on one dependent even variable $u$, and many independent variables $x^i$ ($p$ of which are even and $q$ of which are odd). Geometric structures inherited by $\Sigma$ from the ambient space include:
 \begin{itemize}
 \item a canonical superdistribution $\cC_\Sigma$ induced from the Cartan superdistribution, more precisely, the (annihilator of the) pull-back to $\Sigma$ of the annihilator of the Cartan superdistribution, or, equivalently, the intersection of the tangent bundle $\mathcal{T}\Sigma$ with the Cartan superdistribution;
 \item a vertical subsuperdistribution $V_\Sigma=\mathcal{T}\Sigma\cap V\subset \cC_\Sigma$.  (When $\Sigma \subset \widetilde{M}$, $V = \ker(\mathcal{T}\widetilde{M} \to \mathcal{T}M)\subset \widetilde\cC$.  When $\Sigma \subset \widecheck{M}$, $V = \ker(\mathcal{T}\widecheck{M} \to \mathcal{T}\widetilde{M})\subset \widecheck\cC$.)
 \end{itemize}
 We emphasize that $V_\Sigma$ being distinguished is an important feature of the associated {\sl contact supergeometry} of $\Sigma$: it is an artifact of the Lie--B\"acklund theorem.

In \S\ref{S:mixedct} and \S\ref{S:oddct}, we clarify the geometric origins of the differential equations \eqref{E:F4-2PDE} and \eqref{E:F4-3PDE}, and establish our main result (Theorem \ref{T:main}).

\section{The mixed contact grading and related geometric structures}
\label{S:mixedct}

\subsection{A supervariety and its osculating sequence}

Consider the grading of the LSA $\fg = F(4)$ associated to the parabolic $\fp^{\rm VI}_4$ and grading element $\sfZ = \sfZ_4$.  We have $\fg_0 = \mathfrak{cosp}(4|2;\alpha)$ with $\alpha=2$, which has dimension $(10|8)$ and center $\fz(\fg_0) = \langle \sfZ_4 \rangle$, and $\bbV = \fg_{-1}$ of dimension $(6|4)$ with highest root $-\alpha_4$, cf.\ Table \ref{F:mixed-roots}.  Let $\bbP(\bbV) = \operatorname{Gr}(1|0;6|4) \cong \bbP^{5|4}$ be the projective superspace associated to the linear supermanifold $\bbV = \bbV_{\bar 0} \op \bbV_{\bar 1} \cong \bbC^{6|4}$, which has underlying topological space $\bbP(\bbV_{\bar 0}) \cong \bbP^5$ (see \cite[\S 4.3]{Man}).  We equip $\bbP(\bbV)$ with the natural action of the connected Lie supergroup $G_0 = \operatorname{COSp}(4|2;\alpha) \subset \operatorname{CSpO}(6|4)$ generated by $\fg_0 \subset \fcspo(6|4)$.

Let $\ff = \fg_0^{\rm ss} \cong \mathfrak{osp}(4|2;\alpha)$ be the semisimple part of $\fg_0$ and
$\sfZ_1,\sfZ_2, \sfZ_3$ the dual basis of the simple root system $\alpha_1$, $\alpha_2$, $\alpha_3$ of $\ff$, spanning the Cartan subalgebra of $\ff$. (Here, we are abusing notation: these elements are not necessarily dual to the entire simple root system $VI$ of $F(4)$.) Let $e_{-\alpha_4}$ be a root vector for the root $-\alpha_4$, and consider the topological point $o := [e_{-\alpha_4}] \in \bbP(\bbV_{\bar 0})$ with stabilizer subalgebra $\fq \subset \ff$, which is parabolic.  This can be described via a grading on $\ff$, namely
$
\ff = \ff_{-1} \op \ff_0 \op \ff_1
$,
with $\fq = \ff_0 \op \ff_1$ and $\ff_{\pm 1}$ abelian.  This grading arises from the grading element $\sfZ_3 \in \fh$ and we refer to it as the ``secondary'' grading (compared to the grading of $F(4)$ we started with).  In more detail, the roots of $\ff$ are organized as follows:
\begin{align} \label{E:fgr}
\begin{array}{|c|c|c|c|c} \hline
k & \Delta_{\bar 0}(\ff_k) & \Delta_{\bar 1}(\ff_k)\\ \hline\hline
1 & \alpha_3, \,\, \alpha_1 + 2\alpha_2 + \alpha_3 & \alpha_2 + \alpha_3,\,\, \alpha_1 + \alpha_2 + \alpha_3\\ \hline
0 & \pm \alpha_1 & \pm \alpha_2, \,\, \pm (\alpha_1 + \alpha_2)\\ \hline
-1 & -\alpha_3, \,\, -\alpha_1 - 2\alpha_2 - \alpha_3 & -\alpha_2 - \alpha_3,\,\, -\alpha_1 - \alpha_2 - \alpha_3\\ \hline
\end{array}
\end{align}
We remark that $\ff_0=\CC\oplus \ff_0^{\rm ss}$, where $\fz(\ff_0) = \langle \sfZ_3 \rangle$, $\ff_0^{\rm ss} \cong \fsl(2|1) \cong \mathfrak{osp}(2|2)$, and $\ff_{-1}\cong\CC^{2|2}$ is irreducible as an $\ff_0^{\rm ss}$-representation. 
 Let us concretely express this action in terms of a basis of $\ff_{-1}$ consisting of root vectors adapted to the roots listed in \eqref{E:fgr}.

\begin{lemma}
\label{lem:non-defining-osp}
The following statements are true for the representation of $\ff_0^{\rm ss}\cong\mathfrak{osp}(2|2)$ on $\ff_{-1}\cong\CC^{2|2}$: 
\begin{itemize}
	\item[(i)] It can be explicitly given by the following $4 \times 4$ matrices:
 \begin{align} \label{E:osp22}
 \begin{split}
 h_{10} &=[e_{10},f_{10}]=\begin{psm}
 0 & 0 & 0 & 0\\
 0 & 0 & 0 & 0\\
 0 & 0 & 1 & 0\\
 0 & 0 & 0 & -1
 \end{psm}, \quad e_{10} = \begin{psm}
 0 & 0 & 0 & 0\\
 0 & 0 & 0 & 0\\
 0 & 0 & 0 & 1\\
 0 & 0 & 0 & 0
 \end{psm}, \quad
 f_{10} = \begin{psm}
 0 & 0 & 0 & 0\\
 0 & 0 & 0 & 0\\
 0 & 0 & 0 & 0\\
 0 & 0 & 1 & 0
 \end{psm}, \\
 h_{01} &=[e_{01},f_{01}]= -\tfrac13\begin{psm}
 2 & 0 & 0 & 0\\
 0 & -1 & 0 & 0\\
 0 & 0 & 2 & 0\\
 0 & 0 & 0 & -1
 \end{psm}, \quad
 e_{01} = \begin{psm}
 0 & 0 & c_2 & 0\\
 0 & 0 & 0 & 0\\
 0 & 0 & 0 & 0\\
 0 & c_1 & 0 & 0
 \end{psm}, \quad f_{01} = \begin{psm}
 0 & 0 & 0 & 0\\
 0 & 0 & 0 & c_4\\
 c_3 & 0 & 0 & 0\\
 0 & 0 & 0 & 0
 \end{psm}, \\
 e_{11} &= [e_{10},e_{01}] = \begin{psm}
 0 & 0 & 0 & -c_2\\
 0 & 0 & 0 & 0\\
 0 & c_1 & 0 & 0\\
 0 & 0 & 0 & 0
 \end{psm}, \quad f_{11} =[f_{10},f_{01}]=  \begin{psm}
 0 & 0 & 0 & 0\\
 0 & 0 & -c_4 & 0\\
 0 & 0 & 0 & 0\\
 c_3 & 0 & 0 & 0
 \end{psm}.
 \end{split}
 \end{align}
where $c_2c_3=-\tfrac23$ and $c_1c_4=\tfrac13$. Here, $e_{10}, e_{01}, e_{11},f_{10},f_{01},f_{11}$ are root vectors for the roots $\alpha_1, \alpha_2, \alpha_1+\alpha_2, -\alpha_1, -\alpha_2, -\alpha_1-\alpha_2$, respectively, while $h_{10}$ and $h_{01}$ are coroots;
	\item[(ii)]  It is isomorphic to the irreducible representation with labels $(b,j)=(\tfrac16,\tfrac12)$, in the conventions of \cite{FSS}. In particular, it is not the defining representation of $\mathfrak{osp}(2|2)$ (corresponding to $(b,j)=(0,\tfrac12)$);
	\end{itemize}
\end{lemma}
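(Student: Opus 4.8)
\emph{Overall approach.} The plan is to treat (i) as a structure-constant computation inside $F(4)$ and then to read off the isomorphism class in (ii) directly from the resulting matrices.

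For (i) I would first fix a Chevalley-type system of root vectors for $F(4)$ relative to the simple root system $VI$, using the positive roots tabulated in Table \ref{F:F4-PosSys} and the Cartan data \eqref{eq:CartanVI}. Inside $\ff=\fg_0^{\rm ss}\cong\fosp(4|2;\alpha)$ the relevant vectors are $e_{10},f_{10}$ for $\pm\alpha_1$ and $e_{01},f_{01}$ for $\pm\alpha_2$, together with a basis $v_1,v_2,v_3,v_4$ of $\ff_{-1}$ consisting of root vectors for the four roots $-\alpha_3$, $-\alpha_1-2\alpha_2-\alpha_3$ (even) and $-\alpha_2-\alpha_3$, $-\alpha_1-\alpha_2-\alpha_3$ (odd) appearing in the $k=-1$ slot of \eqref{E:fgr}. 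I would order these so that the $\fh$-weights match the displayed diagonal coroots: a direct weight count against $h_{\alpha_1}$ gives $\alpha_1$-weights $(0,0,1,-1)$ on $(v_1,v_2,v_3,v_4)$, producing $h_{10}=\diag(0,0,1,-1)$, and the analogous count for $h_{01}$ produces $-\tfrac13\diag(2,-1,2,-1)$. Setting $e_{11}:=[e_{10},e_{01}]$ and $f_{11}:=[f_{10},f_{01}]$, all off-diagonal entries are then root vectors for the appropriate sums of roots, and the only remaining freedom is the rescaling of $v_1,\dots,v_4$, absorbed into $c_1,\dots,c_4$. These are constrained: since $e_{01}$ and $f_{01}$ are both odd, $[e_{01},f_{01}]$ is the anticommutator $e_{01}f_{01}+f_{01}e_{01}$, and a one-line matrix product yields $\diag(c_2c_3,c_1c_4,c_2c_3,c_1c_4)$; imposing $[e_{01},f_{01}]=h_{01}$ forces $c_2c_3=-\tfrac23$ and $c_1c_4=\tfrac13$, exactly the stated relations. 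This establishes \eqref{E:osp22}.

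For (ii) the isospin is immediate: under the $\fsp(2)=\langle e_{10},f_{10},h_{10}\rangle$ copy, $h_{10}=\diag(0,0,1,-1)$ shows that the even part $\langle v_1,v_2\rangle$ is a sum of two trivial modules while the odd part $\langle v_3,v_4\rangle$ is the spin-$\tfrac12$ doublet, so $j=\tfrac12$. To compute $b$ I would identify the $\fso(2)$-generator $B$ as the Cartan element of $\ff_0^{\rm ss}$ centralizing this $\fsp(2)$, i.e.\ $B\in\langle h_{10},h_{01}\rangle$ constant on $\fsp(2)$-multiplets, and then fix its scale by the standard requirement that the odd root vectors carry $\fso(2)$-charge $\pm\tfrac12$. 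Solving $B(v_3)=B(v_4)$ gives $B\propto h_{10}+2h_{01}$, and the charge-$\pm\tfrac12$ normalization pins $B=\tfrac12 h_{10}+h_{01}=\diag(-\tfrac23,\tfrac13,-\tfrac16,-\tfrac16)$; the identity $[B,e_{01}]=-\tfrac12 e_{01}$ confirms this normalization. The spin-$\tfrac12$ doublet then carries $\fso(2)$-charge $-\tfrac16$, so $(b,j)=(\tfrac16,\tfrac12)$ after fixing the sign convention for $B$ as in \cite{FSS}. Since $b=\tfrac16\neq0$, this is not the defining representation of $\fosp(2|2)$, whose $\fsp(2)$-doublet is $\fso(2)$-neutral and hence has $(b,j)=(0,\tfrac12)$; combined with the $\ff_0^{\rm ss}$-irreducibility of $\ff_{-1}$ already recorded before the lemma, this identifies the module as claimed.

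\emph{Main obstacle.} The delicate point is the bookkeeping in (i): one must work with a genuine Chevalley basis of $F(4)$ in the non-standard simple system $VI$ and, in particular, treat the odd isotropic root $\alpha_2$ carefully, so that the $F(4)$-normalized value of $[e_{01},f_{01}]$ (equivalently the coroot $h_{01}$) emerges with the precise coefficients $-\tfrac13\diag(2,-1,2,-1)$. Every numerical constant in (ii) — most importantly $b=\tfrac16$ rather than $0$ — is traceable to this normalization, so once (i) is in place the rest of (ii) is a short linear-algebra computation requiring no further input.
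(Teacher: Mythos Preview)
Your proposal is correct and follows essentially the same route as the paper: for (i), both arguments determine the diagonal coroots from the $F(4)$ Cartan data in system~VI, locate the off-diagonal entries by root arithmetic, and pin down the constraints $c_2c_3=-\tfrac23$, $c_1c_4=\tfrac13$ by computing the super-bracket $[e_{01},f_{01}]$; for (ii), both identify the $\fso(2)$-generator as the Cartan combination commuting with $\fsp(2)$ (the paper writes $h=-\tfrac12 h_{10}-h_{01}$, the negative of your $B$, which is exactly the sign issue you flag) and read off $b=\tfrac16$ from its eigenvalue on the doublet. The only point where the paper is slightly more explicit is in deriving the factor $-\tfrac13$ in $h_{01}$ from the column rescaling that takes the $2\times 2$ block of \eqref{eq:CartanVI} to the standard $\fosp(2|2)$ Cartan matrix---precisely the bookkeeping you single out as the main obstacle.
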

\begin{proof}
The Cartan matrix 
$\begin{psm} 2 & +3\\ -1 & 0 \end{psm}$
of $\ff_0^{\rm ss}$ is obtained by removing the next-to-last and last rows and columns 
from the Cartan matrix \eqref{eq:CartanVI} of the VI Dynkin diagram of $F(4)$. By rescaling the column corresponding to the odd isotropic simple root one gets $\begin{psm} 2 & -1\\ -1 & 0 \end{psm}$, which is the standard Cartan matrix of $\ff_0^{\ss} \cong \fosp(2|2)$. Let $h_{\alpha_i}$ be coroots of the simple root system VI and $e_{\alpha_i},f_{\alpha_i}$ the corresponding positive and negative root vectors, $i=1,\ldots,4$. Then the presence of the $\fsl_2$-triple $e_{\alpha_1}, h_{\alpha_1}, f_{\alpha_1}$ acting non-trivially only on the odd subspace of $\ff_{-1}$ is clear. This gives the first row of \eqref{E:osp22}, with $h_{10}=h_{\alpha_1}$, $e_{10}=e_{\alpha_1}$, $f_{10}=f_{\alpha_1}$. Now, the Cartan matrix \eqref{eq:CartanVI} has entries equal to $\alpha_i(h_{\alpha_j})$, where $i$ is the row index and $j$ is the column index, so the action of $h_{\alpha_2}$ on $\ff_{-1}$ (in terms of a basis of root vectors) is given by $h_{\alpha_2}= \diag(2,-1,2,-1)$ and we set $h_{01} = -\tfrac13 h_{\alpha_2}$. The Serre--Chevalley relations \cite[\S 2.44]{FSS} for $\ff_0^{\ss} \cong \fosp(2|2)$ imply that $e_{\alpha_2}$ and $f_{\alpha_2}$ can be rescaled to $e_{01}$ and $f_{01}$ so that $h_{01}=[e_{01}, f_{01}]$. The location of the non-trivial entries of $e_{01}$ is clear from the roots of $\ff_{-1}$, e.g., the sum of $\alpha_2$ and $-\alpha_3$ is not a root, so the first column is trivial, while the sum of $\alpha_2$ and $-\alpha_1 - 2\alpha_2 - \alpha_3$ is a root, so only the fourth entry in the second column is non-trivial, etc.  Continuing in this manner yields the second row of matrices in \eqref{E:osp22}, with the constraints $c_2c_3=-\tfrac23$ and $c_1c_4=\tfrac13$. The last row of matrices in \eqref{E:osp22} is then clear. This proves (i). 

The irreducible representation with labels $(b,j)$
decomposes w.r.t. $\mathfrak{osp}(2|2)_{\bar 0}\cong\mathfrak{so}(2)\oplus\fsp(2)$ into $\CC^2_{b}\oplus\CC_{b+\tfrac12}\oplus\CC_{b-\tfrac12}$, where the subscript is the eigenvalue of a normalized generator $h$ of $\fso(2)$.  In the case of $\ff_{-1}$, we have $h=-\tfrac12 h_{10}-h_{01}=\diag(\tfrac{2}{3}, -\tfrac{1}{3}, \tfrac{1}{6}, \tfrac{1}{6})$ commuting with $\fsp(2)$, 
and (ii) follows.
\end{proof}

Consider the supervariety $\cV \subset \bbP(\bbV)$ defined as the $G_0$-orbit through $o=[e_{-\alpha_4}]$, and its {\sl osculating sequence} as in \cite[\S 2.4.4]{KST2021}. This is obtained as follows. Let $\cU(\fg_0)$ be the universal enveloping algebra of $\fg_0$ and $\cU_k(\fg_0)$ the $k$-filtrand, $k\geq 0$, w.r.t. the usual increasing filtration of $\cU(\fg_0)$. The natural map $\cU_k(\fg_0) \to \bbV$ given by $t \mapsto t \cdot e_{-\alpha_4}$ 
is $\ff_0^{\rm ss}$-equivariant, since $\ff_0^{\rm ss}$ annihilates $e_{-\alpha_4}$, but is not difficult to see that its image is, in fact, a $\fq$-module. 
\begin{definition}
The {\it $k$-th osculating space} $\widehat{T}^{(k)}_o \cV \subset \bbV$ of $\cV$ at $o = [e_{-\alpha_4}]$ is the image of the natural map $\cU_k(\fg_0) \to \bbV$.
\end{definition}
Since $\cU_{k+1}(\fg_0)\cdot o=\cU_k(\fg_0)\cdot o+\ff_{-1}\cdot\cU_k(\fg_0)\cdot o$, we then obtain a $\fq$-invariant filtration by successively applying $\ff_{-1}$ to $o$. We have
\begin{align*}
o = \widehat{T}_o^{(0)} \cV \,\,\subset\,\, \widehat{T}_o^{(1)} \cV \,\,\subset\,\, \widehat{T}_o^{(2)} \cV \,\,\subset\,\, \widehat{T}_o^{(3)} \cV = \bbV,
\end{align*}
with associated graded vector space $
\operatorname{gr}(\bbV) = N_0 \op N_1 \op N_2 \op N_3$, where $N_k := \widehat{T}_o^{(k)} \cV \,/\, \widehat{T}_o^{(k-1)} \cV$ is called the {\sl normal space} of degree $-k$ (w.r.t. $\sfZ_3$).
The corresponding roots are:
 \begin{align} \label{E:normal}
 \begin{array}{|c|c|c|c|}\hline
 & \multicolumn{1}{c}{\Delta_{\bar 0}(\fg_{-1})} &   \multicolumn{1}{|c|}{\Delta_{\bar 1}(\fg_{-1})} & \dim \\ \hline\hline
 N_0 & -\alpha_4 & \cdot & (1|0)\\ \hline
 N_1 & \begin{array}{c} 
 -\alpha_3 - \alpha_4\\ 
 -\alpha_1 - 2\alpha_2 - \alpha_3 - \alpha_4
 \end{array} &
 \begin{array}{c}
 -\alpha_2 - \alpha_3 - \alpha_4\\
 -\alpha_1 - \alpha_2 - \alpha_3 - \alpha_4
 \end{array} & (2|2) \\ \hline
 N_2 & \begin{array}{c} 
 -\alpha_1 - 2\alpha_2 - 2\alpha_3 - \alpha_4\\
 -2\alpha_1 - 4\alpha_2 - 2\alpha_3 - \alpha_4 
 \end{array} & 
 \begin{array}{c}
 -\alpha_1 - 3\alpha_2 - 2\alpha_3 - \alpha_4\\
 -2\alpha_1 - 3\alpha_2 - 2\alpha_3 - \alpha_4
 \end{array}& (2|2) \\ \hline
 N_3 & -2\alpha_1 - 4\alpha_2 - 3\alpha_3 - \alpha_4 & \cdot & (1|0) \\ \hline
 \end{array}
 \end{align}

The natural map $\cU_k(\fg_0) \to \bbV$ induces a surjection $\cU_k(\fg_0) \to N_k$, which in turn descends to a well-defined surjection $S^k(\fg_0) \cong \cU_k(\fg_0) / \cU_{k-1}(\fg_0) \to N_k$.  This map still has a kernel. Its restriction to $S^k(\ff_{-1})$, which we denote by $\varphi_k : S^k(\ff_{-1}) \to N_k$, is still surjective and also $\ff_0^{\rm ss}$-equivariant.  Hence, as $\ff_0^{\rm ss}$-modules, 
\begin{align} \label{E:Nk}
 N_k \cong S^k(\ff_{-1}) / \ker(\varphi_k).
\end{align}
Via these $\ff_0^{\rm ss}$-equivariant isomorphisms, $\operatorname{gr}(\bbV)$ inherits from the natural product structure on $S^\bullet(\ff_{-1})$ a supercommutative, associative $\bbZ$-graded superalgebra structure
$
 N_i \otimes N_j \to N_{i+j}
$,
 which we now make explicit.  For $\ff_{-1}$, take the following root vectors:
 \begin{align*}
 \begin{array}{|c|c|c|c|c|} \hline
 & \multicolumn{2}{c|}{\mbox{even part}} & \multicolumn{2}{c|}{\mbox{odd part}} \\ \hline
 \mbox{Root} & -\alpha_3 & -\alpha_1 - 2\alpha_2 - \alpha_3 & -\alpha_2 - \alpha_3 & -\alpha_1-\alpha_2 - \alpha_3\\  \hline
 \mbox{Root vector} & Y_1 & Y_2 & \Theta_1 & \Theta_2 \\ \hline
 \end{array}
 \end{align*}
 Using \eqref{E:normal} and the identification \eqref{E:Nk}, we have the following representatives of equivalence classes:
 \begin{align} \label{E:grVb}
 \begin{array}{|c|c|c|}\hline
 k & (N_k)_{\bar{0}} &  (N_k)_{\bar{1}}\\ \hline\hline
 0 & 1 & \cdot\\ \hline
 1 & \begin{array}{c} 
 Y_1\\ 
 Y_2
 \end{array} &
 \begin{array}{c}
 \Theta_1\\
 \Theta_2
 \end{array}\\ \hline
 2 & 
 \begin{array}{c} 
 Y_1 Y_2 \equiv \Theta_1 \Theta_2\\
 (Y_2)^2
 \end{array} & 
 \begin{array}{c}
 Y_2 \Theta_1\\
 Y_2 \Theta_2
 \end{array}\\ \hline
 3 & Y_1(Y_2)^2 \equiv Y_2 \Theta_1 \Theta_2 & \cdot\\ \hline 
 \end{array}
 \end{align}
 A priori we have $Y_1 Y_2 \equiv c\Theta_1 \Theta_2$ in $N_2$, but by rescaling the chosen basis elements of $\ff_{-1}$, we have normalized to $c = 1$. (We will show before Proposition \ref{prop:cubic-forms} that this normalization 
gives an additional constraint on the constants $c_1,\ldots,c_4$ appearing in \eqref{E:osp22}.)
Beyond this relation, we have the additional relations
 \begin{align*}
 & (Y_1)^2 = 0, \quad Y_1\Theta_1=Y_1\Theta_2=0, \quad (Y_2)^3 = 0, \quad (Y_2)^2 \Theta_1 = (Y_2)^2 \Theta_2 = 0.
 \end{align*}
The relations $(\Theta_1)^2 = (\Theta_2)^2 = 0$ and $\Theta_1 \Theta_2 = -\Theta_2 \Theta_1$ are implicit since $\Theta_1,\Theta_2$ are odd.

 \subsection{Explicit local form of the supervariety} \label{S:localV}

 We proceed as in \cite[\S 2.4.3]{KST2021}, expressing relevant supermanifolds and group actions in terms of the functor of points formalism. Letting $\bbA = \bbA_{\bar 0} \oplus \bbA_{\bar 1}$ denote an arbitrary finite-dimensional supercommutative superalgebra, the linear supermanifold $\bbV$ has associated functor of points given by 
 \begin{align*}
 \bbA \mapsto \bbV(\bbA) := (\bbV \otimes \bbA)_{\bar 0} = (\bbV_{\bar 0} \otimes \bbA_{\bar 0}) \oplus (\bbV_{\bar 1} \otimes \bbA_{\bar 1}).
 \end{align*}
 For $\bbP(\bbV)$, the functor of points is $\bbA \mapsto \bbP(\bbV)(\bbA) := \bbP^{1|0}(\bbV \otimes \bbA)$, which  consists of all free $\bbA$-modules in $\bbV \otimes \bbA$ of rank $(1|0)$.  The group $G_0$ is thought of in terms of $\bbA \mapsto G_0(\bbA)$, where the (set-theoretic) group $G_0(\bbA)$ acts on $\bbV(\bbA)$ by means of even transformations with coefficients in $\bbA$, and this induces a corresponding $G_0$-action on $\bbP(\bbV)$.  We let $\cV \subset \bbP(\bbV)$ be the $G_0$-orbit through $o = [e_{-\alpha_4}]$.  The formalism is necessary to precisely articulate the notion of exponentiation of infinitesimal transformations below in order to locally express $\cV$.
 
 Let us order the generators in \eqref{E:grVb} as follows:
 \begin{align*} 
 \begin{split}
 & 1, \quad Y_1, \quad Y_2, \quad \Theta_1,\quad \Theta_2, \\ &  Y_1 Y_2 \equiv \Theta_1\Theta_2,\quad (Y_2)^2, \quad Y_2 \Theta_1, \quad Y_2 \Theta_2, \quad Y_1(Y_2)^2 \equiv Y_2\Theta_1 \Theta_2.
 \end{split}
 \end{align*}
 Apply each generator to the chosen basepoint $e_{-\alpha_4}$ to get a corresponding basis $\mathsf{b} = ( v_0, ..., v_9 )$ of $\bbV = \fg_{-1}$.  (Note that $v_3, v_4, v_7, v_8$ are odd, and the rest are even.)  Since $\ff_{-1}$ is abelian, we have that $\bbV \cong \gr(\bbV)$ as $\ff_{-1}$-modules, so the $\ff_{-1}$-action can be expressed in the aforementioned basis in terms of the following supercommuting matrices:
 \begin{footnotesize}
\begin{align*}
\!\!\!\!\!\!\!\!\!\!
 Y_1 = \begin{psm}
 0 & 0 & 0 & 0 & 0 & 0 & 0 & 0 & 0 & 0\\
 1 & 0 & 0 & 0 & 0 & 0 & 0 & 0 & 0 & 0\\
 0 & 0 & 0 & 0 & 0 & 0 & 0 & 0 & 0 & 0\\
 0 & 0 & 0 & 0 & 0 & 0 & 0 & 0 & 0 & 0\\
 0 & 0 & 0 & 0 & 0 & 0 & 0 & 0 & 0 & 0\\
 0 & 0 & 1 & 0 & 0 & 0 & 0 & 0 & 0 & 0\\
 0 & 0 & 0 & 0 & 0 & 0 & 0 & 0 & 0 & 0\\
 0 & 0 & 0 & 0 & 0 & 0 & 0 & 0 & 0 & 0\\
 0 & 0 & 0 & 0 & 0 & 0 & 0 & 0 & 0 & 0\\
 0 & 0 & 0 & 0 & 0 & 0 & 1 & 0 & 0 & 0  
 \end{psm}, \quad 
 Y_2 = \begin{psm}
 0 & 0 & 0 & 0 & 0 & 0 & 0 & 0 & 0 & 0\\
 0 & 0 & 0 & 0 & 0 & 0 & 0 & 0 & 0 & 0\\
 1 & 0 & 0 & 0 & 0 & 0 & 0 & 0 & 0 & 0\\
 0 & 0 & 0 & 0 & 0 & 0 & 0 & 0 & 0 & 0\\
 0 & 0 & 0 & 0 & 0 & 0 & 0 & 0 & 0 & 0\\
 0 & 1 & 0 & 0 & 0 & 0 & 0 & 0 & 0 & 0\\
 0 & 0 & 1 & 0 & 0 & 0 & 0 & 0 & 0 & 0\\
 0 & 0 & 0 & 1 & 0 & 0 & 0 & 0 & 0 & 0\\
 0 & 0 & 0 & 0 & 1 & 0 & 0 & 0 & 0 & 0\\
 0 & 0 & 0 & 0 & 0 & 1 & 0 & 0 & 0 & 0  
 \end{psm}, \quad
\Theta_1 = \begin{psm}
 0 & 0 & 0 & 0 & 0 & 0 & 0 & 0 & 0 & 0\\
 0 & 0 & 0 & 0 & 0 & 0 & 0 & 0 & 0 & 0\\
 0 & 0 & 0 & 0 & 0 & 0 & 0 & 0 & 0 & 0\\
 1 & 0 & 0 & 0 & 0 & 0 & 0 & 0 & 0 & 0\\
 0 & 0 & 0 & 0 & 0 & 0 & 0 & 0 & 0 & 0\\
 0 & 0 & 0 & 0 & 1 & 0 & 0 & 0 & 0 & 0\\
 0 & 0 & 0 & 0 & 0 & 0 & 0 & 0 & 0 & 0\\
 0 & 0 & 1 & 0 & 0 & 0 & 0 & 0 & 0 & 0\\
 0 & 0 & 0 & 0 & 0 & 0 & 0 & 0 & 0 & 0\\
 0 & 0 & 0 & 0 & 0 & 0 & 0 & 0 & 1 & 0  
 \end{psm}, \quad 
 \Theta_2 = \begin{psm}
 0 & 0 & 0 & 0 & 0 & 0 & 0 & 0 & 0 & 0\\
 0 & 0 & 0 & 0 & 0 & 0 & 0 & 0 & 0 & 0\\
 0 & 0 & 0 & 0 & 0 & 0 & 0 & 0 & 0 & 0\\
 0 & 0 & 0 & 0 & 0 & 0 & 0 & 0 & 0 & 0\\
 1 & 0 & 0 & 0 & 0 & 0 & 0 & 0 & 0 & 0\\
 0 & 0 & 0 & -1 & 0 & 0 & 0 & 0 & 0 & 0\\
 0 & 0 & 0 & 0 & 0 & 0 & 0 & 0 & 0 & 0\\
 0 & 0 & 0 & 0 & 0 & 0 & 0 & 0 & 0 & 0\\
 0 & 0 & 1 & 0 & 0 & 0 & 0 & 0 & 0 & 0\\
 0 & 0 & 0 & 0 & 0 & 0 & 0 & -1 & 0 & 0  
 \end{psm}.
 \end{align*}
 \end{footnotesize}
 Now take parameters $\lambda_1, \lambda_2 \in \bbA_{\bar 0}$ and $\theta_1, \theta_2 \in \bbA_{\bar 1}$.  Exponentiation yields
 \begin{align*}
 & \exp(\lambda_1 Y_1) = \id + \lambda_1 Y_1, \quad \exp(\lambda_2 Y_2) = \id + \lambda_2 Y_2 + \frac{(\lambda_2)^2}{2} (Y_2)^2, \\
 & \exp(\theta_1 \Theta_1) = \id + \theta_1 \Theta_1, \quad \exp(\theta_2 \Theta_2) = \id + \theta_2 \Theta_2
 \end{align*}
 We now compute $\exp(\theta_2 \Theta_2) \exp(\theta_1 \Theta_1) \exp(\lambda_2 Y_2) \exp(\lambda_1 Y_1) \cdot e_{-\alpha_4}$.  Computing products, we obtain the following expression in right-coordinates for the supervariety $\cV$:
 \begin{align*}
\!\!\!\!\!\!
 \begin{psm}
 1 & 0 & 0 & 0 & 0 & 0 & 0 & 0 & 0 & 0\\
 0 & 1 & 0 & 0 & 0 & 0 & 0 & 0 & 0 & 0\\
 0 & 0 & 1 & 0 & 0 & 0 & 0 & 0 & 0 & 0\\
 \theta_1 & 0 & 0 & 1 & 0 & 0 & 0 & 0 & 0 & 0\\
 \theta_2 & 0 & 0 & 0 & 1 & 0 & 0 & 0 & 0 & 0\\
 \theta_1\theta_2 & 0 & 0 & -\theta_2 & \theta_1 & 1 & 0 & 0 & 0 & 0\\
 0 & 0 & 0 & 0 & 0 & 0 & 1 & 0 & 0 & 0\\
 0 & 0 & \theta_1 & 0 & 0 & 0 & 0 & 1 & 0 & 0\\
 0 & 0 & \theta_2 & 0 & 0 & 0 & 0 & 0 & 1 & 0\\
 0 & 0 & \theta_1\theta_2 & 0 & 0 & 0 & 0 & -\theta_2 & \theta_1 & 1
 \end{psm} \begin{psm}
 1 & 0 & 0 & 0 & 0 & 0 & 0 & 0 & 0 & 0\\
 \lambda_1 & 1 & 0 & 0 & 0 & 0 & 0 & 0 & 0 & 0\\
 \lambda_2 & 0 & 1 & 0 & 0 & 0 & 0 & 0 & 0 & 0\\
 0 & 0 & 0 & 1 & 0 & 0 & 0 & 0 & 0 & 0\\
 0 & 0 & 0 & 0 & 1 & 0 & 0 & 0 & 0 & 0\\
 \lambda_1\lambda_2 & \lambda_2 & \lambda_1 & 0 & 0 & 1 & 0 & 0 & 0 & 0\\
 \tfrac{\lambda_2^2}{2} & 0 & \lambda_2 & 0 & 0 & 0 & 1 & 0 & 0 & 0\\
 0 & 0 & 0 & \lambda_2 & 0 & 0 & 0 & 1 & 0 & 0\\
 0 & 0 & 0 & 0 & \lambda_2 & 0 & 0 & 0 & 1 & 0\\
 \tfrac{\lambda_1\lambda_2^2}{2} & \tfrac{\lambda_2^2}{2} & \lambda_1\lambda_2 & 0 & 0 & \lambda_2 & \lambda_1 & 0 & 0 & 1
 \end{psm} \begin{psm}
 1\\ 0\\ 0\\ 0\\ 0\\ 0\\ 0\\ 0\\ 0\\ 0
 \end{psm} =
 \begin{psm}
 1\\ \lambda_1\\ \lambda_2\\ \theta_1\\ \theta_2\\ \lambda_1\lambda_2 + \theta_1\theta_2 \\ \tfrac{\lambda_2^2}{2} \\ \lambda_2\theta_1 \\ \lambda_2\theta_2 \\ \lambda_2\theta_1\theta_2 + \tfrac{\lambda_1\lambda_2^2}{2}
 \end{psm}
 \end{align*} 
 
Although legitimate, the local expression obtained here for the supervariety is not convenient for facilitating our later transition to super-PDE.  Namely, $\mathsf{b} = \{ v_0,..., v_9 \}$ is in fact not a $\operatorname{CSpO}$-basis of $\bbV$, i.e., a basis w.r.t. which the $\operatorname{CSpO}$-structure $[\eta]$ is in canonical form.  (For example, $\{ D_{x^i}, \partial_{u_i} \}$ from \S\ref{S:J1J2} is a $\operatorname{CSpO}$-frame of $\cC$ w.r.t. $\eta = d\sigma = \sum_i dx^i \wedge du_i$.)

In our algebraic setting, the $\operatorname{CSpO}$-structure $[\eta]$ corresponds to the $\fg_0$-equivariant bracket $\Lambda^2 \fg_{-1} \to \fg_{-2}$.  Since $\fg_{-2}$ is the root space for $-2\alpha_1-4\alpha_2-3\alpha_3-2\alpha_4$, then the roots \eqref{E:normal} indicate that $\eta$ must be a linear combination of
$
 v^0 \wedge v^9, v^1 \wedge v^6, v^2 \wedge v^5, v^3 \wedge v^8, v^4 \wedge v^7
$.
 (We remind that $\eta$ is skew-symmetric in the super-sense, so the last two entries are in fact symmetric, and that the value of $v^i\wedge v^j$ on vectors $x, y\in\fg_{-1}$ is obtained as insertions from the left
$x \otimes y \mapsto \iota_x \iota_y (v^i\wedge v^j)$ followed by
the usual sign rule.)  Since $\ff_{-1}$ acts trivially on $\fg_{-2}$, we impose $\ff_{-1}$-{\em invariance} of $\eta$, i.e.\ $\eta(Ax,y) + (-1)^{|A||x|} \eta(x,Ay) = 0$, for all $x,y \in \fg_{-1}$, $A\in\ff_{-1}$.  This forces $\eta$ to be a multiple of
$
 v^0 \wedge v^9 - v^1 \wedge v^6 - v^2 \wedge v^5 + v^3 \wedge v^8 - v^4 \wedge v^7
$.

 Now, using the $\operatorname{CSpO}$-basis
$
 \mathsf{b}' = (v_0, -v_1, -v_2, v_3, v_4, -v_9, -v_6, -v_5, v_8, -v_7)
$
instead of the basis $\mathsf{b} = ( v_0,..., v_9 )$,
 we have the column vector of right-coordinates
 \begin{align*}
 \begin{psm}
 1\\ -\lambda_1\\ -\lambda_2\\ \theta_1\\ \theta_2\\ 
 - \tfrac{\lambda_1\lambda_2^2}{2}  -\lambda_2\theta_1\theta_2 \\
 -\tfrac{\lambda_2^2}{2} \\
 -\lambda_1\lambda_2 - \theta_1\theta_2 \\
\lambda_2\theta_2 \\ -\lambda_2\theta_1 \\ 
 \end{psm},
 \end{align*}
which projectivizes to our $\ell\in \cV$.
 Using the canonical isomorphism $\bbV \otimes \bbA \cong \bbA \otimes \bbV$, we interchange right-coordinates with left-coordinates via the ``sign-rule'', giving the following row vector of left-coordinates:
 \begin{align} \label{E:supervar}
 \left(1, \,\, -\lambda_1,\,\, -\lambda_2,\,\, -\theta_1,\,\, -\theta_2,\,\,
  -\tfrac{\lambda_1\lambda_2^2}{2} -\lambda_2\theta_1\theta_2, \,\, -\tfrac{\lambda_2^2}{2},\,\, -\lambda_1\lambda_2 - \theta_1\theta_2,\,\,
-\lambda_2\theta_2,\,\, \lambda_2\theta_1\right).
 \end{align}
 This is our local parametrization of the supervariety $\cV \subset \bbP(\bbV)$.  Finally, we remark that $\widehat{T}_o \cV$ is spanned by the root spaces associated with $N_0 \op N_1$ (see \eqref{E:normal}), so is clearly a Lagrangian subspace of $\bbV$ w.r.t. $\eta : \bigwedge^2 \fg_{-1} \to \fg_{-2}$.  By $G_0$-invariance of $\cV$ and since $G_0 \subset \operatorname{CSpO}(6|4)$, we have that $\widehat{T}_\ell \cV$ is Lagrangian for any $\ell \in \cV$.
  
 \subsection{Supersymmetric cubic forms and a key identity}
 
 The multiplication $N_1 \otimes N_2 \to N_3$ is a non-degenerate $\ff_0^{\ss}$-equivariant pairing, and $\ff_0^{\rm ss}$ acts trivially on $N_3$.  Distinguishing (the equivalence class of) the element $Y_1 (Y_2)^2$ in $N_3$ induces an identification $N_2 \cong (N_1)^*$ as $\ff_0^{\ss}$-modules. Set $W = N_1$ and $(w_1,w_2,w_3,w_4) = (Y_1,Y_2,\Theta_1,\Theta_2)$.  The dual basis w.r.t. the product $N_1 \otimes N_2 \to N_3 \cong \bbC$ is then given by 
 \begin{align} \label{E:Wd}
 w^1 = (Y_2)^2, \quad w^2 = Y_1 Y_2, \quad w^3 = Y_2 \Theta_2, \quad w^4 = -Y_2 \Theta_1.
 \end{align}
 (Again, strictly speaking we refer to equivalence classes on the right-hand sides here.)
 
 The multiplication $N_1 \otimes N_1 \to N_2$ yields an even $\ff_0^{\rm ss}$-equivariant map $W \otimes W \to W^*$, which we dualize as an even, supersymmetric, $\ff_0^{\rm ss}$-invariant cubic form $\fC \in S^3 W^*$, since the algebra structure is induced from $S^\bullet(\ff_{-1})$.  More precisely, $W \otimes W \to W^*$ is given by
 $u \otimes v \mapsto \iota_u \iota_v \fC$,
 where $\iota_u$ and $\iota_v$ refer to insertions from the left, and 
\eqref{E:Wd} indicates that 
 \begin{align} \label{E:fC}
 \fC = w^1 (w^2)^2 - 2 w^2 w^3 w^4\;.
 \end{align}
 Writing $\fC = \fC_{abc} w^a w^b w^c$ (Einstein summation convention), the only non-trivial components of $\fC$ are
$
 \fC_{122} = \fC_{212} = \fC_{221} = \tfrac{1}{3}$ and $\fC_{234} = \fC_{324} = \fC_{342} = -\fC_{243} = -\fC_{423} = -\fC_{432} = -\tfrac{1}{3}
$.
We remark that the cubic form \eqref{E:fC} is $\ff_0^{\ss}$-invariant.  Using  \eqref{E:osp22}, we find that
\begin{align*}
 \begin{split}
 0 &= e_{01} \cdot \fC = (2c_1 - c_2) (w^2)^2w^3,\\
 0 &= f_{01} \cdot \fC = -2(c_3 + c_4) w^1w^2w^4,
 \end{split} 
\end{align*}
so that the constants $c_1,c_2,c_3,c_4$ are finally constrained to
$
 (c_2,c_3,c_4) = \left(2c_1, -\frac{1}{3c_1}, \frac{1}{3c_1}\right)
$.
 \begin{prop}
\label{prop:cubic-forms}
 Any $\ff_0^{\ss}$-invariant, even, supersymmetric cubic forms in $S^3 W^*$ and $S^3 W$ are, respectively, multiples of 
 \begin{align*}
 \fC &= w^1 (w^2)^2 - 2 w^2 w^3 w^4,\\
 \fC^* &= w_1 (w_2)^2 - w_2 w_3 w_4.
 \end{align*}
 \end{prop}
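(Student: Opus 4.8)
The plan is to prove each statement by a weight analysis in $S^3W^*$ (resp.\ $S^3W$) followed by the imposition of a single odd generator. Recall that $\ff_0^{\ss}\cong\fosp(2|2)$ is generated, as a Lie superalgebra, by its Chevalley generators $e_{10},f_{10}$ (even), $e_{01},f_{01}$ (odd) and the Cartan $\langle h_{10},h_{01}\rangle$; since $e_{11}=[e_{10},e_{01}]$ and $f_{11}=[f_{10},f_{01}]$ by \eqref{E:osp22}, annihilation by these four generators and the Cartan is equivalent to $\ff_0^{\ss}$-invariance. Thus an invariant cubic must in particular be a zero-weight vector, and I would first cut down to that finite-dimensional weight space.

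First I would read off the weights of the dual basis $w^1,\dots,w^4$ of \eqref{E:Wd} from the diagonal matrices $h_{10}=\diag(0,0,1,-1)$ and $h_{01}=-\tfrac13\diag(2,-1,2,-1)$ of \eqref{E:osp22}: passing to the dual (i.e.\ negating), the $(h_{10},h_{01})$-weights of $w^1,w^2,w^3,w^4$ are $(0,\tfrac23),\,(0,-\tfrac13),\,(-1,\tfrac23),\,(1,-\tfrac13)$. A short bookkeeping argument then isolates the zero-weight monomials: the $h_{01}$-weight of a cubic monomial equals $i-1$, where $i$ is the number of factors drawn from the pair $\{w^1,w^3\}$ (the weight-$\tfrac23$ generators), so $i=1$; combining this with the $h_{10}$-grading and the oddness relations $(w^3)^2=(w^4)^2=0$ leaves exactly the two monomials $w^1(w^2)^2$ and $w^2w^3w^4$. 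Hence the zero-weight subspace of $S^3W^*$ is two-dimensional.

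Next I would check that this whole plane is annihilated by the even part $\fosp(2|2)_{\bar 0}=\fsp(2)\oplus\fso(2)$: the $\fso(2)$-action is the $h$-weight, which is zero, while $w^1,w^2$ are $\fsp(2)$-fixed and $w^3w^4$ is (up to sign) the $\fsp(2)$-symplectic invariant of the odd plane, so $e_{10}$ and $f_{10}$ kill both spanning monomials. Therefore invariance on this plane is equivalent to being killed by $e_{01}$. Using the matrix of $e_{01}$ in \eqref{E:osp22}, whose dual action gives $e_{01}\cdot w^1=-c_2w^3$ and $e_{01}\cdot w^4=c_1w^2$ (and kills $w^2,w^3$), a Leibniz computation with careful super-signs yields $e_{01}\cdot\big(w^1(w^2)^2\big)=-c_2(w^2)^2w^3$ and $e_{01}\cdot\big(w^2w^3w^4\big)=-c_1(w^2)^2w^3$. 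Thus for $P=a\,w^1(w^2)^2+b\,w^2w^3w^4$ the condition $e_{01}\cdot P=0$ reads $ac_2+bc_1=0$, i.e.\ $a:b=c_1:-c_2=1:-2$ once we use the normalization $c_2=2c_1$ (with $c_1\neq0$) established just before the proposition. This is precisely $\fC=w^1(w^2)^2-2w^2w^3w^4$ of \eqref{E:fC}, so the invariants form exactly the line $\langle\fC\rangle$.

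The claim for $S^3W$ is entirely parallel, with the basis $w_a$ carrying the weights opposite to $w^a$: the same bookkeeping shows the zero-weight space is two-dimensional, spanned by $w_1(w_2)^2$ and $w_2w_3w_4$, and is fixed by $\fosp(2|2)_{\bar 0}$ for the same reasons. Applying $e_{01}$ directly on $W$ (where $e_{01}\cdot w_2=c_1w_4$ and $e_{01}\cdot w_3=c_2w_1$) gives $e_{01}\cdot\big(w_1(w_2)^2\big)=2c_1w_1w_2w_4$ and $e_{01}\cdot\big(w_2w_3w_4\big)=c_2w_1w_2w_4$, so the kernel condition $2a'c_1+b'c_2=0$ forces $a':b'=1:-1$, recovering $\fC^*=w_1(w_2)^2-w_2w_3w_4$. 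The only genuine care needed throughout is the super-sign bookkeeping in the Leibniz rule together with the systematic use of $(w^3)^2=(w^4)^2=0$ (resp.\ $(w_3)^2=(w_4)^2=0$); this is the step where a sign slip would most easily creep in, but there is no conceptual obstacle, and in fact the kernel computation reproduces the stated coefficients on the nose.
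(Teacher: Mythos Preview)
Your argument is correct and is a cleaner variant of the paper's approach. The paper simply lists the six-dimensional even subspace of $S^3W$ (respectively $S^3W^*$) and imposes full $\ff_0^{\ss}$-invariance by brute force using the matrices \eqref{E:osp22}, arriving at the one-parameter family $c_2\,w_1(w_2)^2-2c_1\,w_2w_3w_4$ before specializing $c_2=2c_1$. You instead use the Cartan weights to cut the six-dimensional even space down to the two-dimensional zero-weight plane first, observe that $e_{10},f_{10}$ act trivially there, and then impose a single odd generator. This buys you a much shorter linear-algebra computation and makes the structure of the answer transparent.

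One small point: your sentence ``invariance on this plane is equivalent to being killed by $e_{01}$'' is stated without justification. What you actually prove is the implication (invariant $\Rightarrow$ $e_{01}$-annihilated), which already suffices for the uniqueness assertion in the proposition. For the converse---i.e., that the resulting line really is $f_{01}$-annihilated as well---you should either check $f_{01}$ directly (the same one-line Leibniz computation gives $2a c_4=bc_3$ on the dual side and $2a'c_4+b'c_3=0$ on $S^3W$, which with $(c_3,c_4)=(-\tfrac{1}{3c_1},\tfrac{1}{3c_1})$ reproduces the same ratios), or, for $\fC$, simply invoke that its invariance was already established before the proposition. For $\fC^*$ no such prior fact is available, so the $f_{01}$-check is needed to confirm it is genuinely invariant rather than merely the unique candidate.
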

 
 \begin{proof}
 A basis for the even subspace of $S^3 W$ is 
 \[
 (w_1)^3, \quad (w_1)^2 w_2, \quad w_1 (w_2)^2, \quad (w_2)^3, \quad w_1 w_3 w_4, \quad w_2 w_3 w_4.
 \]
 A direct computation using \eqref{E:osp22} shows that $\ff_0^{\ss}$-invariance of $\fC^*$ forces it to be a multiple of $c_2 w_1 (w_2)^2 - 2 c_1 w_2 w_3 w_4$, substituting $c_2 = 2c_1$ yields the result. The proof for $\fC$ in $S^3W^*$ is analogous.
 \end{proof}
 
 We refer to $\fC^*$ as the {\sl dual cubic} form.  Although any multiple of $\fC^*$ is also $\ff_0^{\ss}$-invariant, we now pin down $\fC^*$ as we have stated it so that a key cubic form identity holds.

Let $\bbA = \bbA_{\bar 0} \op \bbA_{\bar 1}$ be any finite-dimensional supercommutative superalgebra and define $W(\bbA) := (W \otimes \bbA)_{\bar 0} \cong (\bbA \otimes W)_{\bar 0}$, where the isomorphism is induced via the usual ``sign rule''.  Similarly, $W^*(\bbA) := (\bbA \otimes W^*)_{\bar 0}$.  We now extend the definition of $\fC$ from $W$ to $W(\bbA)$ using left $\bbA$-linearity:
for any $T \in W(\bbA)$, and using Einstein summation convention below, we write 
 $T = t^a w_a = \lambda_1 w_1 + \lambda_2 w_2 + \theta_1 w_3 + \theta_2 w_4$,
 where $\lambda_1,\lambda_2 \in \bbA_{\bar 0}$ and $\theta_1,\theta_2 \in \bbA_{\bar 1}$, and define
 \begin{align} \label{E:fCT}
 \fC(T^3) := t^c t^b t^a \fC_{abc} = \lambda_1 (\lambda_2)^2 + 2 \lambda_2\theta_1\theta_2\;.
 \end{align}
 We also use the notation
 \begin{align} \label{E:cubic-der}
 \fC_c(T^2) := \tfrac{1}{3} \partial_{t^c}(\fC(T^3)), \quad
 \fC_{bc}(T) := \tfrac{1}{2} \partial_{t^b}(\fC_c(T^2)),
 \end{align}
 so that $\fC_{abc} = \partial_{t^a}(\fC_{bc}(T))$ and $\fC(T^3) = t^c \fC_c(T^2) = t^c t^b \fC_{bc}(T) = t^c t^b t^a \fC_{abc}$, as expected.
 
 Given $T^* = t^*_a w^a = \mu_1 w^1 + \mu_2 w^2 + \phi_1 w^3 + \phi_2 w^4$,  with $\mu_1,\mu_2 \in \bbA_{\bar 0}$ and $\phi_1,\phi_2 \in \bbA_{\bar 1}$, we similarly have
 \begin{align*}
 \fC^*((T^*)^3) = \mu_1 (\mu_2)^2 + \mu_2 \phi_1\phi_2
 \end{align*}
 and likewise introduce tensors $(\fC^*)^c$ and $(\fC^*)^{bc}$.  
 A straightforward (but slightly tedious) direct check yields the following key identity for $\fC$ and $\fC^*$:
 
 \begin{lemma}[Cubic form identity] \label{L:cubic}
 We have:
 \begin{align} \label{E:cubic}
 \fC_b(T^2) \fC_a(T^2) (\fC^*)^{ab}(T^*) = \frac{4}{27} \fC(T^3) t^c t^*_c.
 \end{align}
 \end{lemma}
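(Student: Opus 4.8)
The plan is to verify \eqref{E:cubic} by reducing it to an explicit identity in the supercommutative superalgebra $\bbA$: once $T = \lambda_1 w_1 + \lambda_2 w_2 + \theta_1 w_3 + \theta_2 w_4$ and $T^* = \mu_1 w^1 + \mu_2 w^2 + \phi_1 w^3 + \phi_2 w^4$ are fixed (with $\lambda_i,\mu_i$ even and $\theta_i,\phi_i$ odd), both sides become concrete elements of $\bbA$. First I would differentiate \eqref{E:fCT} to record the four gradient components
\begin{equation*}
\fC_1(T^2) = \tfrac13\lambda_2^2,\quad \fC_2(T^2) = \tfrac23(\lambda_1\lambda_2 + \theta_1\theta_2),\quad \fC_3(T^2) = \tfrac23\lambda_2\theta_2,\quad \fC_4(T^2) = -\tfrac23\lambda_2\theta_1,
\end{equation*}
and likewise, from $\fC^*((T^*)^3) = \mu_1\mu_2^2 + \mu_2\phi_1\phi_2$, the components $(\fC^*)^a = \tfrac13\partial_{t^*_a}\fC^*$ and then the Hessian-type array $(\fC^*)^{ab} = \tfrac12\partial_{t^*_a}(\fC^*)^b$. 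The only nonzero entries are $(\fC^*)^{12} = (\fC^*)^{21} = \tfrac13\mu_2$, $(\fC^*)^{22} = \tfrac13\mu_1$, $(\fC^*)^{23} = (\fC^*)^{32} = \tfrac16\phi_2$, $(\fC^*)^{24} = (\fC^*)^{42} = -\tfrac16\phi_1$, and $(\fC^*)^{34} = -(\fC^*)^{43} = -\tfrac16\mu_2$ (the odd--odd block is skew, as it must be, while the other blocks are symmetric). A good consistency check at this stage is that these satisfy $t^c\fC_c(T^2) = \fC(T^3)$ and the analogous relation for $\fC^*$.

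Next I would form the contraction $\sum_{a,b}\fC_b(T^2)\fC_a(T^2)(\fC^*)^{ab}(T^*)$ term by term over the nine index pairs with $(\fC^*)^{ab}\neq 0$, reordering each product into a canonical monomial and using the nilpotency relations $\theta_i^2 = \phi_i^2 = 0$ to annihilate most contributions. Collecting the survivors I expect to obtain
\begin{equation*}
\tfrac{4}{27}\big(\lambda_1^2\lambda_2^2\mu_1 + \lambda_1\lambda_2^3\mu_2 + \lambda_1\lambda_2^2\theta_1\phi_1 + \lambda_1\lambda_2^2\theta_2\phi_2 + 2\lambda_1\lambda_2\theta_1\theta_2\mu_1 + 2\lambda_2^2\theta_1\theta_2\mu_2\big).
\end{equation*}
Finally I would expand the right-hand side directly, using $\fC(T^3) = \lambda_1\lambda_2^2 + 2\lambda_2\theta_1\theta_2$ and the pairing $t^c t^*_c = \lambda_1\mu_1 + \lambda_2\mu_2 + \theta_1\phi_1 + \theta_2\phi_2$; the two products quadratic in the $\theta$'s drop out by nilpotency, leaving exactly the polynomial above, which establishes \eqref{E:cubic}.

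The main obstacle is purely the Koszul sign bookkeeping in the contraction: the product $\fC_b\fC_a(\fC^*)^{ab}$ is written in a fixed order, so commuting the odd factors into canonical form must be done consistently, and it is easy to misplace a sign in the odd--odd block where $(\fC^*)^{ab}$ is skew. As an independent cross-check that avoids most of this labour, I would note that both sides are $\ff_0^{\ss}$-invariant polynomials of bidegree $(4,1)$ in $(T,T^*)$, since $\fC$, $\fC^*$ and the pairing $t^c t^*_c$ are all $\ff_0^{\ss}$-invariant by Proposition \ref{prop:cubic-forms}. If the space of such invariants is one-dimensional, the identity reduces to matching the single scalar $\tfrac{4}{27}$, which one fixes by evaluating both sides at a convenient point such as $T = w_1 + w_2$, $T^* = w^2$: there the left side collapses to the two pairs $(a,b)=(1,2),(2,1)$, giving $2\cdot\tfrac23\cdot\tfrac13\cdot\tfrac13 = \tfrac{4}{27}$, while the right side gives $\tfrac{4}{27}\cdot 1\cdot 1$. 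This representation-theoretic route pins down the constant with a single substitution and confirms the computation.
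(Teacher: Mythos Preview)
Your proposal is correct and follows exactly the paper's approach: the paper states that the identity is ``a straightforward (but slightly tedious) direct check'' and supplies Maple code to carry it out, while you do the same computation by hand with the intermediate expressions written out explicitly. Your $\ff_0^{\ss}$-invariance cross-check is a pleasant addition not present in the paper; note only that it is conditional on the one-dimensionality of the relevant space of bidegree-$(4,1)$ invariants, which you have not established, so it should be framed as a sanity check on the constant rather than an independent proof.
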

 
 Alternatively, we can easily confirm the above Lemma in Maple as follows: 
 \begin{verbatim}
restart: with(Physics):
Setup(mathematicalnotation=false): 
Setup(anticommutativeprefix={theta,phi}):
t:=[lambda1,lambda2,theta1,theta2]:
ts:=[mu1,mu2,phi1,phi2]:
C:=lambda1*lambda2^2+2*lambda2*theta1*theta2:
Cs:=mu1*mu2^2+mu2*phi1*phi2:
C1:=[seq(1/3*diff(C,t[a]),a=1..4)]:
C1s:=[seq(1/3*diff(Cs,ts[a]),a=1..4)]:
C2s:=Matrix(4,4,(a,b)->1/2*diff(C1s[b],ts[a])):
LHS:=expand(simplify(add(add(C1[b]*C1[a]*C2s[a,b],a=1..4),b=1..4))):
RHS:=expand(4/27*C*add(t[c]*ts[c],c=1..4)):
test:=expand(LHS-RHS);
 \end{verbatim}
  In our earlier studies \cite{KST2021, The2018}, the cubic form identity \eqref{E:cubic} played an important role in a PDE symmetry calculation. With this algebraic background now in-hand, let us now turn to the geometric setting.
 
 \subsection{An $F(4)$-invariant 2nd order super-PDE system}  Consider a contact supermanifold $(M^{7|4},\cC)$.  This is locally equivalent to $J^1(\bbC^{3|2},\bbC^{1|0})$, so take standard jet coordinates $(x^i,u,u_i)$ with even coordinates $x^0, x^1,x^2, u, u_0, u_1, u_2$ and odd coordinates $x^3,x^4,u_3,u_4$.  We also use the indices $0 \leq i,j,k \leq 4$, while $1 \leq a,b,c \leq 4$.
 
 We will endow the rank $(6|4)$ contact superdistribution $\cC$ with an additional geometric structure fibrewise invariant under $\fg_0 \cong \mathfrak{cosp}(4|2;\alpha)$ for $\alpha = 2$.  By Theorem \ref{thm:235H^1}, $\fg_0 \subset \mathfrak{cspo}(6|4)$ is a maximal subalgebra, so such a structure indeed reduces the structure algebra {\it precisely} to  $\fg_0$.
 
 The first such structure is a field of supervarieties $\cV \subset \bbP(\cC)$ from \S\ref{S:localV}, locally parametrized w.r.t. a $\operatorname{CSpO}$-basis as in \eqref{E:supervar}.  Explicitly, via $\fC(T^3)$ given in \eqref{E:fCT}, we can rewrite \eqref{E:supervar} as
 \begin{align*}
 \left(1, \,\, -t^a,\,\, -\tfrac{1}{2} \fC(T^3),\,\, -\tfrac{3}{2} \fC_a(T^2) \right)\;,
 \end{align*}
 and use these as components w.r.t. an arbitrary $\operatorname{CSpO}$-frame $(\bX_0,\bX_a,\bU^0,\bU^a)$, i.e.,
 \begin{align} \label{E:supervar-field}
 \left[ \bX_0 - t^a \bX_a - \tfrac{1}{2} \fC(T^3) \bU^0 - \tfrac{3}{2} \fC_a(T^2) \bU^a \right], 
 \end{align}
 where brackets denote projectivization.
 
 \begin{definition}
\hfill
\begin{itemize}
	\item[$(i)$] A {\sl mixed-contact $F(4)$-supergeometry} $(M^{7|8},\cC,\cV)$ is a contact supermanifold $(M^{7|4},\cC)$ whose contact superdistribution $\cC$  of rank $(6|4)$ is additionally equipped with a field of supervarieties $\cV \subset \bbP(\cC)$ given by the (Zariski closure of the) parametrization \eqref{E:supervar-field} w.r.t. some $\operatorname{CSpO}$-frame $(\bX_0,\bX_a,\bU^0,\bU^a)$,
		\item[$(ii)$] The symmetry superalgebra of $(M,\cC,\cV)$ consists of the contact supervector fields preserving $\cV$: 
$
\finf(M,\cC,\cV)= \{ \bX \in \fX(M) : \cL_\bX \cC \subset \cC, \, \cL_\bX \cV \subset \cV\} 
$. 
	\item[$(iii)$] The {\sl flat} mixed-contact $F(4)$-supergeometry is the mixed-contact $F(4)$-supergeometry determined by the $\operatorname{CSpO}$-frame
$
 \bX_i = \partial_{x^i} + u_i \partial_u, 
 \bU^i = \partial_{u_i}$.
\end{itemize}
 \end{definition}

 The second structure that gives rise to the $\fg_0$-reduction is the family of affine tangent spaces along $\cV$, given by $\widehat\cV = \{ \widehat{T}_\ell \cV : \ell \in \cV \}$.  These spaces are Lagrangian, so $\widehat\cV \subset \LG(\cC)$.  For $\ell \in \cV$ corresponding \eqref{E:supervar-field}, $\widehat{T}_\ell \cV$ is spanned by itself \eqref{E:supervar-field} and its derivatives w.r.t. each parameter $t^a$.  Taking appropriate linear combinations, $\widehat{T}_\ell \cV$ is then spanned by 
 \begin{align} \label{E:Lag}
 \bX_0 + \fC(T^3) \bU^0 + \tfrac{3}{2} \fC_a(T^2) \bU^a, \qquad 
 \bX_a + \tfrac{3}{2} \fC_a(T^2) \bU^0 + 3 \fC_{ab}(T) \bU^b,
 \end{align}
 where $1\leq a \leq 4$ and we used the notation from \eqref{E:cubic-der}.  While $\cV$ canonically determines $\widehat\cV$, in fact the converse is also true:

 \begin{prop}
 A mixed-contact $F(4)$-supergeometry $(M,\cC,\cV \subset \bbP(\cC))$ and $(M,\cC, \widehat\cV \subset \LG(\cC))$ have the same contact symmetries.
 \end{prop}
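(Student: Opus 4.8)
The plan is to prove the two inclusions $\finf(M,\cC,\cV)\subseteq\finf(M,\cC,\widehat\cV)$ and $\finf(M,\cC,\widehat\cV)\subseteq\finf(M,\cC,\cV)$ separately. The first is immediate: the assignment $\cV\mapsto\widehat\cV=\{\widehat{T}_\ell\cV:\ell\in\cV\}$ was constructed in \S\ref{S:localV} by fibrewise osculation, a manifestly contact-invariant (natural) operation, so any $\bX\in\fX(M)$ with $\cL_\bX\cC\subseteq\cC$ and $\cL_\bX\cV\subseteq\cV$ automatically satisfies $\cL_\bX\widehat\cV\subseteq\widehat\cV$. The entire content is therefore the reverse inclusion, for which it suffices to exhibit a \emph{canonical} reconstruction of $\cV$ from the data $(\cC,[\eta],\widehat\cV)$ alone: if $\cV$ is recovered by a construction referring only to $\cC$, its conformal symplectic form $[\eta]$, and the family $\widehat\cV\subset\LG(\cC)$, then every contact symmetry preserving $\widehat\cV$ preserves $\cV$.

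The reconstruction uses the second fundamental form of $\cV$. Fix a topological point $x\in M$ and work fibrewise in $\LG(\cC_x)$, using the functor-of-points formalism of \cite{KST2021} throughout. At each $L\in\widehat\cV_x$ there is a canonical identification of the vertical tangent space $V_L=\ker(T_L\LG(\cC_x)\to T_xM)$ with the super-symmetric square $\odot^2 L^*$, the $[\eta]$-duality $\cC/L\cong L^*$ being built into this identification. The tangent space to the fibre $\widehat\cV_x$ at $L$ thus determines a canonical subspace $\cW_L\subseteq\odot^2 L^*$ of dimension $(2|2)$, and we set
\[
\ker\cW_L:=\{\,w\in L:\ q(w,\cdot)=0\ \text{for all}\ q\in\cW_L\,\}.
\]
This is a canonical subspace of $L$, and the claim is that $\ker\cW_L=\ell$, the line at which $L=\widehat{T}_\ell\cV$ osculates $\cV$. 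Granting this, the assignment $L\mapsto\ker\cW_L$ defines the sought inverse $\widehat\cV\to\cV$, and $\cV=\{\ker\cW_L:L\in\widehat\cV\}$ is recovered canonically, completing the reverse inclusion.

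To verify $\ker\cW_L=\ell$ I would differentiate the explicit Lagrangian frame \eqref{E:Lag} in the parameters $t^a$: writing $e_0$ and $e_b$ for the spanning vectors there, one finds that $\cW_L$ is spanned by the $(2|2)$ quadrics $q_d$ ($1\le d\le 4$) whose components in the frame $(e_0,e_b)$ are the cubic data
\[
q_d(e_0,e_0)\propto\fC_d(T^2),\qquad q_d(e_0,e_b)\propto\fC_{db}(T),\qquad q_d(e_b,e_c)\propto\fC_{dbc},
\]
in the notation \eqref{E:cubic-der} (the proportionality constants and the $\eta$-normalization are irrelevant, as rescaling each $q_d$ does not change $\ker\cW_L$). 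The Euler relations $t^b\fC_{bc}(T)=\fC_c(T^2)$ and $t^a\fC_{abc}=\fC_{bc}(T)$ then show at once that the line $\ell=[\,e_0-t^a e_a\,]$ of \eqref{E:supervar-field} lies in $\ker\cW_L$.

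The remaining inclusion $\ker\cW_L\subseteq\ell$ is the genuine point and is where the specific cubic $\fC$ of \eqref{E:fC} enters: it amounts to the non-degeneracy statement that the common kernel of the second fundamental form of $\cV$ is no larger than $\ell$, which is a finite linear-algebra check against the relations $(Y_1)^2=Y_1\Theta_1=Y_1\Theta_2=(Y_2)^3=0$ underlying $\fC$. This last step is the main obstacle: one must confirm that the quadratic family $\{q_d\}$ is sufficiently non-degenerate that no direction transverse to $\ell$ is annihilated, and care is needed with the super-signs when solving $q_d(w,\cdot)=0$ over a general base superalgebra $\bbA$. Once this is in hand, canonicity of $\cW_L$ and of $\ker\cW_L$ gives the reverse inclusion, and hence the equality of the symmetry superalgebras.
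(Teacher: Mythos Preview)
Your strategy is sound and, once unpacked, is the \emph{same} construction the paper uses: under the standard identification $T_L\LG(\cC_x)\cong\odot^2 L^*$ (via $[\eta]$), a quadric $q\in\cW_L$ corresponds to a symmetric map $L\to\cC_x/L$, and the span of $L$ together with all these images is exactly the second osculating space $\widehat T^{(2)}_\ell\cV$. A short computation with the $\eta$-pairing then gives
\[
\ker\cW_L=\bigl(\widehat T^{(2)}_\ell\cV\bigr)^{\perp_\eta}\subset L,
\]
so your ``kernel of the tangent quadrics'' and the paper's ``$\eta$-orthogonal of the second osculating space'' coincide. The only difference is packaging.

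Where your proposal falls short is precisely the step you flag as ``the main obstacle'': you do not actually verify that $\ker\cW_L$ is no larger than $\ell$. The paper closes this gap far more cheaply than the direct linear-algebra check you envisage. Since $\cV$ is the $G_0$-orbit through $o=[e_{-\alpha_4}]$ and the whole construction is $G_0$-equivariant, it suffices to verify the statement at the single basepoint $\ell=o$. There the osculating filtration $o\subset\widehat T_o\cV\subset\widehat T^{(2)}_o\cV$ has associated graded $N_0\oplus N_1\oplus N_2$ from \eqref{E:normal}, and since the highest root forces $\eta$ to pair $N_k$ with $N_{3-k}$, one reads off immediately that $(\widehat T^{(2)}_o\cV)^\perp=N_0=o$. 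No parametric computation, no super-sign bookkeeping over a general $\bbA$, is needed. Invoking $G_0$-homogeneity to reduce your non-degeneracy check to the basepoint would complete your argument in one line.
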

 \begin{proof}
Osculating $\widehat{T}_\ell \cV$ further yields the second affine tangent space $\widehat{T}^{(2)}_\ell \cV$.  For example, at $\ell = o$, we have the filtration $o \subset \widehat{T}_o \cV \subset \widehat{T}^{(2)}_o \cV$ with associated-graded vector space $N_0 \op N_1 \op N_2$ from \eqref{E:normal}.  W.r.t. the (conformal) symplectic form $\eta$, $\widehat{T}^{(2)}_o \cV$ has orthogonal complement equal to $o$ itself, hence, by $G_0$-invariance, $\widehat{T}^{(2)}_\ell \cV$ has $\eta$-orthogonal complement equal to $\ell$.  Consequently, $\cV$ is canonically determined from $\widehat\cV$.
\end{proof}

\begin{theorem} \label{T:symbound-mixed}
The symmetry superalgebra of any mixed-contact $F(4)$-supergeometry $(M^{7|4},\cC,\cV)$ has $\dim \finf(M,\cC,\cV) \leq \dim F(4) = (24|16)$.
 \end{theorem}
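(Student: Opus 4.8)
The plan is to realize $(M,\cC,\cV)$ as a supergeometry modelled on the symbol pair $(\fm,\fg_0)$ and then invoke the Tanaka--Weisfeiler prolongation bound of \cite[Thm.~1.1]{KST2022}. Concretely, the symmetry superalgebra $\finf(M,\cC,\cV)$ carries the filtration by weighted order of vanishing at a fixed point compatible with the depth-$2$ contact grading, and its associated graded superalgebra embeds degree-by-degree into a prolongation built from $\fm$ and from the degree-zero stabilizer of the extra structure. Since $(M,\cC)$ is a $(7|4)$-dimensional contact supermanifold, the symbol of this filtration is the Heisenberg superalgebra $\fm=\fg_{-2}\oplus\fg_{-1}$ with $\der_{\gr}(\fm)\cong\mathfrak{cspo}(6|4)$, so the whole estimate reduces to pinning down the degree-zero stabilizer of $\cV$ inside $\der_{\gr}(\fm)$.

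First I would show that $\cV$ reduces the structure algebra of the $\operatorname{CSpO}$-structure on $\cC$ precisely to $\fg_0\cong\mathfrak{cosp}(4|2;\alpha)$. Fibrewise $\cV$ is the Zariski closure of the $G_0$-orbit through $o=[e_{-\alpha_4}]$, so its infinitesimal stabilizer $\fs:=\stab_{\der_{\gr}(\fm)}(\cV)$ contains $\fg_0$ by construction. On the other hand, $\cV$ is a nonempty \emph{proper} subvariety of $\bbP(\cC)$: on the even reduction it is the closure of the $2$-dimensional family parametrized by the even coordinates $(\lambda_1,\lambda_2)$, cf.\ \eqref{E:supervar}, hence a proper subvariety of $\bbP(\cC_{\bar 0})\cong\bbP^5$. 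Since the symplectic group acts transitively on $\bbP^5$, any $\der_{\gr}(\fm)$-invariant subvariety has even reduction $\emptyset$ or all of $\bbP^5$; as $\cV$ is proper and nonempty it is not $\der_{\gr}(\fm)$-invariant, whence $\fs\neq\der_{\gr}(\fm)$. By Theorem \ref{thm:235H^1}, $\fg_0$ is a maximal subalgebra of $\der_{\gr}(\fm)$, and the chain $\fg_0\subseteq\fs\subsetneq\der_{\gr}(\fm)$ then forces $\fs=\fg_0$.

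With this reduction established, $(M,\cC,\cV)$ is a supergeometry modelled on $(\fm,\fg_0)$: its symbol is $\fm$ and its structure superalgebra is reduced to $\fg_0\subset\der_{\gr}(\fm)$. Applying \cite[Thm.~1.1]{KST2022} bounds the symmetry superalgebra in the strong sense (even and odd parts separately) by the prolongation,
\begin{align*}
\dim\finf(M,\cC,\cV)\leq\dim\prn(\fm,\fg_0),
\end{align*}
and Corollary \ref{C:pr-235} identifies $\prn(\fm,\fg_0)\cong F(4)$, of dimension $(24|16)$. This yields the claimed inequality.

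The main obstacle is the reduction step, namely showing that $\fs$ is not strictly larger than $\fg_0$. The inclusion $\fg_0\subseteq\fs$ and the properness of $\cV$ are immediate, but it is the maximality of $\fg_0$ in $\der_{\gr}(\fm)$ from Theorem \ref{thm:235H^1} that does the real work, upgrading ``$\fs$ is some intermediate subalgebra'' to the exact equality $\fs=\fg_0$. One should phrase both the stabilizer and the properness via the functor-of-points description of $\cV$, so that $\fg_0$-invariance and the transitivity statement are read off correctly in the super-setting.
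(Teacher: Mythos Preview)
Your proof is correct and follows essentially the same approach as the paper: reduce the structure algebra to $\fg_0$ via $\cV$, then apply \cite[Thm.~1.1]{KST2022} together with $\prn(\fm,\fg_0)\cong F(4)$ (Corollary~\ref{C:pr-235}, equivalently the vanishing $H^{d,1}(\fm,\fg)=0$ for $d>0$ from Theorem~\ref{thm:235H^1}). You spell out the reduction step more carefully than the paper, which simply asserts it (relying on the maximality statement from Theorem~\ref{thm:235H^1} already flagged in the paragraph preceding the definition of mixed-contact $F(4)$-supergeometries); your properness argument via the even reduction and transitivity of the symplectic group on $\bbP^5$ is a clean way to see $\fs\subsetneq\der_{\gr}(\fm)$.
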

 \begin{proof} Let $\fg = F(4)$, equipped with the mixed-contact grading.  Since $\cV \subset \bbP(\cC)$ reduces the structure group from $\operatorname{CSpO}(6|4)$ to $G_0 = \operatorname{COSp}(4|2;\alpha)$ for $\alpha = 2$, any mixed-contact $F(4)$-supergeometry is a filtered $G_0$-structure with symbol $\fm = \fg_{-2} \op \fg_{-1}$.  From Theorem \ref{thm:235H^1}, we have $H^{d,1}(\fm,\fg) = 0$ for all $d > 0$, which is equivalent to the Tanaka--Weisfeiler prolongation satisfying $\prn(\fm,\fg_0)\cong\fg$.  The claim then follows from \cite[Thm.1.1]{KST2022}.
 \end{proof}

 Henceforth, we consider the {\em flat} mixed-contact $F(4)$-supergeometry.  From \S\ref{S:J1J2}, Lagrangian subspaces of $\cC$ are locally of the form $\bX_i + u_{ij} \bU^j = \partial_{x^i} + u_i \partial_u + u_{ij} \partial_{u_j}$, so \eqref{E:Lag} yields for $\widehat{T}_\ell \cV$ the parametric equations
 \begin{align} \label{E:2PDE}
 \begin{pmatrix}
 u_{00} & u_{0b}\\
 u_{a0} & u_{ab}
 \end{pmatrix}
  = 
 \begin{pmatrix}
 \fC(T^3) & \frac{3}{2}\fC_b(T^2)\\
 \frac{3}{2}\fC_a(T^2) & 3\fC_{ab}(T)
 \end{pmatrix}.
 \end{align}
 Using the explicit expression \eqref{E:fCT} of the cubic form, we can write this out as:
 \begin{align*}
 \begin{pmatrix}
 u_{ij}
 \end{pmatrix} = 
 \begin{pmatrix}
 \lambda_1 (\lambda_2)^2 + 2 \lambda_2\theta_1\theta_2 & \frac{1}{2} (\lambda_2)^2 & \lambda_1 \lambda_2 + \theta_1 \theta_2 & \lambda_2 \theta_2 & -\lambda_2 \theta_1\\
 \frac{1}{2} (\lambda_2)^2 & 0 & \lambda_2 & 0 & 0\\
 \lambda_1 \lambda_2 + \theta_1 \theta_2 & \lambda_2 & \lambda_1 & \theta_2 & -\theta_1\\
 \lambda_2 \theta_2 & 0 & \theta_2 & 0 & -\lambda_2\\
 -\lambda_2 \theta_1 & 0 & -\theta_1 & \lambda_2 & 0\\
 \end{pmatrix}.
 \end{align*}
 (We remind the reader that $x^3, x^4, u_3, u_4$ are odd variables.)  Eliminating the parameters via $(\lambda_1,\lambda_2,\theta_1,\theta_2) = (u_{22},u_{12},-u_{24},u_{23})$, we obtain the super-PDE system \eqref{E:F4-2PDE}.
 
 Let us now turn to the contact symmetries of our super-PDE system.  For $X = (x^1,x^2 | x^3,x^4)$ and $P = (u_1, u_2 | u_3, u_4)$, we define 
$
 \fC(X^3) = x^1 (x^2)^2 + 2 x^2 x^3 x^4$ and 
$
 \fC^*(P^3) = u_1 (u_2)^2 + u_2 u_3 u_4$.
 \begin{prop} The even function
 \begin{align*}
 f = u(u - x^i u_i) - \frac{1}{2} \fC(X^3) u_0 + \frac{1}{2} \fC^*(P^3) x^0 + \frac{9}{4} \fC_c(X^2) (\fC^*)^c(P^2)
 \end{align*}
 generates a contact symmetry of the flat mixed-contact $F(4)$-supergeometry $(M,\cC,\cV)$.
 \end{prop}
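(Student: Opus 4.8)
The plan is to show that the prolongation $\widetilde{\bS}_f$ of the contact supervector field $\bS_f$ to the second jet-superspace $\widetilde{M} \cong J^2(\bbC^{3|2},\bbC^{1|0})$ is tangent to the solution supermanifold $\Sigma \subset \widetilde{M}$ cut out by \eqref{E:F4-2PDE} (equivalently by \eqref{E:2PDE}).  Indeed $\bS_f$ is automatically a contact supervector field (\S\ref{S:J1J2}), and by the Proposition identifying the contact symmetries of $(M,\cC,\cV)$ with those of $(M,\cC,\widehat{\cV})$, preserving $\cV$ is the same as tangency to $\Sigma = \widehat{\cV}$.  As a guide I would first note that $f$ is homogeneous of weight $4$ for the contact grading (assigning $u$ weight $2$ and each of $x^i,u_i$ weight $1$): every monomial $u^2$, $ux^iu_i$, $\fC(X^3)u_0$, $\fC^*(P^3)x^0$, $\fC_c(X^2)(\fC^*)^c(P^2)$ has weight $4$.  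Since in this normalization the generating superfunction of an element of $\fg_d$ has weight $d+2$ (as $\bS_1=\partial_u$ spans $\fg_{-2}$), we get $\bS_f \in \fg_2$, the one–dimensional top piece of the grading; the leading term $u^2$ is the flat generator, and the remaining terms are exactly the corrections forcing tangency to $\Sigma$ rather than to the bare Cartan distribution.

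Next I would parametrize $\Sigma$ by $(x^i,u,u_i)$ together with the four free second–order jets
\[
T = (\lambda_1,\lambda_2,\theta_1,\theta_2) = (u_{22},\, u_{12},\, -u_{24},\, u_{23}),
\]
the remaining jets being $u_{ij} = \Phi_{ij}(T)$, where $\Phi_{ij}$ are the cubic–form entries on the right of \eqref{E:2PDE}.  Writing the defining equations of $\Sigma$ as $G_{ij} := u_{ij} - \Phi_{ij}(T)$ and using $\widetilde{\bS}_f = \bS_f + \sum_{k\leq l} h_{kl}\,\partial_{u_{kl}}$ with $h_{kl} = (-1)^{(|k|+|l|)|f|}\,\widetilde{D}_{x^k}\widetilde{D}_{x^l} f$, the Lie invariance condition $\widetilde{\bS}_f(G_{ij})\big|_\Sigma = 0$ collapses—because $\Phi_{ij}$ depends only on the jets $T$ while $\bS_f$ differentiates only $(x,u,u_i)$—to the system
\[
h_{ij}\big|_\Sigma = \sum_{a} (\pm)\, h_{[a]}\, \partial_{u_{[a]}}\Phi_{ij}\big|_\Sigma,
\qquad [a] \in \{(2,2),(1,2),(2,3),(2,4)\},
\]
with Koszul signs $(\pm)$ for the odd jets.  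For the four free jets this holds tautologically, so the real content is the collection of relations indexed by the dependent jets $u_{00},u_{01},u_{02},u_{03},u_{04},u_{11},u_{13},u_{14}$ and $u_{34}=-u_{12}$.

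I would then compute each coefficient $h_{ij}$ by applying the total derivatives $\widetilde{D}_{x^k} = \partial_{x^k} + u_k\partial_u + \sum_m \Phi_{km}(T)\,\partial_{u_m}$ to $f$ twice and restricting to $\Sigma$.  Here the structure of $f$ is decisive: the term $u(u-x^iu_i)$ supplies the leading contribution, $\fC(X^3)u_0$ and $\fC^*(P^3)x^0$ contribute through $\widetilde{D}$ acting on the $X$– and $P$–cubics, and the cross term $\tfrac{9}{4}\fC_c(X^2)(\fC^*)^c(P^2)$ supplies the quadratic–in–momenta pieces.  After substituting the explicit cubic–form values $\Phi_{ij}(T)$ from \eqref{E:fCT} and collecting terms, each required relation becomes a polynomial identity in $(x^i,u,u_i,T)$ in which the essential cancellation is precisely the contraction
\[
\fC_b(T^2)\fC_a(T^2)(\fC^*)^{ab}(T^*) = \tfrac{4}{27}\,\fC(T^3)\, t^c t^*_c
\]
of Lemma \ref{L:cubic}, with the jets $T$ and the momenta $P=(u_1,u_2,u_3,u_4)$ entering in the roles of $T$ and $T^*$.

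The main obstacle is the bookkeeping rather than any conceptual difficulty: this is a finite but lengthy verification requiring careful tracking of the Koszul signs generated by the odd coordinates $x^3,x^4,u_3,u_4$ and the odd jets, and the normalization $\tfrac{9}{4}$—together with the precise form of the dual cubic $\fC^*$ pinned down before Lemma \ref{L:cubic}—must be exactly right for \eqref{E:cubic} to produce the needed cancellations.  As with Lemma \ref{L:cubic}, once organized the computation can be confirmed directly (by hand or symbolically).  Alternatively, one may sidestep most of this by exhibiting $f$ as a Lagrange bracket \eqref{E:LB} of two lower–weight generating superfunctions already known to generate symmetries, using $\fg_2 = [\fg_1,\fg_1]$; then $\bS_f$ is a symmetry because contact symmetries close under the bracket, and the labour shifts to identifying $f$ with that bracket, which again rests on \eqref{E:cubic}.
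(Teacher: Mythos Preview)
Your proposal is correct and matches the paper's approach: the paper's proof is a one-line reference to \cite[Prop.~4.11]{KST2021}, observing that the calculation there carries over verbatim because it depends only on the cubic form identity verified in Lemma~\ref{L:cubic}. What you have written is precisely a sketch of that referenced computation---prolonging $\bS_f$ to $\widetilde M$, checking tangency to $\Sigma$ via the total-derivative coefficients $h_{ij}$, and noting that the essential cancellation is furnished by \eqref{E:cubic}---so the content is the same.
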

 
 \begin{proof} The same proof appearing in \cite[Prop.4.11]{KST2021} works here.  That calculation only relied on $\fC$ satisfying the key cubic form identity that we have verified in Lemma \ref{L:cubic}.
 \end{proof}
   
 It is clear that $1, x^i,u_i$ are all generating functions for contact symmetries of \eqref{E:2PDE}.  Using the Lagrange bracket \eqref{E:LB}, we obtain other symmetries, which we organize in Table \ref{F:sym}.
 
 \begin{table}[h]
 \[
 \begin{array}{|c|c|l|} \hline
 \fg_2 & & u(u - x^i u_i) - \frac{1}{2} \fC(X^3) u_0 + \frac{1}{2} \fC^*(P^3) x^0 + \frac{9}{4} \fC_c(X^2) (\fC^*)^c(P^2)\\ \hline
 \fg_1 & & x^0(u - x^i u_i) - \frac{1}{2} \fC(X^3) \\
 & & x^a(u - x^i u_i) + (-1)^{|a|} \left( \frac{3}{2} (\fC^*)^a(P^2) x^0 + \frac{9}{2} \fC_b(X^2) (\fC^*)^{ba}(P)\right)\\
 & & uu_0 - \frac{1}{2} \fC^*(P^3)\\
 & & uu_a + \frac{3}{2} \fC_a(X^2) u_0 - \frac{9}{2} \fC_{ab}(X) (\fC^*)^b(P^2)\\ \hline
 \fz(\fg_0) & & \sfZ := 2u - x^i u_i\\ \hline
 \fg_0^{\ss} & \ff_1 & x^a u_0 -  \frac{3}{2} (-1)^{|a|} (\fC^*)^a(P^2) \\
 & \fz(\ff_0) & \sfZ_0 := \frac{3}{2} x^0 u_0 + \frac{1}{2} x^c u_c \\
 & \ff_0^{\ss} & \psi^a{}_b := x^a u_b + (-1)^{|a|} ( \frac{1}{3} \delta^a{}_b x^c u_c - 9(-1)^{|a||b|}\fC_{bc}(X) (\fC^*)^{ca}(P) )\\
 & \ff_{-1} & u_a x^0 + \frac{3}{2} \fC_a(X^2) \\ \hline
 \fg_{-1} & & x^i, u_i\\ \hline
 \fg_{-2} & & 1 \\ \hline
 \end{array}
 \]
 \caption{Contact symmetries of the flat mixed-contact $F(4)$-supergeometry and associated super-PDE system (the ranges of the indices are $0 \leq i\leq 4$ and $1 \leq a,b,c \leq 4$)}
 \label{F:sym}
 \end{table}
 
 Table \ref{F:sym} applies remarkably uniformly, having appeared in \cite[Table 8]{KST2021} as well as \cite{The2018}.  Let us count dimensions in our current setting. Note that
 \begin{align*}
 \dim(\fg_{\pm 2}) = (1|0), \quad \dim(\fg_{\pm 1}) = (6|4), \quad \dim(\ff_{\pm 1}) = (2|2).
 \end{align*}
 Both $\fz(\fg_0)$ and $\fz(\ff_0)$ are $(1|0)$-dimensional, and what remains from $\dim(F(4)) = (24|16)$ is to verify that $\dim(\ff_0^{\ss}) = (4|4)$.  A direct substitution confirms that:
 
 \begin{prop} \label{P:f0} \sloppy The LSA $\ff_0^{\ss} = \langle \psi^a{}_b \rangle$ is $(4|4)$-dimensional and is spanned by the even generators $\psi^1{}_1, \psi^3{}_3, \psi^3{}_4, \psi^4{}_3$ and the 
odd generators $\psi^1{}_3, \psi^1{}_4, \psi^2{}_3, \psi^2{}_4$.
 Explicitly, we have the following basis:
 \begin{align*}
 \mbox{even generators}: &\quad 4 u_1 x^1 - 2 u_2 x^2 - u_3 x^3 - u_4 x^4, \quad
 u_3 x^3 - u_4 x^4, \quad u_4 x^3, \quad u_3 x^4;\\
 \mbox{odd generators}: &\quad -u_2 x^4 + u_3 x^1, \quad u_2 x^3 + u_4 x^1, \quad -2u_1x^4 + u_3 x^2, \quad 2u_1 x^3 +u_4 x^2.
 \end{align*}
 \end{prop}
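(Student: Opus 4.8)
The plan is to prove the statement exactly as its one-line proof advertises, by direct substitution, organizing the computation so that the super-sign bookkeeping stays under control. First I would compute the two supersymmetric Hessians $\fC_{bc}(X)$ and $(\fC^*)^{ca}(P)$ from the derivative definitions \eqref{E:cubic-der}, applied to the explicit cubics $\fC(X^3) = x^1(x^2)^2 + 2x^2x^3x^4$ and $\fC^*(P^3) = u_1(u_2)^2 + u_2u_3u_4$. Displaying each as a $4 \times 4$ matrix of linear forms, one finds that the only nonzero entries of $\fC_{bc}(X)$ are $\fC_{12}=\fC_{21}=\tfrac13 x^2$, $\fC_{22}=\tfrac13 x^1$, $\fC_{23}=\fC_{32}=\tfrac13 x^4$, $\fC_{24}=\fC_{42}=-\tfrac13 x^3$, and $\fC_{34}=-\fC_{43}=-\tfrac13 x^2$, while $(\fC^*)^{ca}(P)$ has the same pattern except that the entries coupling an odd index carry a factor $\tfrac16$ rather than $\tfrac13$. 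This mismatch is precisely the asymmetric normalization pinned down in Proposition \ref{prop:cubic-forms}: the coefficient $1$ (not $2$) in front of $u_2u_3u_4$ in $\fC^*$.

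Second, I would substitute these into the defining expression $\psi^a{}_b = x^a u_b + (-1)^{|a|}\bigl(\tfrac13 \delta^a{}_b\, x^c u_c - 9(-1)^{|a||b|}\fC_{bc}(X)(\fC^*)^{ca}(P)\bigr)$ and carry out the contraction over $c$. The prefactor $(-1)^{|a|}$ flips the sign of the correction term whenever $a$ is odd, and $(-1)^{|a||b|}$ intervenes only when both $a,b$ are odd, so the distinct sign cases must be tracked individually. Carrying this out for the even symbols yields, after using $u_ix^i = -x^iu_i$ for odd $i$, the values $3\psi^1{}_1 = 4u_1x^1 - 2u_2x^2 - u_3x^3 - u_4x^4$, $\psi^3{}_4 = -\tfrac32\, u_4x^3$, $\psi^4{}_3 = -\tfrac32\, u_3x^4$, and $\psi^3{}_3 = -\tfrac{1}{12}(4u_1x^1-2u_2x^2-u_3x^3-u_4x^4) - \tfrac34(u_3x^3-u_4x^4)$; these are precisely scalar combinations of the four stated even generators. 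The parallel computation for the odd symbols gives $\psi^1{}_3 = -u_2x^4 + u_3x^1$, $\psi^1{}_4 = u_2x^3 + u_4x^1$, $2\psi^2{}_3 = -2u_1x^4 + u_3x^2$, and $2\psi^2{}_4 = 2u_1x^3 + u_4x^2$, matching the four stated odd generators.

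Third, I would confirm that the remaining symbols $\psi^a{}_b$ add nothing new: the trace-type correction $\tfrac13 \delta^a{}_b\, x^c u_c$ forces relations such as $\psi^2{}_2 = -\tfrac12\psi^1{}_1$ and $\psi^4{}_4 \in \langle \psi^1{}_1, \psi^3{}_3\rangle$, while the off-diagonal $\psi^1{}_2, \psi^2{}_1$ and the like likewise collapse into the span of the eight displayed elements. Linear independence of the eight explicit forms is then immediate from their distinct leading monomials, so $\langle \psi^a{}_b\rangle$ has dimension $(4|4)$, in agreement with $\ff_0^{\ss} \cong \fsl(2|1) \cong \mathfrak{osp}(2|2)$ recorded in \S\ref{S:mixedct}.

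The only genuine obstacle is the super-sign bookkeeping, namely the interplay of the prefactors $(-1)^{|a|}$ and $(-1)^{|a||b|}$ with the left-derivative signs implicit in the Hessian entries and in the contraction $\fC_{bc}(X)(\fC^*)^{ca}(P)$, compounded by the different normalizations of $\fC$ and $\fC^*$. As with Lemma \ref{L:cubic}, this entire verification is routine to reproduce symbolically in Maple using anticommuting $\theta,\phi$-variables, which eliminates any residual doubt about the signs and confirms the stated basis.
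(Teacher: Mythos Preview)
Your proposal is correct and follows exactly the approach the paper indicates: the paper's proof is literally the single sentence ``A direct substitution confirms that:'', and you have simply written out what that substitution entails, with the Hessian entries, the sign bookkeeping, and the check that the redundant $\psi^a{}_b$ lie in the span of the eight listed generators. The intermediate values you record (e.g.\ $3\psi^1{}_1$, $\psi^3{}_4 = -\tfrac32 u_4x^3$, $\psi^2{}_2 = -\tfrac12\psi^1{}_1$) check out, so there is nothing to add.
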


 \begin{theorem}
\label{thm:areyoulookingforalabel}
 The contact symmetry superalgebra of the super-PDE system \eqref{E:F4-2PDE} is isomorphic to $\fg = F(4)$ and it is spanned by the $(24|16)$ symmetries in Table \ref{F:sym}.
 \end{theorem}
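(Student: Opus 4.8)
The plan is to combine the upper bound already obtained in Theorem~\ref{T:symbound-mixed} with an explicit realization by the $(24|16)$ generating superfunctions collected in Table~\ref{F:sym}. First I would identify the contact symmetries of the super-PDE with $\finf(M,\cC,\cV)$. Since the system \eqref{E:F4-2PDE}, equivalently \eqref{E:2PDE}, is by \eqref{E:Lag} precisely the local coordinate description of the family $\widehat\cV \subset \LG(\cC)$ attached to the flat mixed-contact $F(4)$-supergeometry, any contact symmetry of \eqref{E:F4-2PDE} is, via the Lie--B\"acklund theorem, the prolongation of a contact transformation of $(M,\cC)$ preserving $\widehat\cV$. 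By the Proposition preceding Theorem~\ref{T:symbound-mixed}, preserving $\widehat\cV$ is equivalent to preserving $\cV$, so the contact symmetry superalgebra of \eqref{E:F4-2PDE} coincides with $\finf(M,\cC,\cV)$, whose dimension is at most $(24|16)$ by Theorem~\ref{T:symbound-mixed}.

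Second, I would verify that the $(24|16)$ generating superfunctions in Table~\ref{F:sym} are genuine contact symmetries spanning a space of exactly this dimension. The low-degree generators $1, x^i, u_i$ of $\fg_{-2}\oplus\fg_{-1}$ are manifestly symmetries of \eqref{E:2PDE}, and the top $\fg_2$ generator $f$ was already shown to generate a symmetry in the Proposition immediately above, using only the cubic form identity of Lemma~\ref{L:cubic}. Because the Lagrange bracket satisfies $[\bS_f,\bS_g]=\bS_{[f,g]}$, the contact symmetries form a subalgebra closed under \eqref{E:LB}, so repeatedly bracketing the known symmetries produces every remaining entry of Table~\ref{F:sym}. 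Linear independence, and hence the dimension $(24|16)$, then follows from the contact grading together with Proposition~\ref{P:f0} for the $\ff_0^{\ss}$-block.

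Third, I would identify the resulting superalgebra with $F(4)$. The entries of Table~\ref{F:sym} are organized by the mixed-contact grading $\fg=\fg_{-2}\oplus\cdots\oplus\fg_2$, with $\fg_0$ refined by the secondary grading of $\ff=\fg_0^{\ss}\cong\fosp(4|2;\alpha)$, $\alpha=2$. Computing the Lagrange brackets among them---a calculation identical in form to the one carried out in \cite[\S4]{KST2021} and depending only on Lemma~\ref{L:cubic}---shows that they close into a $\bbZ$-graded Lie superalgebra realizing precisely the bracket relations of $F(4)$ in its mixed-contact grading. Combined with the upper bound, this forces equality of dimensions, so these $(24|16)$ symmetries exhaust the contact symmetry superalgebra, which is therefore isomorphic to $F(4)$.

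The main obstacle will be the verification that the positive-degree generators, in particular those of $\fg_1$ and $\fg_2$, genuinely preserve \eqref{E:F4-2PDE}: this is exactly where the cubic form identity \eqref{E:cubic} is essential. Having established it in Lemma~\ref{L:cubic}, the remaining task reduces to the uniform bracket bookkeeping already validated in \cite{KST2021}, so the proof is essentially a matter of assembling the upper bound with the explicit graded realization.
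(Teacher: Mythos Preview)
Your proposal is correct and follows essentially the same approach as the paper: combine the upper bound from Theorem~\ref{T:symbound-mixed} with the explicit $(24|16)$ symmetries of Table~\ref{F:sym}, then identify the resulting algebra with $F(4)$. The paper's proof is more terse on the last step, invoking the grading element $\sfZ = 2u - x^i u_i$ (which satisfies $\ad(\sfZ)|_{\fg_r} = r\,\id_{\fg_r}$) and deferring to \cite[Thm.~5.4]{KST2021} rather than checking bracket relations directly; your more computational route via Lagrange brackets accomplishes the same thing.
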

 
 \begin{proof}
 By Theorem \ref{T:symbound-mixed}, this is a basis for the symmetry superalgebra.  The proof that it is isomorphic to $F(4)$ follows exactly as in \cite[Thm.5.4]{KST2021}, using the existence of a symmetry $\sfZ = 2u -x^i u_i$ that acts like the grading element, i.e., $\ad(\sfZ)|_{\fg_r} = r \id_{\fg_r}$ for all $r$.
 \end{proof}

 \section{The odd contact grading and related geometric structures}
 \label{S:oddct}

We consider now the odd contact grading of Table \ref{F:odd-roots}, which is associated to the parabolic subalgebra $\fp^{\rm I}_1$.   We have $\fg_0=\fz(\fg_0)\op \ff$, where $\ff\cong B_3$ and the center $\fz(\fg_0)$ is spanned by the grading element $\sfZ = \sfZ_1$. Moreover $\fg_{-1} \cong \bbS$ is the 8-dimensional spin representation of $\ff$, so we may identify $\ff \cong \fspin(7)$ and $\fg_0 \cong \mathfrak{cspin}(7)$. We remark that $\fg_0,\fg_{\pm 2}$ are purely even, while $\fg_{\pm 1}$ is purely odd, and that $\fm = \fg_{-2}\oplus\fg_{-1}$ is an odd Heisenberg algebra with a super-skew (i.e., symmetric) non-degenerate bracket $\eta: \Lambda^2\fg_{-1} \to \fg_{-2}$.  The algebra of zero-degree derivations of $\fm$ is $\fco(\bbS)$, with associated ``contact'' group $\CO(\bbS)=\CC^\times\cdot O(\bbS)$ given by the subgroup of $\GL(\bbS)$ preserving the conformal class $[\eta] := \{ \lambda \eta : \lambda \in \bbC^\times \}$ of $\eta$. 

We note that $\fg_0 \subset \fco(\bbS)$ is a maximal subalgebra, since $\fso(\bbS) \cong \mathfrak{spin}(7)\op\bbC^7$ as irreducible representations for $\ff\cong \mathfrak{spin}(7)$.
 
 \subsection{An explicit presentation of the spin representation $\bbS$}
 \label{subsec:explicit-pres}
 In this subsection, we exclusively work in the classical setting. On $\bbV=\bbC^7$, we fix a basis $(e_1, e_2, e_3, R, f_3, f_2, f_1)$ and define a symmetric bilinear form $g$ with ``anti-diagonal'' components $g_{ij} = \delta_{i,8-j}$ in this basis.  Then $\bbV=E \op \bbC R \op F$, with $E = \langle e_1, e_2, e_3 \rangle$ and $F = \langle f_3, f_2, f_1 \rangle$ maximally isotropic subspaces, and $\ff\cong \fso(\bbV)$ is given in its standard representation by matrices of the form
 \begin{align*}
 X = \begin{psm}
 h_1 & a_{100} & a_{110} & a_{111} & a_{112} & a_{122} & 0 \\
 b_{100} & h_2 &  a_{010} & a_{011} & a_{012} & 0 & -a_{122} \\
 b_{110} & b_{010} & h_3 & a_{001} & 0 & -a_{012} & -a_{112} \\
 b_{111} & b_{011} & b_{001} & 0 & -a_{001} & -a_{011} & -a_{111} \\
 b_{112} & b_{012} & 0 & -b_{001} & -h_3 & -a_{010} & -a_{110} \\
 b_{122} & 0 & -b_{012} & -b_{011} & -b_{010} & -h_2 & -a_{100} \\
 0 & -b_{122} & -b_{112} & -b_{111} & -b_{110} & -b_{100} & -h_1
 \end{psm}\;.
 \end{align*}
The Cartan subalgebra $\fh$ consists of the diagonal matrices $h$ above, and the simple roots are $\beta_1 = \epsilon_1 - \epsilon_2$, $\beta_2 = \epsilon_2 - \epsilon_3$, $\beta_3 = \epsilon_3$, where $\epsilon_i(h) = h_i$. Focusing on the entries of $X$ given by $a_{ijk}$ or $b_{ijk}$ yields root vectors for the roots $\pm (i\beta_1 + j\beta_2 + k\beta_3)$, respectively.  The fundamental weights of $\ff$ are $(\lambda_1,\lambda_2,\lambda_3) = (\epsilon_1, \epsilon_1 + \epsilon_2, \frac{1}{2} (\epsilon_1 + \epsilon_2 + \epsilon_3))$, and we let $\bbV_\lambda$ be the irreducible representation with highest weight $\lambda$. For instance $\bbS \cong \bbV_{\lambda_3}$.

 We equip $\bbS = \Lambda^\bullet E^*$ with the Clifford action by $\bbV$:
 \begin{align} \label{E:Clifford}
 e \cdot \phi := -\sqrt{2} \iota_{e} \phi, \quad
 f \cdot \phi := +\sqrt{2} f^\flat \wedge \phi, \quad
 R \cdot \phi := \begin{cases}
 +i \phi, & \phi \in \Lambda^{\mbox{\tiny even}} E^*\\
 -i \phi, & \phi \in \Lambda^{\mbox{\tiny odd}} E^*
 \end{cases},
 \end{align}
 where $e \in E$, $f \in F$, $\phi\in\bbS$, and $\iota_e$ is the interior product. In our conventions, the highest weight vector in $\bbS$ is given by $\phi=\1\in \Lambda^{\mbox{\tiny even}} E^*$.
We identify $\fso(\bbV) \cong \Lambda^2 \bbV$ as in  \S\ref{eq:spinorial-description} and consider the spinor representation  $\sigma:\fso(\bbV)\to\mathfrak{gl}(\mathbb S)$ on $\bbS$ via
\begin{align*}
\sigma(v_1 \wedge v_2)(\phi) := \frac{1}{4} \big(v_1 \cdot v_2 \cdot \phi - v_2 \cdot v_1 \cdot \phi\big),
 \end{align*}
for all $v_1 \wedge v_2 \in \Lambda^2 \bbV \cong \fso(\bbV)$.
 Let $\vol_{E^*}:= e^1 \wedge e^2 \wedge e^3$. We define the following basis 
 \begin{align} \label{E:spinbasis}
 (\phi_0,\, \phi_1,\, \phi_2\, \phi_3,\, \psi_0,\, \psi_1,\, \psi_2,\, \psi_3) = (\1, \, e^1, \, e^2, \, e^3, \, -\vol_{E^*}, \, e^2 \wedge e^3, \, e^3 \wedge e^1, \, e^1 \wedge e^2)
 \end{align}
 of $\bbS$ and denote the corresponding dual basis elements as usual via raised indices:  $(\phi^0,\ldots, \psi^3)$.
 The induced action of $\fso(\bbV)$ on $\bbS$ is given explicitly by the following matrices:
 \begin{align} \label{E:B3SpinMat}
 \begin{psm}
 \frac{h_1 + h_2 + h_3}{2} & -\tilde{a}_{111} & -\tilde{a}_{011} & -\tilde{a}_{001} & 0 & -\tilde{a}_{012} & -\tilde{a}_{112} & -\tilde{a}_{122} \\
 -\tilde{b}_{111} & \frac{-h_1 + h_2 + h_3}{2} & \tilde{b}_{100} & \tilde{b}_{110} & \tilde{a}_{012} & 0 & \tilde{a}_{001} & -\tilde{a}_{011} \\
 -\tilde{b}_{011} & \tilde{a}_{100} & \frac{h_1 - h_2 + h_3}{2} & \tilde{b}_{010} & \tilde{a}_{112} & -\tilde{a}_{001} & 0 & \tilde{a}_{111} \\
 -\tilde{b}_{001} & \tilde{a}_{110} & \tilde{a}_{010} & \frac{h_1 + h_2 - h_3}{2} & \tilde{a}_{122} & \tilde{a}_{011} & -\tilde{a}_{111} & 0 \\
 0 & -\tilde{b}_{012} & -\tilde{b}_{112} & -\tilde{b}_{122} & \frac{-h_1 - h_2 - h_3}{2} & \tilde{b}_{111} & \tilde{b}_{011} & \tilde{b}_{001} \\
 \tilde{b}_{012} & 0 & -\tilde{b}_{001} & \tilde{b}_{011} & \tilde{a}_{111} & \frac{h_1 - h_2 - h_3}{2} & -\tilde{a}_{100} & - \tilde{a}_{110} \\
 \tilde{b}_{112} & \tilde{b}_{001} & 0 & -\tilde{b}_{111} & \tilde{a}_{011} & -\tilde{b}_{100} & \frac{-h_1 + h_2 - h_3}{2} & -\tilde{a}_{010} \\
 \tilde{b}_{122} & -\tilde{b}_{011} & \tilde{b}_{111} & 0 & \tilde{a}_{001} & -\tilde{b}_{110} & -\tilde{b}_{010} & \frac{-h_1 - h_2 + h_3}{2}
 \end{psm},
 \end{align}
 where the tilded variables are appropriate multiples of their untilded counterparts.  (The explicit factors are not needed in what follows.)

Up to scale, there is a unique $\fso(\bbV)$-invariant symmetric bilinear form and (skew) 4-form on $\bbS$:
 \begin{align}
\eta &= \phi^0 \psi^0 + \phi^1 \psi^1 + \phi^2 \psi^2 + \phi^3 \psi^3, \label{E:g}\\
 \sfQ &= \phi^0 \wedge \phi^1 \wedge \psi^0 \wedge \psi^1 + \phi^0 \wedge \phi^2 \wedge \psi^0 \wedge \psi^2 + \phi^0 \wedge \phi^3 \wedge \psi^0 \wedge \psi^3  \nonumber\\
&\;\;\;\;- \phi^1 \wedge \phi^2 \wedge \psi^1 \wedge \psi^2 - \phi^1 \wedge \phi^3 \wedge \psi^1 \wedge \psi^3 - \phi^2 \wedge \phi^3 \wedge \psi^2 \wedge \psi^3 \label{E:Q}\\
 &\;\;\;\;- 2 \phi^0 \wedge \psi^1 \wedge \psi^2 \wedge \psi^3+ 2 \phi^1 \wedge \phi^2 \wedge \phi^3 \wedge \psi^0\;.\nonumber
\end{align}
It follows that $\sfQ$ is nothing but the Cayley $4$-form on $\bbS$, expressed w.r.t. a spinorial basis that is {\it not} orthogonal. It is well-known that the stabilizer  $\Stab_{\SO(\bbS)}(\sfQ)$ of $\sfQ$ in $\SO(\bbS)$ is connected and isomorphic to $\Spin(7)$. (In fact, the stabilizer in $\GL(\bbS)$ has two connected components, see \cite[549--550]{Bryant}, $\Spin(7)$ and $i\cdot\Spin(7)$, but the latter is evidently not contained in $\SO(\bbS)$). We consider the Hodge dual operations on $\bbS$ and $\bbS^*$ determined by the orientations given by $\vol_{\bbS}=\phi_0\wedge\ldots\wedge\phi_3\wedge\psi_0\wedge\ldots\wedge\psi_3$ and, respectively, $\vol_{\bbS^*}=\vol_{\bbS}^{\flat}=\phi^0\wedge\ldots\wedge\phi^3\wedge\psi^0\wedge\ldots\wedge\psi^3$, where $\flat : \bbS\to \bbS^*$ is the musical isomorphism determined by $\eta$. We remark that the spaces of self-dual $\Lambda^4_{+}\bbS^*$ and anti self-dual $\Lambda^4_{-}\bbS^*$ forms are $\eta$-orthogonal and that $(\Lambda^4_{\pm}\bbS)^\flat=\Lambda^4_{\pm}\bbS^*$. Evidently
$\Lambda^4_{\pm}\bbS^*|_{\Lambda^4_{\mp}\bbS}=0$.
\begin{lemma}
\label{lem:stabilizerCayleyline}
The Cayley $4$-form $\sfQ$ is anti self-dual and the stabilizer $\Stab_{\CO(\bbS)}([\sfQ])$ of its conformal class $[\sfQ]$ in $\CO(\bbS)=\CC^\times\cdot \operatorname{O}(\bbS)$ is isomorphic to $\CSpin(7)=\CC^\times\cdot \Spin(7)$.
\end{lemma}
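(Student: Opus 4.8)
The plan is to prove the two assertions in turn, doing anti self-duality first and then the stabilizer, where the decisive input is the already-cited computation of $\Stab_{\GL(\bbS)}(\sfQ)$.

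For anti self-duality I would argue representation-theoretically rather than expand \eqref{E:Q} by hand. The Hodge star $*\colon\Lambda^4\bbS^*\to\Lambda^4\bbS^*$ determined by $\eta$ and the orientation $\vol_{\bbS^*}$ is $\SO(\bbS)$-equivariant; since $\Spin(7)=\Stab_{\SO(\bbS)}(\sfQ)$ fixes $\sfQ$, it follows that $*\sfQ$ is again $\Spin(7)$-invariant. The space of $\Spin(7)$-invariant $4$-forms on the $8$-dimensional spin module $\bbS$ is one-dimensional (the standard fact that $\sfQ$ spans the trivial summand of $\Lambda^4\bbS^*$), so $*\sfQ=\lambda\sfQ$ for some $\lambda\in\CC$. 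Because $*^2=\id$ in middle degree $4$ on an $8$-dimensional orthogonal space, $\lambda=\pm 1$, i.e. $\sfQ$ lies in $\Lambda^4_{+}\bbS^*$ or in $\Lambda^4_{-}\bbS^*$. The sign is then fixed by inspecting a single component of $*\sfQ$: using $\eta$ from \eqref{E:g} (which pairs $\phi^i$ with $\psi^i$) together with the fixed orientation, one computes the coefficient of, say, $\phi^2\wedge\phi^3\wedge\psi^2\wedge\psi^3$ in $*\sfQ$ and compares it with its coefficient $-1$ in \eqref{E:Q}, yielding $\lambda=-1$. Equivalently one may simply expand $*\sfQ$ directly from \eqref{E:Q}.

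For the stabilizer the key observation is that the stabilizer of the line $[\sfQ]$ already in all of $\GL(\bbS)$ equals $\CSpin(7)$. Indeed, scalar matrices $s\cdot\id$ act on $\Lambda^4\bbS^*$ by the factor $s^{-4}$, so every multiplier in $\CC^\times$ is realized by a scalar; hence $\Stab_{\GL(\bbS)}([\sfQ])=\CC^\times\cdot\Stab_{\GL(\bbS)}(\sfQ)$, with $\CC^\times$ the scalars. By the cited result of Bryant, $\Stab_{\GL(\bbS)}(\sfQ)=\Spin(7)\sqcup i\cdot\Spin(7)$, and since $i\cdot\id$ is itself a scalar this gives $\Stab_{\GL(\bbS)}([\sfQ])=\CC^\times\cdot\Spin(7)=\CSpin(7)$. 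It remains only to intersect with $\CO(\bbS)$: on one hand $\Stab_{\CO(\bbS)}([\sfQ])=\CO(\bbS)\cap\Stab_{\GL(\bbS)}([\sfQ])=\CO(\bbS)\cap\CSpin(7)$, and on the other hand $\CSpin(7)=\CC^\times\cdot\Spin(7)\subset\CO(\bbS)$, since the scalars lie in $\CC^\times\subset\CO(\bbS)$ and $\Spin(7)\subset\SO(\bbS)\subset\opO(\bbS)$ preserves $\eta$, so both factors preserve $[\eta]$. Therefore $\CO(\bbS)\cap\CSpin(7)=\CSpin(7)$ and $\Stab_{\CO(\bbS)}([\sfQ])\cong\CSpin(7)$.

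I expect the only genuinely delicate point to be the sign in the anti self-duality statement: the Hodge star is taken with respect to the non-orthogonal, null-paired spinor coframe \eqref{E:spinbasis}, so care with the index raising via $\flat$ and with the orientation convention $\vol_{\bbS^*}$ is essential when extracting the relevant coefficient. The stabilizer identification is by contrast essentially formal once Bryant's $\GL(\bbS)$-stabilizer is invoked, the single thing to check being the containment $\CSpin(7)\subset\CO(\bbS)$; in particular the conformal freedom in $\CO(\bbS)$ is exactly what absorbs the extra component $i\cdot\Spin(7)$ and any nontrivial multiplier on $[\sfQ]$.
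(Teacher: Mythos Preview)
Your proof is correct, but the stabilizer argument takes a genuinely different route from the paper's.

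For anti self-duality, the paper simply says it is ``easy to see from \eqref{E:Q}'' and also records $\sfQ\wedge\star\sfQ=14\,\vol_{\bbS^*}$; your representation-theoretic reduction to a single coefficient check is equivalent in content and perfectly fine.

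For the stabilizer, however, the paper argues entirely within $\opO(\bbS)$: it first uses anti self-duality to exclude orientation-reversing elements (since $A\in\opO(\bbS)$ with $\det A=-1$ would swap $\Lambda^4_\pm$, forcing $\sfQ=0$), then observes that $\Stab_{\SO(\bbS)}([\sfQ])$ normalizes its identity component $\Spin(7)$, and invokes that all automorphisms of $\Spin(7)$ are inner together with Schur's lemma (the centralizer of $\Spin(7)$ in $\SO(\bbS)$ is $\{\pm 1\}$) to conclude $\Stab_{\SO(\bbS)}([\sfQ])=\Spin(7)$. Your approach instead lifts to $\GL(\bbS)$, uses Bryant's cited computation $\Stab_{\GL(\bbS)}(\sfQ)=\Spin(7)\sqcup i\cdot\Spin(7)$ in full, absorbs the extra component into the scalars to get $\Stab_{\GL(\bbS)}([\sfQ])=\CSpin(7)$, and then intersects back with $\CO(\bbS)$. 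Your route is shorter and makes more efficient use of the already-cited Bryant result; the paper's route is more intrinsic to the conformal orthogonal group and exhibits the pleasant geometric fact that anti self-duality alone rules out the $\det=-1$ component. Either way the conclusion is the same.
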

\begin{proof}
It is easy to see from  \eqref{E:Q} that $\sfQ$ is anti self-dual and $\sfQ\wedge\star\sfQ=14\vol_{\bbS^*}$, so $\eta(\sfQ,\sfQ)=14$.
If $A\in \operatorname{O}(\mathbb S)$ satisfies $A\cdot \sfQ=\lambda\sfQ$ for $\lambda\in\mathbb C^\times$, then $\lambda^2\eta(\sfQ,\sfQ)=\eta(A\cdot\sfQ,A\cdot\sfQ)=\eta(\sfQ,\sfQ)$, so $\lambda=\pm 1$.
If $\det(A)=-1$, then $A\cdot\sfQ=\lambda\sfQ$ is an equality of non-zero $4$-forms with opposite duality, which is not possible. Hence $\det(A)=1$ and the group $\Stab_{\SO(\bbS)}([\sfQ])$ has at most two connected components, corresponding to $\lambda=\pm 1$. The one of the identity is $\Spin(7)$.  In particular, $\Stab_{\SO(\bbS)}([\sfQ])$ is contained in the normalizer of $\Spin(7)$ in $\SO(\bbS)$. Since any automorphism of $\Spin(7)$ is inner, then for any $g\in \Stab_{\SO(\bbS)}([\sfQ])$ there exists $h\in\Spin(7)$ such that $gtg^{-1}=hth^{-1}$ for all $t\in\Spin(7)$. In other words $h^{-1}g$ is in the centralizer of $\Spin(7)$ in $\SO(\bbS)$, which consists only of $\pm 1$ (by Schur's Lemma, since $\Spin(7)$ acts irreducibly on $\bbS$). Hence $g=\pm h$ and $\lambda=1$, i.e., $g\in\Stab_{\SO(\bbS)}(\sfQ)\cong\Spin(7)$.  Summing up, $\Stab_{\CO(\bbS)}([\sfQ])=\mathbb C^\times\cdot \Stab_{\SO(\bbS)}([\sfQ])=\mathbb C^\times\cdot \Stab_{\SO(\bbS)}(\sfQ)\cong \mathbb C^\times\cdot \Spin(7)$.
\end{proof}
We refer to the subgroup $G_0:=\CSpin(7)=\CC^\times\cdot \Spin(7)$ of $\CO(\bbS)$ as the {\em structure group}. Let us define several objects related to the spin representation $\bbS$:
 \begin{itemize}
 \item[$(i)$] The spinor variety $\cV \subset \bbP(\bbS)$ is the $\Spin(7)$-orbit through the isotropic line $o = [\1] \in \bbS$.  Concretely, $\cV$ is the connected, 6-dimensional, null quadric
 \begin{align*}
 \cV = \big\{ [\phi] : \eta(\phi,\phi) = 0 \big\}.
 \end{align*}
(Since both the spinor variety $\cV$ and the null quadric are Zariski-closed irreducible subsets in $\mathbb{P}(\bbS)$ and they have the same dimension, they have to coincide.)  
 \item[$(ii)$] The Lagrangian--Grassmannian $\LG(\bbS)$ consists of all $\eta$-Lagrangian subspaces of $\bbS$.  Its tangent space is modelled on $4\times 4$ skew-symmetric matrices, so $\dim \LG(\bbS) = 6$.  From \cite[p.189]{OV1994}, $\LG(\bbS)$ consists of two open $\SO(\bbS)$-orbits (hence two connected components), and is a single $\operatorname{O}(\bbS)$-orbit. Regarding any $L \in \LG(\bbS)$ via the Pl\"ucker embedding $\LG(\bbS) \hookrightarrow \bbP(\Lambda^4 \bbS)$, the two open $\SO(\bbS)$-orbits consist of
the space $\LG_+(\bbS)$ of self-dual $\eta$-Lagrangian subspaces (e.g., $L=\langle \phi_0, \phi_1, \phi_2, \phi_3\rangle$) and, respectively, the space $\LG_-(\bbS)$ of anti self-dual ones (e.g., $L=\langle \psi_0, \phi_1, \phi_2, \phi_3\rangle$).
\end{itemize} 

We emphasize that both $\cV$ and $\LG(\bbS)$ are actually $\CO(\bbS)$-invariant, so they do {\it not} enforce the desired reduction from $\CO(\bbS)$ to the structure group $G_0=\CSpin(7)$.
 
The stabilizer subalgebra  in $\ff$ at $o=[\1]$ is the parabolic subalgebra $\ff^0$ of $\ff$ corresponding to $\tilde{b}_{ijk} = 0$ in \eqref{E:B3SpinMat}.  There is an $\ff^0$-invariant filtration $\ff = \ff^{-2} \supset \ff^{-1} \supset \ff^0 \supset \ff^1 \supset \ff^2$ of $\ff$, with filtrands $\ff^i = \bigoplus_{j \geq i} \ff_j$ written in terms of a compatible grading $\ff = \ff_{-2} \op ... \op \ff_2$:
 \begin{align*}
 \begin{array}{ccccccc}
 \ff_{-2}\cong\Lambda^2\CC^3 & \ff_{-1}\cong\CC^3 & \ff_{0}\cong\mathfrak{gl}(3) & \ff_{1}\cong(\CC^3)^* & \ff_{2}\cong\Lambda^2(\CC^3)^*\\ \hline
 f_i \wedge f_j &
 R \wedge f_j &
 e_i \wedge f_j &
 R \wedge e_j &
 e_i \wedge e_j
 \end{array} \qquad (1 \leq i,j \leq 3).
 \end{align*}
 We define $\bbS^k = \ff^k \cdot \1$ for $-2 \leq k \leq 2$, which yields an $\ff^0$-invariant filtration of $\bbS$:
 \begin{align*}
 \bbS =: \bbS^{-3} \supset \bbS^{-2} = o^\perp \supset \bbS^{-1} \supset \bbS^0 = o \supset 0\;,
 \end{align*}
with $\bbS^{-1} = \langle \phi_0, \phi_1, \phi_2, \phi_3 \rangle \in \LG_+(\bbS)$.  Trivially extending the filtration to all integers, $\bbS$ becomes a filtered $\ff$-representation, i.e., $\ff^i \cdot \bbS^j \subset \bbS^{i+j}$, for all $i,j \in \bbZ$.
 Via the transitive action of $\Spin(7)$ on the null quadric, a filtration
$
 \bbS = \bbS_\ell^{-3} \cV \supset \bbS_\ell^{-2} \cV = \ell^\perp \supset \bbS^{-1}_\ell \cV \supset \bbS^0_\ell \cV = \ell \supset 0
$
 is then induced at any $\ell \in \cV$.  
\begin{definition}\hfill
\begin{itemize}
	\item[$(i)$] We call $ \widehat{T}_\ell \cV:=\bbS^{-2}_\ell \cV$ the {\sl affine tangent space} to $\cV$ at $\ell$. Geometrically, it is the tangent space to the cone over $\cV$,
	\item[$(ii)$] We call $L_\ell \cV := \bbS^{-1}_\ell \cV$ the {\sl (affine) Lagrangian tangent space} to $\cV$ at $\ell$. It is a self-dual $\eta$-Lagrangian subspace of $\widehat{T}_\ell \cV$. 
\end{itemize}
\end{definition}
We let $L(\cV) := \{ L_\ell \cV : \ell \in \cV \}$ and note that the map 
$
\Phi : \cV \to L(\cV)
$
sending $\ell \mapsto L_\ell \cV$ is $\Spin(7)$-equivariant and surjective by construction. We also introduce the flag manifold given by the incidence relation $\ell\subset L_\ell \cV$:
\begin{equation*}
F(\cV)=\{(\ell,L): \ell\in\cV, L = L_\ell\cV\}\;,
\end{equation*}
which fibres (bijectively) over $\cV$.
 \begin{prop} \label{P:VLV} Consider the spin representation $\bbS$ of $\ff\cong\mathfrak{spin}(7)$ and the Cayley $4$-form $\sfQ \in \Lambda^4 \bbS^*$ as in \eqref{E:Q}. Regarding any $L \in \LG_+(\bbS)$ via the Pl\"ucker embedding $\LG_+(\bbS) \hookrightarrow \bbP(\Lambda^4_+ \bbS)$, then:
 \begin{enumerate}
 \item $L(\cV) =\LG_+(\bbS)$, the connected component of self-dual $\eta$-Lagrangian subspaces in $\LG(\bbS)$.
\item $L(\cV) \subsetneq \{ L \in \LG(\bbS) : \sfQ|_L = 0 \}$. 
\item 
The unique $\CSpin(7)$-invariant element of $\bbP(\Lambda^4 \bbS^*)$ is $[\sfQ]$, 
which vanishes on all of $L(\cV)$.
 \item The map $\Phi : \cV \to L(\cV)$ is a bijection.
 \item Given $\ell = [\phi] \in \cV$, consider $\iota_\phi \sfQ$ and its restriction to $\ell^\perp$.  The subspace of vectors of $\ell^\perp$ inserting trivially into $\iota_\phi \sfQ$ is $L_\ell \cV$.  This distinguishes $F(\cV)$ from $[\sfQ]$.
\item  The stabilizer of $F(\cV)$ in $\CO(\bbS)=\CC^\times\cdot O(\bbS)$ is $\CSpin(7)$. This  distinguishes $[\sfQ]$ from $F(\cV)$.
 \end{enumerate}
 \end{prop}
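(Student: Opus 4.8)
The plan is to run every part of the proposition off $\Spin(7)$-equivariance, reducing each assertion to a computation at the basepoint $o=[\1]$ supplemented by one global argument (a completeness statement, a representation-theoretic count, or a stabilizer comparison). The keystone is the bijectivity statement (4), which I would prove first. Since $\cV\cong\Spin(7)/\Stab_{\Spin(7)}(o)$ and the equivariant map $\Phi\colon\ell\mapsto L_\ell\cV$ is surjective onto $L(\cV)$ by construction, injectivity is equivalent to $\Stab_{\Spin(7)}(o)=\Stab_{\Spin(7)}(L_o\cV)$, where $L_o\cV=\bbS^{-1}=\langle\phi_0,\phi_1,\phi_2,\phi_3\rangle$. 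Both stabilizers are read off \eqref{E:B3SpinMat}: requiring $\phi_0$ to be an eigenvector forces the six below-diagonal entries of the first column to vanish, while requiring $\langle\phi_0,\dots,\phi_3\rangle$ to be invariant forces the lower-left $4\times4$ block to vanish, and these are \emph{the same} six conditions (on $\tilde b_{001},\tilde b_{011},\tilde b_{111},\tilde b_{012},\tilde b_{112},\tilde b_{122}$), the $\fsl(3)$-type entries $\tilde b_{100},\tilde b_{010},\tilde b_{110}$ remaining free in both. Hence $\Phi$ is bijective. Then (1) is formal: $L(\cV)=\Phi(\cV)$ is a single $\Spin(7)$-orbit of dimension $\dim\cV=6$ inside the irreducible $6$-dimensional projective variety $\LG_+(\bbS)$; as the image of the complete variety $\cV$ under the morphism $\Phi$ it is closed, and being of full dimension it is open, so $L(\cV)=\LG_+(\bbS)$.

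For (2) and the vanishing half of (3) I would use Lemma \ref{lem:stabilizerCayleyline}, which identifies $\sfQ\in\Lambda^4_-\bbS^*$. For $L\in\LG_+(\bbS)$ one has $\Lambda^4 L\in\Lambda^4_+\bbS$, so the orthogonality $\Lambda^4_{-}\bbS^*|_{\Lambda^4_{+}\bbS}=0$ gives $\sfQ|_L=0$; with (1) this yields $L(\cV)=\LG_+(\bbS)\subseteq\{L:\sfQ|_L=0\}$ and that $[\sfQ]$ vanishes on all of $L(\cV)$. Strictness follows because the Plücker section $L\mapsto\sfQ|_L$ is not identically zero on the other component $\LG_-(\bbS)$: at $L=\langle\psi_0,\phi_1,\phi_2,\phi_3\rangle$ it is nonzero due to the term $2\,\phi^1\wedge\phi^2\wedge\phi^3\wedge\psi^0$ of \eqref{E:Q}, so its zero locus on $\LG_-(\bbS)$ is a nonempty hypersurface of anti-self-dual Lagrangians with $\sfQ|_L=0$ lying outside $L(\cV)$. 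For the uniqueness in (3) I would decompose $\Lambda^4\bbS^*\cong\Lambda^4\bbS$ as a $\Spin(7)$-module and check that the trivial isotypic component is one-dimensional, spanned by $\sfQ$; since $\CC^\times$ acts by scaling, $[\sfQ]$ is the unique $\CSpin(7)$-invariant line in $\bbP(\Lambda^4\bbS^*)$, and it is invariant by Lemma \ref{lem:stabilizerCayleyline}.

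For (5) I would again reduce to $\ell=o$, $\phi=\phi_0$ and compute directly. Contracting \eqref{E:Q} gives $\iota_{\phi_0}\sfQ=\phi^1\wedge\psi^0\wedge\psi^1+\phi^2\wedge\psi^0\wedge\psi^2+\phi^3\wedge\psi^0\wedge\psi^3-2\,\psi^1\wedge\psi^2\wedge\psi^3$. Restricting to $\widehat{T}_o\cV=o^\perp=\ker\psi^0$ kills the first three terms and leaves $-2\,\psi^1\wedge\psi^2\wedge\psi^3$, and a vector $v\in o^\perp$ inserts trivially into this $3$-form exactly when $\psi^1(v)=\psi^2(v)=\psi^3(v)=0$, i.e.\ when $v\in\langle\phi_0,\phi_1,\phi_2,\phi_3\rangle=L_o\cV$. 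Equivariance extends the conclusion to all $\ell\in\cV$, exhibiting $F(\cV)$ as canonically built from $\eta$ and $[\sfQ]$.

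Finally (6), which I expect to be the main obstacle. The inclusion $\CSpin(7)\subseteq N:=\Stab_{\CO(\bbS)}(F(\cV))$ is immediate, since $\CSpin(7)$ preserves $\cV$ and the assignment $\ell\mapsto L_\ell\cV$ is $\Spin(7)$-equivariant and scale-invariant. The delicate point is the reverse inclusion: preserving $\cV$ alone is no constraint (all of $\CO(\bbS)$ does) and preserving $L(\cV)=\LG_+(\bbS)$ only yields $N\subseteq\CC^\times\cdot\SO(\bbS)$, the subgroup fixing each of $\LG_\pm(\bbS)$ (orientation-reversing elements swap them). One must genuinely use the incidence graph, namely that $g\in N$ must intertwine $\Phi$. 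I would argue that the group of $g\in\SO(\bbS)$ intertwining $\Phi$ cannot be all of $\SO(\bbS)$: an $\SO(\bbS)$-equivariant map $\cV\to\LG_+(\bbS)$ would require the maximal parabolic $\Stab_{\SO(\bbS)}(o)$ to be contained in a conjugate of the (different, equidimensional) maximal parabolic $\Stab_{\SO(\bbS)}(L_o\cV)$, which is impossible. Since $\Spin(7)\subset\SO(\bbS)$ is a maximal connected subgroup, this forces $N^\circ=\CSpin(7)$; extra components would lie in $\CO(\bbS)\setminus(\CC^\times\cdot\SO(\bbS))$ and swap $\LG_\pm(\bbS)$, contradicting preservation of $L(\cV)$, so $N=\CSpin(7)$. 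Combined with (3), $[\sfQ]$ is then recovered from $F(\cV)$ as the unique $N$-invariant line in $\bbP(\Lambda^4\bbS^*)$, completing the correspondence.
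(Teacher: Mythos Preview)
Your approach is largely correct and parallels the paper's closely, with two variations worth recording. For (4), you compute the stabilizers $\Stab_{\Spin(7)}(o)$ and $\Stab_{\Spin(7)}(L_o\cV)$ directly from the matrix form \eqref{E:B3SpinMat}, observing that both amount to killing the same six parameters $\tilde b_{001},\tilde b_{011},\tilde b_{111},\tilde b_{012},\tilde b_{112},\tilde b_{122}$; the paper instead identifies $\Phi$ abstractly with the second Veronese map $[\phi]\mapsto[\phi^2]$ between the minimal orbits in $\bbP(\bbV_{\lambda_3})$ and $\bbP(\bbV_{2\lambda_3})$. Your route is more elementary and self-contained (and since parabolic subgroups of the simply connected $\Spin(7)$ are connected, the Lie-algebra computation suffices). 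For strictness in (2), you invoke the fact that the zero locus of the nonconstant linear section $L\mapsto\sfQ|_L$ on the positive-dimensional projective variety $\LG_-(\bbS)$ is a nonempty hypersurface; the paper instead exhibits the explicit anti-self-dual Lagrangian $\langle\phi_0,\phi_2,\phi_3,\psi_1\rangle$ (a highest weight line of weight $2\lambda_1$ in $\Lambda^4_-\bbS$) on which $\sfQ$ vanishes.

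There is, however, a genuine gap in your argument for (6). Having established $N\subseteq\bbC^\times\cdot\SO(\bbS)$ and $N^\circ=\CSpin(7)$, you must still rule out components of $N$ lying \emph{inside} $\bbC^\times\cdot\SO(\bbS)$ but outside $\CSpin(7)$. Your sentence ``extra components would lie in $\CO(\bbS)\setminus(\CC^\times\cdot\SO(\bbS))$ and swap $\LG_\pm(\bbS)$'' is confused: you have already shown that all of $N$ sits in $\bbC^\times\cdot\SO(\bbS)$, so nothing in $N$ swaps the two families, and the argument does not exclude an element of $\SO(\bbS)\setminus\Spin(7)$ that happens to intertwine $\Phi$. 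The paper closes this with the normalizer argument already used in Lemma~\ref{lem:stabilizerCayleyline}: writing $N=\bbC^\times\cdot H$ with $\Spin(7)\subseteq H\subseteq\SO(\bbS)$ and $H^\circ=\Spin(7)$, any $g\in H$ normalizes $H^\circ=\Spin(7)$; since every automorphism of $\Spin(7)$ is inner, $g=hc$ with $h\in\Spin(7)$ and $c$ centralizing $\Spin(7)$ in $\SO(\bbS)$, and by Schur's lemma (irreducibility of $\bbS$) one has $c=\pm1\in\Spin(7)$, whence $H=\Spin(7)$.
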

\begin{proof} 
We regard any $L \in \LG_+(\bbS)$ via the Pl\"ucker embedding $\LG_+(\bbS) \hookrightarrow \bbP(\Lambda^4_+ \bbS)$.
We first note that $\bbS \cong \bbV_{\lambda_3}$ and $\Lambda_+^{4}\bbS \cong \bbV_{2\lambda_3}$, so that the stabilizer subalgebras of the highest weight lines $o =[\1]\in\bbP(\bbS)$ and $\bbS^{-1}=[\phi_0 \wedge \phi_1 \wedge \phi_2 \wedge \phi_3]\in 
\bbP(\Lambda^4_+ \bbS)$ are both equal to the parabolic subalgebra $\ff^0$ of $\ff$. By construction $L(\cV) = \Spin(7)\cdot \bbS^{-1}\subset \LG_+(\bbS)$ is 
the orbit in $\bbP(\Lambda^4_+ \bbS)$ of minimal dimension $\dim \ff / \ff^0 = 6$, in particular 
it is an irreducible projective variety, i.e., a Zariski-closed irreducible subset in $\bbP(\Lambda^4_+ \bbS)$. Now $\LG_+(\bbS)=
\SO(\bbS) \cdot \bbS^{-1}\subset \bbP(\Lambda^4_+ \bbS)$ is also  Zariski-closed and irreducible of dimension $6$, so that $L(\cV)$ is also open in $\LG_+(\bbS)$, and $L(\cV)$ and $\LG_+(\bbS)$ must agree.
Thus, (1) is proved.
 
By anti self-duality of the Cayley form $\sfQ$, we clearly have $\sfQ|_L = 0$ for all $L \in L(\cV)=\LG_+(\bbS)$. Now, 
$\LG(\bbS) \hookrightarrow \bbP(\Lambda^4 \bbS)$ via the Pl\"ucker embedding and
 \begin{align} \label{E:L4S}
 \Lambda^4\bbS \cong  \Lambda^4_-\bbS\oplus\Lambda^4_+\bbS
\cong\big(\bbC \op \bbV_{\lambda_1} \op \bbV_{2\lambda_1}\big) \op \bbV_{2\lambda_3}
 \end{align}
as decomposition into inequivalent irreducible representations for  $\ff$. Clearly the kernel of $\sfQ$ in $ \Lambda^4\bbS $ is given by $\bbV_{\lambda_1} \op \bbV_{2\lambda_1}\op \bbV_{2\lambda_3}$.
We note that $[\phi_0 \wedge \phi_2 \wedge \phi_3 \wedge \psi_1]$ is also a highest weight line, with weight $2\lambda_1$. Thus $\sfQ=0$ on this line and $L(\cV) \subsetneq \{ L \in \LG(\bbS) : \sfQ|_L = 0 \}$, proving (2). Claim (3) is also immediate from \eqref{E:L4S}.

The map $\Phi$ sends the highest weight line of $\cV \hookrightarrow \bbP(\bbS) \cong \bbP(\bbV_{\lambda_3})$ to the highest weight line of $L(\cV) \hookrightarrow \bbP(\bbV_{2\lambda_3})$.  By $\Spin(7)$-equivariancy of $\Phi$, it must abstractly correspond to the 2nd Veronese embedding $[\phi] \mapsto [\phi^2]$, which is bijective, and so (4) is proved. 
 
 Take $\ell=[\phi_0] \in \cV$. By \eqref{E:Q}, the $3$-form $\iota_{\phi_0} \sfQ$ restricted to $\ell^\perp=\langle \phi_0,\phi_1, \phi_2, \phi_3, \psi_1, \psi_2, \psi_3 \rangle$ becomes just $-2\psi^1 \wedge \psi^2 \wedge \psi^3$, and we immediately confirm the claim for $\ell = [\phi_0]$.  The general result follows then from $\Spin(7)$-equivariancy and its transitive action on $\cV$.  This proves (5).

By construction of $F(\cV)$ and since the $L_\ell \cV$'s are self-dual $\eta$-Lagrangian subspaces, we have $\CSpin(7)\subset\Stab_{\CO(\bbS)}(F(\cV))\subset \bbC^\times\cdot \SO(\bbS)$ for the stabilizer of $F(\cV)$ in $\CO(\bbS)$.  We then write $\Stab_{\CO(\bbS)}(F(\cV))=\bbC^\times\cdot H$, for some closed subgroup $H$ of $\SO(\bbS)$ containing $\Spin(7)$. On the other hand $F(\cV)$ is {\em not}
$\SO(\bbS)$-stable (e.g., take $g\in \operatorname{SO}(\bbS)$ that fixes  $\phi_0$, $\psi_0$, $\phi_1$, $\psi_1$ and interchanges $\phi_i$ with $\psi_i$ for $i=2,3$, so that $g\cdot \bbS^{-1}\neq \bbS^{-1}$) 
and $\mathfrak{spin}(7)$ is a maximal subalgebra of $\fso(\bbS)$, hence the
connected component of the identity of $H$ coincides with $\Spin(7)$. 
It then follows that $H$ is contained in the normalizer of $\Spin(7)$ in $\SO(\bbS)$, which is just $\Spin(7)$ itself, cf. the proof of Lemma \ref{lem:stabilizerCayleyline}.  In summary $H=\Spin(7)$, hence $\Stab_{\CO(\bbS)}(F(\cV))=\CSpin(7)$, and the last claim of (6) follows directly from (3).
\end{proof}

To locally describe $\cV$, we exponentiate the action of $\ff_- := \ff_{-2} \op \ff_{-1}$ on $\1 \in \bbS$, so that linear coordinates on $\ff_-$ induce coordinates on an open subset of $U \subset \cV$. Explicitly, we set $\widetilde{a}_{ijk} = \tilde{b}_{ij0} = h_i = 0$ in \eqref{E:B3SpinMat}, let $(\tilde{b}_{111},\tilde{b}_{011},\tilde{b}_{001},\tilde{b}_{012},\tilde{b}_{112},\tilde{b}_{122}) = (u_{23},u_{31},u_{12},u_{01},u_{02},u_{03})$, and then use the matrix exponential on the basis  \eqref{E:spinbasis} to obtain
 \begin{align} \label{E:expgm}
 \begin{psm}
 1 & 0 & 0 & 0 & 0 & 0 & 0 & 0\\
 -u_{23} & 1 & 0 & 0 & 0 & 0 & 0 & 0\\
 -u_{31} & 0 & 1 & 0 & 0 & 0 & 0 & 0\\
 -u_{12} & 0 & 0 & 1 & 0 & 0 & 0 & 0\\
 u_{01} u_{23} + u_{02} u_{31} + u_{03} u_{12} & -u_{01} & -u_{02} & -u_{03} & 1 & u_{23} & u_{31} & u_{12}\\
 u_{01} & 0 & -u_{12} & u_{31} & 0 & 1 & 0 & 0\\
 u_{02} & u_{12} & 0 & -u_{23} & 0 & 0 & 1 & 0\\
 u_{03} & -u_{31} & u_{23} & 0 & 0 & 0 & 0 & 1
 \end{psm}\;,
 \end{align}
where:
 \begin{itemize}
 \item[$(i)$] the projectivization of the first column of \eqref{E:expgm} describes some $\ell \in U$ (in its right-coordinates);
 \item[$(ii)$] the first four columns of \eqref{E:expgm} describes the Lagrangian tangent space $L_\ell \cV$.  Using column reduction, we obtain:
 \begin{align*}
 \begin{psm}
 1 & 0 & 0 & 0 \\
 0 & 1 & 0 & 0 \\
 0 & 0 & 1 & 0 \\
 0 & 0 & 0 & 1 \\
 0 & -u_{01} & -u_{02} & -u_{03} \\
 u_{01} & 0 & -u_{12} & u_{31} \\
 u_{02} & u_{12} & 0 & -u_{23} \\
 u_{03} & -u_{31} & u_{23} & 0 
 \end{psm},
 \end{align*}
 so $\{ u_{ij} \}$ are standard affine coordinates on $\LG_+(\bbS)$.  (The bottom $4\times 4$ block is skew.)
 \end{itemize}
 We further observe for later use that the 2nd-4th columns of \eqref{E:expgm} can also be obtained by applying the (negatives of the) following derivations to the first column:
 \begin{align} \label{E:Lvert}
 \partial_{u_{23}} - u_{12} \partial_{u_{02}} + u_{31} \partial_{u_{03}}, \quad
 \partial_{u_{31}} - u_{23} \partial_{u_{03}} + u_{12} \partial_{u_{01}}, \quad
 \partial_{u_{12}} - u_{31} \partial_{u_{01}} + u_{23} \partial_{u_{02}}.
 \end{align}
\begin{rem}
From Proposition \ref{P:VLV}, $L(\cV)=\LG_+(\bbS)$ is a connected component of $\LG(\bbS)$, so we cannot hope to produce a 2nd order super-PDE as in the mixed contact case.  Naturally, this motivates looking to higher jets, e.g., for a 3rd order super-PDE, and we will do so below.
\end{rem}
\begin{rem} 
We conclude with a brief detour on how triality for $\fso(8)$ is related to our approach.  Let $\bar{\bbV} = \bbC^8 = \bar{E} \op \bar{F}$ be an isotropic decomposition of the standard representation of $ \fso(8)$ and define a Clifford action on $\bar\bbS = \Lambda^\bullet \bar E^*$ similar to \eqref{E:Clifford}, with positive and negative chirality $8$-dimensional irreducible representations $\bar\bbS_+ = \Lambda^{\mbox{\tiny even}} \bar E^*$ and $\bar\bbS_- = \Lambda^{\mbox{\tiny odd}} \bar E^*$. Then:
 \begin{enumerate}
 \item[(a)] Fix an isotropic spinor $\1 \in \bar\bbS_+$.  Its Clifford annihilator in $\bar{\bbV}$ is $\bar{E}$;
 \item[(b)] Fix  a non-isotropic spinor $\phi \in \bar\bbS_-$, e.g., $\phi = e^1 + e^{234}$;
 \item[(c)] Take the Clifford product of $\bar{E}$ and $\phi$.  The image is the distinguished 4-dimensional Lagrangian subspace $\bar\bbS^{-1}_+=\langle 1, e^{23}, e^{24}, e^{34}\rangle$ in $\bar\bbS_+$.
 \end{enumerate}
Our picture can be restored with appropriate identifications $\ff=\stab_{\fso(8)}(\phi)$, $\bbV=\phi^\perp\cap\bar\bbS_-$, $\bbS=\bar\bbS_+\cong\bar \bbV$ (as a representation for $\ff$). The Cayley $4$-form $\sfQ$ on $\bbS$ is obtained by squaring $\phi$ via the isomorphism $\odot^2\bar\bbS_-\cong\Lambda^0\bar \bbV^*\oplus\Lambda^4_-\bar \bbV^*$, with the explicit formula akin to those used in supergravity theories (see also \S\ref{eq:spinorial-description}). The main advantage of the approach of this section is the fact that the reduction of structure group is encoded in the Cayley $4$-form, which lives naturally on $\mathbb S$ and does not require the introduction of auxiliary spaces of spinors for $\fso(8)$.
 \end{rem}
\subsection{$F(4)$-supergeometries of odd-contact type}
 
 Consider a contact supermanifold $(M^{1|8},\cC)$ with rank $(0|8)$ contact superdistribution $\cC$.  Suppose that $\cC = \ker(\sigma)$ for some local defining (even) contact 1-form $\sigma$.  We say that the collection $\{ \omega^i, \theta_i \}_{i=0}^3$ of odd 1-forms  on $M$ is a {\sl local conformal coframe} of $\cC$ if $d\sigma = \lambda \sum_i\omega^i \wedge \theta_i$ on $\cC$ for some even invertible superfunction $\lambda$.  (As usual, $\wedge$ is meant to be skew in the supersense, i.e., just symmetric in this case.)  Given this, we define the following even supersymmetric 4-tensor on $\cC$:
 \begin{align} \label{E:Qct}
 \sfQ &= \omega^0 \omega^1 \theta_0 \theta_1 + \omega^0 \omega^2 \theta_0 \theta_2 + \omega^0 \omega^3 \theta_0 \theta_3 - 2 \omega^0 \theta_1 \theta_2 \theta_3 \nonumber\\
 & \qquad - \omega^1 \omega^2 \theta_1 \theta_2 - \omega^1 \omega^3 \theta_1 \theta_3 - \omega^2 \omega^3 \theta_2 \theta_3 + 2 \omega^1 \omega^2 \omega^3 \theta_0 \quad\in \Gamma(\odot^4 \cC^*),
 \end{align}
 which is modelled on \eqref{E:Q}.  (The restriction of the local conformal coframing to $\cC$ has been omitted on the r.h.s. for simplicity.  We also remark that $\cC$ is generated by odd supervector fields, so $\sfQ$ is skew in the classical sense on these generators.)
 
 \begin{definition}
\hfill
\begin{itemize}
	\item[$(i)$]  An {\sl odd-contact $F(4)$-supergeometry} $(M^{1|8},\cC,[\sfQ])$ is a contact supermanifold $(M^{1|8},\cC)$ whose contact superdistribution $\cC$ of rank $(0|8)$ is  equipped with a conformal class $[\sfQ]$ of a quartic tensor $\sfQ \in \Gamma(\odot^4 \cC^*)$, which is locally of the form \eqref{E:Qct} for some local conformal coframe of $\cC$.
	\item[$(ii)$] The symmetry superalgebra of $(M,\cC, [\sfQ])$ consists of the contact supervector fields preserving $[\sfQ]$: 
$
\finf(M,\cC,[\sfQ])= \{ \bX \in \fX(M) : \cL_\bX \cC \subset \cC, \, \exists \mu \mbox{ with } \cL_\bX \sfQ = \mu \sfQ \mbox{ on } \cC \} 
$. 
\end{itemize}

 \end{definition}
 
 \begin{theorem} \label{T:symbound}
The symmetry superalgebra of any odd-contact $F(4)$-supergeometry $(M^{1|8},\cC,[\sfQ])$ has $\dim \finf(M,\cC,[\sfQ]) \leq \dim F(4) = (24|16)$.
 \end{theorem}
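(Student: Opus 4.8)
The plan is to mirror the proof of Theorem \ref{T:symbound-mixed}, replacing the supervariety reduction by the quartic-tensor reduction and substituting the odd-contact Spencer computation (Theorem \ref{sec:Spencer}) for its mixed-contact analogue. Let $\fg = F(4)$, equipped with the odd contact grading $\fg = \fg_{-2} \op \cdots \op \fg_2$, so that $\fm = \fg_{-2} \op \fg_{-1}$ is the odd Heisenberg symbol and $\der_{\gr}(\fm) \cong \fco(\bbS)$ with associated group $\CO(\bbS) \cong \CO(8)$.

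First I would argue that the datum $[\sfQ]$ reduces the structure group of the contact supergeometry from $\CO(\bbS)$ down to $G_0 = \CSpin(7)$. On each fibre of $\cC$ the tensor $\sfQ$ is modelled on the Cayley $4$-form \eqref{E:Q}, and Lemma \ref{lem:stabilizerCayleyline} identifies $\Stab_{\CO(\bbS)}([\sfQ]) \cong \CSpin(7)$, whose Lie algebra is $\fg_0 \cong \fco(\bbV) = \fco(7)$ sitting inside $\fco(\bbS)$ via the spin representation. Hence any odd-contact $F(4)$-supergeometry $(M, \cC, [\sfQ])$ is a filtered $G_0$-structure with symbol $\fm$ and structure algebra reduced precisely to $\fg_0$, the reduction being sharp because $\fg_0 \subset \fco(\bbS)$ is a maximal subalgebra (Theorem \ref{sec:Spencer}).

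Next I would invoke the vanishing $H^{d,1}(\fm, \fg) = 0$ for all $d > 0$ established in Theorem \ref{sec:Spencer}. By the prolongation criterion recalled in \S\ref{sec:SCG}, this vanishing is equivalent to $\fg \cong \prn(\fm, \fg_0)$, so the Tanaka--Weisfeiler prolongation of the pair $(\fm, \fg_0)$ recovers $F(4)$ exactly. Finally, the general symmetry bound \cite[Thm.1.1]{KST2022}, which applies to any supergeometry modelled on $(\fm, \fg_0)$, yields $\dim \finf(M, \cC, [\sfQ]) \leq \dim \prn(\fm, \fg_0) = \dim F(4) = (24|16)$, with the even and odd parts separately bounded by those of the prolongation.

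The only non-formal point --- hence the main obstacle --- is verifying cleanly that preserving $[\sfQ]$ on $\cC$ is indeed the correct fibrewise condition encoding a $\CSpin(7)$-reduction of the $\CO(\bbS)$-frame bundle of $\cC$, rather than merely a pointwise stabiliser statement. This is exactly what Lemma \ref{lem:stabilizerCayleyline} (together with the maximality assertion in Theorem \ref{sec:Spencer}) provides, so the conceptual work is already in place; the remainder is a direct transcription of the mixed-contact argument, with Theorem \ref{sec:Spencer} supplying the Spencer-cohomology input that feeds into \cite[Thm.1.1]{KST2022}.
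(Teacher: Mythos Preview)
Your proposal is correct and follows essentially the same route as the paper's proof: reduce the structure group from $\CO(8)$ to $G_0 = \CSpin(7)$ via $[\sfQ]$ (Lemma~\ref{lem:stabilizerCayleyline}), invoke $H^{d,1}(\fm,\fg)=0$ for $d>0$ from Theorem~\ref{sec:Spencer} to get $\prn(\fm,\fg_0)\cong\fg$, and then apply \cite[Thm.~1.1]{KST2022}. The paper's argument is terser but structurally identical.
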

 \begin{proof} Let $\fg = F(4)$, equipped with the odd-contact grading.  Since $[\sfQ]$ reduces the structure group from $\CO(8)$ to $G_0 = \CSpin(7)$, any odd-contact $F(4)$-supergeometry is a filtered $G_0$-structure with symbol $\fm$.  From Theorem \ref{sec:Spencer}, we have $H^{d,1}(\fm,\fg) = 0$ for all $d > 0$, which is equivalent to the Tanaka--Weisfeiler prolongation satisfying $\prn(\fm,\fg_0)\cong\fg$.  The claim then follows from \cite[Thm.1.1]{KST2022}.
 \end{proof}
 
 Locally, $(M^{1|8},\cC)$ is identified with $J^1(\bbC^{0|4},\bbC^{1|0})$, for which we have standard coordinates $(x^i,u,u_i)$, with $x^i,u_i$ odd for $0 \leq i \leq 3$ and $u$ even, and $\cC = \ker(\sigma)$ where $\sigma = du - \sum_i(dx^i) u_i$.  Any contact supervector field is identified with a generating superfunction via \eqref{E:Sf}.
 
 \begin{definition}
 Define $\sfQ$ as in \eqref{E:Qct} with $(\omega^i,\theta_i) = (dx^i,du_i)$.  We refer to $(M,\cC,[\sfQ])$ so defined as the {\sl flat} odd-contact $F(4)$-supergeometry.
 \end{definition}
 
 \begin{theorem} \label{T:flatF4ct}
The {\sl flat} odd-contact $F(4)$-supergeometry has symmetry superalgebra $F(4)$.
 \end{theorem}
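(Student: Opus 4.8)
The plan is to match the a priori upper bound of Theorem \ref{T:symbound} with an explicit lower bound. By Theorem \ref{T:symbound} we already know $\dim\finf(M,\cC,[\sfQ]) \le \dim F(4) = (24|16)$, so it suffices to exhibit a $(24|16)$-dimensional space of contact symmetries of the flat structure and to check that under the Lagrange bracket \eqref{E:LB} it is isomorphic to $F(4)$. I would organise the generating superfunctions (recorded in Table \ref{F:FlatQctSym}) according to the odd-contact grading $\fg = \fg_{-2}\oplus\fg_{-1}\oplus\fg_0\oplus\fg_1\oplus\fg_2$, with even part $\fg_{\bar 0} = \fg_{-2}\oplus\fg_0\oplus\fg_2$ and odd part $\fg_{\bar 1} = \fg_{-1}\oplus\fg_1$. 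A direct computation with \eqref{E:LB} shows that the grading is recovered by the Lagrange bracket with $Z = 2u - \sum_i x^iu_i$, namely $[Z,f] = r f$ for $f \in \fg_r$; this element plays the same role as in Theorem \ref{thm:areyoulookingforalabel}.

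The generators of non-positive degree are immediate. The constant $f=1$ spans $\fg_{-2}\cong\CC$, and the odd linear superfunctions $x^i,u_i$ ($0\le i\le 3$) span $\fg_{-1}\cong\bbS$; these are symmetries because the symbol $\fm$ acts by the Heisenberg translations, which preserve $\sfQ$ on $\cC$. For $\fg_0\cong\mathfrak{cspin}(7) = \CC Z\oplus\fspin(7)$, the conformal/grading direction is realised by $Z$, while $\fspin(7)\subset\fso(\bbS)$ is realised by the even quadratic Hamiltonians in $x^i,u_i$ whose induced linear action on $\cC$ preserves $[\sfQ]$. By Lemma \ref{lem:stabilizerCayleyline} these are precisely $\mathfrak{cspin}(7)$, and the explicit matrices \eqref{E:B3SpinMat} allow one to write down the corresponding quadratics. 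All of these preserve $[\sfQ]$ by construction.

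The positive-degree symmetries are the crux. Here I would single out one even generating function $f_2 \in \fg_2$ of degree two --- built from $u$, the odd coordinates $x^i,u_i$, and the quartic $\sfQ$, modelled on the degree-two generator of the mixed-contact case in \S\ref{S:mixedct} --- and then obtain $\fg_1 \cong \bbS$ as the Lagrange brackets $[x^i, f_2]$ and $[u_i, f_2]$. The only genuinely nontrivial point is that $f_2$ preserves the conformal class $[\sfQ]$. As in the mixed-contact setting, where this hinged on the cubic form identity of Lemma \ref{L:cubic}, I expect this to reduce to a single quartic identity for the Cayley form $\sfQ$ of \eqref{E:Q}, which can be verified by a direct (and, if desired, symbolic) computation. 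This quartic Cayley-form identity, together with the correct determination of the $\fspin(7)$-reduction inside the quadratic Hamiltonians, is the main obstacle. Once $f_2$ is known to be a symmetry, every remaining positive-degree generator is a symmetry automatically, since the Lagrange bracket of two symmetries is again a symmetry.

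It then remains to assemble the conclusion. Counting dimensions gives $\dim\fg_{\pm 2} = (1|0)$, $\dim\fg_{\pm 1} = (0|8)$ and $\dim\fg_0 = (22|0)$, for a total of $(24|16)$; this meets the bound of Theorem \ref{T:symbound}, so the functions exhibited span all of $\finf(M,\cC,[\sfQ])$. Finally, to identify the bracket structure I would argue exactly as in the proof of Theorem \ref{thm:areyoulookingforalabel}: the symmetry $Z$ acts as the grading derivation, so $\finf$ is $\bbZ$-graded with the same graded components as $F(4)$, and comparing the Lagrange brackets of the explicit generators with the structure constants of Proposition \ref{prop:explicit-spinorial} yields the isomorphism $\finf(M,\cC,[\sfQ]) \cong F(4)$.
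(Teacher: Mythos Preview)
Your proposal is correct and follows essentially the same approach as the paper: match the upper bound from Theorem~\ref{T:symbound} against the explicit $(24|16)$-dimensional family of generating superfunctions in Table~\ref{F:FlatQctSym}, verify directly (and tediously) that the degree-two generator preserves $[\sfQ]$, obtain the remaining positive-degree symmetries via Lagrange brackets, and then identify the result with $F(4)$ using the grading element $\sfZ = 2u - x^i u_i$ exactly as in \cite[Thm.~5.4]{KST2021}. The only minor discrepancy is that the paper does not isolate a named ``quartic Cayley-form identity'' analogous to Lemma~\ref{L:cubic} but simply carries out the direct check, and the $\fg_0$-generators in Table~\ref{F:FlatQctSym} carry explicit $u$-corrections rather than being pure quadratics in $x^i,u_i$.
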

 
 \begin{proof} In Table \ref{F:FlatQctSym}, we give a $(24|16)$-dimensional space of generating superfunctions that we can directly verify being symmetries of $(M,\cC,[\sfQ])$.  (This is easy for $\fg_{-2}$ and $\fg_{-1}$, direct but tedious for the generator of $\fg_2$, and then all other symmetries are generated by taking the Lagrange bracket \eqref{E:LB}.)  By Theorem \ref{T:symbound}, this is a basis for the symmetry superalgebra.  The proof that it is isomorphic to $F(4)$ follows exactly as in \cite[Thm.5.4]{KST2021}, using the existence of a symmetry $\sfZ = 2u - x^i u_i$ that acts like the grading element. 
 \end{proof} 
 
\begin{table}[h]
 \[
 \begin{array}{|c|c|} \hline
 \fg_2 & \begin{array}{c} u^2 + u (u_0 x^0 + u_1 x^1 + u_2 x^2 + u_3 x^3) \\
 - \tfrac{2}{3} u_0 (u_1 x^0 x^1 + u_2 x^0 x^2 + u_3 x^0 x^3) + \tfrac{1}{3} u_0 x^1 x^2 x^3 \\
 -\tfrac{1}{3}(u_1 u_2 u_3 x^0 + u_1 u_2 x^1 x^2 + u_1 u_3 x^1 x^3 + u_2 u_3 x^2 x^3)
 \end{array}
\\ \hline
 \fg_1 & \begin{array}{c} 
 -(u + \tfrac{1}{3} u_1 x^1 + \tfrac{1}{3} u_2 x^2 + \tfrac{1}{3} u_3 x^3 ) x^0 + \tfrac{1}{3} x^1 x^2 x^3,\\
 -(u + \tfrac{1}{3} u_0 x^0 + \tfrac{2}{3} u_2 x^2 + \tfrac{2}{3} u_3 x^3) x^1 - \tfrac{1}{3} u_2 u_3 x^0,\\
 -(u + \tfrac{1}{3} u_0 x^0 + \tfrac{2}{3} u_1 x^1 + \tfrac{2}{3} u_3 x^3) x^2 - \tfrac{1}{3} u_3 u_1 x^0,\\
 -(u + \tfrac{1}{3} u_0 x^0 + \tfrac{2}{3} u_1 x^1 + \tfrac{2}{3} u_2 x^2 ) x^3 - \tfrac{1}{3} u_1 u_2 x^0,\\
 -(u + \tfrac{2}{3} u_1 x^1 + \tfrac{2}{3} u_2 x^2 + \tfrac{2}{3} u_3 x^3 ) u_0 + \tfrac{1}{3} u_1 u_2 u_3,\\
 -(u + \tfrac{2}{3} u_0 x^0 + \tfrac{1}{3} u_2 x^2 + \tfrac{1}{3} u_3 x^3 ) u_1 - \tfrac{1}{3} x^2 x^3 u_0,\\
 -(u + \tfrac{2}{3} u_0 x^0 + \tfrac{1}{3} u_1 x^1 + \tfrac{1}{3} u_3 x^3 ) u_2- \tfrac{1}{3} x^3 x^1 u_0,\\
 -(u + \tfrac{2}{3} u_0 x^0 + \tfrac{1}{3} u_1 x^1 + \tfrac{1}{3} u_2 x^2 ) u_3 - \tfrac{1}{3} x^1 x^2 u_0
 \end{array} \\ \hline
 \fg_0 & \begin{array}{c} 
 x^0 x^1,\,\, x^0 x^2,\,\, x^0 x^3\\
 u_1 x^0 + x^2 x^3, \,\, u_2 x^0 + x^3 x^1, \,\, u_3 x^0 + x^1 x^2, \\
 u_0 x^1 - u_2 u_3, \,\, u_0 x^2 - u_3 u_1, \,\, u_0 x^3 - u_1 u_2, \\
 u_0 x^0 - u, \,\, u_1 x^1 + u,\,\, u_2 x^2 + u,\,\, u_3 x^3 + u,\\
 u_1 x^2, \,\, u_2 x^3, \,\, u_3 x^1, \\
 u_1 x^3, \,\, u_3 x^2, \,\, u_2 x^1, \\
 u_0 u_1, \,\, u_0 u_2, \,\, u_0 u_3
 \end{array}\\ \hline
 \fg_{-1} & x^0,\,\, x^1,\,\, x^2,\,\, x^3, \,\, u_0, \,\, u_1,\,\, u_2,\,\, u_3 \\ \hline
 \fg_{-2} & 1 \\ \hline
 \end{array}
 \]
 \caption{Symmetries of $(M,\cC,[\sfQ])$ when $(\omega^i,\theta^i) = (dx^i,du_i)$ in \eqref{E:Qct}}
 \label{F:FlatQctSym}
 \end{table}

We will geometrically reinterpret this structure in the sections to follow.

 \begin{rem} \label{R:omega2}
 In terms of Table \ref{F:FlatQctSym}, we can make Proposition \ref{prop:explicit-spinorial} explicit: $\1 = 1 \in \fg_{-2}$, while the stated elements of $\fg_2$ and $\fg_1$ in Table \ref{F:FlatQctSym} are respectively $\1^\dagger$ and $(x^0)^\dagger, (x^1)^\dagger, ..., (u_3)^\dagger$.  In the basis $\{ x^0, x^1, ..., u_3 \}$ of $\fg_{-1}$, we have:
 \begin{footnotesize}
 \begin{align*}
 \left(\omega^{(2)}(x^i,x^j)\right)_{0 \leq i,j \leq 3} &= \begin{pmatrix} 
 0 & 2 x^0 x^1 & 2 x^0 x^2 & 2 x^0 x^3 \\
 -2 x^0 x^1 & 0 & u_3 x^0+x^1 x^2 & -u_2 x^0-x^3 x^1 \\
 -2 x^0 x^2 & -u_3 x^0-x^1 x^2 & 0 & u_1 x^0+x^2 x^3 \\
 -2 x^0 x^3 & u_2 x^0+x^3 x^1 & -u_1 x^0-x^2 x^3 & 0 
 \end{pmatrix}\;,\\
 \left(\omega^{(2)}(u_i,u_j)\right)_{0 \leq i,j \leq 3} &= \begin{pmatrix} 
 0 & 2 u_0 u_1 & 2 u_0 u_2 & 2 u_0 u_3 \\
 -2 u_0 u_1 & 0 & x^3 u_0+u_1 u_2 & -x^2 u_0-u_3 u_1 \\
 -2 u_0 u_2 & -x^3 u_0-u_1 u_2 & 0 & x^1 u_0+u_2 u_3 \\
 -2 u_0 u_3 & x^2 u_0+u_3 u_1 & -x^1 u_0-u_2 u_3 & 0 
 \end{pmatrix}\;,\\
 \begin{split}
 \left(\omega^{(2)}(x^i,u_j)\right)_{0 \leq i,j \leq 3} &= \begin{pmatrix} 
 0 & -u_1 x^0-x^2 x^3 & -u_2 x^0+x^1 x^3 & -u_3 x^0-x^1 x^2\\
 -u_0 x^1+u_2 u_3 & 0 & -2 u_2 x^1 & -2 u_3 x^1\\
 -u_0 x^2-u_1 u_3 & -2 u_1 x^2 & 0 & -2 u_3 x^2\\
 -u_0 x^3+u_1 u_2 & -2 u_1 x^3 & -2 u_2 x^3 & 0\\ 
 \end{pmatrix}\\
 &\qquad + \tfrac{1}{2} \diag\left(-3u_0 x^0-u_1 x^1-u_2 x^2-u_3 x^3, -u_0 x^0-3u_1 x^1+u_2 x^2+u_3 x^3, \right. \\
 &\qquad\qquad\qquad\quad \left. -u_0 x^0+u_1 x^1-3u_2 x^2+u_3 x^3, -u_0 x^0+u_1 x^1+u_2 x^2-3u_3 x^3\right)\;.
 \end{split}
 \end{align*}
 \end{footnotesize}
 \end{rem}
\begin{rem} \label{R:Cayley}
The conformal class of the Cayley $4$-form $\sfQ$ can be recovered as the supersymmetric counterpart of Freudenthal's quartic invariant \cite{Fre1954,Fre1964}, or its realization \cite{LM2002,Hel} in terms of a contact grading $\fg = \fg_{-2} \op ... \op \fg_2$ as the symmetric 4-tensor $\sfQ(x)\1=(\ad_x)^4 \1^\dagger$, for $x \in \fg_{-1}$ and some fixed non-zero $\1 \in \fg_{-2}$ and $\1^\dagger \in \fg_2$.  (If $\fg = G_2$, then $\sfQ$ is the cubic discriminant, while if $\fg = D_4$, then $\sfQ$ is the Cayley hyperdeterminant.)  In our $F(4)$ odd-contact setting, given that $\fg_{-1}$ is purely odd, we have for all $s_1,s_2,s_3,s_4\in \fg_{-1}\cong\mathbb S$,
\begin{align}
\label{eq:Helenius}
\tfrac16\sfQ(s_1,s_2,s_3,s_4)\1&=\tfrac1{4!} \sum_{\rho} (-1)^\rho[[[[\1^\dagger,s_{\rho(1)}],s_{\rho(2)}],s_{\rho(3)}],s_{\rho(4)}]\;,
\end{align}
where $(-1)^\rho$ is the sign of a permutation $\rho$ of $\{1,\dots,4\}$. In fact, the R.H.S. of \eqref{eq:Helenius} defines a skew $4$-form on $\mathbb S$, which is proportional to $\sfQ$ due to $\fspin(7)$-invariance. Using Proposition \ref{prop:explicit-spinorial},
we then see that this R.H.S. is equal to $\tfrac1{4!} \sum_{\rho} (-1)^\rho\tfrac{1}{3}\eta\big(\sigma(\omega^{(2)}(s_{\rho(1)},s_{\rho(2)}))s_{\rho(3)},s_{\rho(4)}\big)\1$, and settle the constant of proportionality using the explicit expressions of the Lie brackets in terms of generating superfunctions in Remark \ref{R:omega2}.
\end{rem}

 \subsection{A distinguished superdistribution on the incidence Lagrange--Grassmann bundle}
 \label{S:DLG}
 Given an odd-contact $F(4)$-supergeometry $(M^{1|8},\cC,[\sfQ])$, the following geometric objects are also distinguished:
 \begin{itemize}
 \item[(i)] a {\it null quadric} $\cV = \{ \eta = 0 \} \subset \bbP(\cC)$.  (Locally, $[\eta] := [d\sigma|_\cC]$, where $\cC = \ker(\sigma)$.)
 \item[(ii)] the {\sl incidence Lagrange--Grassmann bundle} $\pi:\widetilde{M}^o\to M$ determined by the Lagrangian tangent spaces along $\cV$ distinguished  by (5) of Proposition \ref{P:VLV}, namely
\begin{equation*}
\widetilde{M}^o=\{(\ell,L):\ell\in\cV,L = L_\ell\cV\}\;,
\end{equation*}
where $\cV$ is as in (i).
\end{itemize}
A number of observations are in order:
\begin{itemize}
	\item We emphasize that these notions are defined ``pointwise'' on $M$, which is a supermanifold, so the functor of points approach is required to interpret the definitions rigorously.  Concretely, we shall consider a local frame of odd sections of $\cC$ that is dual to a conformal coframing, and keep the dependence on the ``points'' of $M$ (i.e, the components w.r.t. the frame, depending on even and odd coordinates) explicit.
\item The notion of a purely odd projective superspace presents subtleties (see the survey papers \cite{ManSur, BP}): in our case, there are no free $\bbA$-modules in $\bbS\otimes \bbA$ of rank $(1|0)$ for any given finite-dimensional supercommutative superalgebra $\bbA = \bbA_{\bar 0} \op \bbA_{\bar 1}$, so 
we shall deal with free $\bbA$-modules in $\bbS\otimes \bbA$ of rank $(0|1)$, i.e., with the Grassmannian of {\em odd} lines. This is the same as the classical projective space of $\bbS$ (regarded as a purely even vector space), allowing us to transfer the results in the classical setting of \S\ref{subsec:explicit-pres} to the framework of this section.
	\item By Proposition \ref{P:VLV}, the incidence Lagrange--Grassmann bundle naturally identifies with the subbundle $LG_+(\cC)$ of the Lagrange--Grassmann bundle $\pi:\widetilde M=LG(\cC)\to M$ consisting of all self-dual planes. However, their  symmetry superalgebras are quite different, since, in the former case, also the incidence relation has to be preserved.
\end{itemize}  
Since we have $\dim \LG(\bbS) = (6|0)$, then Proposition \ref{P:VLV}(1) indicates that $\dim \widetilde{M}^o = (7|8)$. Moreover $\widetilde{M}^o$ is equipped with the tautological  Cartan superdistribution $\widetilde\cC$ of rank $(6|4)$.  By the incidence relation, any $L$ in $\widetilde{M}^o$ is a Lagrangian subspace with $L = L_\ell \cV$ for a {\em unique} $\ell \in \cV$, which in turn pulls back to a $(6|1)$-subsuperdistribution $\widetilde\cC^o$ of $\widetilde\cC$. Explicitly:
\begin{equation*}
\widetilde\cC^o|_L=
(\pi_*)^{-1}(\ell)\subset (\pi_*)^{-1}(L)=\widetilde{\cC}|_L\;.
\end{equation*}
Since $L = L_\ell \cV$, then the inclusion $\widetilde{\cC}\subset(\widetilde\cC^o)^2$ follows: indeed $\widetilde{\cC}|_L$ is the pullback of $L$ and this is generated by taking the bracket of $\widetilde\cC^o$ with (a subdistribution of) the vertical distribution $V = \langle \partial_{u_{ij}} \rangle$ for $\pi:\widetilde{M}^o \to M$, see the obervation leading to \eqref{E:Lvert}.
  
 \begin{definition}  \label{D:Ddist} Let $(M^{1|8},\cC,[\sfQ])$ be an odd-contact $F(4)$-supergeometry. Then we define:
\begin{itemize}
\item[$(i)$] the superdistribution $\cD$ on $\widetilde{M}^o$ as the subdistribution of $\widetilde\cC^o$ satisfying the tensorial condition $[\cD,\widetilde\cC]\subset(\widetilde\cC^o)^2$.
\item[$(ii)$] the {\sl symmetry superalgebra} of the incidence Lagrange--Grassmann bundle 
$(\widetilde{M}^o,\widetilde\cC,\cD)$  as
$
 \finf(\widetilde{M}^o,\widetilde\cC,\cD) = \{ \bX \in \fX(\widetilde{M}^o) : \cL_\bX \widetilde\cC \subset \widetilde\cC, \cL_\bX \cD \subset \cD \}
$.
\end{itemize}
\end{definition}
 In supergeometry, the even and odd parts of a superdistribution generally do not have intrinsic meaning {\em as superdistributions}, as the even and odd sections are not sheaves of locally-free modules (not even for the even superfunctions, see \cite[\S 2.4]{KST2021}). Put differently, passing to the ``even part'' of a local frame is not a well-defined global operation, unless the structure group of the principal bundle of frames is purely even.  This can be algebraically seen for the $\cD$ of Definition  \ref{D:Ddist} as we have outlined in Remark \ref{R:5gr} below; we will also provide an independent geometric confirmation in the flat case, which is our main focus from now on.

We make this explicit for the flat odd-contact $F(4)$-supergeometry.  In jet-like coordinates $(x^i,u,u_i)$ on $(M^{1|8},\cC)$, we have $\eta= d\sigma|_\cC = (dx^i \wedge du_i)|_\cC$ and we consider 
the local conformal frame  $(D_{x^0},\ldots,D_{x^3},\partial_{u_0},\ldots, \partial_{u_3})$ of $\cC$, 
where $D_{x^i}= \partial_{x^i} + u_i \partial_u$.  Let $(x^i,u,u_i,u_{ij}=-u_{ji})$ be bundle-adapted coordinates on $(\widetilde{M}^o,\widetilde\cC)$, so that $L = \langle \widetilde{D}_{x^i} \rangle$, with
 $\widetilde{D}_{x^i} := D_{x^i} + \sum_{j}u_{ij} \partial_{u_j}$.  

From (1) and (4) of Proposition \ref{P:VLV}, the $(u_{ij})$'s can be regarded as locally parametrizing $\cV \subset \bbP(\cC)$ in some fibre over $M$: the $\ell \in \cV$ for which $L=L_\ell \cV$ is obtained by (the projectivization of) the first column of \eqref{E:expgm}, thought as components w.r.t. the chosen local conformal frame.  This yields
the odd supervector field
 \begin{align*}
 \begin{split}
 D_{x^0} - u_{23} D_{x^1} - u_{31} D_{x^2} - u_{12} D_{x^3}+ &(u_{01} u_{23} + u_{02} u_{31} + u_{03} u_{12}) \partial_{u_0}
 + u_{01} \partial_{u_1} + u_{02} \partial_{u_2} + u_{03} \partial_{u_3}\\
 &= \widetilde{D}_{x^0} - u_{23} \widetilde{D}_{x^1} - u_{31} \widetilde{D}_{x^2} - u_{12} \widetilde{D}_{x^3},
 \end{split}
 \end{align*}
 and, using the definition of $\widetilde\cC^o$, we then get
\begin{equation*}
\begin{aligned}
\widetilde\cC^o&= \langle \partial_{u_{ij}}, \widetilde{D}_{x^0} - u_{23} \widetilde{D}_{x^1} - u_{31} \widetilde{D}_{x^2} - u_{12} \widetilde{D}_{x^3} \rangle\;,\\
(\widetilde\cC^o)^2&=\widetilde\cC^o\oplus\langle \widetilde{D}_{x^1}, \widetilde{D}_{x^2}, \widetilde{D}_{x^3}, \partial_{u_1} + u_{23} \partial_{u_0},
 \partial_{u_2} + u_{31} \partial_{u_0},
 \partial_{u_3} + u_{12} \partial_{u_0}\rangle\;.
\end{aligned}
\end{equation*}
  In particular $\widetilde\cC=\langle \widetilde{D}_{x^i}, \partial_{u_{ij}}\rangle\subset(\widetilde\cC^o)^2$. Now the superdistribution $\cD \subset\widetilde\cC^o$ from Definition \ref{D:Ddist} is defined by the condition $[\cD,\widetilde\cC]\subset(\widetilde\cC^o)^2$ and, using \eqref{E:Lvert}, we arrive at $\cD$ given in \eqref{E:5gr}. We compute its weak derived flag, which has growth $(3|1), (3|3), (0|3), (0|1), (1|0)$, and see that  $\cD^2 = \widetilde\cC$, $\cD^3=(\widetilde\cC^o)^2$, $\cD^4=\widetilde\cC^2=(\widetilde\cC^o)^3$, and the full tangent bundle $\mathcal T\widetilde{M}^o = \cD^5 = \widetilde\cC^3 = (\widetilde\cC^o)^4$.

 \begin{align} \label{E:5gr}
 \begin{array}{|c|c|c|} \hline
 & \mbox{even} & \mbox{odd} \\ \hline\hline
 \cD &  
 \begin{array}{l} \partial_{u_{23}} - u_{12} \partial_{u_{02}} + u_{31} \partial_{u_{03}}, \\
 \partial_{u_{31}} - u_{23} \partial_{u_{03}} + u_{12} \partial_{u_{01}}, \\
 \partial_{u_{12}} - u_{31} \partial_{u_{01}} + u_{23} \partial_{u_{02}}
 \end{array} & \widetilde{D}_{x^0} - u_{23} \widetilde{D}_{x^1} - u_{31} \widetilde{D}_{x^2} - u_{12} \widetilde{D}_{x^3}\\ \hline
 \cD^2 / \cD & \partial_{u_{01}}, \,\, \partial_{u_{02}}, \,\, \partial_{u_{03}} & \widetilde{D}_{x^1}, \,\, \widetilde{D}_{x^2}, \,\, \widetilde{D}_{x^3} \\ \hline
 \cD^3 / \cD^2 & & 
 \begin{array}{l} 
 \partial_{u_1} + u_{23} \partial_{u_0},\,\, 
 \partial_{u_2} + u_{31} \partial_{u_0},\,\,
 \partial_{u_3} + u_{12} \partial_{u_0}
 \end{array}  \\ \hline
 \cD^4 / \cD^3 & & \partial_{u_0} \\ \hline
 \cD^5 / \cD^4 & \partial_u & \\ \hline
 \end{array}
 \end{align}
 
Now:
 \begin{itemize}
 \item[$(i)$] The vertical superdistribution $\ker(\pi_*)= \langle \partial_{u_{ij}} \rangle$ of $\pi:\widetilde{M}^o\to M$ is distinguished under contact transformations, so $\cD_{\bar{0}} := \cD \cap \ker(\pi_*)$ is a distinguished superdistribution.
 \item[$(ii)$] The Cauchy characteristic space $\Ch(\cD^3) = \{ \bX \in \Gamma(\cD^3) : \cL_\bX \cD^3 \subset \cD^3 \}$ of $\cD^3$ is covariantly defined from $\cD$, so $\cD_{\bar{1}}:=\Ch(\cD^3)$ is also a distinguished superdistribution.
 \end{itemize}
 From \eqref{E:5gr}, we explicitly have 
\begin{equation*}
\begin{aligned}
\cD_{\bar 0}&=\langle\partial_{u_{23}} - u_{12} \partial_{u_{02}} + u_{31} \partial_{u_{03}},
 \partial_{u_{31}} - u_{23} \partial_{u_{03}} + u_{12} \partial_{u_{01}}, 
 \partial_{u_{12}} - u_{31} \partial_{u_{01}} + u_{23} \partial_{u_{02}}
\rangle,\\
\cD_{\bar{1}} &= \langle \widetilde{D}_{x^0} - u_{23} \widetilde{D}_{x^1} - u_{31} \widetilde{D}_{x^2} - u_{12} \widetilde{D}_{x^3} \rangle,
\end{aligned}
\end{equation*}
of rank $(3|0)$ and $(0|1)$, respectively. Evidently $\cD=\cD_{\bar 0}\oplus\cD_{\bar 1}$ and $\widetilde\cC^o=(\cD_{\bar0})^2\oplus\cD_{\bar1}$. 
\begin{theorem} \label{T:F4-5gr}
Let $(\widetilde{M}^o,\widetilde\cC,\cD)$ be the incidence Lagrange--Grassmann bundle
associated to the flat odd-contact $F(4)$-supergeometry $(M,\cC,[\sfQ])$.
Then $\finf(\widetilde{M}^o,\widetilde\cC,\cD) \cong F(4)$.
 \end{theorem}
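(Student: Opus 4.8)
The plan is to identify $\finf(\widetilde{M}^o,\widetilde\cC,\cD)$ with $\finf(M,\cC,[\sfQ])$ by exhibiting mutually inverse \emph{prolongation} and \emph{projection} homomorphisms of Lie superalgebras, and then to invoke Theorem \ref{T:flatF4ct} to conclude that the former is isomorphic to $F(4)$. The whole point is thus that passing from the flat odd-contact $F(4)$-supergeometry $(M,\cC,[\sfQ])$ to the incidence Lagrange--Grassmann bundle $(\widetilde{M}^o,\widetilde\cC,\cD)$ neither creates nor destroys symmetries. Throughout, as elsewhere in the paper, the constructions are understood via the functor of points.

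For the injection $\finf(M,\cC,[\sfQ]) \hookrightarrow \finf(\widetilde{M}^o,\widetilde\cC,\cD)$, I would argue by naturality of the construction. A contact supervector field $\bX$ preserving $[\sfQ]$ preserves the null quadric field $\cV = \{\eta=0\}$ and, by Proposition \ref{P:VLV}(5), the family of Lagrangian tangent spaces $L_\ell\cV$, hence the fibrewise incidence relation $F(\cV)$. Since $\widetilde{M}^o$, the tautological Cartan superdistribution $\widetilde\cC$, and the subsuperdistribution $\widetilde\cC^o$ (which records the lines $\ell \subset L_\ell\cV$) are all built canonically from these data, $\bX$ prolongs to a supervector field $\widetilde\bX$ on $\widetilde{M}^o$ preserving $\widetilde\cC$ and $\widetilde\cC^o$, and therefore preserving the tensorially defined $\cD$. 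This prolongation is injective, since $\widetilde\bX$ projects back onto $\bX$.

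For the reverse map $\finf(\widetilde{M}^o,\widetilde\cC,\cD) \to \finf(M,\cC,[\sfQ])$, I would first establish projectability by the super Lie--B\"acklund argument already recorded in \S\ref{S:J1J2}: any $\widetilde\bX$ preserving $\widetilde\cC$ preserves the derived superdistribution $\widetilde\cC^2$ and hence its Cauchy characteristic space $\Ch(\widetilde\cC^2)$, which equals the vertical superdistribution $\ker(\pi_*)$ of $\pi:\widetilde{M}^o\to M$; thus $\widetilde\bX$ descends to a contact supervector field $\bX=\pi_*\widetilde\bX$ on $(M,\cC)$. Next, because $\widetilde\bX$ preserves $\cD$, it preserves the distinguished pieces $\cD_{\bar0}$ and $\cD_{\bar1}$ and, through their weak derived flag $(\widetilde\cC^o)^2=\cD^3$ in \eqref{E:5gr}, also $\widetilde\cC^o$. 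Preservation of $\widetilde\cC^o$ together with projectability forces $\bX$ to preserve the incidence data $F(\cV)$ fibrewise, equivalently the linear action of $\bX$ on $\cC$ lands in $\Stab_{\CO(\bbS)}(F(\cV))$. By Proposition \ref{P:VLV}(6) this stabilizer is $\CSpin(7)$, which is exactly $\Stab_{\CO(\bbS)}([\sfQ])$ by Lemma \ref{lem:stabilizerCayleyline}; hence $\bX$ preserves $[\sfQ]$, i.e. $\bX\in\finf(M,\cC,[\sfQ])$. These two maps are manifestly inverse, so $\finf(\widetilde{M}^o,\widetilde\cC,\cD)\cong\finf(M,\cC,[\sfQ])\cong F(4)$ by Theorem \ref{T:flatF4ct}. (Projectability together with Theorem \ref{T:symbound} alternatively supplies the upper bound $(24|16)$, matching the lower bound from prolongation.)

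The main obstacle I anticipate is the careful super-bookkeeping needed to verify that preservation of $\cD$ genuinely recovers the \emph{full} incidence structure $F(\cV)$, and not merely the family of self-dual Lagrangian planes $L_\ell\cV$ already carried by $\widetilde\cC$; only then may one legitimately pass from $F(\cV)$ back to $[\sfQ]$ via Proposition \ref{P:VLV}. This hinges on the distinguished splitting $\cD=\cD_{\bar0}\oplus\cD_{\bar1}$ (with $\cD_{\bar0}=\cD\cap\ker(\pi_*)$ and $\cD_{\bar1}=\Ch(\cD^3)$) and on confirming that the weak derived flag of $\cD$ reproduces $\widetilde\cC$, $(\widetilde\cC^o)^2$ and the vertical superdistribution exactly as tabulated in \eqref{E:5gr}. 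The even and odd ranks must be matched with care, since in supergeometry the even and odd parts of a superdistribution are not themselves sub-superdistributions in general, which is precisely why the covariant characterizations of $\cD_{\bar0}$ and $\cD_{\bar1}$ are indispensable.
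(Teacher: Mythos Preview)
Your proposal is correct and follows essentially the same approach as the paper's proof: both directions rely on naturality of the construction for the injection $\finf(M,\cC,[\sfQ])\hookrightarrow\finf(\widetilde{M}^o,\widetilde\cC,\cD)$, and for the converse use Lie--B\"acklund projectability together with the covariant splitting $\cD=\cD_{\bar0}\oplus\cD_{\bar1}$ to recover the fibrewise incidence data $F(\cV)$ and then invoke Proposition~\ref{P:VLV}(6) (and Lemma~\ref{lem:stabilizerCayleyline}) to conclude that the projected vector field preserves $[\sfQ]$. The only cosmetic difference is that the paper reads off $\ell=\pi_*\cD_{\bar1}|_L\subset\pi_*\widetilde\cC|_L=L$ directly, whereas you route through $\widetilde\cC^o$ first; either way one lands on the same incidence structure.
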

 
 \begin{proof} We geometrically constructed $(\widetilde{M}^o,\widetilde\cC, \cD)$ from the flat $(M,\cC,[\sfQ])$, so all symmetries  of the latter (Table \ref{F:FlatQctSym}) are inherited by the former.  By Theorem \ref{T:flatF4ct}, $F(4)$ injects into $\finf(\widetilde{M}^o,\widetilde\cC,\cD)$. Conversely, let us consider a symmetry $\bX \in \finf(\widetilde{M}^o,\widetilde\cC,\cD)$. 
It is contact by definition, so it projects to a contact vector field $\bS$ on $(M,\cC)$, and
it satisfies
$\cL_\bX \cD_{\bar{0}} \subset \cD_{\bar{0}}$ and $\cL_\bX \cD_{\bar{1}} \subset \cD_{\bar{1}}$ thanks to our general considerations. We claim that $\bX$ preserves each fiber of $\pi:\widetilde{M}^o \to M$, which is $F(\cV_x) = \{ (\ell, L) : \ell \in \cV_x, L = L_\ell (\cV_x) \}$ over some superpoint $x$ of $M$.  In fact, the incidence structure is $\ell = \pi_*\cD_{\bar1}|_L\subset \pi_*\widetilde\cC|_L = L$, and $\cD_{\bar{1}}$ and $\widetilde\cC$ are preserved by $\bX$, and so $F(\cV_x)$ is preserved by $\bS$.  From (6) of Proposition \ref{P:VLV}, the conformal class of the Cayley $4$-form $[\sfQ]$ is determined from this data, so $\bS$ must preserve it and  $\finf(M,\cC,[\sfQ]) \cong \finf(\widetilde{M}^o,\widetilde\cC,\cD)$.
 \end{proof}
 
 \begin{rem} \label{R:5gr}
The structure considered in Theorem \ref{T:F4-5gr} is the flat model for the parabolic supergeometry $M^{\rm I}_{12} := F(4) / P^{\rm I}_{12}$, which fibres over $M^{\rm I}_1 := F(4) / P^{\rm I}_1$.  An odd-contact $F(4)$-supergeometry is the geometric structure associated with the latter, which gives rise to an instance of the former via the correspondence space type construction discussed in this section.  Abstractly, a $|5|$-grading with purely even structure algebra $\fg_0$ arises from the former
-- see Table \ref{F:P12-roots}.
\begin{table}[h]
 \[
 \begin{array}{|c|l|l|} \hline
 k & \multicolumn{1}{c|}{\Delta_{\bar 0}^+(\fg_k)} &  \multicolumn{1}{c|}{\Delta_{\bar 1}^+(\fg_k)} \\ \hline
 0 & 
 \begin{array}{l}
 \alpha_3, \,\,
 \alpha_4, \,\,
 \alpha_3+\alpha_4
 \end{array} & 
  \\ \hline
 1 & 
 \begin{array}{l}
 \alpha_2, \,\,
 \alpha_2+\alpha_3, \,\,
 \alpha_2+\alpha_3+\alpha_4
 \end{array} &
 \begin{array}{l}
 \alpha_1
 \end{array} \\ \hline
 2 &
 \begin{array}{l}
 2\alpha_2+\alpha_3,\\
 2\alpha_2+\alpha_3+\alpha_4,\\
 2\alpha_2+2\alpha_3+\alpha_4
 \end{array} &
 \begin{array}{l}
 \alpha_1+\alpha_2,\\ 
 \alpha_1+\alpha_2+\alpha_3,\\
 \alpha_1+\alpha_2+\alpha_3+\alpha_4,\\
 \end{array}
 \\ \hline
 3 & &
 \begin{array}{l}
 \alpha_1+2\alpha_2+\alpha_3,\ \\
 \alpha_1+2\alpha_2+\alpha_3+\alpha_4,\ \\
 \alpha_1+2\alpha_2+2\alpha_3+\alpha_4
 \end{array} \\ \hline
 4 & &
 \begin{array}{l}
 \alpha_1+3\alpha_2+2\alpha_3+\alpha_4
 \end{array} \\ \hline
 5 & 
 \begin{array}{l}
 2\alpha_1+3\alpha_2+2\alpha_3+\alpha_4
 \end{array}
 & \\ \hline 
 \end{array}
 \]
 \caption{Grading associated to parabolic $\fp^{\rm I}_{12}$}
 \label{F:P12-roots}
 \end{table}
  \end{rem}

  \subsection{A remarkable 3rd order super-PDE} 
  
  According to \S\ref{S:J1J2}, we can construct a further bundle $p:\widecheck{M} \to \widetilde{M}$ equipped with a canonical superdistribution $\widecheck\cC$, and we can now consider its pullback $\widecheck{M}{}^o \to \widetilde{M}^o$ over the incidence Lagrange--Grassmann bundle $\widetilde{M}^o$.  
Specifically, $(\widetilde{M}^o, \widetilde\cC)$ admits the vertical superdistribution $\Ch(\widetilde\cC^2)=\langle\partial_{u_{ij}} \rangle$, and the total space of $\widecheck{M}{}^o$ consists of all subspaces $E$ of $\widetilde\cC=\langle \widetilde{D}_{x^i}, \partial_{u_{ij}} \rangle$ that are isotropic w.r.t. the Levi form of $\widetilde\cC$ and complementary in $\widetilde\cC$ to $\Ch(\widetilde\cC^2)$. Then $\widecheck\cC$ is obtained tautologically, by pulling back these subspaces to $\widecheck{M}{}^o$. Being an open sub-supermanifold of $(\widecheck{M}{},\widecheck\cC)$, the pair $(\widecheck{M}{}^o,\widecheck\cC)$ is locally isomorphic to the 3rd jet-superspace $J^3(\bbC^{0|4},\bbC^{1|0})$ with standard coordinates $(x^i,u,u_i,u_{ij},u_{ijk})$, where $0 \leq i,j,k \leq 3$. The aforementioned subspaces of $\widetilde\cC$ are of the form $E = \langle \widecheck{D}_{x^i} \rangle$, where
 \begin{align}
\label{eq:Dchecks}
 \widecheck{D}_{x^i} := \widetilde{D}_{x^i} + \sum_{j < k} u_{ijk} \partial_{u_{jk}} = \partial_{x^i} + u_i \partial_u + u_{ij} \partial_{u_j} + \sum_{j < k} u_{ijk} \partial_{u_{jk}}.
 \end{align}
  
The key point here is that the incidence Lagrange--Grassmann bundle $(\widetilde{M}^o, \widetilde\cC)$ is enhanced with the subsuperdistribution $\cD=\cD_{\bar 0}\oplus\cD_{\bar 1} \subset \widetilde\cC$ with covariant even and odd components, as observed in \S\ref{S:DLG} and Theorem \ref{T:F4-5gr}, 
and this allows us to distinguish a sub-supermanifold $\Sigma \subset \widecheck{M}{}^o$, i.e., a 3rd order super-PDE!  Namely, we restrict to those isotropic subspaces $E$ of $\widetilde\cC$ complementary to $\Ch(\widetilde\cC^2)$ that, in addition, {\bf\em contain $\cD_{\bar{1}}$}.  Explicitly, from $E = \langle \widecheck{D}_{x^i} \rangle$, we observe that
 \begin{align} \label{E:Llift}
 \begin{split}
 &\widecheck{D}_{x^0} - u_{23} \widecheck{D}_{x^1} - u_{31} \widecheck{D}_{x^2} - u_{12} \widecheck{D}_{x^3} \\
 &=  \widetilde{D}_{x^0} - u_{23} \widetilde{D}_{x^1} - u_{31} \widetilde{D}_{x^2} - u_{12} \widetilde{D}_{x^3} + \sum_{j < k} (u_{0jk} - u_{23} u_{1jk} - u_{31} u_{2jk} - u_{12} u_{3jk}) \partial_{u_{jk}}\;,
 \end{split}
 \end{align}
so the requirement of containing $\cD_{\bar{1}} = \langle \widetilde{D}_{x^0} - u_{23} \widetilde{D}_{x^1} - u_{31} \widetilde{D}_{x^2} - u_{12} \widetilde{D}_{x^3} \rangle$ forces the above summation to vanish.  Using the skew-symmetry of the indices of $u_{ijk}$ and focusing on the coefficients of $\partial_{u_{jk}}$ where $0 < j < k$, we efficiently arrive at the remarkable super-PDE 
 \begin{align*}
 u_{0ab} = u_{ab} u_{123}, \quad 1 \leq a < b \leq 3\;,
 \end{align*}
and re-substitution shows that this is sufficient for vanishing of the summation above.  
 
 \begin{theorem} \label{T:3PDE} Consider the sub-supermanifold $\Sigma \subset \widecheck{M}{}^o$ of the 
3rd jet-superspace $\widecheck{M}{}^o$
constructed as above from 
the incidence Lagrange--Grassmann bundle $(\widetilde{M}^o,\widetilde\cC,\cD)$ associated to 
the flat odd-contact $F(4)$-supergeometry $(M,\cC,[\sfQ])$. In standard coordinates $(x^i,u,u_i,u_{ij},u_{ijk})$ of $\widecheck{M}{}^o\cong J^3(\bbC^{0|4},\bbC^{1|0})$ (here $0 \leq i < j < k \leq 3$, the independent variables $x^i$ are odd and the dependent variable $u$ even), the sub-supermanifold  $\Sigma$ is given by the 3rd order super-PDE 
 \begin{align} \label{E:3PDE}
 u_{0ab} = u_{ab} u_{123}, \quad 1 \leq a < b \leq 3,
 \end{align}
and its contact symmetry superalgebra is isomorphic to $F(4)$.
 \end{theorem}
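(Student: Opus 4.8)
The plan is to deduce the result from Theorem~\ref{T:F4-5gr} by matching the contact symmetry superalgebra of $\Sigma$ with the symmetry superalgebra $\finf(\widetilde{M}{}^o,\widetilde\cC,\cD)$ of the incidence Lagrange--Grassmann bundle. The entire argument rests on the single observation that, inside the third jet-superspace $\widecheck{M}{}^o$, the super-PDE $\Sigma$ and the distinguished odd subsuperdistribution $\cD_{\bar 1}$ of \eqref{E:5gr} determine one another: over each superpoint of $\widetilde{M}{}^o$, the fibre of $\Sigma$ consists precisely of those isotropic complements $E$ of $\Ch(\widetilde\cC^2)$ in $\widetilde\cC$ that contain $\cD_{\bar 1}$, and conversely $\cD_{\bar 1}$ is recovered as the common intersection of this family. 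I would make both statements rigorous in the functor-of-points formalism, mirroring the local computation leading to \eqref{E:Llift} and \eqref{E:3PDE}.

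For the lower bound I would argue that any $\bX \in \finf(\widetilde{M}{}^o,\widetilde\cC,\cD)$ preserves $\widetilde\cC$ and $\cD_{\bar 1}$, so its prolongation to $\widecheck{M}{}^o$ sends an isotropic complement $E \supset \cD_{\bar 1}$ to another such complement and therefore preserves $\Sigma$. This gives an injection $\finf(\widetilde{M}{}^o,\widetilde\cC,\cD) \hookrightarrow \finf(\Sigma)$, and Theorem~\ref{T:F4-5gr} upgrades it to $F(4) \hookrightarrow \finf(\Sigma)$. Concretely, one may instead prolong the explicit $(24|16)$ generators of Table~\ref{F:FlatQctSym} to $J^3(\bbC^{0|4},\bbC^{1|0})$ and check directly that they preserve the locus \eqref{E:3PDE}.

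For the upper bound I would take $\bX \in \finf(\Sigma)$. By the Lie--B\"acklund theorem in the super-setting (applicable because the Cartan superdistributions have trivial Cauchy characteristics), $\bX$ is the prolongation of a contact supervector field $\bS$ on $(M,\cC)$, and its intermediate prolongation $\widetilde\bX$ to $\widetilde{M}{}^o$ preserves the tautological Cartan superdistribution $\widetilde\cC$ and the vertical superdistribution of $\pi:\widetilde{M}{}^o \to M$. Since $\cD_{\bar 1}$ is the fibrewise common intersection of the family of jets constituting $\Sigma$, and $\widetilde\bX$ preserves both $\Sigma$ and $\widetilde\cC$, it follows that $\widetilde\bX$ preserves $\cD_{\bar 1}$. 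At this point the upper-bound argument of Theorem~\ref{T:F4-5gr} applies verbatim: the incidence datum $\ell = \pi_* \cD_{\bar 1}|_L \subset \pi_* \widetilde\cC|_L = L$ is preserved, so $\bS$ preserves the fibrewise flag manifold $F(\cV)$, whence by Proposition~\ref{P:VLV}(6) it preserves the conformal Cayley class $[\sfQ]$. Thus $\bS \in \finf(M,\cC,[\sfQ]) \cong F(4)$ by Theorem~\ref{T:flatF4ct}, yielding $\finf(\Sigma) \hookrightarrow F(4)$ and hence $\finf(\Sigma) \cong F(4)$.

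The hard part will be the recovery of $\cD_{\bar 1}$ from $\Sigma$: I must verify, within the functor-of-points description of the purely odd projective and Lagrangian structures, that intersecting the tautological family cut out by \eqref{E:3PDE} over $\widetilde{M}{}^o$ reconstructs $\cD_{\bar 1}$ covariantly. Once this is in place, no genuine computation beyond the Lie--B\"acklund prolongation and Proposition~\ref{P:VLV} is required, and the remaining bookkeeping (isotropy, complementarity, and the skew-symmetry of $u_{ijk}$) is exactly what was already carried out in the passage from \eqref{E:Llift} to \eqref{E:3PDE}.
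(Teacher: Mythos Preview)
Your overall strategy matches the paper's exactly: the lower bound comes from prolonging symmetries of $(\widetilde{M}^o,\widetilde\cC,\cD)$ to $\Sigma$, and the upper bound comes from recovering $\cD_{\bar 1}$ (hence the incidence structure, hence $[\sfQ]$ via Proposition~\ref{P:VLV}(6)) from the contact geometry of $\Sigma$, then invoking Theorem~\ref{T:F4-5gr}. The one place the paper proceeds differently is in the recovery step. Rather than realizing $\cD_{\bar 1}$ as the fibrewise common intersection $\bigcap_t E_t$ of the one-parameter family of isotropic complements, the paper works intrinsically on $\Sigma$: it forms the induced superdistribution $\widecheck\cD = \widecheck\cC \cap \mathcal{T}\Sigma = \langle \widecheck{D}_{x^i}|_\Sigma,\,\partial_t\rangle$ (with $t = u_{123}$) and computes its Cauchy characteristic space, which turns out to be exactly $\langle \widecheck{D}_{x^0} - u_{23}\widecheck{D}_{x^1} - u_{31}\widecheck{D}_{x^2} - u_{12}\widecheck{D}_{x^3}\rangle$, the pullback of $\cD_{\bar 1}$. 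Quotienting by the distinguished vertical $V = \langle\partial_t\rangle$ and pushing forward to $\widetilde{M}^o$ recovers $\cD_{\bar 1}$, after which $\widetilde\cC^o$ and $\cD_{\bar 0}$ are rebuilt via the tensorial condition of Definition~\ref{D:Ddist}. Your intersection argument is correct (the $t$-derivatives of the $\widecheck{D}_{x^i}|_\Sigma$ span exactly $\cD_{\bar 0}$, so the common intersection is the $(0|1)$-dimensional kernel generated by the $\cD_{\bar 1}$ vector), but the Cauchy-characteristic route is a standard covariant construction that sidesteps the functor-of-points verification you flag as the hard part; both encode the same linear algebra.
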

 
 \begin{proof} The contact symmetry superalgebra of $\Sigma$ consists of all contact vector fields $\bX \in \fX(\widecheck{M}{}^o)$ that are tangent to $\Sigma \subset \widecheck{M}{}^o$. Since the super-PDE $\Sigma$ was geometrically constructed from the incidence Lagrange--Grassmann bundle $(\widetilde{M}^o,\widetilde\cC,\cD)$, all contact symmetries of the latter lift to contact symmetries of $\Sigma \subset \widecheck{M}{}^o$.  

Conversely, consider the contact supergeometry of the sub-supermanifold $\Sigma \subset \widecheck{M}{}^o$ given by \eqref{E:3PDE}, with induced coordinates $(x^i,u,u_i,u_{ij},t = u_{123})$.  The vertical bundle $V = \langle \partial_t \rangle$ is distinguished under contact transformations, and so is the superdistribution $\widecheck\cD=\widecheck\cC\cap \mathcal{T}\Sigma$ induced on $\Sigma$ from
$\widecheck\cC= \langle \widecheck{D}_{x^i}, \partial_{u_{ijk}} \rangle$. Explicitly
$$\widecheck\cD := \langle \widecheck{D}_{x^0}, \,  \widecheck{D}_{x^1}, \,  \widecheck{D}_{x^2}, \,  \widecheck{D}_{x^3}, \, \partial_t \rangle\;,$$ 
where the $ \widecheck{D}_{x^i}$'s are as in \eqref{eq:Dchecks}
with the relations \eqref{E:3PDE} imposed.  The Cauchy characteristic space of $\widecheck\cD$ is given by
 \begin{align*}
 \Ch(\widecheck\cD) = \{ \bX \in \Gamma(\widecheck\cD) : \cL_\bX \widecheck\cD \subset \widecheck\cD \} &=  \langle \widecheck{D}_{x^0} - u_{23} \widecheck{D}_{x^1} - u_{31} \widecheck{D}_{x^2} - u_{12} \widecheck{D}_{x^3} \rangle\\
 &= \langle \widetilde{D}_{x^0} - u_{23} \widetilde{D}_{x^1} - u_{31} \widetilde{D}_{x^2} - u_{12} \widetilde{D}_{x^3} \rangle,
 \end{align*}
 where we have used \eqref{E:Llift} in the last step.  Consequently, $\Ch(\widecheck\cD) \op V$ is Frobenius integrable and invariant under contact transformations.
 
We define the superdistribution $\cD_{\bar{1}}$ of rank $(0|1)$ on $\widetilde M^o$ as push-forward of the quotient $(\Ch(\widecheck\cD) \op V) / V$.  Next, the fibre of $\Sigma \to M$ is distinguished by contact transformations, so its quotient by $V$ is also distinguished and can be pushed-forward to the superdistribution $\langle \partial_{u_{ij}} \rangle$ on $\widetilde M^o$ of rank $(6|0)$. We define a superdistribution $\widetilde\cC^o=\langle \partial_{u_{ij}} \rangle\oplus\cD_{\bar 1}$ of rank $(6|1)$, and
$\cD_{\bar{0}}$ as the subsuperdistribution of $\langle \partial_{u_{ij}} \rangle$ such that 
$\cD := \cD_{\bar{0}} \op \cD_{\bar{1}}$ satisfies
$[\cD,\widetilde\cC]\subset(\widetilde\cC^o)^2$.  Explicitly, we see that $\cD$ has the same local form as in \eqref{E:5gr}. Any contact symmetry of $\Sigma \subset \widecheck{M}{}^o$ induces a symmetry of $\widecheck\cD$, $\Ch(\widecheck\cD)$ and $V$, hence a symmetry of the incidence Lagrange--Grassmann bundle $(\widetilde{M}^o, \widetilde\cC,\cD)$.

In summary, the contact symmetries of $\Sigma \subset \widecheck{M}{}^o$ are in bijective correspondence with the symmetries of $(\widetilde{M}^o, \widetilde\cC,\cD)$ and
Theorem \ref{T:F4-5gr} implies the desired result. 
 \end{proof}

\begin{rem} One can independently confirm that all the generating superfunctions 
of the symmetries of $(M,\cC,[\sfQ])$ (see Table \ref{F:FlatQctSym}) indeed prolong to contact symmetries of the super-PDE \eqref{E:3PDE}, and Theorem \ref{T:3PDE} implies that this list is complete.
\end{rem}

Finally, let us briefly remark on the solution superspace of our PDE system \eqref{E:3PDE}.
This is introduced via the functor of points, the main 
r$\hat{\text{o}}$le being played by even superfunctions which are parametrized by
elements in the auxiliary finite-dimensional supercommutative superalgebra $\bbA = \bbA_{\bar 0} \oplus \bbA_{\bar 1}$ (these are the super-points of such a superspace).
Concretely, we let $u = u(x^i)$ be an even superfunction of the odd coordinates $x^i$, so $u$ is a supersymmetric polynomial in the $x^i$'s with constant coefficients in $\mathbb A$.  Differentiating $u_{012} = u_{12} u_{123}$ w.r.t. $x^3$, we obtain $u_{3012} = u_{312} u_{123} + u_{12} u_{3123} = 0$ since $u_{123}$ is odd, so we obtain the compatibility condition $u_{0123} = 0$.  Thus, 
\begin{align*}
u = \lambda + \sum_{i < j} \lambda_{ij} x^i x^j + \sum_i \theta_i x^i + \sum_{i < j < k} \theta_{ijk} x^i x^j x^k,
\end{align*}
where the constants $\lambda,\lambda_{ij}$ are even, while $\theta_i,\theta_{ijk}$ are odd.  Substitution in \eqref{E:3PDE} then forces
$\theta_{012} = -(\lambda_{12}+\theta_{012}x^0) \epsilon$,
$\theta_{013} = -(\lambda_{13}+\theta_{013}x^0) \epsilon$,  
$\theta_{023} = -(\lambda_{23}+\theta_{023}x^0) \epsilon$, 
with $\epsilon := u_{123}=\theta_{123}$. Using \eqref{E:3PDE} again, we then see that $\theta_{012}\theta_{123}=u_{012}u_{123}=u_{12}u_{123}u_{123}=0$, and similarly $\theta_{013}\theta_{123}=0$, $\theta_{023}\theta_{123}=0$. In summary, we arrive at
\begin{align*}
\theta_{012} = -\lambda_{12} \epsilon, \quad
\theta_{013} = -\lambda_{13} \epsilon, \quad 
\theta_{023} = -\lambda_{23} \epsilon,
\end{align*}
and the solution superspace of \eqref{E:3PDE} is $(7|5)$-dimensional.

\section*{Acknowledgments}

We thank Boris Kruglikov for helpful discussions. The first author is supported by a tenure-track RTDB position, and and acknowledges the MIUR Excellence Department
Project MatMod@TOV awarded to the Department of Mathematics, University
of Rome Tor Vergata, CUP E83C23000330006. The research leading to these results has received funding from the Norwegian Financial Mechanism 2014-2021 (project registration number 2019/34/H/ST1/00636), the Troms\o{} Research Foundation (project ``Pure Mathematics in Norway''), the UiT Aurora project MASCOT, and this article/publication is based upon work from COST Action CaLISTA CA21109 supported by COST (European Cooperation in Science and Technology), \href{https://www.cost.eu}{https://www.cost.eu}.

\appendix
\section{Spencer cohomology for the odd contact grading: proof of Theorem \ref{sec:Spencer}}
\label{S:Spencer-odd-contact-grading}
We will extensively and tacitly use the spinorial presentation given by Proposition \ref{prop:explicit-spinorial} and split the proof into several cases, depending on the degree $d\geq 1$.    The relevant spaces of cocycles and coboundaries are $Z^{d,1}(\fm,\fg)$ and $B^{d,1}(\fm,\fg)$ respectively.

\subsection{Case $d=1$.}\par\noindent

\begin{proposition} We have $B^{1,1}(\fm,\fg) = Z^{1,1}(\fm,\fg)\cong \bbS$, and hence $H^{1,1}(\fm,\fg)=0$.
\end{proposition}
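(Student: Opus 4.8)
The plan is to pin down the space $Z^{1,1}(\fm,\fg)$ of degree-one $1$-cocycles explicitly and then show it coincides with the coboundaries. First I would record that a degree-one cochain $\varphi\in C^{1,1}(\fm,\fg)$ amounts to a pair of linear maps $\varphi|_{\fg_{-1}}:\fg_{-1}\to\fg_0$ and $\varphi|_{\fg_{-2}}:\fg_{-2}\to\fg_{-1}$, both necessarily odd since $\fg_{-1}$ is odd while $\fg_{-2}$ and $\fg_0$ are even. Using $\fg_0\cong\fso(\bbV)\oplus\bbC Z$ I would write $\varphi(s)=\sigma(\omega_s)+\psi(s)Z$ for $s\in\fg_{-1}$, where $\omega:\fg_{-1}\to\fso(\bbV)$ is linear and $\psi\in\fg_{-1}^*$, together with $\varphi(\one)=v\in\bbS$ for the fixed generator $\one$ of $\fg_{-2}$.

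For the coboundaries I would compute $\partial$ on $C^{0,1}(\fm,\fg)=\fg_1\cong\bbS$ directly from the brackets of Proposition \ref{prop:explicit-spinorial}: for $w^\dagger\in\fg_1$ one finds $\partial(w^\dagger)(\one)=-w$ and $\partial(w^\dagger)(s)=-\tfrac13\omega^{(2)}(w,s)+\tfrac12\langle w,s\rangle Z$. Since $H^{1,0}(\fm,\fg)=0$ (the centralizer argument of Lemma \ref{lem:centralizers}(i) carries over verbatim), the map $\fg_1\to C^{1,1}$ is injective, so $B^{1,1}(\fm,\fg)\cong\bbS$; this is also immediate from $\partial(w^\dagger)(\one)=-w$.

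Next I would extract the cocycle equations by imposing $\partial\varphi(X,Y)=0$ on the two relevant types of pairs. Testing $X=\one$, $Y=s$, and using $[\one,Z]=2\one$ and $[s,v]=\langle s,v\rangle\one$, yields $\psi(s)=-\tfrac12\langle v,s\rangle$, so $\psi$ is determined by $v$. Testing $X=s$, $Y=t$ (both odd) and using the natural $\fg_0$-action on $\fg_{-1}$ gives the quadratic identity $\sigma(\omega_t)s+\sigma(\omega_s)t+\tfrac12\langle v,t\rangle s+\tfrac12\langle v,s\rangle t-\langle s,t\rangle v=0$ for all $s,t$.

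The decisive step is a normalization-and-reduction: given a cocycle $\varphi$ with datum $v$, I would subtract the coboundary $\partial\bigl((-v)^\dagger\bigr)$, whose $\fg_{-2}$-value is exactly $v$, producing a cocycle $\tilde\varphi$ with $\tilde v=0$ and hence $\tilde\psi=0$. The quadratic identity then collapses to $\sigma(\tilde\omega_t)s+\sigma(\tilde\omega_s)t=0$; setting $t=s$ gives $\sigma(\tilde\omega_s)s=0$ for all $s$, which is precisely the hypothesis of Proposition \ref{prop:omega-vanishes}, forcing $\tilde\omega=0$ and therefore $\tilde\varphi=0$. Thus every cocycle is a coboundary, $Z^{1,1}(\fm,\fg)=B^{1,1}(\fm,\fg)\cong\bbS$, and $H^{1,1}(\fm,\fg)=0$. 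The one point demanding care—the main obstacle—is fixing the coboundary normalization so that the leftover cocycle condition matches Proposition \ref{prop:omega-vanishes} on the nose; the sign bookkeeping from the super Chevalley--Eilenberg differential and the odd parity of $\fg_{\pm1}$ must be tracked carefully, but this becomes routine once the brackets of Proposition \ref{prop:explicit-spinorial} are in hand.
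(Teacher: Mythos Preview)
Your proof is correct and follows essentially the same route as the paper: normalize away the $\fg_{-2}\to\fg_{-1}$ component by subtracting a coboundary from $\fg_1\cong\bbS$, use the $(\one,s)$-cocycle equation to kill the $Z$-component, and then the $(s,s)$-cocycle equation reduces to $\sigma(\omega_s)s=0$, whence Proposition~\ref{prop:omega-vanishes} finishes the argument. The only cosmetic difference is ordering---the paper normalizes first and then reads off $\omega^Z=0$, whereas you first express $\psi$ in terms of $v$ and then normalize---but the substance is identical.
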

\begin{proof}
Note that $Z^{1,1}(\fm,\fg)$ includes 
at least one module isomorphic to $\mathbb S$, since $B^{1,1}(\fm,\fg)=\partial\fg_1$.  Let $\omega + \alpha \in Z^{1,1}(\fm,\fg)$, with components $\omega = \omega^{\ss} + \omega^Z : \fg_{-1} \to \fso(\bbV) \oplus \bbC Z$ and $\alpha : \fg_{-2} \to \fg_{-1}$, and modify it by a coboundary $\partial t^\dagger$ for $t \in \fg_{-1}$.  Then $(\omega + \alpha + \partial t^\dagger)(\1) = \alpha(\1) + [\1,t^\dagger] = \alpha(\1) - t$.  Setting $t = \alpha(\1)$ and relabelling, we may assume that $\alpha = 0$.  Thus, we have for all $s \in \fg_{-1}$,
 \begin{align*} 
 0 &= \partial \omega(\1,s) = [\1,\omega^Z(s)Z] = 2\omega^Z(s) \1 \quad\Rightarrow\quad \omega^Z(s)=0, 
\\
 0 &=\partial\omega(s,s)=2[\omega^{\ss}(s),s]-2\omega^{Z}(s)s = 2[\omega^{\ss}(s),s]\;, \label{eq:s-d-II}
\end{align*}
 Thus, $\omega^{\ss}=0$ by Proposition \ref{prop:omega-vanishes}, hence $\omega = 0$ and the results follow.

\end{proof}
\subsection{Case $d=2$.}\label{subsubsec:d=2}\par\noindent

\begin{proposition} We have $B^{2,1}(\fm,\fg) = Z^{2,1}(\fm,\fg)\cong \bbC$, and hence $H^{2,1}(\fm,\fg) = 0$.
\end{proposition}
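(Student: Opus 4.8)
The plan is to parametrise a degree-$2$ one-cochain $\varphi \in C^{2,1}(\fm,\fg)$ by its two nonzero components $\beta := \varphi|_{\fg_{-1}} : \fg_{-1} \to \fg_1$ and $\gamma := \varphi|_{\fg_{-2}} : \fg_{-2} \to \fg_0$, writing $\gamma(\1) = A + cZ$ with $A \in \fso(\bbV)$ and $c \in \bbC$, and regarding $\beta$ as an endomorphism of $\bbS$ via $\fg_{\pm 1} \cong \bbS$ (so that $\varphi(s) = \beta(s)^\dagger$). Since $\varphi$ is even, the cocycle condition $\partial\varphi = 0$ only needs to be tested on the pairs $(s,\1)$ and $(s,t)$ with $s,t \in \fg_{-1}$, the pair $(\1,\1)$ contributing nothing. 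First I would evaluate $\partial\varphi(s,\1)$ using Proposition~\ref{prop:explicit-spinorial} and the action of $\fg_0$ on $\fg_{\pm 1}$; this yields the single relation $\beta = \sigma(A) - c\,\id_{\bbS}$, so that $\beta$ is completely determined by the pair $(A,c)$.

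Next I would compute $\partial\varphi(s,t)$ and split it into its $\fso(\bbV)$- and $\bbC Z$-components. Substituting $\beta = \sigma(A) - c\,\id$, the $\bbC Z$-component holds identically: it uses only that $\sigma(A)$ is skew for $\langle -,-\rangle$ and that $\omega^{(2)}$ is antisymmetric (the case $p=2$ of \eqref{E:SymProp}), so the terms proportional to $c$ cancel and the remaining terms match. The same antisymmetry removes the $c$-dependence of the $\fso(\bbV)$-component as well, reducing the entire system to the single $\fso(\bbV)$-valued, symmetric, $\fso(\bbV)$-equivariant identity
\[
\tfrac13\big(\omega^{(2)}(\sigma(A)t,s) + \omega^{(2)}(\sigma(A)s,t)\big) = \langle s,t\rangle\, A \qquad \text{for all } s,t \in \bbS.
\]

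The decisive step is to show this forces $A = 0$. Here I would take the trace of the identity: setting $s=t=\bep_\alpha$ and summing over the orthonormal basis $(\bep_\alpha)$ of $\bbS$, the right-hand side becomes $(\dim\bbS)\,A = 8A$, while the left-hand side is $\tfrac23\sum_\alpha \omega^{(2)}(\sigma(A)\bep_\alpha,\bep_\alpha)$. A short Clifford-trace computation — expanding $\omega^{(2)}$ and $\sigma(A)$ in the $\Gamma$-basis and using $\operatorname{tr}(\Gamma_{\mu\nu}\Gamma^{\rho\tau})$ on the $8$-dimensional module — evaluates the bivector $\sum_\alpha \omega^{(2)}(\sigma(A)\bep_\alpha,\bep_\alpha)$ to $4A$. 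Hence $\tfrac83 A = 8A$, forcing $A = 0$. (Equivalently: the solution set of the displayed identity is an $\fso(\bbV)\cong\fso(7)$-submodule of the irreducible adjoint representation, hence either $0$ or everything, and the trace excludes the latter.) With $A=0$ we get $\beta = -c\,\id$ and $\gamma(\1) = cZ$, so $Z^{2,1}(\fm,\fg)$ is the line parametrised by $c \in \bbC$. Finally I would compute $B^{2,1}(\fm,\fg) = \partial\big(C^{2,0}\big) = \partial\,\fg_2$: since $\fg_2 = \langle\1^\dagger\rangle$ and $\partial\1^\dagger$ sends $s \mapsto [s,\1^\dagger] = -s^\dagger$ and $\1 \mapsto [\1,\1^\dagger] = Z$, this coboundary is exactly the cocycle with $(A,c) = (0,1)$. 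Thus $B^{2,1} = Z^{2,1} \cong \bbC$ and $H^{2,1}(\fm,\fg) = 0$.

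The main obstacle is the trace computation producing the constant $4$: one must fix the normalisation conventions for $\omega^{(2)}$, $\sigma$, and the Clifford trace carefully, since the whole argument hinges on this constant being different from $12$. The structural reduction — the displayed identity together with the automatic vanishing of the $\bbC Z$-part — is routine given Proposition~\ref{prop:explicit-spinorial}, and the irreducibility remark provides a convention-independent route to $A=0$ once one verifies that the identity fails for at least one choice of $A$.
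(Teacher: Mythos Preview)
Your proof is correct and follows essentially the same route as the paper. Both arguments use the $(\1,s)$-equation to express $\beta$ in terms of $\gamma(\1)=A+cZ$, then reduce the $(s,t)$-equation to the single $\fso(\bbV)$-valued identity $\tfrac{2}{3}\omega^{(2)}(\sigma(A)s,s)=\langle s,s\rangle A$; the paper normalises $c=0$ by a coboundary first, whereas you verify directly that the $c$-terms cancel, which is equivalent. Your trace over the orthonormal basis is exactly the projection onto the $\Lambda^0\bbV$ summand of $\odot^2\bbS\cong\Lambda^0\bbV\oplus\Lambda^3\bbV$ that the paper invokes in its final line, so the two ``decisive steps'' coincide; your irreducibility remark is a useful convention-free safeguard for the constant $4$.
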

\begin{proof}
Let $\omega+\alpha \in Z^{2,1}(\fm,\fg)$, where $\omega : \fg_{-1} \to \fg_1$ and $\alpha : \fg_{-2} \to \fg_0$.  For any $s \in \fg_{-1}$,
\begin{equation*}
0 =\partial(\omega+\alpha)(\1,s)=[\1,\omega(s)]+[\alpha(\1),s] 
\end{equation*}
so $\omega=\alpha(\1)$ as endomorphisms of $\mathbb S$, and
\begin{equation}
\label{eq:s-d-II-2}
\begin{aligned}
0&=\partial(\omega+\alpha)(s,s)=2[\omega(s),s]-\langle s,s\rangle
\alpha(\1)\\
&=\tfrac{2}{3}\omega^{(2)}(\omega(s),s)-\langle \omega(s),s\rangle Z
-\langle s,s\rangle
\omega
\;,
\end{aligned}
\end{equation}
using Proposition \ref{prop:explicit-spinorial}.  Picking $s$ non-isotropic, we see that $\omega\in\fg_0\cong\fso(\bbV)\oplus\mathbb C$.

It is easy to see that we may modify the cocycle by an appropriate coboundary so that $\omega\in\fso(\bbV)$. 
Writing then
$\omega=\tfrac12\omega^{\mu\nu}\Gamma_{\mu\nu}$, equation
\eqref{eq:s-d-II-2} becomes
\begin{equation*}
\begin{aligned}
0&=-\tfrac{2}{3}\omega^{(2)}(s,\omega(s))-\langle s,s\rangle\omega=-\tfrac{2}{3}\tfrac14\omega^{\mu\nu}
\omega^{(2)}(s,\Gamma_{\mu\nu}s)-\tfrac12(\overline s s)\omega^{\mu\nu}\Gamma_{\mu\nu}\\
&=-\tfrac{1}{3}\tfrac14\omega^{\mu\nu}
\Big(\overline s\Gamma_{\alpha\beta}\Gamma_{\mu\nu}s\Big)\Gamma^{\alpha\beta}
-\tfrac12(\overline s s)\omega^{\mu\nu}\Gamma_{\mu\nu}\\
&=-\tfrac{1}{3}\tfrac14\omega^{\mu\nu}
\Big(\overline s \Gamma_{\alpha\beta\mu\nu}s\Big)\Gamma^{\alpha\beta}
-\tfrac13(\overline s s)\omega^{\mu\nu}\Gamma_{\mu\nu}\;,
\end{aligned}
\end{equation*}
where in the last step we used that $\Gamma_{\alpha\beta}\Gamma_{\mu\nu}\equiv\Gamma_{\alpha\beta\mu\nu}+\delta_{\beta\mu}\delta_{\alpha\nu}-\delta_{\alpha\mu}\delta_{\beta\nu}$ modulo $\Lambda^2 \bbV\subset\Lambda^2\mathbb S$. Since
$\odot^2\mathbb S\cong \Lambda^0 \bbV\oplus\Lambda^3 \bbV\cong \Lambda^0 \bbV\oplus\Lambda^4 \bbV$, one readily gets $\omega=0$.
\end{proof}

\subsection{Case $d=3$.}\par\noindent

\begin{lemma}
The differential $\partial:C^{3,1}(\fm,\fg)\to C^{3,2}(\fm,\fg)$ is injective, and hence $H^{3,1}(\fm,\fg)=0$.
\end{lemma}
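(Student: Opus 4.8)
The plan is to establish the stronger statement $Z^{3,1}(\fm,\fg)=0$ directly. Since the odd contact grading has height $2$, we have $\fg_3=0$, hence $C^{3,0}(\fm,\fg)=0$ and $B^{3,1}(\fm,\fg)=\partial\,C^{3,0}(\fm,\fg)=0$; thus injectivity of $\partial$ on $C^{3,1}(\fm,\fg)$ is equivalent to the vanishing of $H^{3,1}(\fm,\fg)$, and it suffices to show that every degree-$3$ cocycle is zero. The argument runs exactly parallel to the cases $d=1,2$ treated above, using only Proposition \ref{prop:explicit-spinorial}.

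First I would parametrize the cochain space. A homogeneous element $\varphi=\omega+\alpha\in C^{3,1}(\fm,\fg)$ raises the $\bbZ$-degree by three, so $\omega:\fg_{-1}\to\fg_2$ and $\alpha:\fg_{-2}\to\fg_1$. As $\fg_2=\langle\1^\dagger\rangle$ is even while $\fg_{-1}=\bbS$ is odd, the map $\omega$ is an odd cochain of the form $\omega(s)=c(s)\,\1^\dagger$ for a functional $c\in\bbS^*$; similarly $\alpha(\1)=v^\dagger$ for some $v\in\bbS$. Both components are odd, so $|\varphi|=1$.

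Next I would impose a single necessary cocycle condition, namely the symmetric odd--odd slot $\partial\varphi(s,s)=0$ for $s\in\fg_{-1}$, which already closes the argument. Evaluating \eqref{eq:CE1} with $x=y=|\varphi|=1$ and using $[s,s]=\langle s,s\rangle\1$ gives $\partial\varphi(s,s)=-2[s,\omega(s)]-\langle s,s\rangle v^\dagger$. The required brackets come from Proposition \ref{prop:explicit-spinorial}: from $[\1^\dagger,s]=s^\dagger$ and super-antisymmetry one obtains $[s,\1^\dagger]=-s^\dagger$, so $[s,\omega(s)]=-c(s)\,s^\dagger$. Identifying $\fg_1\cong\bbS$ via the dagger, the vanishing of $\partial\varphi(s,s)$ becomes the $\bbS$-valued identity
\begin{equation*}
2\,c(s)\,s=\langle s,s\rangle\,v\qquad\text{for all }s\in\bbS.
\end{equation*}

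Finally I would specialize to non-isotropic spinors. Whenever $\langle s,s\rangle\neq 0$ the identity forces $v=\tfrac{2c(s)}{\langle s,s\rangle}\,s$, so $v$ is proportional to every non-isotropic $s\in\bbS$; since $\dim\bbS=8>1$ there exist two linearly independent non-isotropic spinors, whence $v=0$. Feeding this back yields $2c(s)\,s=0$ for every non-isotropic $s$, so $c$ vanishes on the Zariski-dense set of non-isotropic vectors and therefore $c=0$. Hence $\varphi=0$, which proves that $\partial$ is injective on $C^{3,1}(\fm,\fg)$ and that $H^{3,1}(\fm,\fg)=0$. I expect no genuine obstacle here: unlike the $d=1,2$ cases, no appeal to Proposition \ref{prop:omega-vanishes} is needed, and the only delicate points are the bookkeeping of the Koszul signs in \eqref{eq:CE1} and the elementary observation that a vector proportional to all non-isotropic spinors must vanish.
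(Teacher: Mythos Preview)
Your proof is correct and follows essentially the same route as the paper: parametrize $\omega(s)=c(s)\1^\dagger$, $\alpha(\1)=v^\dagger$, and use the $(s,s)$-cocycle condition together with Proposition~\ref{prop:explicit-spinorial} to obtain $2c(s)\,s=\langle s,s\rangle v$, from which $v=0$ and then $c=0$. The only difference is that the paper also invokes the $(\1,s)$-cocycle condition to first identify $c=\tfrac12\overline{v}$ before reaching the same endgame equation, whereas you dispense with this step and argue directly---a harmless streamlining.
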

\begin{proof} Let $\omega + \alpha \in Z^{3,1}(\fm,\fg)$, where $\omega : \fg_{-1} \to \fg_2$ and $\alpha : \fg_{-2} \to \fg_1$, and  write $\omega = \tilde\omega\1^\dagger$ for some $\tilde\omega : \fg_{-1} \to \bbC$, and $\alpha(\1) = t^\dagger$ for some $t \in \mathbb S$.  For all $s \in \fg_{-1}$, we have
 \begin{align*}
 0 &= \partial(\omega + \alpha)(\1,s) = [\1,\omega(s)] + [s,\alpha(\1)] = \tilde\omega(s) Z + \tfrac{1}{3} \omega^{(2)}(t,s) - \tfrac{1}{2} \langle t, s \rangle Z\\
 0 &= \partial(\omega + \alpha)(s,s) = -2[s, \omega(s)] - \alpha([s,s]) = 2\tilde\omega(s) s^\dagger - \langle s, s \rangle t^\dagger
 \end{align*}
 The first equation implies $\tilde\omega = \tfrac{1}{2} \overline{t}$.  The second equation then implies $\langle t, s \rangle s^\dagger = \langle s, s \rangle t^\dagger$ for all $s \in \fg_{-1}$, so $t = 0$.  Thus, $\alpha = 0$ and $\omega = 0$ follow, proving our claim. 
\end{proof}
\subsection{Case $d=4$.}\par\noindent

\begin{lemma}
The differential $\partial:C^{4,1}(\fm,\fg)\to C^{4,2}(\fm,\fg)$ is injective, and hence $H^{4,1}(\fm,\fg)=0$. 
\end{lemma}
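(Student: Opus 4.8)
The plan is to follow the same strategy as the $d=3$ case, taking advantage of the fact that at the top degree $d=4$ the relevant cochain space is as small as possible. First I would pin down $C^{4,1}(\fm,\fg)$ by a degree count. A degree-$d$ element of $C^1(\fm,\fg)$ is a map $\varphi:\fm\to\fg$ whose restriction to $\fg_{-k}$ lands in $\fg_{d-k}$. For $d=4$ the component on $\fg_{-1}$ would have to map into $\fg_3=0$, so it vanishes, and the only surviving piece is a map $\alpha:\fg_{-2}\to\fg_2$. Writing $\alpha(\1)=c\,\1^\dagger$ for a scalar $c\in\bbC$, the entire cochain is determined by $c$, so $C^{4,1}(\fm,\fg)$ is $(1|0)$-dimensional.

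Next I would evaluate $\partial\alpha$ on a repeated odd argument. Take $s\in\fg_{-1}\cong\bbS$ and apply the differential \eqref{eq:CE1}. Since $\alpha$ is even and vanishes on $\fg_{-1}$, the first two terms of the differential drop out and only the last term survives, giving $\partial\alpha(s,s)=-\alpha([s,s])$. Using the symbol bracket $[s,s]=\langle s,s\rangle\1$ from Proposition \ref{prop:explicit-spinorial}, this equals $-c\langle s,s\rangle\1^\dagger$, a nonzero element of $\fg_2$ precisely when $c\langle s,s\rangle\neq 0$. Choosing $s$ non-isotropic, so that $\langle s,s\rangle\neq 0$, the vanishing $\partial\alpha(s,s)=0$ forces $c=0$, i.e. $\alpha=0$. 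Hence $Z^{4,1}(\fm,\fg)=\ker\partial=0$, the differential is injective, and $H^{4,1}(\fm,\fg)=0$.

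There is essentially no obstacle in this case: it is the simplest of the four degrees, since $\fg_3=0$ collapses $C^{4,1}(\fm,\fg)$ to a single scalar, and the cocycle condition is already detected by one bracket evaluation. The only point requiring minor care is the sign bookkeeping in \eqref{eq:CE1} for the argument $(s,s)$ with $s$ odd, but because $\alpha(s)=0$ the sole surviving contribution is $-\alpha([s,s])$ irrespective of the signs, so the computation is immediate.
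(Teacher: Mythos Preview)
Your proof is correct and takes essentially the same approach as the paper: both identify $C^{4,1}(\fm,\fg)$ with maps $\alpha:\fg_{-2}\to\fg_2$, evaluate $\partial\alpha(s,s)=-\langle s,s\rangle\alpha(\1)$, and pick a non-isotropic $s$ to force $\alpha=0$. Your version just spells out a bit more detail (the degree count showing the $\fg_{-1}$ component vanishes, and writing $\alpha(\1)=c\,\1^\dagger$).
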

\begin{proof} Let $\alpha \in Z^{4,1}(\fm,\fg)$, where $\alpha : \fg_{-2} \to \fg_2$.  Thus, $0 = \partial\alpha(s,s) = -\langle s, s \rangle \alpha(\1)$ for all $s \in \fg_{-1}$, so $\alpha(\1) = 0$ and $\alpha = 0$.
\end{proof}
\subsection{Case $d\geq 5$.}\par\noindent
We have $C^{d,1}(\fm,\fg)=0$ for all $d\geq 5$
simply by degree reasons. 
\begin{corollary}
The Spencer cohomology groups $H^{d,1}(\fm,\fg)=0$ for all $d\geq 5$.
\end{corollary}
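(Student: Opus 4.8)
The plan is to deduce this immediately from the vanishing of the underlying cochain spaces, which was noted just above. Recall that by definition the homogeneous Spencer group is the subquotient
\begin{equation*}
H^{d,1}(\fm,\fg) = Z^{d,1}(\fm,\fg)/B^{d,1}(\fm,\fg)
\end{equation*}
of the space $C^{d,1}(\fm,\fg)$ of degree-$d$ one-cochains. Since both the cocycles $Z^{d,1}(\fm,\fg)$ and the coboundaries $B^{d,1}(\fm,\fg)$ are by construction subspaces of $C^{d,1}(\fm,\fg)$, it suffices to know that the latter vanishes.

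First I would make the degree bookkeeping fully explicit. A homogeneous one-cochain $\varphi$ of degree $d$ is a map $\varphi\colon\fm\to\fg$ raising the $\bbZ$-degree by $d$; since $\fm=\fg_{-2}\oplus\fg_{-1}$, such a $\varphi$ is determined by its two restrictions $\fg_{-1}\to\fg_{d-1}$ and $\fg_{-2}\to\fg_{d-2}$. The odd contact grading is concentrated in degrees $-2\leq k\leq 2$, so for $d\geq 5$ one has $d-1\geq 4$ and $d-2\geq 3$, whence $\fg_{d-1}=\fg_{d-2}=0$ and therefore $C^{d,1}(\fm,\fg)=0$. Consequently $Z^{d,1}(\fm,\fg)=B^{d,1}(\fm,\fg)=0$, and hence $H^{d,1}(\fm,\fg)=0$ for all $d\geq 5$, as claimed.

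There is no genuine obstacle at this step: the entire content is the observation that the grading has height $2$, so that no nonzero cochain can raise the degree beyond the top graded piece by more than the depth of $\fm$. In contrast to the cases $d=1,\dots,4$ treated above — where $C^{d,1}(\fm,\fg)$ is nonzero and one must actually analyze the cocycle equations via Propositions \ref{prop:explicit-spinorial} and \ref{prop:omega-vanishes} — here the cochain complex is already trivial in the relevant degree, so the assertion is purely formal.
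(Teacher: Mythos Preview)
Your proof is correct and takes essentially the same approach as the paper: the paper simply states that $C^{d,1}(\fm,\fg)=0$ for all $d\geq 5$ ``by degree reasons'' and deduces the corollary immediately. Your version is just a more explicit unpacking of that one-line observation.
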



\begin{thebibliography}{WW}
 \footnotesize

\bibitem{AC} D.\ V.\ Alekseevsky and V.\ Cort\'es, {\it Classification of $N$-(super)-extended Poincar\'e algebras and bilinear invariants of the spinor representation of $\mathrm{Spin}(p,q)$}, Comm. Math. Phys. {\bf 183} (1997), 477--510.


\bibitem{AGMM} D.\ V.\ Alekseevsky, J.\ Gutt, G.\ Manno, G.\ Moreno, {\it 
Lowest degree invariant second-order PDEs over rational homogeneous contact manifolds}, 
Commun. Contemp. Math. {\bf 21}, 54 pp. (2019). 

\bibitem{AS} A.\ Altomani, A.\ Santi,
{\em Classification of maximal transitive prolongations of super-Poincaré algebras}, Adv. Math. {\bf 265}, 60--96 (2014).

\bibitem{Bryant} R.\ Bryant, {\it Metrics with exceptional holonomy}, Ann. Math {\bf 126}, 525--576 (1987).

\bibitem{BP} U.\ Bruzzo, V.\ Pestov, {\it On the notion of compactness in supergeometry}, 
Bull. Austral. Math. Soc. {\bf 61}, 473--488 (2000).

\bibitem{CCF2011} C. Carmeli, L. Caston, R. Fioresi, Mathematical Foundations of Supersymmetry, EMS Series of Lectures in Mathematics, European Mathematical Society (EMS), Z\"urich (2011).

\bibitem{Cartan1893} \'E. Cartan, Sur la structure des groupes simples finis et continus, C. R. Acad. Sci. {\bf 116} (1893) 784--786.

\bibitem{Coul} K.\ Coulembier, {\it Bott-Borel-Weil theory and Bernstein-Gel'fand-Gel'fand reciprocity for Lie superalgebras},
Transform. Groups, {\bf 21}(3):681--723, 2016.

\bibitem{Engel1893} F. Engel, Sur un group simple \`a quatorze param\`etres, C. R. Acad. Sci. {\bf 116} (1893) 786--788.

\bibitem{FSS} L.\ Frappat, A.\ Sciarrino, P.\ Sorba, {\em Dictionary on {L}ie algebras and superalgebras}, Academic Press Inc., San Diego CA (2000).

\bibitem{Fre1954} H. Freudenthal, {\it Beziehungen der $E_7$ und $E_8$ zur Oktavenebene. I}, Nederl. Akad. Wetensch. Proc. Ser. A. 57 (1954), 218--230.

\bibitem{Fre1964} H. Freudenthal, {\it Lie groups in the foundations of geometry}, Adv. Math. 1 (1964), 145--190.

\bibitem{MR2552682} A.\ S.\ Galaev, {\it Irreducible complex skew-Berger algebras}, Diff. Geom. Appl. {\bf 27}(6), 743--754 (2009).

\bibitem{Hel} F.\ W.\ Helenius, {\it Freudenthal triple systems by root system methods}, 
J. Algebra {\bf 357}, 116--137 (2012). 

\bibitem{Kac} V.\,G.\ Kac, {\it Lie superalgebras}, Adv. Math.\ {\bf 26} (1), 8--96 (1977).

\bibitem{Kos} B.\ Kostant, {\it Lie algebra cohomology and the generalized Borel-Weil theorem}, Ann. of Math. {\bf 74} (2), 329--387
(1961).

\bibitem{Kos1977} B. Kostant, {\it Graded manifolds, graded Lie theory and prequantization}, Springer LNM {\bf 570}, 177--306 (1977).

\bibitem{KST2021} B.\ Kruglikov, A.\ Santi, D.\ The, {\em $G(3)$-supergeometry and a supersymmetric extension of the
Hilbert--Cartan equation}, Adv. Math. {\bf 376} (2021), 107420 - 98 pages.

\bibitem{KST2022} B.\ Kruglikov, A.\ Santi, D.\ The, {\em Symmetries of supergeometries related to nonholonomic superdistributions}, Transform. Groups (2022), 51 pages. 

\bibitem{LM2002} J.M.\ Landsberg, L.\ Manivel, {\em Construction and classification of complex simple Lie algebras via projective geometry}, Sel. Math. New Ser. 8 (2002), 137--159.

\bibitem{Leites1980} D. A. Leites, {\it Introduction to the theory of supermanifolds}, Russian Math. Surveys {\bf 35}, no. 1, 1--64 (1980).

\bibitem{LNS} T. Leistner, P. Nurowski, K. Sagerschnig, {\it New relations between $G_2$-geometries in dimensions 5 and 7}, Int. J.  Math. 28, no. 13, 1750094 (2017).

\bibitem{LPS} D.\ Leites, E.\ Poletaeva, and V.\ Serganova, {\it On Einstein equations on manifolds and supermanifolds}, J. Nonlinear Math. Phys., {\bf 9} (4):394--425, 2002.

\bibitem{LSS} D.\ A.\ Leites, M.\ V.\ Saveliev, V.\ V.\ Serganova, {\em Embeddings of Lie superalgebra $\mathfrak{osp}(1|2)$ and the associated nonlinear supersymmetric equations}, in ``Group Theoretical Methods in Physics'', VUN Science Press, Utrecht, The Netherlands (1986).

\bibitem{ManSur}  Y.\ I.\ Manin, {\it New directions in geometry}, Russian Math. Surveys {\bf 39} (1984), 47--73.

\bibitem{Man} Y.\ I.\ Manin, {\it Gauge field theory and complex geometry}, volume 289 of Grundlehren der Mathematischen Wissenschaften [Fundamental Principles of Mathematical Sciences]. Springer-Verlag, Berlin, 1997. Translated from the 1984 Russian original by N. Koblitz and J. R. King, With an appendix by Sergei Merkulov.

\bibitem{OV1994} A.L. Onishchik, E.V. Vinberg, {\em Lie Groups and Lie Algebras III: Structure of Lie Groups and Lie Algebras}, Encyclopaedia of Mathematical Sciences, Springer Berlin Heidelberg (1994).

\bibitem{SS} A.\ Sciarrino, P.\ Sorba, {\it Representations of the superalgebra $F_4$ and Young supertableaux}, J. Phys. A {\bf 19}, 2241--2248 (1986). 

\bibitem{Ser2011} V. Serganova, {\em Kac-Moody superalgebras and integrability}, in ``Developments and Trends in Infinite-Dimensional Lie Theory'', Progr. Math. {\bf 288}, Birkhäuser Boston, Inc., Boston, MA, 2011, 169--218.

\bibitem{Tan} N.\ Tanaka, {\em On differential systems, graded Lie algebras and pseudogroups}, J. Math. Kyoto Univ. {\bf 10}, 1--82 (1970).

\bibitem{The2018} D. The, {\em Exceptionally simple PDE}, Diff. Geom. Appl. {\bf 6}, 13--41 (2018).

\bibitem{MR787332} J.\ Van der Jeugt, {\it Irreducible representations of the exceptional Lie superalgebras $D(2,1;\alpha)$}, J. Math. Phys. {\bf 26}(5), 913--924 (1985).

\end{thebibliography}
\end{document}